\DeclareMathOperator\Std{Std}
\DeclareMathOperator\res{res}
\DeclareMathOperator\sgn{sgn}
\newtheorem{thm}{Theorem}[section]
\theoremstyle{plain}
\newtheorem{lem}[thm]{Lemma}
\newtheorem{prop}[thm]{Proposition}
\newtheorem{cor}[thm]{Corollary}
\theoremstyle{definition}
\newtheorem{defn}[thm]{Definition}
\newtheorem{example}[thm]{Example}
\theoremstyle{remark}
\newtheorem{rem}[thm]{Remark}
\newtheorem{conjecture}[thm]{Conjecture}
\definecolor{A}{rgb}{.75,1,.75}
\numberwithin{equation}{section}
\newcommand{\Z}{\mathbb Z}
\newcommand{\N}{\mathbb N}
\newcommand{\mHcn}{\mathcal{H}_{c}(n)} 
\newcommand{\mHfcn}{\mathcal{H}^f_{c}(n)} 
\newcommand{\mhcn}{\mathfrak{H}_{c}(n)} 
\newcommand{\mhgcn}{\mathfrak{H}^g_{c}(n)} 
\newcommand{\mb}{\mathtt{b}}
\newcommand{\mt}{\mathfrak{t}}
\newcommand{\ms}{\mathfrak{s}}
\newcommand{\mfku}{\mathfrak{u}}
\newcommand{\mfkv}{\mathfrak{v}}
\newcommand{\End}{\text{End}}
\newcommand{\supp}{\text{supp}}
\newcommand{\undla}{\underline{\lambda}}
\newcommand{\undQ}{\underline{Q}}
\begin{document}

	\title[cyclotomic Hecke-Clifford algebras]{Seminormal bases of cyclotomic Hecke-Clifford algebras}
	
	\author{Shuo Li}\address{School of Mathematics and Statistics\\
		Beijing Institute of Technology\\
		Beijing, 100081, P.R. China}
	\email{shuoli1203@163.com}

    \author{Lei Shi}\address{Academy of Mathematics and Systems Science\\
    	Chinese Academy of Sciences, Beijing 100190\\
    	P.R.China}
    	\address{Max-Planck-Institut f\"ur Mathematik\\
	    Vivatsgasse 7, 53111 Bonn\\
	    Germany}
    \email{leishi202406@163.com}

	\begin{abstract}
	In this paper, we describe the actions of standard generators on certain bases of simple modules for semisimple cyclotomic Hecke-Clifford superalgebras. As applications, we explicitly construct a complete set of primitive idempotents and seminormal bases for these algebras.
	\end{abstract}
	\maketitle
	
	\setcounter{tocdepth}{1}
	\tableofcontents
	
	\section{Introduction}
    \label{pag:N}
	Let $\N:=\{1,2,\ldots\},$ $n\in \N$ and $\mathfrak{S}_n$ be the symmetric group of $n$ letters. In \cite{Sch}, to study the projective representation theory of $\mathfrak{S}_n$, Schur introduced a double cover $\widetilde{\mathfrak{S}}_n$ and showed that the study of the projective representation of $\mathfrak{S}_n$ is equivalent to the study of linear representation of $\widetilde{\mathfrak{S}}_n$ while the latter is equivalent to the linear representation theory of $\mathfrak{S}_n$ and $\mathbb{C} \mathfrak{S}_n^-$ (spin symmetric group algebra). The linear representation theory of $\mathfrak{S}_n$ as well as its $q$-analogue, Iwahori-Hecke algebra of type $A$ has various generalizations, including the representation theory of non-degenerate and degenerate cyclotomic Hecke algebras \cite{AK} and quiver Hecke algebras of type $A$ \cite{BK:GradedKL}, which has been extensively studied in the literature, see \cite{A3, BK:GradedKL,HM1,K3,Ma1} and references therein.
	
	The spin representation of $\mathfrak{S}_n$ or the linear representation of $\mathbb{C}\mathfrak{S}_n^-$ has attracted much attention in recent years, see \cite{BK:spin,ELL,K2,Na1,WW}. It is ``super-equivalent'' to the representation theory of the so-called Sergeev algebra $\mathcal{C}_n \rtimes \mathbb{C} \mathfrak{S}_n$ \cite{Na2}. The representation theory of Sergeev algebra as well as its $q$-analogue \cite{JN}, Hecke-Clifford algebra $\mathcal{H}(n)$ also has various generalizations, including the representation theory of non-degenerate and degenerate cyclotomic Hecke-Clifford algebras \cite{BK} and quiver Hecke superalgebras \cite{KKT,KKO1,KKO2}. In \cite{BK,T}, Brundan, Kleshchev, and Tsuchioka studied modular branching rules for non-degenerate and degenerate cyclotomic Hecke-Clifford algebras. They used induction and restriction functors to construct simple modules for these algebras. Unfortunately, in contrast to the usual cyclotomic Hecke algebra \cite{DJM}, there is no ``cellular theory''  for cyclotomic Hecke-Clifford algebras, due to the lack of ``cellular basis''. In \cite{EM,HM1,HM2}, several (graded) cellular bases were constructed for cyclotomic Hecke algebras and cyclotomic quiver Hecke algebras of type $A$ and $C$ using the semisimple deformation and seminormal bases. Hence, to give a ``cellular'' structure for cyclotomic Hecke-Clifford algebras, it's natural to consider the semisimple deformation and construct seminormal bases for cyclotomic Hecke-Clifford algebras. This is the motivation of this work.
	
	
	From now on, let $\mathbb{K}$ \label{pag:K} be an algebraically closed field of characteristic different from $2$ and
$\mathbb{K}^*:=\mathbb{K}\setminus\{0\}.$ In order to state our main results, we introduce some notations. For each $\bullet\in\{\mathsf{0},\,\mathsf{s},\,\mathsf{ss}\}$ and $\underline{Q}=(Q_1,\ldots,Q_m)$, we associate the polynomial $f= f^{(\bullet)}_{\underline{Q}}(X_1)$ in the definition of non-degenerate cyclotomic Hecke-Clifford superalgebra $\mHfcn$ with a set $ \mathscr{P}^{\bullet,m}_{n}$  which consists of mixing of partitions  and strict partitions. More precisely, for $m,n\geq 0$, let $\mathscr{P}^{\mathsf{0},m}_{n}:=\mathscr{P}^m_n$ be the set of all $m$-multipartitions of $n$  and $\mathscr{P}^\mathsf{s}_n$ be the set of strict partitions of $n$. We set
	$$
	\mathscr{P}^{\mathsf{s},m}_{n}:=
	\cup_{a=0}^{n}( \mathscr{P}^{\mathsf{s}}_a\times \mathscr{P}^{m}_{n-a}),\qquad \mathscr{P}^{\mathsf{ss}, m}_{n}:=
	\cup_{a+b+c=n}(\mathscr{P}^{\mathsf{s}}_a \times \mathscr{P}^{\mathsf{s}}_b\times \mathscr{P}^{m}_{c}).$$  In \cite{SW}, the second author and Wan gave a ``separate condition'' for $\mHfcn$ in terms of the residues of the boxes in the multipartitions. The ``separate condition'' for $\mHfcn$ is equivalent to the polynomial $P^{\bullet}_{n}(q^2,\undQ)\neq 0$ (see \cite[Proposition 5.13]{SW}), where the $P^{\bullet}_{n}(q^2,\undQ)$ can be regarded as a super-version of the classical Poincar\'e polynomial. Under this condition, the second author and Wan showed that cyclotomic Hecke-Clifford superalgebra $\mHfcn$ is semisimple and gave a complete set of simple modules
$\{\mathbb{D}(\undla)|~ \undla\in\mathscr{P}^{\bullet,m}_{n}\}$, generalizing \cite{JN,Na2}, which gave the construction of simple modules for both Hecke-Clifford algebra and Sergeev algebra. The construction is inspired by \cite{Wa}. However, the construction in \cite{SW} didn't give an explicit basis for $\mathbb{D}(\undla)$. The following Theorem \ref{main1}, which is the first main result of this paper, constructs an explicit basis for $\mathbb{D}(\undla)$ and describes actions of standard generators on the certain basis, where we refer the reader to Section \ref{basic-Non-dege} and Section \ref{Non-dege-simplemodule} for unexplained notations used here.
	
		\begin{thm}\label{main1}
The simple module $\mathbb{D}(\undla)$ has a $\mathbb{K}$-basis of the form$$\bigsqcup_{\mt\in \Std(\undla)}\Biggl\{C^{\beta_{\mt}}C^{\alpha_{\mt}}v_{\mt}\biggm|\begin{matrix}\beta_{\mt} \in \Z_2([n]\setminus \mathcal{D}_{\mt})  \\
			\alpha_{\mt}\in \Z_2(\mathcal{OD}_{\mt})
		\end{matrix}\Biggr\}.$$ Moreover, the actions of generators of $\mHfcn$ on the above basis are given explicitly in \eqref{X eigenvalues}, \eqref{Caction} and \eqref{Taction Non-dege}.

	\end{thm}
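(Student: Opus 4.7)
The plan is to take the simple module $\mathbb{D}(\undla)$ constructed in \cite{SW} as given and pin down an explicit basis from its distinguished eigenvector system. The SW construction already singles out, for each $\mt\in\Std(\undla)$, a vector $v_\mt$ that is a joint eigenvector for the Jucys--Murphy family $X_1,\dots,X_n$; the decoration sets $\mathcal{D}_\mt$ and $\mathcal{OD}_\mt$ will encode, respectively, which Clifford generators are already ``absorbed'' by $v_\mt$ (so that $C^{\beta_\mt}$ is only allowed to run over the complementary indices) and which additional Clifford twists are forced by the residue-zero boxes along $\mt$. The candidate basis is therefore the orbit of $v_\mt$ under all admissible Clifford monomials $C^{\beta_\mt}C^{\alpha_\mt}$, and the proof reduces to verifying the three action formulas and then a standard linear-independence count.

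I would carry out the three verifications in order. First, \eqref{X eigenvalues} is proved by pushing each $X_i$ past $C^{\beta_\mt}C^{\alpha_\mt}$ using the defining (anti)commutation relations of $\mHfcn$; the residue of the box occupied by $i$ in $\mt$, combined with the parity of $\beta_\mt$ at index $i$, then yields the claimed eigenvalue. Second, \eqref{Caction} is a direct consequence of the Clifford relations, which either absorb $C_i$ into $C^{\beta_\mt}$ or $C^{\alpha_\mt}$, or produce a sign by anticommuting past the existing factors, according as $i\in\mathcal{D}_\mt$ or not. These two steps are essentially bookkeeping with signs, and the only subtlety is that at an index $i\in\mathcal{D}_\mt$ the generator $C_i$ interacts with $v_\mt$ through a relation inherited from the SW construction rather than by freely creating a new factor.

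The main content is the action \eqref{Taction Non-dege} of the braid-type generator $T_i$. Here I would follow the standard seminormal strategy: decompose $T_i\cdot v_\mt$ into a linear combination supported on $v_\mt$ and $v_{s_i\mt}$ (the latter present when $s_i\mt\in\Std(\undla)$) by writing the coefficients as rational expressions in $X_i,X_{i+1},C_i,C_{i+1}$ and solving for them from compatibility with the already known $X$-eigenvalues on both sides together with the quadratic relation for $T_i$. One then conjugates $T_i$ through $C^{\beta_\mt}C^{\alpha_\mt}$ using \eqref{Caction} to extend the formula to every basis vector. The separate condition $P^{\bullet}_n(q^2,\undQ)\neq 0$ is exactly what guarantees that the denominators appearing in the seminormal coefficients are nonzero, so the formula is well-defined throughout.

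Linear independence and spanning are then immediate. Spanning follows because the $\mathbb{K}$-span of the proposed set is stable under all generators by the three action formulas above and contains a cyclic vector for $\mathbb{D}(\undla)$. For linear independence, vectors attached to different $\mt$ lie in distinct joint $X$-eigenspaces by \eqref{X eigenvalues} and the separate condition, while vectors with the same $\mt$ but different $(\beta_\mt,\alpha_\mt)$ are separated by the free Clifford action on $v_\mt$: the $2^{|[n]\setminus\mathcal{D}_\mt|+|\mathcal{OD}_\mt|}$ monomials $C^{\beta_\mt}C^{\alpha_\mt}$ are $\mathbb{K}$-linearly independent in the relevant Clifford subalgebra. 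The step I expect to be most involved is the case analysis in \eqref{Taction Non-dege} when $i$ or $i+1$ occupies a zero-residue node of $\mt$: there the Clifford decorations do not commute past $T_i$ cleanly and the seminormal coefficient acquires extra $C_i,C_{i+1}$-terms, requiring careful tracking of signs and of the twist between the $\mathcal{D}_\mt$ and $\mathcal{OD}_\mt$ alphabets. All remaining cases are routine once these formulas are in hand.
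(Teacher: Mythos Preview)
Your outline has the right shape, but it rests on an assumption that is not true: the SW construction does \emph{not} single out a distinguished vector $v_\mt$. What \cite{SW} provides is the $\mathcal{A}_n$-module decomposition $\mathbb{D}(\undla)=\bigoplus_\mt \mathbb{L}(\res(\mt^{\undla}))^{d(\mt,\mt^{\undla})}$ together with the abstract $T_i$-action formula \eqref{actionformulaNon-dege} on arbitrary $z^{d(\mt,\mt^{\undla})}$; it gives no preferred element of the summand. The paper's actual work is to \emph{construct} $v_\mt$: Section~\ref{simplemodules} builds an algebra map $\rho:\mathcal{C}_t\to\End_{\mathcal{A}_n}(\mathbb{L}(\iota_1)\otimes\cdots\otimes\mathbb{L}(\iota_n))$, takes $v_{\mt^{\undla}}:=\rho(\gamma_1)(v_1\otimes\cdots\otimes v_n)$ for the specific primitive idempotent $\gamma_1\in I_t$, and sets $v_\mt:=v_{\mt^{\undla}}^{d(\mt,\mt^{\undla})}$. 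The resulting identity $\gamma_\mt v_\mt=v_\mt$ (statement (2) of the theorem) is not a byproduct but the defining property; it is exactly what makes \eqref{Caction} computable when $i=d(\mt,\mt^{\undla})(i_p)\in\mathcal{D}_\mt$ with $p$ even, where one needs the relation $C_{i_p}\gamma_{\underline{\iota}}=-\sqrt{-1}\,C_{i_{p-1}}\gamma_{\underline{\iota}}$. Your phrase ``a relation inherited from the SW construction'' is where the gap sits.

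This same gap undermines your linear-independence argument. The $X$-eigenvalues separate pairs $(\mt,\beta_\mt)$, but for fixed $(\mt,\beta_\mt)$ the vectors $C^{\alpha_\mt}v_\mt$ all live in a single $X$-eigenspace on which only the Clifford subalgebra $\mathcal{C}_{\mathcal{D}_\mt}\cong\mathcal{C}_t$ acts; that action is \emph{not} free (the simple $\mathcal{C}_t$-module has dimension $2^{\lceil t/2\rceil}$, not $2^t$), so ``linearly independent monomials in the Clifford algebra'' does not imply linearly independent vectors. What works is Lemma~\ref{lem:clifford rep}(3) applied to the idempotent $\gamma_\mt$, which again requires knowing $\gamma_\mt v_\mt=v_\mt$. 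Finally, for the $T_i$-action your plan to ``conjugate $T_i$ through $C^{\beta_\mt}C^{\alpha_\mt}$'' is workable but messy, since $T_iC_{i+1}=C_iT_i-\epsilon(C_i-C_{i+1})$ produces extra terms; the paper instead passes to the intertwining element $\Phi_i(x,y)$, whose commutation with Clifford generators \eqref{Phi and C} is clean, reducing the computation to $\Phi_i(\mathtt{b}_{\mt,i},\mathtt{b}_{\mt,i+1})v_\mt=\delta(s_i\mt)\sqrt{\mathtt{c}_\mt(i)}\,v_{s_i\mt}$ together with a case analysis of $C_iC_{i+1}$ acting on the basis.
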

	
Theorem \ref{main1} makes it possible to do computations in $\mHfcn$. For example, to show two elements $a,b\in\mHfcn$ are equal, it is enough to show that $a,b$ act in the same way on each basis element of $\mathbb{D}(\undla)$, for all $\undla\in\mathscr{P}^{\bullet,m}_{n}$.
As an application of Theorem \ref{main1}, we can construct a complete set of (super) primitive idempotents and matrix units for $\mHfcn$. Let ${\rm Tri}_{\bar{0}}(\mathscr{P}^{\bullet,m}_{n})$ be the set consisting of triple ${\rm T}=(\mt, \alpha_{\mt}, \beta_{\mt})$, where $\mt$ is a standard tableaux, $\alpha_{\mt}$ is a sequence consisting of the ``diagonal part'' of  $\mt$ and $\beta_{\mt}$ is a sequence consisting of the ``non-diagonal part'' of  $\mt$. Note that in contrast to Hecke algebra \cite{Ma1}, the index set of primitive idempotents ${\rm Tri}_{\bar{0}}(\mathscr{P}^{\bullet,m}_{n})$ becomes a bit more complicated. Roughly speaking, for any ${\rm T}=(\mt, \alpha_{\mt}, \beta_{\mt})\in {\rm Tri}_{\bar{0}}(\undla),$ the first component together with the third component of ${\rm T}\in {\rm Tri}_{\bar{0}}(\undla)$ are uniquely determined by the eigenvalues while the second component appears to record different primitive idempotents of Clifford algebra. Then for each ${\rm T}\in{\rm Tri}_{\bar{0}}(\mathscr{P}^{\bullet,m}_{n}),$ we can define an explicit element ${F_{\rm T}}\in\mHfcn$ under separate condition. The following Theorem \ref{main2} is the second main result of this paper.

	\begin{thm}\label{main2}
	Suppose $P^{\bullet}_{n}(q^2,\undQ)\neq 0$. We have the following.
	
	(a) $\{F_{\rm T} \mid {\rm T}\in {\rm Tri}_{\bar{0}}(\mathscr{P}^{\bullet,m}_{n})\}$ is a complete set of (super) primitive orthogonal idempotents of $\mHfcn.$
	
	(b) For ${\rm T}=(\mt, \alpha_{\mt}, \beta_{\mt})\in {\rm Tri}_{\bar{0}}(\undla),\,
	{\rm S}=(\ms, \alpha_{\ms}', \beta_{\ms}')\in {\rm Tri}_{\bar{0}}(\underline{\mu}),$ then $\mathbb{D}_{\rm T}:=\mHfcn F_{\rm T}$ and $\mathbb{D}_{\rm S}:=\mHfcn F_{\rm S}$ belong to the same block if and only if $\undla=\underline{\mu}$.
	\end{thm}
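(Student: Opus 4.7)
The plan is to deduce both statements from Theorem \ref{main1} by studying how the explicit elements $F_{\rm T}$ act on the simple modules $\mathbb{D}(\undla)$. Under the separating condition $P^{\bullet}_n(q^2,\undQ)\neq 0$, the algebra $\mHfcn$ is semisimple, and we may identify it with an appropriate sum of super-endomorphism algebras $\bigoplus_{\undla} \End_{\mathbb{K}}(\mathbb{D}(\undla))$. Since the elements $F_{\rm T}$ are built from products of rational functions in the Jucys--Murphy generators $X_1,\ldots,X_n$ together with Clifford-type correction factors matching the sequence $\alpha_\mt$, their action can be computed directly from the eigenvalue formulas \eqref{X eigenvalues} and \eqref{Caction}.

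For part (a), I would first show that $F_{\rm T}$ annihilates $\mathbb{D}(\underline{\mu})$ whenever $\underline{\mu}\neq\undla$: because the residue sequences $(\res_{\mt}(1),\ldots,\res_{\mt}(n))$ separate different multipartitions under the hypothesis $P^{\bullet}_n(q^2,\undQ)\neq 0$, the Jucys--Murphy factors built into $F_{\rm T}$ vanish on every simple module not indexed by $\undla$. A direct computation using Theorem \ref{main1} then shows that on $\mathbb{D}(\undla)$ the element $F_{\rm T}$ acts as a projection onto the one-dimensional line spanned by $C^{\beta_\mt}C^{\alpha_\mt}v_{\mt}$, with the role of $\alpha_\mt$ being precisely to single out one of the Clifford-generated basis vectors. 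Idempotency $F_{\rm T}^2=F_{\rm T}$ and orthogonality $F_{\rm T}F_{\rm S}=0$ for ${\rm T}\neq{\rm S}$ follow immediately, and completeness $\sum_{\rm T}F_{\rm T}=1$ is obtained by a dimension count: $|{\rm Tri}_{\bar{0}}(\mathscr{P}^{\bullet,m}_{n})|$ equals $\sum_{\undla}\dim\mathbb{D}(\undla)$, which in the semisimple super-setting coincides with $\dim\mHfcn$.

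Super-primitivity is the subtlest point. I would verify it by computing $F_{\rm T}\mHfcn F_{\rm T}$ directly: the only elements stabilizing the line $\mathbb{K}\cdot C^{\beta_\mt}C^{\alpha_\mt}v_{\mt}$ reduce to scalars together with possibly an odd Clifford-type operator, so the super-endomorphism ring is isomorphic either to $\mathbb{K}$ or to a two-dimensional Clifford superalgebra, which in both cases is local as a superalgebra. The main obstacle is the careful bookkeeping of the two types (type $M$ versus type $Q$) of simple supermodules, especially the odd sectors governed by $\alpha_\mt$, so that each ${\rm T}\in{\rm Tri}_{\bar{0}}(\undla)$ corresponds to a genuinely indecomposable super-summand and no finer decomposition is possible inside the super-category.

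For part (b), recall that two idempotents of a finite-dimensional (super)algebra lie in the same block iff the indecomposable projective (super)modules $\mHfcn F_{\rm T}$ and $\mHfcn F_{\rm S}$ share a composition factor. By part (a) combined with Theorem \ref{main1}, $\mHfcn F_{\rm T}$ has unique composition factor $\mathbb{D}(\undla)$ (up to parity shift), so $F_{\rm T}$ and $F_{\rm S}$ lie in the same block exactly when $\undla=\underline{\mu}$. Conversely, for fixed $\undla$, all idempotents $F_{\rm T}$ with ${\rm T}\in{\rm Tri}_{\bar{0}}(\undla)$ are linked via the matrix-unit structure on $\End_{\mathbb{K}}(\mathbb{D}(\undla))$ established in part (a), so they belong to a single block.
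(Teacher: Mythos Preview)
Your overall strategy---check the action of each $F_{\rm T}$ on the explicit basis of every simple $\mathbb{D}(\underline{\mu})$ from Theorem~\ref{main1}, then use semisimplicity to conclude---is exactly the paper's approach (see Lemma~\ref{idempotent action. non-dege} and the proof of Theorem~\ref{primitive iempotents}). Part~(b) is handled correctly.

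However, two concrete points in part~(a) are wrong as stated. First, it is not true in general that $F_{\rm T}$ projects onto a \emph{one}-dimensional line. When $d_{\undla}=1$ (the type~\texttt{Q} case), equation~\eqref{F. eq3} shows that $F_{\rm T}$ fixes \emph{both} $C^{\beta_\mt}C^{\alpha_{\mt,\bar 0}}v_\mt$ and $C^{\beta_\mt}C^{\alpha_{\mt,\bar 1}}v_\mt$, so its image in $\mathbb{D}(\undla)$ is two-dimensional. You are clearly aware that the type~\texttt{Q} situation needs extra care, but the one-dimensional claim should be corrected up front, not deferred. Second, the dimension count you propose for completeness is incorrect: $|{\rm Tri}_{\bar 0}(\mathscr{P}^{\bullet,m}_n)|$ does \emph{not} equal $\sum_{\undla}\dim\mathbb{D}(\undla)$ (it is off by a factor of~$2$ on each type~\texttt{Q} block), and in any case $\sum_{\undla}\dim\mathbb{D}(\undla)\neq\dim\mHfcn$. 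Completeness follows instead directly from the action computation: once you know that the images of the $F_{\rm T}$ with ${\rm T}\in{\rm Tri}_{\bar 0}(\undla)$ partition the basis of $\mathbb{D}(\undla)$, the sum $\sum_{\rm T}F_{\rm T}$ acts as the identity on every simple, hence equals~$1$ by semisimplicity.

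For primitivity, your plan to compute $F_{\rm T}\mHfcn F_{\rm T}$ would work, but the paper's route is shorter: once orthogonality and completeness are known, primitivity follows from a count. The block $B_{\undla}$ is a simple superalgebra of type~\texttt{M} or~\texttt{Q}, so the number of idempotents in any complete orthogonal family of super-primitive idempotents equals $2^{n-|\mathcal{OD}_{\mt^{\undla}}|}|\Std(\undla)|=|{\rm Tri}_{\bar 0}(\undla)|$; since you already have that many pairwise orthogonal idempotents summing to~$F_{\undla}$, none of them can split further.
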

	
We can also give a complete set of (super) primitive central idempotents of $\mHfcn$ and decide when two $\mHfcn$-supermodules $\mathbb{D}_{\rm T},\,\mathbb{D}_{\rm S}$ are evenly isomorphic, see Theorem \ref{primitive idempotents}. As a further application of Theorem \ref{main1} and Theorem \ref{main2}, we can construct seminormal bases for the entire algebra $\mHfcn$. Using cellular bases theory, Mathas \cite[\S 2,\S 3]{Ma2} constructed seminormal and dual seminormal bases for cyclotomic Hecke algebras and in \cite[\S 4]{Ma2}, he also gave another realization of his seminormal and dual seminormal bases by intertwining elements $\Phi_{\mt}$. Our construction is inspired by \cite[\S 4]{Ma2}. Actually, for each $\mathfrak{w}\in\Std(\undla)$, we can associate a set inside the block $B_{\undla}$, \begin{align}\label{Semi1}
	\left\{ f_{{\rm S},{\rm T}}^\mathfrak{w} \Biggm|
	{\rm S}=(\ms, \alpha_{\ms}', \beta_{\ms}')\in {\rm Tri}_{\bar{0}}(\undla),
	{\rm T}=(\mt, \alpha_{\mt}, \beta_{\mt})\in {\rm Tri}(\undla)
	\right\},
\end{align} where the elements ``factor through'' a common $\mathfrak{w}$ via intertwining elements, see \eqref{fst. typeM. nondege.}, \eqref{fst. typeQ. nondege.}. We can also define the set \begin{align}\label{Semi2}
\left\{ f_{{\rm S},{\rm T}} \Biggm|
{\rm S}=(\ms, \alpha_{\ms}', \beta_{\ms}')\in {\rm Tri}_{\bar{0}}(\undla),
{\rm T}=(\mt, \alpha_{\mt}, \beta_{\mt})\in {\rm Tri}(\undla)
\right\}
\end{align} using the reduced expression of $d(\ms,\mt)\in \mathfrak{S}_n$.
The following Theorem \ref{main3} is the third main result of this paper.
	\begin{thm}\label{main3}
	Suppose  $P^{\bullet}_{n}(q^2,\undQ)\neq 0$. We fix $\mathfrak{w}\in\Std(\undla)$. Then the above two sets \eqref{Semi1} and \eqref{Semi2} form two $\mathbb{K}$-bases of the block $B_{\undla}$ of $\mHfcn$.
	
	Moreover, for ${\rm S}=(\ms, \alpha_{\ms}', \beta_{\ms}')\in {\rm Tri}_{\bar{0}}(\undla),
	{\rm T}=(\mt, \alpha_{\mt}, \beta_{\mt})\in {\rm Tri}(\undla),$ we have \begin{equation*}
		f_{{\rm S},{\rm T}}
		=\frac{\mathtt{c}_{\ms,\mt}}{\mathtt{c}_{\ms,\mathfrak{w} }\mathtt{c}_{\mathfrak{w},\mt }} f_{{\rm S},{\rm T}}^\mathfrak{w}\in F_{\rm S}\mHfcn F_{\rm T}.
	\end{equation*} The multiplications of basis elements in \eqref{Semi1} are given as follows.
	
	(1) Supppose $d_{\undla}=0.$  Then for any
	${\rm S}=(\ms, \alpha_{\ms}', \beta_{\ms}'),
	{\rm T}=(\mt, \alpha_{\mt}, \beta_{\mt}),
	{\rm U}=(\mfku,\alpha_{\mfku}^{''},\beta_{\mfku}^{''}),
	{\rm V}=(\mfkv,\alpha_{\mfkv}^{'''},\beta_{\mfkv}^{'''})\in {\rm Tri}(\undla),$ we have
	\begin{align*}
		f_{{\rm S},{\rm T}}^\mathfrak{w} f_{{\rm U},{\rm V}}^\mathfrak{w}
		=\delta_{{\rm T},{\rm U}} \mathtt{c}_{\rm T}^\mathfrak{w} f_{{\rm S},{\rm V}}^\mathfrak{w}.
	\end{align*}

	(2) Suppose $d_{\undla}=1.$ Then for any $a,b\in \mathbb{Z}_2$ and
	\begin{align*}
		{\rm S}&=(\ms, \alpha_{\ms}', \beta_{\ms}')\in {\rm Tri}_{\bar{0}}(\undla), \quad
		{\rm T}_{a}=(\mt, \alpha_{\mt,a}, \beta_{\mt})\in {\rm Tri}_{a}(\undla),\nonumber\\
		{\rm U}&=(\mfku,\alpha_{\mfku}^{''},\beta_{\mfku}^{''})\in {\rm Tri}_{\bar{0}}(\undla), \quad
		{\rm V}_{b}=(\mfkv,{\alpha_{\mfkv,b}^{'''}},\beta_{\mfkv}^{'''})\in {\rm Tri}_{b}(\undla),\nonumber
	\end{align*} we have
	\begin{align*}
		f_{{\rm S},{\rm T}_{a}}^\mathfrak{w} f_{{\rm U},{\rm V}_{b}}^\mathfrak{w}
		=\delta_{{\rm T}_{\bar{0}},{\rm U}}(-1)^{\left(|\alpha_{\mt}|_{>d(\mt,\mt^{\undla})(i_t)}\right)}\mathtt{c}_{\rm T}^\mathfrak{w} f_{{\rm S},{\rm V}_{a+b}}^\mathfrak{w}.
	\end{align*}
	
\end{thm}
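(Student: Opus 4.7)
The plan is to bootstrap Theorem \ref{main3} from Theorem \ref{main2} and Theorem \ref{main1}: the primitive idempotent decomposition gives a Peirce decomposition $B_{\undla}=\bigoplus_{{\rm S},{\rm T}}F_{\rm S}\mHfcn F_{\rm T}$, while the explicit action on $\mathbb{D}(\undla)$ provides a faithful tool for detecting linear independence and extracting scalars. Since $B_{\undla}$ is a simple (super)matrix algebra over the endomorphism division superalgebra of $\mathbb{D}(\undla)$, the dimensions of the Peirce summands are determined by whether $d_{\undla}=0$ (type $\text{M}$) or $d_{\undla}=1$ (type $\text{Q}$). A direct cardinality count of $\mathrm{Tri}_{\bar 0}(\undla)\times\mathrm{Tri}(\undla)$ matches $\dim B_{\undla}$ in both cases, so once the proposed $f$-elements are shown to be nonzero and to lie in the correct Peirce components they automatically form $\mathbb{K}$-bases.

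First I would verify $f_{{\rm S},{\rm T}}^{\mathfrak{w}}\in F_{\rm S}\mHfcn F_{\rm T}$ using the construction via the intertwining elements $\Phi$ of \eqref{fst. typeM. nondege.}/\eqref{fst. typeQ. nondege.}: the defining property of $\Phi_r$ is that it permutes eigenspaces of the Jucys-Murphy elements in the manner of a simple transposition of tableaux, so $F_{\rm S'}\Phi_r=\Phi_r F_{\rm S}$ whenever $\rm S'$ is obtained by the corresponding reordering, which conjugates $F_{\rm T}$ through the chain of simple transpositions in $d(\mathfrak{w},\mt)$. Non-vanishing follows from evaluating $f_{{\rm S},{\rm T}}^{\mathfrak{w}}$ on $C^{\beta_{\mt}}C^{\alpha_{\mt}}v_{\mt}$: by the explicit formulas of Theorem \ref{main1} the image is a nonzero multiple of the basis vector indexed by $\rm S$, with scalar equal to $\mathtt{c}_{\ms,\mathfrak{w}}\mathtt{c}_{\mathfrak{w},\mt}$ accumulated from the two intertwiner paths $\mt\to\mathfrak{w}$ and $\mathfrak{w}\to\ms$. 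Applying the same computation to $f_{{\rm S},{\rm T}}$ (built directly from a reduced expression of $d(\ms,\mt)$) produces the scalar $\mathtt{c}_{\ms,\mt}$, and comparing the two expressions inside the one-dimensional (resp. two-dimensional) Peirce component $F_{\rm S}\mHfcn F_{\rm T}$ gives the identity $f_{{\rm S},{\rm T}}=\frac{\mathtt{c}_{\ms,\mt}}{\mathtt{c}_{\ms,\mathfrak{w}}\mathtt{c}_{\mathfrak{w},\mt}}f_{{\rm S},{\rm T}}^{\mathfrak{w}}$, simultaneously proving that \eqref{Semi2} is also a basis.

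For the multiplication formulas, the orthogonality $F_{\rm T}F_{\rm U}=\delta_{{\rm T},{\rm U}}F_{\rm T}$ from Theorem \ref{main2}(a) forces $f_{{\rm S},{\rm T}}^{\mathfrak{w}}f_{{\rm U},{\rm V}}^{\mathfrak{w}}$ to vanish unless $\rm T$ and $\rm U$ agree on their first and third components (and, in case (1), on all three). In the surviving case the interior $F_{\rm T}$ collapses and the two ``half-intertwiners'' through $\mathfrak{w}$ compose to a product factoring once through $\mathfrak{w}$, giving an element of the form $\mathtt{c}_{\rm T}^{\mathfrak{w}}f_{{\rm S},{\rm V}}^{\mathfrak{w}}$; I would pin the coefficient down by applying both sides to the basis of $\mathbb{D}(\undla)$ and telescoping the intertwiner eigenvalues along the loop $\mathfrak{w}\to\mt\to\mathfrak{w}$.

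The main obstacle is case (2), $d_{\undla}=1$. Here the primitive idempotent $F_{\rm T}$ splits as a sum of two \emph{super}-primitive pieces $F_{{\rm T}_{\bar 0}}+F_{{\rm T}_{\bar 1}}$ distinguished by the $\Z_2$-grading of $\alpha$, and the distinguished Clifford generator $C_{i_t}$ swaps these two pieces while having to be supercommuted past the Clifford factors $C^{\alpha_{\mt}}$ already present in $f_{{\rm S},{\rm T}_a}^{\mathfrak{w}}$ before it can be absorbed by the reassembly on the right. Tracking the parity accumulated from commuting $C_{i_t}$ past every Clifford generator sitting to the right of position $d(\mt,\mt^{\undla})(i_t)$ produces exactly the sign $(-1)^{|\alpha_{\mt}|_{>d(\mt,\mt^{\undla})(i_t)}}$. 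I would verify this by an explicit computation on $\mathbb{D}(\undla)$ using \eqref{Caction}, which is where the bulk of the bookkeeping of the proof is concentrated; strict adherence to the ordering convention $C^{\beta}C^{\alpha}$ in Theorem \ref{main1} is essential to obtain the sign in the stated form.
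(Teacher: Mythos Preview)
Your overall strategy coincides with the paper's: both arguments reduce everything to the action on $\mathbb{D}(\undla)$ via Theorem~\ref{main1}, use the Peirce decomposition coming from Theorem~\ref{main2}, and identify the scalars by evaluating on the distinguished basis vectors (this is exactly the content of the paper's key Lemma giving \eqref{SNB. eq1}--\eqref{SNB. eq4}, followed by the semisimplicity argument in Theorem~\ref{seminormal basis}).

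There is one mis-statement you should correct in case $d_{\undla}=1$. The primitive idempotent $F_{\rm T}$ does \emph{not} split as $F_{{\rm T}_{\bar 0}}+F_{{\rm T}_{\bar 1}}$; no element $F_{{\rm T}_{\bar 1}}$ is ever defined, and $F_{\rm T}$ (for ${\rm T}\in\mathrm{Tri}_{\bar 0}(\undla)$) is already a genuine \emph{super}-primitive idempotent in the type~$\texttt{Q}$ block. What happens instead is that the Peirce component $F_{\rm S}\mHfcn F_{\rm T}$ is two-dimensional (one even and one odd direction, reflecting the odd endomorphism of $\mathbb{D}(\undla)$), and $\{f^{\mathfrak w}_{{\rm S},{\rm T}_{\bar 0}},\,f^{\mathfrak w}_{{\rm S},{\rm T}_{\bar 1}}\}$ is a basis of that two-dimensional space---both elements carry the \emph{same} right idempotent $F_{\rm T}$. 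With this correction your sign-tracking heuristic is on target: the factor $(-1)^{|\alpha_{\mt}|_{>d(\mt,\mt^{\undla})(i_t)}}$ arises from commuting $C_{d(\mt,\mt^{\undla})(i_t)}$ (not $C_{i_t}$) past the remaining Clifford monomial, exactly as the paper computes in the proof of \eqref{SNB. eq3}.
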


Both \eqref{Semi1} and \eqref{Semi2} are called {\bf Seminormal Bases} for block $B_{\undla}$. For $\mathfrak{w}=\mt^{\undla},\mt_{\undla}$, i.e.,  the maximal and minimal tableaux, two constructions in \eqref{Semi1} are analogues to ``seminormal and dual seminarmal'' bases in \cite{EM, Ma2,HM2}.  For generic cyclotomic Hecke-Clifford algebra $\mHfcn$, we always have $P^{\bullet}_{n}(q^2,\undQ)\neq 0$. Hence Theorem \ref{main2} and \ref{main3} give a complete set of (super) primitive idempotents and seminormal basis for generic cyclotomic Hecke-Clifford algebra $\mHfcn$. In particular, if we take the cyclotomic polynomial $f=x-1$, then $\mHfcn$ becomes Hecke-Clifford algebra $\mathcal{H}(n)$. Therefore, our main results give a complete set of (super) primitive idempotents and seminormal basis for the Hecke-Clifford algebra $\mathcal{H}(n)$. Furthermore, if we specialize $q=1$, this gives rise to the corresponding results for Sergeev algebra $\mathcal{C}_n \rtimes \mathbb{C} \mathfrak{S}_n$, which is weakly Morita equivalent to the spin group algebra $\mathbb{C} \mathfrak{S}_n^-$. To the best of our knowledge, these results are completely new.

We also compute the action of anti-involution $*$ in \cite{BK} on seminormal bases. Using Theorem \ref{main1}, actions of generators of $\mHfcn$ on seminormal bases are obtained, see Subsection \ref{action}. Using our seminormal bases theory, we study some subalgebras of $\mHfcn$ in Subsection \ref{some subalgebras}, which seems to be some analogues of the Gelfand-Zetalin subalgebra of cyclotomic Hecke algebra. The above story for degenerate case is similar and we omit all of the details.

Note that the main aim of \cite{EM, HM2} is to give (homogeneous) integral bases, which happen to be cellular. We shall follow a similar idea to develop the ``cellular basis'' theory for cyclotomic Hecke-Clifford algebra in future work.

Here is the layout of this paper. In Section \ref{preli}, we first recall some basics on general superalgebras and representation theory of Clifford algebra. We also recall the notion of Affine Hecke-Clifford algebra $\mHcn$ (Affine Sergeev algebra $\mhcn$), cyclotomic Hecke-Clifford algebra $\mHfcn$ (cyclotomic Sergeev algebra $\mhgcn$) as well as the associated combinatorics and the Separate Conditions in Subsection \ref{basic-Non-dege} (\ref{basic-dege}).
In Section \ref{simplemodules}, we give an explicit basis for each simple module of $\mathcal{A}_n$ and $\mathcal{P}_n$ and write down the actions of generators on the basis explicitly.
In Section \ref{Nondeg}, we use the results in Section \ref{simplemodules} to obtain our first main result Theorem \ref{main1}. Then we use Theorem \ref{main1} to give a complete set of primitive idempotents and matrix units for cyclotomic Hecke-Clifford algebra $\mHfcn$ under separate condition, i.e., the second main result Theorem \ref{main2} in Subsection \ref{primitiveidem}. In Subsection \ref{Seminormalbase}, we introduce the key element $\Phi_{\ms,\mt}$ and discuss some properties of $\Phi_{\ms,\mt}$. We then turn to the construction of \eqref{Semi1} and \eqref{Semi2}. We prove our third main result Theorem \ref{main3} in Theorem \ref{seminormal basis}. We also compute the actions of anti-involution $*$ and generating elements of $\mHfcn$ on seminormal bases in Subsection \ref{action}. Finally, we study some subalgebras of $\mHfcn$ in Subsection \ref{some subalgebras} using our main result Theorem \ref{main3}. In Section \ref{dege}, we develop a parallel story for cyclotomic Sergeev algebra $\mhgcn$.

After we uploaded the earlier version of this paper to arXiv,  the preprint \cite{KMS} appears almost simultaneously which contains some similar constructions for Sergeev algebra $\mathcal{C}_n \rtimes \mathbb{C} \mathfrak{S}_n$, such as the primitive idempotents and seminormal basis. We would like to point out that our construction of primitive idempotents for the Sergeev algebra aligns with the version presented in \cite{KMS} (see Remark \ref{KMS idempotent} for more details). Additionally, the seminormal basis of $\mathcal{C}_n \rtimes \mathbb{C} \mathfrak{S}_n$ in \cite{KMS} is over some Clifford algebra.

\bigskip
\centerline{\bf Acknowledgements}
\bigskip
The research is supported by the National Natural Science Foundation of China
(No. 12431002) and the Natural Science Foundation of Beijing Municipality (No. 1232017).  The second author is partially supported by the Postdoctoral Fellowship Program of CPSF under Grant Number GZB20250717. Both authors thank Jun Hu and Andrew Mathas for their helpful suggestions and feedback. The authors would like to express their gratitude to the referee for the thorough report, which greatly enhanced the quality of this paper.
\bigskip

	\section{Preliminary}\label{preli}

	\subsection{Some basics about superalgebras}
	We shall recall some basic notions of superalgebras, referring the
	reader to~\cite[\S 2-b]{BK}. Let us denote by
	$|v|\in\mathbb{Z}_2$ \label{pag:||} the parity of a homogeneous vector $v$ of a
	vector superspace. By a superalgebra, we mean a
	$\mathbb{Z}_2$-graded associative algebra. Let $\mathcal{A}$ be a
	superalgebra. An $\mathcal{A}$-module means a $\mathbb{Z}_2$-graded
	left $\mathcal{A}$-module.	A homomorphism $f:V\rightarrow W$ of
	$\mathcal{A}$-modules $V$ and $W$ means a linear map such that $
	f(av)=(-1)^{|f||a|}af(v).$  Note that this and other such
	expressions only make sense for homogeneous $a, f$ and the meaning
	for arbitrary elements is to be obtained by extending linearly from
	the homogeneous case. An non-zero element $e\in\mathcal{A}$ is called a super primitive idempotent if
	it is an idempotent with $|e|=\bar{0}$ and it cannot be decomposed as the sum of
	two nonzero orthogonal idempotents with parity $\bar{0}.$ Let $V$ be a finite dimensional
	$\mathcal{A}$-module. Let $\Pi
	V$ \label{pag:parity shift} be the same underlying vector space but with the opposite
	$\mathbb{Z}_2$-grading. The new action of $a\in\mathcal{A}$ on $v\in\Pi
	V$ is defined in terms of the old action by $a\cdot
	v:=(-1)^{|a|}av$. Note that the identity map on $V$ defines
	an isomorphism from $V$ to $\Pi V$.
	
	A superalgebra analog of Schur's Lemma states that the endomorphism
	algebra of a finite dimensional irreducible module over a
	superalgebra is either one dimensional or two dimensional. In the
	former case, we call the module of {\em type }\texttt{M} while in
	the latter case the module is called of {\em type }\texttt{Q}.

	\begin{example}\label{simple algebra}
		1). Let $V$ be a superspace with superdimension $(m,n)$ over field $F$, then $\mathcal{M}_{m,n}:={\text{End}}_{F}(V)$ is a simple superalgebra with simple module $V$ of {\em type }\texttt{M}. Then the set of super primitive idempotents of $\mathcal{M}_{m,n}$ is
		$\{E_{ii} \mid i,j=1,\ldots,m+n\}.$ One can see that there is an evenly $\mathcal{M}_{m,n}$-supermodule isomorphism $V \cong \mathcal{M}_{m,n}E_{ii}$ if $i\in \{1,\ldots,m\},$ and there is an evenly $\mathcal{M}_{m,n}$-supermodule isomorphism $\Pi V \cong \mathcal{M}_{m,n}E_{ii}$ if $i\in \{m+1,\ldots,m+n\}.$\\
		2). Let $V$ be a superspace with superdimension $(n,n)$ over field $F$. We define $\mathcal{Q}_n:=\Biggl\{\biggl(\begin{matrix} &A & B\\
			&-B& A
		\end{matrix}\biggr) \biggm| A,B\in  M_n\Biggr\}\subset \mathcal{M}_{n,n}$. Then the set of super primitive idempotents of $\mathcal{Q}_n$ is
		$\Biggl\{\biggl(\begin{matrix} &E_{ii} & 0\\
			&0& E_{ii}
		\end{matrix}\biggr) \biggm| i\in  \{1,\ldots,n\} \Biggr\}$
		and there is an evenly $\mathcal{Q}_n$-supermodule isomorphism $V \cong \mathcal{Q}_n \biggl(\begin{matrix} &E_{ii} & 0\\
			&0& E_{ii}
		\end{matrix}\biggr)$ for each $i=1,\ldots,n.$
		
	\end{example}

	Given two superalgebras $\mathcal{A}$ and $\mathcal{B}$, we view
	the tensor product of superspaces $\mathcal{A}\otimes\mathcal{B}$
	as a superalgebra with multiplication defined by
	$$
	(a\otimes b)(a'\otimes b')=(-1)^{|b||a'|}(aa')\otimes (bb')
	\qquad (a,a'\in\mathcal{A}, b,b'\in\mathcal{B}).
	$$
	Suppose $V$ is an $\mathcal{A}$-module and $W$ is a
	$\mathcal{B}$-module. Then $V\otimes W$ affords $A\otimes B$-module
	denoted by $V\boxtimes W$ via
	$$
	(a\otimes b)(v\otimes w)=(-1)^{|b||v|}av\otimes bw,~a\in A,
	b\in B, v\in V, w\in W.
	$$
	
	If $V$ is an irreducible $\mathcal{A}$-module and $W$ is an
	irreducible $\mathcal{B}$-module, $V\boxtimes W$ may not be
	irreducible. Indeed, we have the following standard lemma (cf.
	\cite[Lemma 12.2.13]{K1}).
	\begin{lem}\label{tensorsmod}
		Let $V$ be an irreducible $\mathcal{A}$-module and $W$ be an
		irreducible $\mathcal{B}$-module.
		\begin{enumerate}
			\item If both $V$ and $W$ are of type $\texttt{M}$, then
			$V\boxtimes W$ is an irreducible
			$\mathcal{A}\otimes\mathcal{B}$-module of type $\texttt{M}$.
			
			\item If one of $V$ or $W$ is of type $\texttt{M}$ and the other
			is of type $\texttt{Q}$, then $V\boxtimes W$ is an irreducible
			$\mathcal{A}\otimes\mathcal{B}$-module of type $\texttt{Q}$.
			
			\item If both $V$ and $W$ are of type $\texttt{Q}$, then
			$V\boxtimes W\cong X\oplus \Pi X$ for a type $\texttt{M}$
			irreducible $\mathcal{A}\otimes\mathcal{B}$-module $X$.
		\end{enumerate}
		Moreover, all irreducible $\mathcal{A}\otimes\mathcal{B}$-modules
		arise as constituents of $V\boxtimes W$ for some choice of
		irreducibles $V,W$.
	\end{lem}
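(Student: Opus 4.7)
The plan is to compute the endomorphism superalgebra $\End_{\mathcal{A}\otimes\mathcal{B}}(V\boxtimes W)$ and then read off the three cases from the superalgebra Schur's lemma recalled just before the statement. By Schur, $\End_\mathcal{A}(V)$ is either $\mathbb{K}$ (type $\texttt{M}$) or $\mathbb{K}\oplus \mathbb{K}\theta_V$ with $\theta_V$ odd and $\theta_V^2\in\mathbb{K}^\times$ (type $\texttt{Q}$), and likewise for $\End_\mathcal{B}(W)$. A super version of the Jacobson density theorem produces the key identification
\begin{equation*}
\End_\mathcal{A}(V)\otimes \End_\mathcal{B}(W)\;\xrightarrow{\;\sim\;}\;\End_{\mathcal{A}\otimes\mathcal{B}}(V\boxtimes W),
\end{equation*}
where the tensor product on the left is the super one with its sign rule. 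This identification is the technical heart of the argument.

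With this in hand, cases (1) and (2) follow at once. In case (1) both factors on the left equal $\mathbb{K}$, so $\End_{\mathcal{A}\otimes\mathcal{B}}(V\boxtimes W)=\mathbb{K}$ and $V\boxtimes W$ is irreducible of type $\texttt{M}$. In case (2), say $W$ is of type $\texttt{Q}$, the endomorphism algebra is $\mathbb{K}\oplus \mathbb{K}(1\otimes\theta_W)$, generated by the odd element $1\otimes\theta_W$ whose square lies in $\mathbb{K}^\times$; hence $V\boxtimes W$ is irreducible of type $\texttt{Q}$. In case (3) the endomorphism algebra becomes four-dimensional, generated by the odd operators $\theta_V\otimes 1$ and $1\otimes\theta_W$ which anticommute by the super sign rule. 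This is the Clifford superalgebra on two odd generators, isomorphic to the matrix superalgebra $\mathcal{M}_{1,1}$ of Example~\ref{simple algebra}, so $V\boxtimes W$ cannot be irreducible.

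To produce the constituent $X$ explicitly, I would form the even operator $\Theta:=\theta_V\otimes\theta_W$, compute $\Theta^2=-\theta_V^2\theta_W^2\in\mathbb{K}^\times$, and choose a square root $\mu$ (available since $\mathbb{K}$ is algebraically closed of characteristic $\neq 2$); then set $X:=\ker(\Theta-\mu\cdot\mathrm{id})$. The odd operator $\theta_V\otimes 1$ commutes with $\mathcal{A}\otimes\mathcal{B}$ and interchanges the two $\Theta$-eigenspaces, yielding $V\boxtimes W=X\oplus(\theta_V\otimes 1)X\cong X\oplus\Pi X$; a short centralizer argument on $X$ then gives $\End_{\mathcal{A}\otimes\mathcal{B}}(X)=\mathbb{K}$, so $X$ is irreducible of type $\texttt{M}$. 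For the ``moreover'' clause, given any irreducible $\mathcal{A}\otimes\mathcal{B}$-module $U$ I pick an irreducible $\mathcal{A}$-summand $V$ of $U|_\mathcal{A}$ and an irreducible $\mathcal{B}$-submodule $W$ of the multiplicity space $\text{Hom}_\mathcal{A}(V,U)$; the resulting evaluation $V\boxtimes W\to U$ is a nonzero $\mathcal{A}\otimes\mathcal{B}$-homomorphism, so $U$ appears as a constituent of some $V\boxtimes W$ and falls into one of the three cases above. The main obstacle is the sign-bookkeeping in the super density theorem needed to establish the displayed isomorphism above.
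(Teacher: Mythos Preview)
The paper does not prove this lemma; it merely cites it as a standard result (cf.\ \cite[Lemma 12.2.13]{K1}, though the intended reference is presumably Kleshchev's book \cite{K2}). Your outline is essentially the standard argument found there, so there is nothing to compare against in the paper itself.

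That said, one step in your write-up needs tightening. You compute $\End_{\mathcal{A}\otimes\mathcal{B}}(V\boxtimes W)$ via the density theorem and then infer irreducibility (cases (1),(2)) or the splitting (case (3)) directly from the shape of this endomorphism ring. But $\End(M)=\mathbb{K}$ does not by itself force $M$ to be irreducible: the natural two-dimensional module for the algebra of upper-triangular $2\times 2$ matrices has scalar endomorphisms yet is not simple. What actually makes your argument work is that density gives you more than the endomorphism ring---it identifies the \emph{image} of $\mathcal{A}\otimes\mathcal{B}$ in $\End_{\mathbb{K}}(V\otimes W)$ as a tensor product of simple superalgebras of type $\mathcal{M}_{p,q}$ or $\mathcal{Q}_r$, and such a tensor product is again a simple superalgebra. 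Hence $V\boxtimes W$ is automatically semisimple as an $\mathcal{A}\otimes\mathcal{B}$-module, and \emph{then} the endomorphism ring determines its decomposition. You should either make this semisimplicity explicit, or (cleaner) argue directly from the image of the algebra: in case (1) the image is all of $\End_{\mathbb{K}}(V\otimes W)$, giving irreducibility immediately without passing through $\End$. With that adjustment your treatment of all three cases and of the ``moreover'' clause is correct.
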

	
	If $V$ is an irreducible $\mathcal{A}$-module and $W$ is an
	irreducible $\mathcal{B}$-module, denote by $V\circledast W$ \label{pag:irrtensor} an
	irreducible component of $V\boxtimes W$. Thus,
	$$
	V\boxtimes W=\left\{
	\begin{array}{ll}
		V\circledast W\oplus \Pi (V\circledast W), & \text{ if both } V \text{ and } W
		\text{ are of type }\texttt{Q}, \\
		V\circledast W, &\text{ otherwise}.
	\end{array}
	\right.
	$$
	\subsection{Clifford algebra $\mathcal{C}_n$}
	In this subsection, we shall recall the representation theory of Clifford superalgebra $\mathcal{C}_n$ which will be used in later sections.  Let $\mathcal{C}_n$ \label{pag:Clifford algebra} be the Clifford superalgebra generated by odd generators $C_1,\ldots,C_n,$ subject to the following relations
	$$C_i^2=1,C_iC_j=-C_jC_i, \quad 1\leq i\neq j\leq n.$$

Recall that $\mathbb{K}$ is an algebraically closed field of characteristic different from $2$. For any $a\in \mathbb{K}$, we fix a solution of the equation $x^2=a$ and denote it by $\sqrt{a}$. Now it's easy to check the following.
	\begin{lem}\label{trivial cilfford}
		(1). $\mathcal{C}_1$ is a simple superalgebra with the unique simple (super)module $\mathcal{C}_1$ of type $\texttt{Q}$.
		
		(2).$\mathcal{C}_2$ is a simple superalgebra with the unique simple (super)module of type $\texttt{M}$. Moreover, $$
		\biggl\{\frac{1+\sqrt{-1}C_1C_2}{2},\frac{1-\sqrt{-1}C_1C_2}{2}\biggr\}$$ forms a complete set of orthogonal primitive idempotents. These two idempotents are conjugate via $C_1$. Let $\gamma\in
		\biggl\{\frac{1+\sqrt{-1}C_1C_2}{2},\frac{1-\sqrt{-1}C_1C_2}{2}\biggr\}$, then the simple $\mathcal{C}_2$-module $\mathcal{C}_2\gamma$ has basis $\biggl\{\gamma, C_1\gamma\biggr\}$.
	\end{lem}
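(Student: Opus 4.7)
The plan is to handle the two parts in turn, with all computations reducing to the single relation $(C_1C_2)^2=-1$ and the parity grading on $\mathcal{C}_n$.

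For part (1), I would first note that $\mathcal{C}_1$ has $\mathbb{K}$-basis $\{1,C_1\}$ with $C_1$ odd, so as a left regular supermodule it has superdimension $(1,1)$. To see it is simple, I would take any nonzero homogeneous element $v$: if $v=a\in\mathbb{K}^{\times}$ then $\mathcal{C}_1 v=\mathcal{C}_1$, and if $v=bC_1$ with $b\neq 0$ then $C_1v=b$ also generates everything. Hence $\mathcal{C}_1$ is a simple superalgebra with a unique simple supermodule (by Wedderburn for superalgebras, or by a direct argument). For the type-$\texttt{Q}$ claim, I would exhibit the odd endomorphism $\varphi\colon \mathcal{C}_1\to\mathcal{C}_1$ given by right multiplication by $C_1$ (which commutes with left multiplication up to a sign determined by parity, and satisfies $\varphi^2=\mathrm{id}$), so $\mathrm{End}_{\mathcal{C}_1}(\mathcal{C}_1)=\mathbb{K}\oplus\mathbb{K}\varphi$ is $2$-dimensional.

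For part (2), let $e_{\pm}=\tfrac{1}{2}(1\pm\sqrt{-1}\,C_1C_2)$. The key computation is $(C_1C_2)^2=C_1C_2C_1C_2=-C_1^2C_2^2=-1$, from which $(\sqrt{-1}\,C_1C_2)^2=1$ follows immediately; the identities $e_{\pm}^2=e_{\pm}$, $e_+e_-=e_-e_+=0$, and $e_++e_-=1$ then drop out. For the conjugacy, I would use $C_1(C_1C_2)C_1=-C_2=-(C_1C_2)$ (wait, actually $C_1C_1C_2C_1=C_2C_1=-C_1C_2$), which yields $C_1 e_{\pm} C_1^{-1}=C_1e_\pm C_1=e_{\mp}$. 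To show that $e_{+}$ is primitive and that the simple module $\mathcal{C}_2e_{+}$ is of type $\texttt{M}$, I would verify $e_{+}\mathcal{C}_2 e_{+}=\mathbb{K} e_{+}$: a direct check shows $C_1e_{+}=e_{-}C_1$ and $C_2e_{+}=e_{-}C_2$, so $e_+ C_1 e_+=e_+ e_- C_1=0$ and similarly $e_+C_2e_+=0$ and $e_+C_1C_2e_+=\pm\sqrt{-1}\,e_+\in\mathbb{K} e_+$, hence $e_+\mathcal{C}_2e_+=\mathbb{K} e_+$.

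Next I would exhibit the basis of $\mathcal{C}_2 e_{+}$: the four products $e_+,\,C_1e_+,\,C_2e_+,\,C_1C_2e_+$ span it, but using $\sqrt{-1}\,C_1C_2\,e_+=e_+$ we get $C_1C_2e_+=-\sqrt{-1}\,e_+$, and multiplying by $C_1$ gives $C_2e_+=-\sqrt{-1}\,C_1e_+$; hence $\{e_+,C_1e_+\}$ is a $\mathbb{K}$-basis, so $\dim\mathcal{C}_2e_+=2$. The same analysis for $e_-$ plus the orthogonal decomposition $\mathcal{C}_2=\mathcal{C}_2e_+\oplus\mathcal{C}_2e_-$ accounts for all $4$ dimensions, so both summands are simple of type $\texttt{M}$; the conjugation by $C_1$ provides an (odd) isomorphism $\mathcal{C}_2e_+\cong \Pi(\mathcal{C}_2e_-)$, giving the uniqueness of the simple supermodule up to even isomorphism and $\mathcal{C}_2\cong\mathcal{M}_{1,1}$ as superalgebras.

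The only mildly delicate step is keeping track of the sign that appears when sliding $e_{\pm}$ past $C_1$ (i.e., $C_1e_{\pm}=e_{\mp}C_1$), since this is what simultaneously gives the type-$\texttt{M}$ conclusion and the conjugacy of the two idempotents; everything else is bookkeeping with $(C_1C_2)^2=-1$.
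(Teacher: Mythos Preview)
Your proposal is correct and matches the paper's approach, which simply states that the lemma is ``easy to check'' and gives no further argument; you have supplied exactly the routine verifications the paper omits. One tiny remark: in the paper's convention for super-homomorphisms, the odd endomorphism of $\mathcal{C}_1$ you describe (sign-twisted right multiplication by $C_1$) actually squares to $-\mathrm{id}$ rather than $\mathrm{id}$, but this is irrelevant to the type-$\texttt{Q}$ conclusion since all that is needed is a nonzero odd endomorphism.
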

	
	Note that \begin{equation}\label{tensor cilfford}\mathcal{C}_n\cong\begin{cases} \underbrace{\mathcal{C}_2\otimes\cdots\otimes \mathcal{C}_2}_{(n-1)/2  \text{ times}}\otimes\mathcal{C}_1, &{\text{if $n$ is odd;}}\\
			\underbrace{\mathcal{C}_2\otimes\cdots\otimes \mathcal{C}_2}_{n/2   \text{ times}}, &{\text{if $n$ is even.}}\\
		\end{cases}
	\end{equation}

 Let $A$ be any algebra and $a_1,a_2,\ldots,a_p\in A$, we define the ordered product \label{pag:ordered product} as
 $$\overrightarrow{\prod_{1 \leq i\leq p}}a_i:=a_1 a_2 \ldots a_p.$$ Then we obtain the following.
	\begin{lem}\label{lem:clifford rep}
		
		Let $n\in \N=\{1,2,\ldots\}$.
		
		(1).  $\mathcal{C}_n$ is a simple superalgebra with the unique simple (super)module of type $\texttt{Q}$ if $n$ is odd, of type $\texttt{M}$ if $n$ is even. Let
$$I_n:=\begin{cases}
	\{1\}, &\text{if $n=1$;}\\
	\Biggl\{2^{-\lfloor n/2 \rfloor}\cdot\overrightarrow{\prod_{k=1,\cdots,{\lfloor n/2 \rfloor}}}(1+(-1)^{a_k} \sqrt{-1}C_{2k-1}C_{2k})\Biggm|a_k\in\Z_2,\,1\leq k\leq {\lfloor n/2 \rfloor} \Biggr\}, &\text{if $n>1$,}
	\end{cases}$$
 where $\lfloor n/2 \rfloor$ \label{pag:round down} denotes the greatest integer less than or equal to $n/2.$
Then the set $I_n$ forms a complete set of super primitive idempotents for $\mathcal{C}_n.$
		
		(2). Let $\gamma_1,\gamma_2\in I_n$. There is a unique monomial of the form $C^{b_1}_1C^{b_3}_3\cdots C^{b_{2{\lfloor n/2 \rfloor}-1}}_{2{\lfloor n/2 \rfloor}-1}$, where $b_1,b_3,\cdots, b_{2{\lfloor n/2 \rfloor}-1}\in \Z_2$ such that $\gamma_1,\gamma_2$ are conjugate via $C^{b_1}_1C^{b_3}_3\cdots C^{b_{2{\lfloor n/2 \rfloor}-1}}_{2{\lfloor n/2 \rfloor}-1}$. Moreover, if $n$ is odd, there is also a unique monomial of the form $C^{b_1}_1C^{b_3}_3\cdots C^{b_{2{\lfloor n/2 \rfloor}-1}}_{2{\lfloor n/2 \rfloor}-1}C_{2{\lceil n/2 \rceil}-1}$, where $b_1,b_3,\cdots, b_{2{\lfloor n/2 \rfloor}-1}\in \Z_2$ such that $\gamma_1,\gamma_2$ are conjugate via $C^{b_1}_1C^{b_3}_3\cdots C^{b_{2{\lfloor n/2 \rfloor}-1}}_{2{\lfloor n/2 \rfloor}-1}C_{2{\lceil n/2 \rceil}-1}$.
		
		(3). Let $\gamma\in I_n$, then the simple $\mathcal{C}_n$-supermodule $\mathcal{C}_n\gamma$ has a basis $$\Biggl\{C^{b_1}_1C^{b_3}_3\cdots C^{b_{2{\lceil n/2 \rceil}-1}}_{2{\lceil n/2 \rceil}-1}\gamma\Biggm|b_1,b_3,\cdots, b_{2{\lceil n/2 \rceil}-1}\in \Z_2\Biggr\},$$
where $\lceil n/2 \rceil$ \label{pag:round up} denotes the smallest integer greater than or equal to $n/2$.
	\end{lem}
	
	\begin{proof}
		(1). The first statement follows from Lemma \ref{tensorsmod}, \eqref{tensor cilfford} and Lemma \ref{trivial cilfford}. It is straightforward to check that $I_n$ is a set of orthogonal primitive idempotents and the sum of elements in $I_n$ is equal to the identity. On the other hand, $\mathcal{C}_n$ can be decomposed into $\lfloor\frac{n}{2}\rfloor$ simple $\mathcal{C}_n$-modules by Lemma \ref{tensorsmod}, \eqref{tensor cilfford} and Lemma \ref{trivial cilfford} again. This implies that $I_n$ forms a complete set of orthogonal primitive idempotents.
		
		(2) and (3) follows from \eqref{tensor cilfford} and Lemma \ref{trivial cilfford}.
	\end{proof}
	\subsection{Non-degenerate case}\label{basic-Non-dege}
	\subsubsection{Affine Hecke-Clifford algebra $\mHcn$}
	Let $q$ be an invertible element in the field $\mathbb{K}$ such that $q^2\neq \pm 1$. We set $\epsilon=q-q^{-1}.$ Then $\epsilon \in \mathbb{K}^*.$ The non-degenerate affine Hecke-Clifford algebra $\mHcn$ \label{pag:AHCA} is
	the superalgebra over $\mathbb{K}$ generated by even generators
	$T_1,\ldots,T_{n-1},X_1^{\pm 1},\ldots,X_n^{\pm 1}$ and odd generators
	$C_1,\ldots,C_n$ subject to the following relations
	\begin{align}
		T_i^2=\epsilon T_i +1,\quad T_iT_j =T_jT_i, &\quad
		T_iT_{i+1}T_i=T_{i+1}T_iT_{i+1}, \quad|i-j|>1,\label{Braid}\\
		X_iX_j&=X_jX_i, X_iX^{-1}_i=X^{-1}_iX_i=1 \quad 1\leq i,j\leq n, \label{Poly}\\
		C_i^2=1,C_iC_j&=-C_jC_i, \quad 1\leq i\neq j\leq n, \label{Clifford}\\
		T_iX_i&=X_{i+1}T_i-\epsilon(X_{i+1}+C_iC_{i+1}X_i),\label{PX1}\\
		T_iX_{i+1}&=X_iT_i+\epsilon(1+C_iC_{i+1})X_{i+1},\label{PX2}\\
		T_iX_j&=X_jT_i, \quad j\neq i, i+1, \label{PX3}\\
		T_iC_i=C_{i+1}T_i, T_iC_{i+1}&=C_iT_i-\epsilon(C_i-C_{i+1}),T_iC_j=C_jT_i,\quad j\neq i, i+1, \label{PC}\\
		X_iC_i=C_iX^{-1}_i, X_iC_j&=C_jX_i,\quad 1\leq i\neq j\leq n.
		\label{XC}
	\end{align}
	
	For $\alpha=(\alpha_1,\ldots,\alpha_n)\in\mathbb{Z}^n$ and
	$\beta=(\beta_1,\ldots,\beta_n)\in\mathbb{Z}_2^n$, we set
	$X^{\alpha}=X_1^{\alpha_1}\cdots X_n^{\alpha},$
	$C^{\beta}=C_1^{\beta_1}\cdots C_n^{\beta_n}$ and define
	$\supp(\beta):=\{1 \leq k \leq n:\beta_{k}=\bar{1}\},$ $|\beta|:=\Sigma_{i=1}^{n}\beta_i \in \mathbb{Z}_2.$ \label{pag:suppot and sum}
	Then we have the
	following.
	\begin{lem}\cite[Theorem 2.2]{BK}\label{lem:PBWNon-dege}
		The set $\{X^{\alpha}C^{\beta}T_w~|~ \alpha\in\mathbb{Z}^n,
		\beta\in\mathbb{Z}_2^n, w\in \mathfrak{S}_n\}$ forms a basis of $\mHcn$.
	\end{lem}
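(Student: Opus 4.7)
The plan is to establish the PBW basis in two stages: spanning and linear independence.

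For the spanning statement, I would use the defining relations as a confluent system of rewriting rules, aiming to push every $X_i$ to the left, every $T_i$ to the right, and collect the $C_j$'s in between. Relation \eqref{XC} lets me move $X_i$ past $C_j$ (replacing $X_i$ by $X_i^{-1}$ when $i=j$); relations \eqref{PX1}--\eqref{PX3} and \eqref{PC} then move each $T_i$ to the right of any $X$ or $C$, at the cost of correction terms with strictly smaller $T$-length (or unchanged length but smaller $X$-degree). The Clifford relations \eqref{Clifford} together with $C_i^2=1$ reduce the middle block to $C^\beta$ with $\beta\in\mathbb{Z}_2^n$, and the braid plus quadratic relations \eqref{Braid} rewrite any product of $T_i$'s as a $\mathbb{K}$-linear combination of $T_w$'s with $w\in\mathfrak{S}_n$ (indexed by fixed reduced expressions). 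A termination argument on a lexicographic measure such as $(\ell(w),\sum_i|\alpha_i|)$ guarantees that the process eventually rewrites every element of $\mHcn$ as a $\mathbb{K}$-linear combination of the claimed monomials.

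For linear independence I would construct a faithful representation on the free $\mathbb{K}$-superspace $\mathcal{M}:=\mathbb{K}[X_1^{\pm 1},\ldots,X_n^{\pm 1}]\otimes \mathcal{C}_n\otimes \mathbb{K}\mathfrak{S}_n$, where $X_i$ acts by multiplication on the first factor, $C_i$ acts on the Clifford factor with the sign conventions dictated by \eqref{XC}, and $T_i$ acts by an appropriate Demazure--Lusztig type operator whose definition is forced by the cross relations \eqref{PX1}--\eqref{PC} together with the requirement that $T_i$ send $1\otimes 1\otimes w$ essentially to $1\otimes 1\otimes s_iw$ plus lower-order corrections. Once one checks that these operators satisfy every defining relation, $\mathcal{M}$ becomes a $\mHcn$-module; evaluating $X^{\alpha}C^{\beta}T_w$ on $1\otimes 1\otimes 1$ produces the element $X^{\alpha}\otimes C^{\beta}\otimes w\in \mathcal{M}$, and distinctness of these vectors forces linear independence.

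The main obstacle will be the verification step in the construction of the action on $\mathcal{M}$: writing down the explicit $T_i$-operator compatible with all four families of mixed relations \eqref{PX1}--\eqref{PC}, and then checking the braid relation, which couples the $X$, $C$, and $\mathfrak{S}_n$ factors simultaneously and produces a delicate cancellation involving the odd scalars $C_iC_{i+1}$. This is lengthy but mechanical; as an alternative one could avoid it entirely by a flat deformation argument identifying $\mHcn$ at a suitable classical limit with $\mathbb{K}[X_1^{\pm 1},\ldots,X_n^{\pm 1}]\otimes \mathcal{C}_n\rtimes \mathbb{K}\mathfrak{S}_n$, where the PBW property is transparent, and transferring it to $\mHcn$ via upper-triangularity of the rewriting in the spanning step.
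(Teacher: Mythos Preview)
The paper does not give its own proof of this lemma: it is stated with a citation to \cite[Theorem 2.2]{BK} and used as a black box. Your outline is the standard PBW argument and is essentially what Brundan--Kleshchev carry out in the cited reference (spanning by a straightening/filtration argument, independence via an explicit faithful module), so there is nothing to correct. One small point: in your independence step, applying $X^{\alpha}C^{\beta}T_w$ to $1\otimes 1\otimes 1$ will not literally give $X^{\alpha}\otimes C^{\beta}\otimes w$ but that element plus terms supported on shorter $w'$, so the conclusion comes from upper-triangularity rather than a direct bijection; you already flag this when describing the $T_i$-operator, so just make sure the final sentence reflects it.
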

	
	Let $\mathcal{A}_n$ \label{pag:subalg An} be the subalgebra generated by even generators $X_1^{\pm 1},\ldots,X_n^{\pm 1}$ and odd generators $C_1,\ldots,C_n$. By Lemma~\ref{lem:PBWNon-dege}, $\mathcal{A}_n$ actually can be identified with the superalgebra generated by even generators $X^{\pm 1}_1,\ldots,X^{\pm 1}_n$ and odd generators $C_1,\ldots,C_n$ subject to relations \eqref{Poly}, \eqref{Clifford}, \eqref{XC}. Clifford algebra $\mathcal{C}_n$ can be identified with the subalgebra of $\mathcal{A}_n$ generated by  odd generators $C_1,\ldots,C_n$ subject to relations \eqref{Clifford}.

	\subsubsection{Intertwining elements for $\mHcn$}
\label{pag:example}
	Given $1\leq i<n$, one can define the intertwining element $\tilde{\Phi}_i$ \label{pag:BK intertwining element} in $\mHcn$ as follows: $$ z_i:= X_i+X^{-1}_i-X_{i+1}-X^{-1}_{i+1}= X^{-1}_i(X_iX_{i+1}-1)(X_iX^{-1}_{i+1}-1),$$
	$$\label{intertwinNon-dege}\tilde{\Phi}_i:=z^2_i T_i+\epsilon\frac{z^2_i}{X_i X^{-1}_{i+1}-1}-\epsilon\frac{z^2_i}{X_i X_{i+1}-1}C_i C_{i+1}.$$ These elements satisfy the following properties (cf. \cite[(3.7),Proposition 3.1]{JN} and \cite[(4.11)-(4.15)]{BK})
	\begin{align}
		\tilde{\Phi}^2_i&=z^2_i\bigl(z^2_i-\epsilon^2 (X^{-1}_i X^{-1}_{i+1}(X_i X_{i+1}-1)^2-X^{-1}_iX_{i+1}(X_i X^{-1}_{i+1}-1)^2)\bigr)\label{Sqinter},\\
		\tilde{\Phi}_i X^\pm_i&=X^\pm_{i+1}\tilde{\Phi}_i, \tilde{\Phi}_iX^\pm_{i+1}=X^\pm_i\tilde{\Phi}_i,
		\tilde{\Phi}_i X^\pm_l=X^\pm_l\tilde{\Phi}_i \label{Xinter},\\
		\tilde{\Phi}_i C_i&=C_{i+1}\tilde{\Phi}_i, \tilde{\Phi}_i C_{i+1}=C_i \tilde{\Phi}_i,
		\tilde{\Phi}_iC_l=C_l\tilde{\Phi}_i \label{Cinter},\\
		\tilde{\Phi}_j \tilde{\Phi}_i&=\tilde{\Phi}_i \tilde{\Phi}_j,
		\tilde{\Phi}_i\tilde{\Phi}_{i+1}\tilde{\Phi}_i=\tilde{\Phi}_{i+1}\tilde{\Phi}_i \tilde{\Phi}_{i+1}\label{Braidinter}
	\end{align}
	for all admissible $i,j,l$ with $l\neq i, i+1$ and $|j-i|>1$.
	Define $\tilde{\Phi}_{w}:=\tilde{\Phi}_{i_1}\cdots \tilde{\Phi}_{i_p}$ for
	$w\in \mathfrak{S}_n$ and $w=s_{i_1}\cdots s_{i_p}$ is a reduced expression.
Jones and Nazarov (\cite[(3.6)]{JN}) also introduced the intertwining elements of the following version, \label{pag:JN intertwining element}
	\begin{align}\label{universal-Phi}
\Phi_i:= z_{i}^{-2} \tilde{\Phi}_i&=T_{i}+\frac{\epsilon}{X_{i}X_{i+1}^{-1}-1}-\frac{\epsilon}{X_{i}X_{i+1}-1}C_{i}C_{i+1}\\
&\in \mathbb{K}(X_1,\ldots,X_n)\otimes_{\mathbb{K}[X_1^{\pm 1},\ldots, X_n^{\pm 1}]} \mHcn, \nonumber
    \end{align}
	for $i=1,\ldots,n-1.$
	
	For any $i=1,2,\ldots,n-1$ and $x,y \in \mathbb{K}^*$ satisfying $y\neq x^{\pm 1},$ let (\cite[(3.13)]{JN})
    \label{pag:Phi function}
	\begin{align}\label{Phi function}
		\Phi_i(x,y):=T_{i}+\frac{\epsilon}{x^{-1}y-1}-\frac{\epsilon}{xy-1}C_{i}C_{i+1} \in \mHcn.
	\end{align}
	
	\begin{lem}(\cite[Lemma 4.1]{JN}) These functions satisfy the following properties
		\begin{align}
			\Phi_i(x,y)\Phi_i(y,x)=1-\epsilon^2 \left( \frac{x^{-1}y}{(x^{-1}y-1)^2} +  \frac{xy}{(xy-1)^2} \right), \label{square1}\\
			\Phi_i(x,y)\Phi_j(z,w)=\Phi_j(z,w)\Phi_i(x,y), \quad \text{ if } |j-i|>1, \label{braidrel1} \\
			\Phi_i(x,y)\Phi_{i+1}(z,y)\Phi_i(z,x)=\Phi_{i+1}(z,x)\Phi_i(z,y)\Phi_{i+1}(x,y) \label{braidrel2}
		\end{align}
		for all possible  $i,j$ and $x,y,z,w.$ Furthermore, we also have the following
		\begin{align}
			\Phi_i(x,y)^2=-\epsilon \frac{x+y}{x-y} \Phi_i(x,y) + 1 &-\epsilon^2 \left( \frac{x^{-1}y}{(x^{-1}y-1)^2} +  \frac{xy}{(xy-1)^2} \right),\label{square2}\\
			C_i\Phi_i(x,y)=\Phi_i(x,y^{-1})C_{i+1},\quad C_{i+1}\Phi_i(x,y)&=\Phi_i(x^{-1},y)C_i, \quad C_j\Phi_i(x,y)=\Phi_i(x,y)C_j \label{Phi and C}
		\end{align}
		for $ j\neq i,i+1.$
	\end{lem}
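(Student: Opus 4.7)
My plan is to verify all five identities by direct computation inside $\mHcn$, using that $x,y,z,w\in\mathbb{K}^*$ are central scalars and that $\Phi_i(x,y)$ lies in the small subalgebra generated by $T_i$, $C_i$, $C_{i+1}$ and constants. The ingredients I will reuse throughout are: (i) the quadratic relation $T_i^2=\epsilon T_i+1$; (ii) writing $\pi:=C_iC_{i+1}$, one has $\pi^2=-1$, $C_i\pi=C_{i+1}=-\pi C_i$, $\pi C_{i+1}=C_i=-C_{i+1}\pi$, and a short use of \eqref{PC} gives $T_i\pi=-\pi T_i+\epsilon(\pi+1)$; (iii) for $j\neq i,i+1$, $C_j$ commutes with $T_i$, $C_i$, $C_{i+1}$ in the obvious way by \eqref{PC} and \eqref{Clifford}.

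For \eqref{square1} and \eqref{square2}, I would write $\Phi_i(x,y)=T_i+A-B\pi$ with $A=\epsilon/(x^{-1}y-1)$ and $B=\epsilon/(xy-1)$, multiply out, and collect coefficients of $T_i$, $\pi T_i$, $\pi$ and $1$ using (i)--(ii). For \eqref{square1} the scalar identity $\epsilon/(x^{-1}y-1)+\epsilon/(xy^{-1}-1)=-\epsilon$ forces the $T_i$-, $\pi T_i$- and $\pi$-coefficients to vanish, and the constant reassembles as the stated rational function through $\epsilon B+B^2=\epsilon^2 xy/(xy-1)^2$ together with a parallel manipulation of $A\cdot A'$. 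For \eqref{square2} the same expansion leaves a $T_i$-coefficient of $\epsilon+2A=-\epsilon(x+y)/(x-y)$, which is exactly the prefactor of $\Phi_i(x,y)$ on the right-hand side. The Clifford relations \eqref{Phi and C} follow by substituting \eqref{Phi function} and commuting $C_i$ or $C_{i+1}$ through $T_i$ using \eqref{PC}; the inversions $y\mapsto y^{-1}$ and $x\mapsto x^{-1}$ in the stated formulas drop out after matching coefficients of $C_i$ and $C_{i+1}$ on both sides. The far commutation $C_j\Phi_i(x,y)=\Phi_i(x,y)C_j$ and the far braid \eqref{braidrel1} are then immediate from (iii), \eqref{Braid} and \eqref{Clifford}.

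The braid relation \eqref{braidrel2} is the main obstacle, since a head-on expansion produces many dozens of terms. My plan is to deduce it from the universal identity \eqref{Braidinter} for the localised intertwiners $\tilde{\Phi}_i$. Observe that $\Phi_i(x,y)$ is the image of the universal $\Phi_i$ of \eqref{universal-Phi} on any representation on which $X_i,X_{i+1}$ act as the scalars $x,y$; taking a generic module on which $X_i,X_{i+1},X_{i+2}$ act diagonally with three independent eigenvalues, the identity \eqref{Braidinter} becomes, after clearing the nonvanishing factor $z_i^2 z_{i+1}^2 z_i^2$, an identity among suitably specialised $\Phi$'s on a common eigenvector. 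The key bookkeeping step is to verify that the eigenvalue pairs seen successively by the three $\tilde{\Phi}$-factors on the two sides of \eqref{Braidinter} are exactly $(x,y),(z,y),(z,x)$ and $(z,x),(z,y),(x,y)$, reproducing \eqref{braidrel2}; this requires tracking how each $\tilde{\Phi}_k$ swaps the adjacent $X$-eigenvalues at each stage. Since both sides of \eqref{braidrel2} are rational functions of $(x,y,z)$ with coefficients in $\mHcn$, the generic pointwise identity promotes to an equality in the algebra itself.
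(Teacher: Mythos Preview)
The paper does not prove this lemma; it is quoted from \cite[Lemma~4.1]{JN} without argument, so there is no in-paper proof to compare against. Your direct-computation plan for \eqref{square1}, \eqref{square2}, \eqref{braidrel1} and \eqref{Phi and C} is correct and routine.

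Your treatment of \eqref{braidrel2} has two gaps. First, the claim that the universal $\Phi_i$ of \eqref{universal-Phi} acts on an eigenvector $v$ with $(X_i,X_{i+1})$-eigenvalues $(x,y)$ as $\Phi_i(x,y)$ is false: the rational function $\epsilon/(X_iX_{i+1}-1)$ sits to the \emph{left} of $C_iC_{i+1}$, and by \eqref{XC} the operator $C_iC_{i+1}$ inverts both eigenvalues before that rational function is evaluated. A short computation shows the universal $\Phi_i$ instead acts as $\Phi_i(x^{-1},y^{-1})$ on such a $v$. This is repairable by relabelling, and after doing so the three successive specializations on each side of the universal braid relation do match the pattern in \eqref{braidrel2}. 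The second gap is more serious: even granting the correct specialization, you only obtain $(\text{LHS}-\text{RHS})\cdot v=0$ for a single eigenvector $v$ in some module. Both sides of \eqref{braidrel2} lie in the $48$-dimensional subalgebra of $\mHcn$ generated by $T_i,T_{i+1},C_i,C_{i+1},C_{i+2}$, and to conclude equality in $\mHcn$ you need the map $h\mapsto hv$ from that subalgebra into the module to be injective, i.e.\ the $48$ vectors $C^\beta T_w v$ (for $\beta\in\mathbb{Z}_2^3$, $w\in\langle s_i,s_{i+1}\rangle$) to be linearly independent. Your sentence ``the generic pointwise identity promotes to an equality in the algebra itself'' is exactly where this is elided. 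The injectivity does hold for generic eigenvalues, but it must be argued; alternatively, \cite{JN} establishes \eqref{braidrel2} by direct expansion in that finite subalgebra, which is tedious but requires no module-theoretic input.
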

	
	For any pair of $(x,y)\in (\mathbb{K}^*)^2$ and $y\neq x^{\pm1}$, we consider the following idempotency condition on $(x,y)$
	\begin{align}\label{invertible}
		\frac{x^{-1}y}{(x^{-1}y-1)^2}+\frac{xy}{(xy-1)^2}=\frac{1}{\epsilon^2}.
	\end{align}
	Note that the equation \eqref{invertible} holds for the pair $(x,y)$ if and only if it holds for one of these four pairs $(x^{\pm 1},y^{\pm 1}).$
\begin{cor}\label{Phi}(\cite[Corollary 4.2]{JN})
(a) Suppose that the pair $(x,y)$ satisfies (\ref{invertible}) and $y\neq x^{\pm1}.$ Then
$$\Phi_i(x,y)^2=-\epsilon \frac{x+y}{x-y} \Phi_i(x,y).$$
(b) Suppose that the pair $(x,y)$ does not satisfy (\ref{invertible}) and $y\neq x^{\pm1}.$ Then $\Phi_i(x,y)$ is invertible and
$$\Phi_i(x,y)^{-1}
=\left( 1-\epsilon^2 \left( \frac{x^{-1}y}{(x^{-1}y-1)^2}
       + \frac{xy}{(xy-1)^2} \right) \right)^{-1} \left( \Phi_i(x,y)+\epsilon \frac{x+y}{x-y}\right).$$
\end{cor}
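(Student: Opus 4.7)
The plan is to deduce both parts of the corollary as immediate algebraic consequences of the quadratic relation \eqref{square2} in the previous lemma. Write
\[
c(x,y) := 1 - \epsilon^2\left(\frac{x^{-1}y}{(x^{-1}y-1)^2} + \frac{xy}{(xy-1)^2}\right),
\]
so that \eqref{square2} reads
\[
\Phi_i(x,y)^2 = -\epsilon\,\frac{x+y}{x-y}\,\Phi_i(x,y) + c(x,y).
\]
Both $x^{-1}y - 1$ and $xy - 1$ are nonzero by the assumption $y \ne x^{\pm 1}$, so $c(x,y)$ is a well-defined scalar in $\mathbb{K}$ and the expression for $\Phi_i(x,y)$ makes sense.

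For part (a), the idempotency condition \eqref{invertible} is exactly the statement that the parenthesized sum equals $1/\epsilon^2$, so $c(x,y)=0$ and the displayed formula for $\Phi_i(x,y)^2$ in part (a) follows immediately.

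For part (b), $c(x,y) \ne 0$ by hypothesis, so rewriting the quadratic relation as
\[
\Phi_i(x,y)\left(\Phi_i(x,y) + \epsilon\,\frac{x+y}{x-y}\right) = c(x,y)
\]
and dividing by the scalar $c(x,y)$ exhibits a two-sided inverse of $\Phi_i(x,y)$ matching the stated formula. (Since the right multiplication by $\Phi_i(x,y) + \epsilon\frac{x+y}{x-y}$ equally gives the same scalar on the other side, the element is a genuine inverse in $\mHcn$.)

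There is essentially no obstacle: the entire content is packaged into \eqref{square2}, and the only thing to verify is that $c(x,y)$ is genuinely a scalar in $\mathbb{K}$ (which uses $y \ne x^{\pm 1}$) and that the two factors commute in each order, so the one-sided formula really yields a two-sided inverse.
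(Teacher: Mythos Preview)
Your proof is correct and follows exactly the implicit argument: the paper states this result as a corollary (with no explicit proof, only a citation to \cite{JN}) precisely because both parts drop out immediately from the quadratic relation \eqref{square2} by the case distinction on whether the constant term $c(x,y)$ vanishes. Your observation that the scalar $\epsilon\frac{x+y}{x-y}$ commutes with $\Phi_i(x,y)$, so the one-sided inverse is automatically two-sided, is the only point worth noting, and you handled it.
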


For any invertible $\iota \in \mathbb{K}$, we define
\label{pag:q-function and b-function}
\begin{align}\label{substitution0}
	\mathtt{q}(\iota):=2\frac{q\iota+(q\iota)^{-1}}{q+q^{-1}}, \quad \mathtt{b}_{\pm}(\iota):=\frac{\mathtt{q}(\iota)}{2}\pm \sqrt{\frac{\mathtt{q}(\iota)^2}{4}-1}.
\end{align}
Clearly, $\mathtt{b}_{\pm}(\iota)$ is the solution of equation $x+x^{-1}=\mathtt{q}(\iota),$ thus $\mathtt{b}_{+}(\iota)\mathtt{b}_{-}(\iota)=1.$

According to \cite{JN}, via the substitution
\begin{align}\label{substitute}
x+x^{-1}=2\frac{qu+q^{-1}u^{-1}}{q+q^{-1}}=\mathtt{q}(u),\qquad\qquad y+y^{-1}=2\frac{qv+q^{-1}v^{-1}}{q+q^{-1}}=\mathtt{q}(v)
\end{align}
the condition \eqref{invertible}  is  equivalent to the condition which states that $u,v$ satisfy one of the following four equations
\begin{align}\label{invertible2}
v=q^2u,\quad v=q^{-2}u,\quad v=u^{-1},\quad v=q^{-4}u^{-1}.
\end{align}

	\subsubsection{Cyclotomic Hecke-Clifford algebra $\mHfcn$}
	
	To define the cyclotomic Hecke-Clifford algebra $\mHfcn$, \label{pag:CHCA} we fix $m\geq 0$ and $\underline{Q}=(Q_1,Q_2,\ldots,Q_m)\in(\mathbb{K}^*)^m$ \label{pag:Q-parameters} and take a $f=f(X_1)\in \mathbb{K}[X_1^\pm]$ satisfying \cite[(3.2)]{BK}. Since we are working over algebraically closed field $\mathbb{K}$,  it is noted in \cite{SW} that we only need to consider $f(X_1)\in \mathbb{K}[X_1^\pm]$ to be one of the following three forms:
 $$\begin{aligned}
 f=\begin{cases}
     f^{\mathsf{(0)}}_{\underline{Q}}=\prod_{i=1}^m \biggl(X_1+X^{-1}_1-\mathtt{q}(Q_i)\biggr), \\ 
      f^{\mathsf{(s)}}_{\underline{Q}}=(X_1-1)\prod_{i=1}^m \biggl(X_1+X^{-1}_1-\mathtt{q}(Q_i)\biggr), \\  
     f^{\mathsf{(ss)}}_{\underline{Q}} = (X_1-1)(X_1+1)\prod_{i=1}^m \biggl(X_1+X^{-1}_1-\mathtt{q}(Q_i)\biggr).
    \end{cases}
    \end{aligned}$$
In each case, the degree $r$ of the polynomial $f$ is $2m,\,2m+1,\,2m+2$ respectively.
	
	The non-degenerate cyclotomic Hecke-Clifford algebra $\mHfcn$ is defined as $$\mHfcn:=\mHcn/\mathcal{I}_f,
	$$ where $\mathcal{I}_f$ is the two sided ideal of $\mHcn$ generated by $f(X_1)$. The degree $r$ \label{pag:nondege level} of $f$ is called the level of $\mHfcn.$ We shall denote the images of $X^{\alpha}, C^{\beta}, T_w$ in the cyclotomic quotient $\mHfcn$ still by the same symbols. Then we have the following due to \cite{BK}.
	
	\begin{lem}\cite[Theorem 3.6]{BK}
		The set $\{X^{\alpha}C^{\beta}T_w~|~ \alpha\in\{0,1,\cdots,r-1\}^n,
		\beta\in\mathbb{Z}_2^n, w\in {\mathfrak{S}_n}\}$ forms a basis of $\mHfcn$.
	\end{lem}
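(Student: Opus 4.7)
The plan has two components: spanning and linear independence. For spanning, start with the PBW basis of the affine algebra $\mHcn$ from Lemma \ref{lem:PBWNon-dege}, consisting of monomials $X^\alpha C^\beta T_w$ with $\alpha \in \mathbb{Z}^n$, and reduce modulo the ideal $\mathcal{I}_f$ generated by $f(X_1)$ to bring each exponent $\alpha_i$ into the range $\{0,1,\ldots,r-1\}$. The first coordinate is immediate: in each of the three admissible shapes $f^{(\mathsf{0})}_{\underline{Q}}$, $f^{(\mathsf{s})}_{\underline{Q}}$, $f^{(\mathsf{ss})}_{\underline{Q}}$, once cleared of negative powers of $X_1$ (by multiplying through by a suitable $X_1^k$), both the leading and constant coefficients are nonzero, so the relation $f(X_1)=0$ lets one recursively express $X_1^r$ and $X_1^{-1}$ in terms of $1, X_1, \ldots, X_1^{r-1}$.

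For the remaining coordinates I would proceed by induction on $i$: assuming the reduction has been carried out for $X_1, \ldots, X_{i-1}$, I would conjugate the relation $f(X_1)=0$ by the product $T_{i-1}T_{i-2}\cdots T_1$ and use relations \eqref{PX1}--\eqref{PX3} together with \eqref{XC} to push the conjugate into a form whose leading term in the $X_i$-variable is a nonzero scalar multiple of $X_i^r$, with lower-order corrections involving the Clifford pairs $C_j C_{j+1}$. This produces, inside $\mathcal{I}_f$, an element that allows one to rewrite $X_i^r$ in terms of lower $X_i$-powers modulo monomials already covered by the inductive hypothesis, and a lexicographic induction on the exponent tuple $\alpha$ completes the argument.

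For linear independence, the plan is to exhibit a representation of dimension exactly $r^n \cdot 2^n \cdot n!$ on which the proposed spanning set acts by linearly independent operators. A natural candidate is the induced module $\mathcal{M}:=\mHcn \otimes_{\mathcal{A}_n} (\mathcal{A}_n / \mathcal{A}_n \cdot f(X_1))$, where the quotient is the evident $\mathcal{A}_n$-module of $\mathbb{K}$-dimension $r \cdot 2^n$. The affine PBW theorem gives $\mathcal{M}$ the correct dimension $r^n \cdot 2^n \cdot n!$, a direct check shows $f(X_1)$ annihilates it so that the $\mHcn$-action descends to $\mHfcn$, and the images of the claimed monomials form an obvious basis of $\mathcal{M}$. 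Thus the spanning set is $\mathbb{K}$-linearly independent in $\mHfcn$.

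The main obstacle I anticipate is the inductive reduction for $i \geq 2$. Because the Clifford corrections in \eqref{PX1} and \eqref{PX2} are not of ``substitution'' type, conjugating $f(X_1)$ does not simply send $X_1 \mapsto X_i$; it introduces a forest of correction terms mixing polynomial and Clifford generators whose $X$-degrees must be carefully tracked so that the induction genuinely closes. A secondary nuisance is handling the three shapes of $f$ uniformly, which requires verifying that after the substitution $X_1 \mapsto X_i$ (and the attendant corrections) the coefficient of $X_i^r$ remains a unit in each case; this is where the hypothesis that the leading and constant coefficients of $f$ are nonzero gets used repeatedly.
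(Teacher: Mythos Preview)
The paper does not give its own proof of this lemma; it simply cites \cite[Theorem 3.6]{BK}. So there is no proof in the paper to compare against, and your sketch is being measured against the original Brundan--Kleshchev argument.

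Your spanning argument is essentially the standard one and is fine in outline, though as you correctly flag, the bookkeeping for $i\geq 2$ is where all the work lies.

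Your linear independence argument, however, has a genuine gap. You claim that $\mathcal{A}_n/\mathcal{A}_n\cdot f(X_1)$ has $\mathbb{K}$-dimension $r\cdot 2^n$, but this is false: the relation $f(X_1)=0$ only constrains the $X_1$-variable, while $X_2^{\pm1},\ldots,X_n^{\pm1}$ remain free Laurent generators in $\mathcal{A}_n$, so the quotient is infinite-dimensional. Consequently the induced module $\mathcal{M}$ you describe does not have dimension $r^n\cdot 2^n\cdot n!$, and the argument collapses. To repair this you would need to quotient by a larger ideal that also controls $X_2,\ldots,X_n$ (for instance, suitable conjugates of $f(X_1)$), but then showing that the resulting module has exactly the expected dimension is precisely as hard as the linear independence statement you are trying to prove. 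Brundan and Kleshchev's actual proof in \cite{BK} proceeds differently, via an explicit filtration and a careful inductive comparison with the level-one case, rather than a single induced module of the target dimension.
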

	
		By \cite[(2.32)]{BK}, the algebra $\mHfcn$ admits an anti-involution $*$ \label{pag:nondege anti-involution} defined by
	$$T_i^*:=T_i+\epsilon C_iC_{i+1},\quad C_j^*:=C_j,\quad X_j^*:=X_j$$
	for all $i=1,\ldots, n-1,j=1,\ldots, n.$
	
	\subsubsection{Combinatorics}
    \label{pag:The types of combinatorics}
	The different choices of $f\in \{f^{\mathsf{(0)}}_{\underline{Q}},\,f^{\mathsf{(s)}}_{\underline{Q}},\,f^{\mathsf{(ss)}}_{\underline{Q}}\}$ corresponds to different combinatorics $\mathscr{P}^{\mathsf{0},m}_{n},\mathscr{P}^{\mathsf{s},m}_{n},\mathscr{P}^{\mathsf{ss},m}_{n}$ respectively in the representation theory of $\mHfcn$. Let's recall these combinatorics. For $n\in \N$, let $\mathscr{P}_n$ be the set of partitions of $n$ and denote by $\ell(\mu)$ the number of nonzero parts in the partition $\mu$ for each $\mu\in\mathscr{P}_n$. Let $\mathscr{P}^m_n$ be the set of all $m$-multipartitions of $n$ for $m\geq 0$, where we use convention that $\mathscr{P}^0_n=\{\emptyset\}$. Let $\mathscr{P}^\mathsf{s}_n$ be the set of strict partitions of $n$. Then for $m\geq 0$, set
	$$
	\mathscr{P}^{\mathsf{s},m}_{n}:=
	\cup_{a=0}^{n}( \mathscr{P}^{\mathsf{s}}_a\times \mathscr{P}^{m}_{n-a}),\qquad \mathscr{P}^{\mathsf{ss}, m}_{n}:=
	\cup_{a+b+c=n}(\mathscr{P}^{\mathsf{s}}_a \times \mathscr{P}^{\mathsf{s}}_b\times \mathscr{P}^{m}_{c}).$$
We will formally write  $\mathscr{P}^{\mathsf{0},m}_{n}=\mathscr{P}^m_n$.  In convention, for any \label{pag:multipartition} $\undla\in  \mathscr{P}^{\mathsf{0},m}_{n}$, we write  $\undla=(\lambda^{(1)},\cdots,\lambda^{(m)}),$ while for any $\undla\in  \mathscr{P}^{\mathsf{s},m}_{n}$, we write  $\undla=(\lambda^{(0)},\lambda^{(1)},\cdots,\lambda^{(m)})$, i.e. we shall put the strict partition in the $0$-th component. Moreover, for any $\undla\in  \mathscr{P}^{\mathsf{ss},m}_{n}$, we write  $\undla=(\lambda^{(0_-)},\lambda^{(0_+)},\lambda^{(1)},\cdots,\lambda^{(m)})$, i.e. we shall put two strict partitions in the $0_-$-th component and the $0_+$-th component.
	
	We will also identify the (strict) partition with the corresponding (shifted) young diagram.  For any  $\undla\in\mathscr{P}^{\bullet,m}_{n}$ with $\bullet\in\{\mathsf{0},\mathsf{s},\mathsf{ss}\}$ and $m\in \N$, the box in the $l$-th component with row $i$, column $j$ will be denoted by $(i,j,l)$  with $l\in\{1,2,\ldots,m\},$ or $l\in\{0,1,2,\ldots,m\}$ or $l\in\{0_-,0_+,1,2,\ldots,m\}$ in the case $\bullet=\mathsf{0},\mathsf{s},\mathsf{ss},$ respectively. We also use the notation $\alpha=(i,j,l)\in \undla$ if the diagram of $\undla$ has a box $\alpha$ on the $l$-th component of row $i$ and column $j$. We use $\Std(\undla)$ \label{pag:standard tableaux} to denote the set of standard tableaux of shape $\undla$. One can also regard each $\mathfrak{t}\in\Std(\undla)$ as a bijection $\mathfrak{t}:\undla\rightarrow \{1,2,\ldots, n\}$ satisfying $\mathfrak{t}((i,j,l))=k$ if the box occupied by $k$ is located in the $i$th row, $j$th column in the $l$-th component $\lambda^{(l)}$. We use $\mathfrak{t}^{\undla}$ (resp. $\mathfrak{t}_{\undla}$) to denote the standard tableaux obtained by inserting the symbols $1,2,\ldots,n$ consecutively by rows (resp. column) from the first (resp. last) component of $\undla$.
	\label{pag:diag of undlam}
	\begin{defn}(\cite[Definition 2.5]{SW})
		Let $\undla\in\mathscr{P}^{\bullet,m}_{n}$ with $\bullet\in\{\mathsf{0},\mathsf{s},\mathsf{ss}\}$.  We define
        $$
		\mathcal{D}_{\undla}:=\begin{cases} \emptyset,&\text{if $\undla\in\mathscr{P}^{\mathsf{0},m}_n$,}\\
			\{(a,a,0)|(a,a,0)\in \undla,\,a\in\N\}, &\text{if $\undla\in\mathscr{P}^{\mathsf{s},m}_{n}$,}\\
			\big\{(a,a,l)|(a,a,l)\in \undla,\,a\in\N, l\in\{0_-,0_+\}\big\}, &\text{if $\undla\in\mathscr{P}^{\mathsf{ss}, m}_{n}.$}
		\end{cases}
		$$
        For any $\mathfrak{t}\in\Std(\undla), $ we define
        \begin{align}\label{Dt}
        \mathcal{D}_{\mt}:=\{\mathfrak{t}((a,a,l))|(a,a,l)\in\mathcal{D}_{\undla}\}.
		\end{align}
	\end{defn}

	\begin{example} Let $\undla=(\lambda^{(0)},\lambda^{(1)})\in \mathscr{P}^{\mathsf{s},1}_{5}$, where via the identification with strict Young diagrams and Young diagrams:
		$$
		\lambda^{(0)}=\young(\,\, ,:\,),\qquad \lambda^{(1)}=\young(\,,\,).
		$$
		Then
		$$
		\mathfrak{t}^{\undla}=\Biggl(\young(12,:3),\quad \young(4,5)\Biggr).
		$$
		and  an example of standard tableau is as follows:
		$$
		\mathfrak{t}=\Biggl(\young(13,:5),\quad \young(2,4)\Biggr)\in \Std(\undla).
		$$ We have $$\mathcal{D}_{\undla}=\{(1,1,0),(2,2,0)\},\qquad \mathcal{D}_{\mathfrak{t}}=\{1,5\}.
		$$
	\end{example}
	
	Let $\mathfrak{S}_n$ be the symmetric group on ${1,2,\ldots,n}$ with basic transpositions $s_1,s_2,\ldots, s_{n-1}$.  And $\mathfrak{S}_n$ acts on the set of tableaux of shape $\underline{\lambda}$ in the natural way.
    \label{pag:adimssible}
	\begin{defn}(\cite[Definition 2.7]{SW})
		Let  $\undla\in\mathscr{P}^{\bullet,m}_{n}$ with $\bullet\in\{\mathsf{0},\mathsf{s},\mathsf{ss}\}$.  For any standard tableaux $\mathfrak{t}\in \Std(\undla)$, if $s_l \mathfrak{t}$ is still standard, we call the simple transposition $s_l$ is admissible with respect to $\mathfrak{t}$. We set
$$P(\undla):=\biggl\{\tau=s_{k_t}\ldots s_{k_1}\in\mathfrak{S}_n\biggm|~\begin{matrix}&s_{k_l} \,\text{is admissible with respect to }\\
			& s_{k_{l-1}}\ldots s_{k_1}\mathfrak{t}^{\undla}, \,\text{for $l=1,2,\cdots,t$}
		\end{matrix}\biggr\}.
		$$
	\end{defn}
	The following standard facts will be used in the sequel.
	
	
	\begin{lem}\label{admissible transposes}(\cite[Lemma 2.8]{SW})
		Let $\undla\in\mathscr{P}^{\bullet,m}_{n}$ with $\bullet\in\{\mathsf{0},\mathsf{s},\mathsf{ss}\}$. For any $\ms,\mt \in \Std(\undla),$ we denote by $d(\ms,\mt)\in \mathfrak{S}_n$ the unique element
		such that $\ms=d(\ms,\mt)\mt.$ Then we have
		$$s_{k_i}\text{ is admissible with respect to } s_{k_{i-1}}\ldots s_{k_1}\mathfrak{t},\quad i=1,2,\ldots,p$$
		for any reduced expression $d(\ms,\mt)=s_{k_p}\cdots s_{k_1}.$
	\end{lem}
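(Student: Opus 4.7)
The plan is to argue by induction on $p = \ell(d(\ms,\mt))$, reducing everything to proving the single step that $s_{k_1}$ is admissible with respect to $\mt$. The case $p=0$ is vacuous. For the inductive step, once we know $s_{k_1}\mt$ is standard, then $d(\ms, s_{k_1}\mt) = s_{k_p}\cdots s_{k_2}$, and this is still a reduced expression (obtained by dropping a generator from a reduced word), so the induction hypothesis applied to $(\ms, s_{k_1}\mt)$ with this reduced expression yields admissibility of $s_{k_i}$ with respect to $s_{k_{i-1}}\cdots s_{k_1}\mt$ for every $i\geq 2$.

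To prove the key step, I would argue by contradiction. Suppose $s_{k_1}\mt$ is not standard. Because $\mt$ is standard, the only way this can happen is if the labels $k_1$ and $k_1+1$ sit in the same row or the same column of the same component of $\mt$; by standardness of $\mt$, in fact $k_1+1$ lies immediately to the right of, or immediately below, $k_1$. Let $B_1, B_2 \in \undla$ be the boxes with $\mt(B_1) = k_1$ and $\mt(B_2) = k_1+1$, so $B_2$ is southeast-adjacent to $B_1$ in the weak sense above.

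Now invoke the \emph{exchange condition} in $\mathfrak{S}_n$: since $d(\ms,\mt) = s_{k_p}\cdots s_{k_1}$ is reduced, we have $\ell\bigl(d(\ms,\mt) s_{k_1}\bigr) = p - 1 < p$, which forces the pair $(k_1, k_1+1)$ to be an inversion of $d(\ms,\mt)$, i.e.\
\[
d(\ms,\mt)(k_1) > d(\ms,\mt)(k_1+1).
\]
On the other hand, the definition $\ms = d(\ms,\mt)\mt$ gives $\ms(B_j) = d(\ms,\mt)(\mt(B_j))$ for $j=1,2$, so
\[
\ms(B_1) = d(\ms,\mt)(k_1), \qquad \ms(B_2) = d(\ms,\mt)(k_1+1).
\]
Since $\ms$ is standard and $B_2$ is to the right of or below $B_1$ (in the same row or column of the same component), we must have $\ms(B_1) < \ms(B_2)$, contradicting the inversion inequality above. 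Hence $s_{k_1}\mt$ is standard, completing the induction.

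The main subtlety to watch for is that the argument must go through uniformly across the three cases $\bullet \in \{\mathsf{0},\mathsf{s},\mathsf{ss}\}$ and in the multipartition setting; however, because the obstruction to $s_l\mt$ being standard is entirely local (it only cares whether $l$ and $l+1$ lie in the same row or column of a \emph{single} component), the combinatorial framework for shifted or multi-shapes does not interfere, and the exchange-condition argument is insensitive to which component the shape lives in. Thus no case analysis on the type of $\undla$ is actually required.
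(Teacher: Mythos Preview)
Your argument is correct. The induction on $p$ together with the descent characterization $\ell(ws_{k_1})=\ell(w)-1 \iff w(k_1)>w(k_1+1)$ is exactly what is needed, and your local analysis of when $s_{k_1}\mt$ fails to be standard is valid uniformly for ordinary, shifted, and mixed multi-shapes (in the shifted components, if $(i,i)$ and $(i+1,i+1)$ both lie in the shape then so does $(i,i+1)$, forcing the diagonal entries to differ by at least $2$, so the diagonal never obstructs the swap of consecutive values).

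There is nothing in the present paper to compare against: the lemma is stated here without proof and attributed to \cite[Lemma~2.8]{SW}. Your proof is the standard one and would serve as a self-contained substitute for that citation.
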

	\begin{cor}\label{bij}(\cite[Corollary 2.9]{SW})
		Let $\undla\in\mathscr{P}^{\bullet,m}_{n}$ with $\bullet\in\{\mathsf{0},\mathsf{s},\mathsf{ss}\}$. There is a bijection
		$$\psi:\, \tau \mapsto \tau \mt^{\undla},\quad P(\undla)\rightarrow \Std(\undla).$$
	\end{cor}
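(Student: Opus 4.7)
The plan is to verify directly that the map $\psi:\tau\mapsto \tau\mt^{\undla}$ is well-defined with image in $\Std(\undla)$, then check injectivity and surjectivity separately. All three steps are short and essentially reduce to combining the definition of $P(\undla)$ with Lemma~\ref{admissible transposes}.

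First, I would check \emph{well-definedness}. Take any $\tau=s_{k_t}\cdots s_{k_1}\in P(\undla)$. Since $\mt^{\undla}\in\Std(\undla)$, and at each step the transposition $s_{k_l}$ is by definition admissible with respect to $s_{k_{l-1}}\cdots s_{k_1}\mt^{\undla}$, a trivial induction on $l$ shows that $s_{k_l}\cdots s_{k_1}\mt^{\undla}\in\Std(\undla)$. In particular $\tau\mt^{\undla}\in\Std(\undla)$, so $\psi$ really does take values in $\Std(\undla)$.

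Next I would handle \emph{injectivity}. Suppose $\tau_1,\tau_2\in P(\undla)$ satisfy $\tau_1\mt^{\undla}=\tau_2\mt^{\undla}$. Regarding a standard tableau $\mt$ on $\undla$ as a bijection between the boxes of $\undla$ and $\{1,\dots,n\}$, the natural action of $\mathfrak{S}_n$ on tableaux is free: the image $\tau\mt^{\undla}$ records, for each $k$, the box of $\undla$ occupied by $k$ in $\mt^{\undla}$ sent through $\tau^{-1}$, hence $\tau$ is uniquely recoverable from $\tau\mt^{\undla}$. Thus $\tau_1=\tau_2$ in $\mathfrak{S}_n$.

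For \emph{surjectivity}, given $\mt\in\Std(\undla)$ let $d:=d(\mt,\mt^{\undla})\in\mathfrak{S}_n$ be the unique element with $d\mt^{\undla}=\mt$, and fix any reduced expression $d=s_{k_p}\cdots s_{k_1}$. Lemma~\ref{admissible transposes}, applied with $\ms=\mt$, says exactly that each $s_{k_i}$ is admissible with respect to $s_{k_{i-1}}\cdots s_{k_1}\mt^{\undla}$, which is the defining condition for $d\in P(\undla)$. Since $\psi(d)=\mt$, $\psi$ is surjective, completing the proof.

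There is no real obstacle here: the statement is essentially a bookkeeping translation between ``apply admissible simple reflections one at a time starting from $\mt^{\undla}$'' and ``reach an arbitrary standard tableau via a reduced word of $d(\mt,\mt^{\undla})$'', and Lemma~\ref{admissible transposes} has already done the only nontrivial work.
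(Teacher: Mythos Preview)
Your proof is correct and follows the natural route: well-definedness by induction on the admissible word, injectivity from freeness of the $\mathfrak{S}_n$-action on tableaux, and surjectivity via Lemma~\ref{admissible transposes} applied to a reduced expression of $d(\mt,\mt^{\undla})$. The paper itself does not give a proof of this corollary---it is quoted directly from \cite[Corollary~2.9]{SW}---so there is nothing to compare against beyond noting that your argument is the standard one and that the only substantive input, Lemma~\ref{admissible transposes}, is exactly what the corollary is meant to package.
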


	
We set $Q_0=Q_{0_+}=1, Q_{0_-}=-1$. Now we can define the residue.
\begin{defn}
Suppose  $\undla\in\mathscr{P}^{\bullet,m}_{n}$ with $\bullet\in\{\mathsf{0},\mathsf{s},\mathsf{ss}\}$ and $(i,j,l)\in \undla$, we define the residue of box $(i,j,l)$ with respect to the parameter $\undQ=(Q_1,Q_2,\ldots,Q_m)\in(\mathbb{K}^*)^m$ as follows:
\label{pag:nondeg residue}
\begin{equation}\label{eq:residue}
\res(i,j,l):=Q_lq^{2(j-i)}.
\end{equation}
If $\mathfrak{t}\in \Std(\undla)$ and $\mathfrak{t}(i,j,l)=k$, we set
\begin{align}
        \res_\mathfrak{t}(k)&:=Q_lq^{2(j-i)},\label{resNon-dege-1}\\
\res(\mathfrak{t})&:=(\res_\mathfrak{t}(1),\cdots,\res_\mathfrak{t}(n)),\label{resNon-dege-2}
       \end{align}
       then the $\mathtt{q}$-sequence of $\mt$ is
\begin{align}\label{resNon-dege-3}
       \mathtt{q}(\res(\mathfrak{t})):=(\mathtt{q}(\res_{\mathfrak{t}}(1)), \mathtt{q}(\res_{\mathfrak{t}}(2)),\ldots, \mathtt{q}(\res_{\mathfrak{t}}(n))).
       \end{align}
\end{defn}
	
	We can write down an explicit formula of $\mb_\pm(\res_\mathfrak{t}(k))$ using deformed quantum integers. We define the deformed quantum integers  as\begin{equation}\label{deformed quantum number}
		\left[ m \right]_{l,q^2}:=\frac{Q_lq^{2m}-Q_l^{-1}q^{-2m}}{q^{2}-q^{-2}},\quad m\in \mathbb{Z}.\end{equation}
	Then
	\begin{align}
		\mb_\pm(\res_\mathfrak{t}(k))&= \frac{Q_lq^{2(j-i)+1}+Q_l^{-1}q^{-2(j-i)-1}}{q+q^{-1}}
		\pm \sqrt{\left(\frac{Q_lq^{2(j-i)+1}+Q_l^{-1}q^{-2(j-i)-1}}{q+q^{-1}}\right)^2 -1} \nonumber\\
		&=[j-i+1]_{l,q^2}-[j-i]_{l,q^2}\pm\epsilon\sqrt{[j-i+1]_{l,q^2}[j-i]_{l,q^2}}\label{eigenvalues}
	\end{align}
	for each $k=\mt(i,j,l).$
	
	We remark here that when the level $r=1,$ $\mb_-(\res_\mathfrak{t}(k))$ coincides with $q_k$ in \cite[(4.6)]{JN}  i.e., suppose $k=\mt(i,j,0),$ then
	\begin{align}
		\mb_-(\res_\mathfrak{t}(k)) = [j-i+1]_{q^2}-[j-i]_{q^2}-\epsilon\sqrt{[j-i+1]_{q^2}[j-i]_{q^2}}.\nonumber
	\end{align}

	\subsubsection{Separate Condition}

	Let $[n]:=\{1,2,\ldots,n\}$. \label{pag:[n]} In the rest of this subsection, we recall the separate condition \cite[Definition 3.9]{SW} on the choice of the parameters $\underline{Q}$ and $f=f^{(\bullet)}_{\underline{Q}}$ with $\bullet\in\{\mathtt{0},\mathtt{s},\mathtt{ss}\}$, where $r=\deg(f)$.
\begin{defn}\label{defn:separate}\cite[Definition 3.9]{SW}
Let $\bullet\in\{\mathsf{0},\mathsf{s},\mathsf{ss}\}$ and $\undQ=(Q_1,\ldots,Q_m)\in(\mathbb{K}^*)^m$.  Assume $\undla\in\mathscr{P}^{\bullet,m}_{n}$. Then $(q,\undQ)$ is said to be {\em separate} with respect to $\undla$ if for any $\mathfrak{t}\in \undla$, the $\mathtt{q}$-sequence for $\mathfrak{t}$ defined via \eqref{resNon-dege-3} satisfy the following condition:
$$
\mathtt{q}(\res_{\mathfrak{t}}(k))\neq\mathtt{q}(\res_{\mathfrak{t}}(k+1)) \text{ for any } k=1,\ldots,n-1.
$$
\end{defn}

The separate condition holds for any $\underline{\mu} \in \mathscr{P}^{\bullet,m}_{n+1}$ can be reformulated via the condition $P^{(\bullet)}_{n}(q^2,\undQ)\neq 0$ (\cite[Proposition 3.11]{SW}), where $P^{(\bullet)}_{n}(q^2,\undQ)$ ($\bullet\in\{\mathsf{0},\mathsf{s},\mathsf{ss}\}$) \label{pag:nondege Pioncare poly} is an explicit polynomial on $(q,\undQ)$. Since we will not use the explicit expression of $P^{(\bullet)}_{n}(q^2,\undQ)$ in this paper, we skip the definition of $P^{(\bullet)}_{n}(q^2,\undQ)$ here.
\begin{prop}\label{separate formula}\cite[Proposition 3.11]{SW}
Let $n\geq 1,\,m\geq 0$,  $\undQ=(Q_1,\ldots,Q_m)\in(\mathbb{K}^*)^m$ and  $\bullet\in\{\mathsf{0},\mathsf{s},\mathsf{ss}\}$. Then $(q,\undQ)$ is separate with respect to $\underline{\mu}$ for any  $\underline{\mu}\in\mathscr{P}^{\bullet,m}_{n+1}$ if and only if $P_n^{(\bullet)}(q^2,\undQ)\neq 0$.
\end{prop}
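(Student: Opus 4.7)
The plan is to translate the combinatorial separate condition into an explicit polynomial inequality by enumerating all the ways two $\mathtt{q}$-residues can coincide, and then matching this list of collisions to the factors of the polynomial $P^{(\bullet)}_n(q^2,\undQ)$ as defined in \cite{SW}.

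First I would analyze when $\mathtt{q}(\iota)=\mathtt{q}(\iota')$. Since $\mathtt{q}(\iota)$ is, up to a scalar, equal to $q\iota + (q\iota)^{-1}$, setting $u=q\iota$ and $v=q\iota'$ gives $u+u^{-1}=v+v^{-1}$, which factors as $(u-v)(1-(uv)^{-1})=0$. Hence $\mathtt{q}(\iota)=\mathtt{q}(\iota')$ if and only if $\iota'=\iota$ or $q^2\iota\iota'=1$. Substituting the formula $\res(i,j,l)=Q_lq^{2(j-i)}$, the collision $\mathtt{q}(\res(i,j,l))=\mathtt{q}(\res(i',j',l'))$ splits into two explicit polynomial equations in the parameters: $Q_lQ_{l'}^{-1}=q^{-2((j-i)-(j'-i'))}$ or $Q_lQ_{l'}=q^{-2((j-i)+(j'-i')+1)}$. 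In the strict cases $\bullet\in\{\mathsf{s},\mathsf{ss}\}$ one substitutes $Q_0=1$ or $Q_{0_\pm}=\pm 1$ whenever a corresponding component appears, producing additional constraints that involve only $q$ or only $q$ together with a single $Q_l$.

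Next I would show that the separate condition fails for some $\underline{\mu}\in \mathscr{P}^{\bullet,m}_{n+1}$ and some $\mt\in\Std(\underline{\mu})$ precisely when one of the above equalities is witnessed by a pair of boxes $\alpha,\alpha'$ of $\underline{\mu}$ with $\mt(\alpha')=\mt(\alpha)+1$ for some $1\leq k\leq n$. Using Lemma \ref{admissible transposes} and Corollary \ref{bij}, the existence of such a $\mt$ reduces to a purely combinatorial reachability condition: there must exist a sub-multipartition $\underline{\nu}$ of size $k-1$ such that $\alpha$ is addable to $\underline{\nu}$, $\alpha'$ is addable to $\underline{\nu}\cup\{\alpha\}$, and the remainder of $\underline{\mu}$ can be added consecutively. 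The total-size bound $|\underline{\mu}|=n+1$ caps the shifts $j-i$ and $j'-i'$ by explicit functions of $n$, so only finitely many polynomial factors arise.

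Finally I would match the resulting finite list of polynomial factors $Q_lQ_{l'}^{-1}-q^{2d}$ and $Q_lQ_{l'}q^{2d}-1$, together with the special factors produced by the strict components, with the explicit factorization of $P^{(\bullet)}_n(q^2,\undQ)$ in \cite{SW}. The main obstacle is the combinatorial bookkeeping: one must verify that each factor of $P^{(\bullet)}_n$ genuinely corresponds to an adjacent collision realized by a standard tableau of total size exactly $n+1$, neither smaller nor larger, and the three cases $\bullet=\mathsf{0},\mathsf{s},\mathsf{ss}$ must be treated separately because the strict components carry diagonal boxes whose residues are forced to be $\pm 1$, producing additional collision patterns that do not arise in the purely multipartition case.
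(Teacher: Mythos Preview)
The paper does not contain a proof of this proposition: it is simply quoted from \cite[Proposition~3.11]{SW}, and indeed the paper explicitly declines even to define the polynomial $P^{(\bullet)}_n(q^2,\undQ)$, writing ``Since we will not use the explicit expression of $P^{(\bullet)}_{n}(q^2,\undQ)$ in this paper, we skip the definition of $P^{(\bullet)}_{n}(q^2,\undQ)$ here.'' So there is nothing in the paper itself to compare your proposal against.

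Your strategy is the natural one and is almost certainly what \cite{SW} does: resolve the equation $\mathtt{q}(\iota)=\mathtt{q}(\iota')$ into the two branches $\iota'=\iota$ and $q^2\iota\iota'=1$, plug in the residues to obtain a finite list of polynomial constraints on $(q,\undQ)$ bounded by $n$, and then identify that list with the irreducible factors of the polynomial $P^{(\bullet)}_n$ as it is defined in \cite{SW}. The only caution is that your last paragraph correctly flags the real work as ``combinatorial bookkeeping'' but does not carry it out: the equivalence requires showing, for each putative factor, both that the corresponding collision is realizable by an adjacent pair in some standard tableau of size exactly $n+1$ and conversely that no extra factors arise, and this depends on the precise ranges in the definition of $P^{(\bullet)}_n$ that the present paper omits. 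Without that definition in hand your outline is a plan rather than a proof, but it is the right plan.
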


The following is key for our various constructions in the rest of this paper.
\begin{lem}\label{important condition1}\cite{SW}
		Let $\undQ=(Q_1,\ldots,Q_m)\in(\mathbb{K}^*)^m$ and  $\bullet\in\{\mathsf{0},\mathsf{s},\mathsf{ss}\}$. Suppose $P_n^{(\bullet)}(q^2,\undQ)\neq 0$. Then
	\begin{enumerate}
		\item For any $\undla\in\mathscr{P}^{\bullet,m}_{n},$ $\mathfrak{t}\in\Std(\undla)$, we have  $\mathtt{b}_{\pm}(\res_{\mathfrak{t}}(k))\notin \{\pm 1\}$ for $k\notin \mathcal{D}_{\mathfrak{t}}$;
		\item For any $\undla,\underline{\mu} \in\mathscr{P}^{\bullet,m}_{n},$ $\mt\in\Std(\undla), \mt' \in \Std(\underline{\mu}),$
if $\mt\neq \mt',$ then we have $\mathtt{q}(\res(\mt))\neq \mathtt{q}(\res(\mt'))$;
		\item For any $\undla\in\mathscr{P}^{\bullet,m}_{n},$ $\mathfrak{t}\in\Std(\undla)$ and $k\in [n-1]$, the four pairs $(\mathtt{b}_{\pm}(\res_{\mathfrak{t}}(k)),\mathtt{b}_{\pm}(\res_{\mathfrak{t}}(k+1)))$ do not satisfy \eqref{invertible} if $k,k+1$ are not in the adjacent diagonals of $\mathfrak{t}$.
	\end{enumerate}
	\end{lem}
	
	\begin{proof}
		The statement (1) is \cite[Lemma 3.13(1)]{SW} and (2) is \cite[Lemma 3.15]{SW}. The statement (3) follows from \cite[Lemma 3.13(3)]{SW} and the fact that \eqref{invertible} is equivalent to \eqref{invertible2} via the substitution \eqref{substitute}. This completes our proof.
	\end{proof}

	\begin{thm}\cite[Theorem 4.10]{SW}
	Let $\undQ=(Q_1,Q_2,\ldots,Q_m)\in(\mathbb{K}^*)^m$. Assume $f=f^{(\bullet)}_{\undQ}$ and $P^{(\bullet)}_{n}(q^2,\undQ)\neq 0$ with $\bullet\in\{\mathtt{0},\mathtt{s},\mathtt{ss}\}$. Then $\mHfcn$ is a (split) semisimple algebra.
	\end{thm}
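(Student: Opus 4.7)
The plan is to produce a complete collection of pairwise non-isomorphic simple $\mHfcn$-supermodules $\{\mathbb{D}(\undla)\mid \undla\in\mathscr{P}^{\bullet,m}_n\}$ and then to verify that the super-Wedderburn components they assemble exhaust $\mHfcn$. Once this is done, the super analogue of Wedderburn's theorem (cf.\ Example~\ref{simple algebra}) decomposes $\mHfcn$ as a direct sum of matrix superalgebras of type $\texttt{M}$ or $\texttt{Q}$, yielding split semisimplicity. The key technical input will be Theorem~\ref{main1}, which hands us an explicit basis of each $\mathbb{D}(\undla)$.

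First I would construct $\mathbb{D}(\undla)$ for each $\undla$. Choose a weight vector $v_{\mt^{\undla}}$ on which each $X_k$ acts by $\mathtt{b}_{\pm}(\res_{\mt^{\undla}}(k))$, and build the spanning set $\{C^{\beta_{\mt}}C^{\alpha_{\mt}}v_{\mt}\}$ by repeatedly applying intertwining elements $\tilde\Phi_i$ (for admissible $s_i$) and Clifford generators. The separate condition $P^{(\bullet)}_n(q^2,\undQ)\neq 0$, through Lemma~\ref{important condition1}, ensures that the quadratic whose roots are $\mathtt{b}_{\pm}$ in~\eqref{eigenvalues} has two distinct nonzero roots for $k\notin\mathcal{D}_{\mt}$, and that for $s_i$ not crossing adjacent diagonals the pair $(\mathtt{b}_{\pm}(\res_\mt(i)),\mathtt{b}_{\pm}(\res_\mt(i+1)))$ fails~\eqref{invertible}, so Corollary~\ref{Phi} makes the intertwining operator invertible. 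These facts let one verify the defining relations of $\mHfcn$ directly on the spanning set. Simplicity of $\mathbb{D}(\undla)$ follows because the joint $X_k$-eigenvalues separate the weight spaces indexed by $\mt\in\Std(\undla)$, and each weight space is irreducible as a $\mathcal{C}_n$-module by Lemma~\ref{lem:clifford rep}; non-isomorphism for $\undla\neq\underline{\mu}$ is immediate from Lemma~\ref{important condition1}(2), since the two residue multisets then differ.

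The remaining and most delicate step, which I expect to be the main obstacle, is the dimension identity
\[
\sum_{\undla\in\mathscr{P}^{\bullet,m}_n}2^{-d_{\undla}}\bigl(\dim\mathbb{D}(\undla)\bigr)^2 \;=\; r^n\cdot 2^n\cdot n! \;=\;\dim_{\mathbb{K}}\mHfcn,
\]
where $d_{\undla}\in\{0,1\}$ distinguishes type $\texttt{M}$ from type $\texttt{Q}$ and is read off from the parity of the Clifford action on the diagonal boxes $\mathcal{D}_{\undla}$. Theorem~\ref{main1} supplies $\dim\mathbb{D}(\undla)$ as $|\Std(\undla)|$ times a power of $2$ encoding $|\mathcal{D}_{\undla}|$ and $|\mathcal{OD}_{\undla}|$. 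After decomposing along the product structure $\mathscr{P}^{\mathsf{s}}_a\times\mathscr{P}^{\mathsf{s}}_b\times\mathscr{P}^m_c$ in the $\mathsf{ss}$ case (with the obvious variants in the $\mathsf{0}$ and $\mathsf{s}$ cases), the identity reduces to the classical hook-length count for ordinary Young diagrams together with the shifted-hook analogue for strict partitions, weighted by the level factor $r^n$. Balancing the powers of $2$ on the two sides, in particular reconciling the $2^{-d_{\undla}}$ weight with the Clifford contribution, is where the main combinatorial bookkeeping lives; once it is in place, the super-Wedderburn decomposition is forced and semisimplicity follows.
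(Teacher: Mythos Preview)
The paper does not give its own proof of this statement; it is simply quoted from \cite[Theorem 4.10]{SW}, and the fuller version (including the classification of simples and their types) is again quoted as Theorem~\ref{semisimple:non-dege}. So there is no in-paper argument to compare against; your outline is essentially the standard proof and is presumably what \cite{SW} does.

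A couple of remarks on your sketch. First, a small correction in the simplicity argument: each weight space $\mathbb{L}_\mt$ is irreducible as an $\mathcal{A}_n$-module (Proposition~\ref{L basis}(2)), not as a $\mathcal{C}_n$-module; in general $\dim\mathbb{L}_\mt=2^{n-\lfloor t/2\rfloor}$ exceeds the dimension $2^{\lceil n/2\rceil}$ of the unique $\mathcal{C}_n$-simple. The correct argument is: project to a single weight space using the $X_k$'s (distinct $\mathtt{q}$-sequences by Lemma~\ref{important condition1}(2)), generate that weight space as an $\mathcal{A}_n$-module, then move between weight spaces using the invertible intertwiners guaranteed by Lemma~\ref{important condition1}(3) and Corollary~\ref{Phi}(b).

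Second, on logical dependence: you invoke Theorem~\ref{main1} for $\dim\mathbb{D}(\undla)$, whereas in the paper's flow semisimplicity is cited first and then used in the applications of Theorem~\ref{main1}. This is fine, because the proof of Theorem~\ref{main1} (i.e.\ Theorem~\ref{actions of generators on L basis}) only relies on Theorem~\ref{Construction} and the material of Section~\ref{simplemodules}, not on semisimplicity.

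Finally, the dimension identity you isolate does reduce cleanly. With $t=|\mathcal{D}_{\undla}|$ one has $\dim\mathbb{D}(\undla)=2^{n-\lfloor t/2\rfloor}|\Std(\undla)|$ and $d_{\undla}=t\bmod 2$, so the target becomes $\sum_{\undla}2^{n-t}|\Std(\undla)|^2=r^n n!$. For $\bullet=\mathsf{0}$ this is the $m$-multipartition RSK identity $\sum_{\undla\in\mathscr{P}^m_n}|\Std(\undla)|^2=m^n n!$; for $\bullet=\mathsf{s},\mathsf{ss}$ it factors, via the multinomial splitting of $\Std(\undla)$, into that identity together with the shifted RSK identity $\sum_{\lambda\,\text{strict},\,|\lambda|=a}2^{a-\ell(\lambda)}g_\lambda^2=a!$, and the remaining multinomial sum collapses to $(1+2m)^n$ or $(2+2m)^n$ respectively. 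So the bookkeeping you anticipated is routine once stated this way.
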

	The explicit construction of simple modules will be given in Section \ref{Nondeg}.
	\subsection{Degenerate case}\label{basic-dege}
		\subsubsection{Affine Sergeev algebra $\mhcn$}
    \label{pag:ASA}
	For $n\in\mathbb{N}$, the affine Sergeev (or degenerate Hecke-Clifford) algebra $\mhcn$ is
	the superalgebra generated by even generators
	$s_1,\ldots,s_{n-1},x_1,\ldots,x_n$ and odd generators
	$c_1,\ldots,c_n$ subject to the following relations
	\begin{align}
		s_i^2=1,\quad s_is_j =s_js_i, \quad
		s_is_{i+1}s_i&=s_{i+1}s_is_{i+1}, \quad|i-j|>1,\label{braid}\\
		x_ix_j&=x_jx_i, \quad 1\leq i,j\leq n, \label{poly}\\
		c_i^2=1,c_ic_j&=-c_jc_i, \quad 1\leq i\neq j\leq n, \label{clifford}\\
		s_ix_i&=x_{i+1}s_i-(1+c_ic_{i+1}),\label{px1}\\
		s_ix_j&=x_js_i, \quad j\neq i, i+1, \label{px2}\\
		s_ic_i=c_{i+1}s_i, s_ic_{i+1}&=c_is_i,s_ic_j=c_js_i,\quad j\neq i, i+1, \label{pc}\\
		x_ic_i=-c_ix_i, x_ic_j&=c_jx_i,\quad 1\leq i\neq j\leq n.
		\label{xc}
	\end{align}
	
	For $\alpha=(\alpha_1,\ldots,\alpha_n)\in\mathbb{Z}_+^n$ and
	$\beta=(\beta_1,\ldots,\beta_n)\in\mathbb{Z}_2^n$, set
	$x^{\alpha}:=x_1^{\alpha_1}\cdots x_n^{\alpha},$
	$c^{\beta}:=c_1^{\beta_1}\cdots c_n^{\beta_n}$ and recall that
	$\supp(\beta):=\{1 \leq k \leq n:\beta_{k}=\bar{1}\},$
	$|\beta|:=\Sigma_{i=1}^{n}\beta_i \in \mathbb{Z}_2.$
	Then we have the
	following.
	\begin{lem}\cite[Theorem 2.2]{BK}\label{lem:PBW}
		The set $\{x^{\alpha}c^{\beta}w~|~ \alpha\in\mathbb{Z}_+^n,
		\beta\in\mathbb{Z}_2^n, w\in \mathfrak{S}_n\}$ forms a basis of $\mhcn$.
	\end{lem}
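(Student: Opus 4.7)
The plan is a standard PBW-type argument in two stages: a spanning argument that rewrites any word in the generators into the claimed normal form $x^\alpha c^\beta w$, followed by a linear independence argument via an explicit faithful module.

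For spanning, I would induct on the lexicographic pair (total $x$-degree, Coxeter length of the $s$-subword) of a monomial in the generators. The relations \eqref{px1}, \eqref{px2} let one swap $s_ix_i$ with $x_{i+1}s_i$ and $s_ix_{i+1}$ with $x_is_i$ at the cost of terms $(1+c_ic_{i+1})$ and $-(1+c_ic_{i+1})$, which have strictly smaller Coxeter length in $s$; \eqref{pc} moves any $c_j$ past an $s_i$ (permuting the Clifford index at worst); \eqref{xc} and \eqref{clifford} permute $x$'s past $c$'s with a possible sign and among themselves freely; and \eqref{braid} together with $s_i^2=1$ reorganize the $s$-part using reduced expressions for a well-defined $w \in \mathfrak{S}_n$. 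A careful induction thus reduces any monomial to a $\mathbb{K}$-linear combination of the claimed normal form.

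For linear independence, the clean approach is to construct the natural polynomial representation. Take $M := \mathbb{K}[y_1,\ldots,y_n]\otimes \mathcal{C}_n$, let each $c_j$ act by left multiplication on the Clifford factor, each $y_j$ by multiplication on the polynomial factor (these will realize the $x_j$'s up to Dunkl-type corrections), and let $s_i$ act via the naive permutation of the $y$'s on the polynomial part combined with the swap $c_i \leftrightarrow c_{i+1}$ dictated by \eqref{pc}. Then define the action of $x_i$ iteratively using \eqref{px1}--\eqref{px2} to guarantee compatibility. With these definitions the cyclic vector $1\otimes 1$ generates $M$, and the images of the vectors $x^\alpha c^\beta w$ applied to $1\otimes 1$ are manifestly distinct monomials of the form $y^{w(\alpha)}\otimes c^{\beta'}$ (with bijectively determined $\beta'$), hence linearly independent. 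Faithfulness of the representation on the span of normal forms then upgrades the spanning statement to a basis statement.

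The main obstacle is the verification of the defining relations on $M$. The commutation relations \eqref{poly}--\eqref{clifford} and \eqref{pc}, \eqref{xc} are immediate from the construction, but the braid relations \eqref{braid} and the mixed relations \eqref{px1}--\eqref{px2} require careful bookkeeping of the $(1+c_ic_{i+1})$ corrections: the twisted permutation operator for $s_i$ is not just a transposition but rather interacts nontrivially with the Clifford factor, and one must check that the inductively defined $x_i$ on $M$ is consistent (independent of how one applies \eqref{px1}--\eqref{px2}) and satisfies all three-generator relations. This is routine but lengthy; no new idea beyond patient computation is required.
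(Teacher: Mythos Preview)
The paper does not give its own proof of this lemma; it simply cites \cite[Theorem 2.2]{BK}. Your spanning argument via straightening is standard and fine.

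Your linear independence argument, however, has a real gap. In the module $M=\mathbb{K}[y_1,\ldots,y_n]\otimes\mathcal{C}_n$ with $s_i$ acting by the simultaneous transposition of the $y$'s and the $c$'s, the vector $1\otimes 1$ is fixed by every $s_i$, hence by every $w\in\mathfrak{S}_n$. Therefore
\[
x^\alpha c^\beta w\cdot(1\otimes 1)=x^\alpha c^\beta\cdot(1\otimes 1)
\]
is independent of $w$, and evaluation at the cyclic vector cannot separate different $w$'s. Your claimed formula $y^{w(\alpha)}\otimes c^{\beta'}$ is not what the action produces (the $w$ hits $1\otimes 1$ first and disappears), and in any case would still collapse for $\alpha=0$. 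The module $M$ is simply too small: it has the size of $\mathbb{K}[x]\otimes\mathcal{C}_n$, while $\mhcn$ must be $n!$ times larger.

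The usual fix is one of two routes. Either (i) filter $\mhcn$ by total $x$-degree and identify the associated graded with the tensor product $\mathbb{K}[x_1,\ldots,x_n]\otimes\mathcal{C}_n\otimes\mathbb{K}\mathfrak{S}_n$, so that linear independence is inherited from the graded object (this is essentially the Brundan--Kleshchev argument); or (ii) enlarge the representation space to $\mathbb{K}[y_1,\ldots,y_n]\otimes\mathcal{C}_n\otimes\mathbb{K}\mathfrak{S}_n$, letting $s_i$ act by right multiplication on the $\mathbb{K}\mathfrak{S}_n$ factor so that the $w$-component is genuinely recorded, after which a cyclic-vector argument does go through.
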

	
	Let $\mathcal{P}_n$ \label{pag:subalg Pn} be the superalgebra generated by even generators $x_1,\ldots,x_n$ and odd generators $c_1,\ldots,c_n$.
	By Lemma~\ref{lem:PBW},
$\mathcal{P}_n$ can be identified with the superalgebra generated by even generators $x_1,\ldots,x_n$ and odd generators $c_1,\ldots,c_n$ subject to relations \eqref{poly}, \eqref{clifford}, \eqref{xc}. 	Clifford algebra $\mathcal{C}_n$ can be identified with the superalgebra generated by odd generators $c_1,\ldots,c_n$ with relations \eqref{clifford}.

	\subsubsection{Intertwining elements for $\mhcn$}
	Following~\cite{Na2}, we define the intertwining elements as
	\label{pag:dege BK intertwining elements}
	\begin{align}
		\tilde{\phi}_i:=s_i(x_i^2-x^2_{i+1})+(x_i+x_{i+1})+c_ic_{i+1}(x_i-x_{i+1}),\quad
		1\leq i<n.\label{intertw}
	\end{align}
	It is known that
	\begin{align}
		\tilde{\phi}_i^2=2(x_i^2+x^2_{i+1})-(x_i^2-x^2_{i+1})^2\label{sqinter},\\
		\tilde{\phi}_ix_i=x_{i+1}\tilde{\phi}_i, \tilde{\phi}_ix_{i+1}=x_i\tilde{\phi}_i,
		\tilde{\phi}_ix_l=x_l\tilde{\phi}_i\label{xinter},\\
		\tilde{\phi}_ic_i=c_{i+1}\tilde{\phi}_i, \tilde{\phi}_ic_{i+1}=c_i\tilde{\phi}_i,
		\tilde{\phi}_ic_l=c_l\tilde{\phi}_i\label{cinter},\\
		\tilde{\phi}_j\tilde{\phi}_i=\tilde{\phi}_i\tilde{\phi}_j,
		\tilde{\phi}_i\tilde{\phi}_{i+1}\tilde{\phi}_i=\tilde{\phi}_{i+1}\tilde{\phi}_i\tilde{\phi}_{i+1}\label{braidinter}
	\end{align}
	for all admissible $j,i,l$ with $l\neq i, i+1$ and $|j-i|>1$.
We also have the intertwining elements of the following version
\label{pag:dege N intertwining elements}
\begin{align*}
	\phi_{i}:=\tilde{\phi}_i (x_{i}^{2}-x_{i+1}^{2})^{-1}&=s_i+\frac{1}{x_i-x_{i+1}}+c_ic_{i+1}\frac{1}{x_i+x_{i+1}}\\
                            &\in \mathbb{K}(x_1,\ldots,x_n)\otimes_{\mathbb{K}[x_1,\ldots, x_n]} \mhcn,
\end{align*}
	for $i=1,\ldots,n-1.$
	
	For any $i=1,2,\ldots,n-1$ and $x,y \in \mathbb{K}$ satisfying $y\neq \pm x,$ let (\cite{Na2})
    \label{pag:dege N phi function}
	$$\phi_i(x,y):=s_{i}+\frac{1}{x-y}+\frac{c_{i}c_{i+1}}{x+y} \in \mHcn.$$
\begin{rem}\label{diff. of phi and Phi}
It is worth noting that $x$'s are on the right-hand-side of $c$'s in the third term of $\phi_{i},$ which differs slightly from $\Phi_{i}$ \eqref{universal-Phi} in non-degenerate case.
\end{rem}

	\begin{lem}(\cite[(5.1)]{Na2}) These functions satisfy the following properties
		\begin{align}
			\phi_i(x,y)\phi_i(y,x)=1- \frac{1}{(x-y)^2} - \frac{1}{(x+y)^2}, \label{degsquare1}\\
			\phi_i(x,y)\phi_j(z,w)=\phi_j(z,w)\phi_i(x,y), \quad \text{ if } |j-i|>1, \label{degbraidrel1} \\
			\phi_i(x,y)\phi_{i+1}(z,y)\phi_i(z,x)=\phi_{i+1}(z,x)\phi_i(z,y)\phi_{i+1}(x,y) \label{degbraidrel2}
		\end{align}
		for all possible $i,j$ and $x,y,z,w.$ Furthermore, we also have the following
		\begin{align}
			\phi_i(x,y)^2= \frac{2}{x-y} \phi_i(x,y) + 1&- \frac{1}{(x-y)^2} - \frac{1}{(x+y)^2},\label{degsquare2}\\
			c_i\phi_i(x,y)=\phi_i(x,-y)c_{i+1},\quad  c_{i+1}\phi_i(x,y)&=\phi_i(-x,y)c_i,\quad c_j\phi_i(x,y)=\phi_i(x,y)c_j \label{phic2}
		\end{align}
		for $j \neq i,i+1.$
	\end{lem}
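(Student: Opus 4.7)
The plan is to verify each of the five identities directly from the defining relations of $\mhcn$, treating the rational coefficients in $\phi_i(x,y)$ as scalars (which is legitimate since $x,y,z,w$ are scalars in $\mathbb{K}$, not elements of the algebra). The main ingredients are the Coxeter braid relation, the Clifford relations \eqref{clifford}, and the ``twist'' relations $s_i c_i = c_{i+1} s_i$ and $s_i c_{i+1} = c_i s_i$ from \eqref{pc}, together with the fact that the $c_j$, $c_{j+1}$ with $j\notin\{i-1,i,i+1\}$ commute with everything involved.

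For \eqref{degsquare1}, I would expand $\phi_i(x,y)\phi_i(y,x)$ by distributing, using $s_i^2=1$, $(c_ic_{i+1})^2=-1$ (from $c_i^2=c_{i+1}^2=1$ and $c_ic_{i+1}=-c_{i+1}c_i$), and the identities $s_i\cdot c_ic_{i+1} = c_{i+1}c_i\cdot s_i = -c_ic_{i+1}\cdot s_i$. The cross terms involving $s_i$ collect to $\bigl(\frac{1}{x-y}+\frac{1}{y-x}\bigr)s_i = 0$ and similarly for the $c_ic_{i+1}$ terms, leaving $1 - \frac{1}{(x-y)^2} - \frac{1}{(x+y)^2}$. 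The quadratic relation \eqref{degsquare2} follows in a very similar manner, or by writing $\phi_i(x,y)^2 = \phi_i(x,y)\phi_i(y,x) + \phi_i(x,y)(\phi_i(x,y)-\phi_i(y,x))$ and observing that $\phi_i(x,y)-\phi_i(y,x) = \frac{2}{x-y}$.

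The commutation \eqref{degbraidrel1} is immediate from the fact that every generator appearing in $\phi_i(x,y)$ (namely $s_i, c_i, c_{i+1}$) commutes with every generator appearing in $\phi_j(z,w)$ when $|i-j|>1$, by \eqref{braid}, \eqref{clifford}, \eqref{pc}. The Clifford commutations \eqref{phic2} are also straightforward: for instance $c_i \phi_i(x,y) = c_i s_i + \frac{c_i}{x-y} + \frac{c_ic_ic_{i+1}}{x+y} = s_i c_{i+1} + \frac{c_i}{x-y} + \frac{c_{i+1}}{x+y}$, whereas $\phi_i(x,-y)c_{i+1} = s_i c_{i+1} + \frac{c_{i+1}}{x+y} + \frac{c_ic_{i+1}c_{i+1}}{x-y} = s_i c_{i+1} + \frac{c_{i+1}}{x+y} + \frac{c_i}{x-y}$, and these match; the other two cases are analogous.

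The main obstacle is the braid relation \eqref{degbraidrel2}. Expanding both sides produces $27$ terms each, involving monomials of the form $s_is_{i+1}s_i$ (equal to $s_{i+1}s_is_{i+1}$ by \eqref{braid}), $s_is_{i+1}$, $s_{i+1}s_i$, $s_i$, $s_{i+1}$, $1$, each possibly multiplied by products of Clifford generators from $\{c_i,c_{i+1},c_{i+2}\}$. The verification amounts to matching coefficients of each distinct group-algebra-times-Clifford monomial on both sides. Because of the sign twists $s_ic_i=c_{i+1}s_i$ and $s_{i+1}c_{i+1}=c_{i+2}s_{i+1}$, one must carefully track the order in which Clifford generators are pushed past $s$'s before comparing. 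Rather than carrying out all $27+27$ expansions by brute force, I would organize the check by first verifying the ``even'' part (ignoring Clifford terms, reducing to the known braid identity for the trigonometric $R$-matrix $s_i + \frac{1}{x-y}$ of the degenerate affine Hecke algebra), and then verifying the terms containing exactly two Clifford generators and the terms containing four Clifford generators separately, using \eqref{phic2}-type identities to normalize all Clifford words to $c_ic_{i+1}$, $c_ic_{i+2}$ or $c_{i+1}c_{i+2}$ standing on the right. Since this is precisely the computation carried out in \cite[(5.1)]{Na2}, I would ultimately cite Nazarov's identity rather than reproduce it in full.
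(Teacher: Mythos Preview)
Your proposal is correct and aligns with the paper's treatment: the paper does not give a proof of this lemma at all but simply cites \cite[(5.1)]{Na2}, which is exactly what you ultimately do for the braid relation \eqref{degbraidrel2}. Your sketched verifications of \eqref{degsquare1}, \eqref{degsquare2}, \eqref{degbraidrel1}, and \eqref{phic2} are accurate (in particular the trick $\phi_i(x,y)-\phi_i(y,x)=\tfrac{2}{x-y}$ for deducing \eqref{degsquare2} from \eqref{degsquare1} is clean), so you actually provide more detail than the paper itself.
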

	
	For any pair of $(x,y)\in \mathbb{K}^2$ and $y\neq \pm x$, we consider the following idempotency condition on $(x,y)$
	\begin{align}\label{invertible dege}
		\frac{1}{(x-y)^2}+\frac{1}{(x+y)^2}=1.
	\end{align}
	Note that the equation \eqref{invertible dege} holds for the pair $(x,y)$ if and only if it holds for one of these four pairs $(\pm x,\pm y).$	
	Similarly, we have
	\begin{cor}\label{phi}
		(a) Suppose that the pair $(x,y)$ satisfies (\ref{invertible dege}) and $y\neq \pm x.$ Then
		$$\phi_i(x,y)^2=\frac{2}{x-y} \phi_i(x,y).$$
		(b) Suppose that the pair $(x,y)$ does not satisfy (\ref{invertible dege}) and $y\neq \pm x.$ Then $\phi_i(x,y)$ is invertible and
		$$\phi_i(x,y)^{-1}
		=\left( 1- \frac{1}{(x-y)^2} - \frac{1}{(x+y)^2} \right)^{-1}  \left( \phi_i(x,y)-\frac{2}{x-y} \right).$$
	\end{cor}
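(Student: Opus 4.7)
The plan is to derive Corollary \ref{phi} as a direct consequence of the quadratic relation \eqref{degsquare2}, exactly mirroring the proof of Corollary \ref{Phi} in the non-degenerate setting. All the necessary identities have already been recorded in the immediately preceding lemma, so no further computation inside $\mhcn$ is needed; the argument is purely algebraic manipulation of \eqref{degsquare2}.

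For part (a), I would substitute the hypothesis \eqref{invertible dege} directly into \eqref{degsquare2}. Since the constant term in \eqref{degsquare2} is precisely
\[
1-\frac{1}{(x-y)^2}-\frac{1}{(x+y)^2},
\]
the assumption that $(x,y)$ satisfies \eqref{invertible dege} forces this scalar to vanish, leaving
\[
\phi_i(x,y)^2=\frac{2}{x-y}\phi_i(x,y),
\]
as claimed.

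For part (b), I would set
\[
\alpha:=1-\frac{1}{(x-y)^2}-\frac{1}{(x+y)^2},
\]
which is nonzero by hypothesis. Rewriting \eqref{degsquare2} as
\[
\phi_i(x,y)\Bigl(\phi_i(x,y)-\tfrac{2}{x-y}\Bigr)=\alpha=\Bigl(\phi_i(x,y)-\tfrac{2}{x-y}\Bigr)\phi_i(x,y),
\]
(the second equality uses that $\phi_i(x,y)$ commutes with the scalar $\tfrac{2}{x-y}$), I obtain a two-sided inverse
\[
\phi_i(x,y)^{-1}=\alpha^{-1}\Bigl(\phi_i(x,y)-\tfrac{2}{x-y}\Bigr),
\]
which is the asserted formula.

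There is no genuine obstacle here; the only thing to be careful about is verifying that the quadratic relation \eqref{degsquare2} produces a two-sided (not merely one-sided) inverse, which follows because $\phi_i(x,y)$ commutes with the scalar $\tfrac{2}{x-y}\in\mathbb{K}$. This completes the proof.
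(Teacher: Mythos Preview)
Your proof is correct and matches the paper's intended approach: the paper presents this corollary without explicit proof, prefacing it only with ``Similarly, we have'' in reference to Corollary~\ref{Phi}, so deriving both parts directly from the quadratic relation \eqref{degsquare2} is exactly what is expected.
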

		For any  $\iota \in \mathbb{K}$, we set
    \label{dege q-function and u-function}
	\begin{align}
		\mathtt{q}(\iota):=\iota(\iota+1) \label{qi},\quad \mathtt{u}_{\pm}(\iota):=\pm \sqrt{\iota(\iota+1)}.
	\end{align}
	
	In \cite{Na2}, the condition \eqref{invertible dege} was transformed by the substitution \begin{align}\label{substitute dege}
		x^2=u(u+1)=	\mathtt{q}(u),\qquad\qquad y^2=v(v+1)=	\mathtt{q}(v)
	\end{align} to the equivalent condition which states that $u,v$ satisfy one of the following four equations \begin{align}\label{invertible dege2}
		u-v=\pm 1,\quad u+v=0,\quad u+v=-2.
	\end{align}
	\subsubsection{Cyclotomic Sergeev algebra $\mhgcn$}
    \label{pag:CSA}
	Similarly, to define the cyclotomic Sergeev algebra $\mhgcn$, we need to fix a $g=g(x_1)\in \mathbb{K}[x_1]$ satisfying  \cite[3-e]{BK}. Since we are working over algebraically closed field  $\mathbb{K}$,  one can check that $g=g(x_1)\in \mathbb{K}[x_1]$ satisfying  \cite[3-e]{BK} must be one of the following two forms:
	$$\begin{aligned}
		g=\begin{cases}
		g^{(\mathsf{0})}_{\underline{Q}}=\prod_{i=1}^m\biggl(x^2_1-\mathtt{q}(Q_i)\biggr), \\ 
		g^{\mathsf{(s)}}_{\underline{Q}}=x_1\prod_{i=1}^m\biggl(x^2_1-\mathtt{q}(Q_i)\biggr), 
			\end{cases}
	\end{aligned}
	$$ where $Q_1,\cdots, Q_m\in\mathbb{K}$. \label{pag:dege Q-parameters} In each case, the degree $r$ of the polynomial $f$ is $2m,\,2m+1$ respectively.
	
	The cyclotomic Sergeev (or degenerate cyclotomic Hecke-Clifford) algebra $\mhgcn$ is defined as $$\mhgcn:=\mhcn/\mathcal{J}_g,
	$$ where $\mathcal{J}_g$ is the two sided ideal of $\mhcn$ generated by $g(x_1)$. We call $r:=\deg(g)$ the level of $\mhgcn.$
    \label{pag:dege level}
    We shall denote the images of $x^{\alpha}, c^{\beta}, w$ in the cyclotomic quotient $\mhgcn$ by the same symbols.
	
	Then we have the following due to \cite{BK}.
	\begin{lem}\cite[Theorem 3.6]{BK}
		The set $\{x^{\alpha}c^{\beta}w~|~ \alpha\in\{0,1,\cdots,r-1\}^n,
		\beta\in\mathbb{Z}_2^n, w\in {\mathfrak{S}_n}\}$ forms a basis of $\mhgcn$, where $r=\deg(g)$.
	\end{lem}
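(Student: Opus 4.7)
The plan is to prove the statement in two stages: first show that the given set spans $\mhgcn$, then establish linear independence by matching a lower bound on dimension. Essentially this is the cyclotomic PBW theorem of Brundan--Kleshchev, so the real difficulty is concentrated in the second stage.

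For spanning, I would start from the affine PBW basis of Lemma \ref{lem:PBW}, namely $\{x^{\alpha}c^{\beta}w\mid \alpha\in\mathbb{Z}_+^n,\,\beta\in\mathbb{Z}_2^n,\,w\in\mathfrak{S}_n\}$. The cyclotomic relation $g(x_1)=0$ immediately truncates $\alpha_1$ to the range $\{0,\dots,r-1\}$. To transfer this truncation to $x_i$ with $i\geq 2$, I would iterate the relation \eqref{px1}: it yields $s_i x_i^k = x_{i+1}^k s_i + R_{i,k}$ with $R_{i,k}$ of strictly smaller total $x$-degree, after also using \eqref{pc} and \eqref{xc} to move the $c$'s through. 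Conjugating $g(x_1)$ by $s_1\cdots s_{i-1}$ therefore produces a monic polynomial relation of degree $r$ for $x_i$, modulo terms of lower $x$-degree. A double induction, outer on $i$ and inner on the total degree $|\alpha|$ in the deglex order, then shows that every element of $\mhgcn$ is a $\mathbb{K}$-linear combination of the claimed set.

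For linear independence, the strategy I would follow is the one from \cite[Theorem 3.6]{BK}: exhibit a faithful $\mhgcn$-module of dimension $r^n\cdot 2^n\cdot n!$. A natural candidate is a universal module over the polynomial ring $\mathbb{K}[Q_1,\dots,Q_m]$ whose specialisations at generic $\undQ$ decompose into the simple modules indexed by the combinatorics $\mathscr{P}^{\bullet,m}_n$ recalled before Theorem \ref{main1}. Summing squares of the dimensions of these simple modules in the generic case should recover $r^n\cdot 2^n\cdot n!$, matching the spanning bound. An alternative and perhaps cleaner route is a flatness or deformation argument: both sides of the claimed basis identity are flat in the parameters $\undQ$, so it suffices to check at generic values where the algebra is split semisimple and the dimension count is immediate.

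The main obstacle is the linear independence step: the spanning reduction is a routine consequence of the relations, but matching the lower bound on dimension requires either an explicit module-theoretic construction or the semisimple specialisation. Once both bounds coincide at $r^n\cdot 2^n\cdot n!$, the PBW property for $\mhgcn$ follows.
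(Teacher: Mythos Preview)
The paper does not prove this lemma; it is stated as a citation of \cite[Theorem 3.6]{BK} and used as a black box. Your sketch is a reasonable outline of the Brundan--Kleshchev argument: spanning via the commutation relations and an induction on $x$-degree, then independence by exhibiting a module of the correct dimension.

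One caution about your alternative route. Proving linear independence by specialising to generic $\undQ$, invoking semisimplicity, and summing squares of the dimensions of the simples $D(\undla)$ over $\undla\in\mathscr{P}^{\bullet,m}_n$ is circular in the logical order of this paper and of \cite{SW}: the semisimplicity theorem and the statement that the $D(\undla)$ exhaust the simples both presuppose that $\dim_{\mathbb{K}}\mhgcn = r^n\cdot 2^n\cdot n!$, which is exactly what you are trying to establish. The original \cite{BK} argument avoids this by building an explicit faithful representation on a polynomial-type space directly from the defining relations, with no appeal to the later representation-theoretic classification. If you want a self-contained proof, that is the route to take; your first suggestion (a faithful module of the right size) is the correct one, but it has to be constructed by hand rather than assembled from the $D(\undla)$.
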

	By \cite[(2.32)]{BK}, the algebra $\mhgcn$ admits an anti-involution $*$ defined by $s_i^*=s_i,$ $c_j^*=c_j$ and $x_j^*=x_j,$ for
$i\in [n-1],$ $j\in[n].$ \label{pag:dege anti-involution}

	We set $Q_0:=0$.
    \label{pag:dege residue}
	\begin{defn}Suppose  $\undla\in\mathscr{P}^{\bullet,m}_{n}$ with $\bullet\in\{\mathsf{0},\mathsf{s}\}$  and $(i,j,l)\in \undla$, we define the residue of box $(i,j,l)$ with respect to the parameter $\undQ=(Q_1,\ldots,Q_m)\in\mathbb{K}^m$ as follows: \begin{equation}\label{eq:deg-residue}
			\res(i,j,l):=Q_l+j-i.
		\end{equation} If $\mathfrak{t}\in \Std(\undla)$ and $\mathfrak{t}(i,j,l)=a$, we set \begin{align}\label{res-dege}
			\res_\mathfrak{t}(a)&:=Q_l+j-i,\\
			\res(\mathfrak{t})&:=(\res_\mathfrak{t}(1),\cdots,\res_\mathfrak{t}(n)),
		\end{align}
		then the (degenerate) $\mathtt{q}$-sequence of $\mt$ is
\begin{align}\label{res-dege2}
       \mathtt{q}(\res(\mathfrak{t})):=(\mathtt{q}(\res_{\mathfrak{t}}(1)), \mathtt{q}(\res_{\mathfrak{t}}(2)),\ldots, \mathtt{q}(\res_{\mathfrak{t}}(n))).
       \end{align}
	\end{defn}
	\subsubsection{Separate Condition}
	We shall recall the separate condition in \cite[Definition 5.8]{SW} on the choice of the parameters $\underline{Q}$ and $g=g^{(\bullet)}_{\underline{Q}}$ with $\bullet\in\{\mathtt{0},\mathtt{s}\}$, $r:=\deg(g)$.

	\begin{defn}\label{important condition2}\cite[Definition 5.8]{SW}
	Let $\undQ=(Q_1,\ldots,Q_m)\in\mathbb{K}^m$ and $\undla\in\mathscr{P}^{\bullet,m}_{n}$ with $\bullet\in\{\mathsf{0},\mathsf{s}\}$. The parameter $\undQ$ is said to be {\em separate} with respect to $\undla$  if for any $\mathfrak{t}\in \Std(\undla)$, the $\mathtt{q}$-sequence for $\mathfrak{t}$ defined via \eqref{res-dege2} satisfy the following condition:
    $$
	\mathtt{q}(\res_{\mathfrak{t}}(k))\neq\mathtt{q}(\res_{\mathfrak{t}}(k+1)) \text{ for any $k=1,\cdots,n-1.$ }
	$$
	\end{defn}
	
	As in non-degenerate case, the separate condition holds for any $\underline{\mu} \in \mathscr{P}^{\bullet,m}_{n+1}$ can be reformulated via the condition $P^{(\bullet)}_{n}(1,\undQ)\neq 0$ (\cite[Proposition 5.12]{SW}), \label{pag:dege Pioncare poly} where $P^{(\bullet)}_{n}(1,\undQ)$ ($\bullet\in\{\mathsf{0},\mathsf{s}\}$) is an explicit polynomial on $\undQ$ which can be regarded as the extended definition of $P^{(\bullet)}_{n}(q^2,\undQ)$ on ``$q^2=1$''. We also skip the definition of $P^{(\bullet)}_{n}(1,\undQ)$ here.
\begin{prop}\label{separate formula dege}\cite[Proposition 5.13]{SW}
	Let $n\geq 1,\,m\geq 0$,  $\undQ=(Q_1,\ldots,Q_m)\in\mathbb{K}^m$ and $\bullet\in\{\mathsf{0},\mathsf{s}\}$. Then the parameter $\undQ$ is separate with respect to $\undla$ for any $\undla\in\mathscr{P}^{\bullet,m}_{n+1}$ if and only if $P_n^{(\bullet)}(1,\undQ)\neq 0$.
\end{prop}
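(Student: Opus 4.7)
The plan is to mirror the argument for Proposition \ref{separate formula}, working throughout with the degenerate $\mathtt{q}$-function \eqref{qi}. The first observation is that $\mathtt{q}(x) = \mathtt{q}(y)$ with $\mathtt{q}(\iota) = \iota(\iota+1)$ factors as $(x-y)(x+y+1) = 0$, so it is equivalent to $x = y$ or $x+y = -1$. These two alternatives are the degenerate analogues of the four-case analysis \eqref{invertible2} that controls the non-degenerate case via the substitution \eqref{substitute}, and indeed they can be obtained directly by specializing \eqref{invertible2} under \eqref{substitute dege}.

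Next I would translate the separate condition into an explicit collection of linear equations on $\undQ$. If $\mathfrak{t} \in \Std(\undla)$ has $\mathfrak{t}(i,j,l) = k$ and $\mathfrak{t}(i',j',l') = k+1$, then by \eqref{res-dege} the residues are $\res_{\mathfrak{t}}(k) = Q_l + j - i$ and $\res_{\mathfrak{t}}(k+1) = Q_{l'} + j' - i'$, so the separate condition fails at this step precisely when one of
\begin{equation*}
Q_l - Q_{l'} = (i - j) - (i' - j'), \qquad Q_l + Q_{l'} = (i - j) + (i' - j') - 1
\end{equation*}
holds (with the convention $Q_0 = 0$ in the strict case $\bullet = \mathsf{s}$). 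Running over all $\undla \in \mathscr{P}^{\bullet,m}_{n+1}$, all $\mathfrak{t} \in \Std(\undla)$, and all $1 \leq k \leq n$ produces a finite list of affine hyperplanes in $\mathbb{K}^m$ whose union is exactly the non-separate locus.

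The concluding step is to identify the product of these distinct linear forms with $P_n^{(\bullet)}(1, \undQ)$. As noted after Definition \ref{important condition2}, the polynomial $P_n^{(\bullet)}(q^2, \undQ)$ is built in [SW] in exactly this way in the non-degenerate setting, and its extension to $q^2 = 1$ is defined so that each non-degenerate linear factor (arising under \eqref{substitute} from the four conditions \eqref{invertible2}) collapses to the corresponding degenerate linear factor listed above. Granting this identification, the equivalence is immediate: $P_n^{(\bullet)}(1, \undQ) \neq 0$ if and only if $\undQ$ lies off every hyperplane in the list, if and only if the separate condition holds for every $\undla \in \mathscr{P}^{\bullet,m}_{n+1}$.

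The main obstacle I expect is the strict-partition case $\bullet = \mathsf{s}$: when boxes lie on or near the main diagonal of $\lambda^{(0)}$, the convention $Q_0 = 0$ causes several generic factors of $P_n^{(\mathsf{s})}(q^2, \undQ)$ to coincide or to specialize to constants, so one must verify that each distinct degenerate hyperplane still appears in the specialized product with the correct multiplicity and a nonzero leading coefficient. This bookkeeping parallels, and is slightly simpler than, the corresponding analysis already carried out in [SW] for the non-degenerate case; alternatively, one can bypass it entirely by repeating the enumeration argument of Proposition \ref{separate formula} with \eqref{substitute} and \eqref{invertible2} replaced by \eqref{substitute dege} and the two-case dichotomy above.
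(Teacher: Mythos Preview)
The paper does not supply its own proof of this proposition: it is stated with a direct citation to \cite[Proposition~5.13]{SW}, and the surrounding text explicitly says ``We also skip the definition of $P^{(\bullet)}_{n}(1,\undQ)$ here.'' So there is no proof in the paper to compare your proposal against.

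That said, your sketch is a reasonable reconstruction of what the argument in \cite{SW} must look like, and the factorization $(x-y)(x+y+1)=0$ for $\mathtt{q}(x)=\mathtt{q}(y)$ is correct. The honest gap is the one you flag yourself: without the explicit definition of $P_n^{(\bullet)}(1,\undQ)$, the final identification step is an assumption rather than an argument. In \cite{SW} the polynomial is \emph{defined} as a product of precisely the linear forms you enumerate, so once that definition is in hand the equivalence is indeed immediate; but as written your proof depends on a definition the present paper has chosen not to reproduce. If you want a self-contained statement, you could replace the proposition by the assertion that the non-separate locus for $\mathscr{P}^{\bullet,m}_{n+1}$ is the zero set of an explicit product of linear forms in $\undQ$, which your first two paragraphs already establish.
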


Again, the following is key for our constructions in the rest of this paper.	
\begin{lem}\label{important conditionequi2}\cite{SW}
Let $\undQ=(Q_1,\ldots,Q_m)\in\mathbb{K}^m$ and $\bullet\in\{\mathsf{0},\mathsf{s}\}$. Suppose $P_n^{(\bullet)}(1,\undQ)\neq 0$. Then
	\begin{enumerate}
		\item For any $\undla\in\mathscr{P}^{\bullet,m}_{n},$ $\mathfrak{t}\in\Std(\undla)$, we have  $\mathtt{u}_{\pm}(\res_{\mathfrak{t}}(k))\neq 0$ for $k\notin \mathcal{D}_{\mathfrak{t}}$;
		\item For any $\undla,\underline{\mu} \in\mathscr{P}^{\bullet,m}_{n},$ $\mt\in\Std(\undla), \mt' \in \Std(\underline{\mu}),$
if $\mt\neq \mt',$ then we have $\mathtt{q}(\res(\mt))\neq \mathtt{q}(\res(\mt'))$;
		\item For any $\undla\in\mathscr{P}^{\bullet,m}_{n},$ $\mathfrak{t}\in\Std(\undla)$ and $k\in [n-1]$, the four pairs $(\mathtt{u}_{\pm}(\res_{\mathfrak{t}}(k)),\mathtt{u}_{\pm}(\res_{\mathfrak{t}}(k+1)))$ do not satisfy \eqref{invertible dege} if $k,k+1$ are not in the adjacent diagonals of $\mathfrak{t}$.
	\end{enumerate}
\end{lem}

	\begin{thm}\cite[Theorem 5.21]{SW}
		Let $\undQ=(Q_1,Q_2,\ldots,Q_m)\in\mathbb{K}^m$.  Assume $g=g^{(\bullet)}_{\undQ}$ and   $P_n^{(\bullet)}(1,\undQ)\neq 0$, with $\bullet\in\{\mathtt{0},\mathtt{s}\}$. Then  $\mhgcn$ is a (split) semisimple algebra.
	\end{thm}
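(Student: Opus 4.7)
The plan is to mirror the non-degenerate story: for each $\undla \in \mathscr{P}^{\bullet,m}_{n}$, produce an explicit simple $\mhgcn$-supermodule $\mathbb{D}(\undla)$, show the resulting list exhausts the isomorphism classes of simples, and then conclude semisimplicity by Wedderburn after matching dimensions.

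First, I would build the underlying vector space of $\mathbb{D}(\undla)$ as a direct sum of ``weight spaces'' indexed by $\Std(\undla)$, with each weight space carrying a simple $\mathcal{C}_n$-module of the appropriate type ($\texttt{M}$ or $\texttt{Q}$) dictated by the parity of $|\mathcal{D}_{\undla}|$ via Lemma \ref{lem:clifford rep}. On a seed vector $v_{\mt^{\undla}}$ I would declare $x_i$ to act through the eigenvalues $\mathtt{u}_{\pm}(\res_{\mt^{\undla}}(i))$; Lemma \ref{important conditionequi2}(1) guarantees these eigenvalues are nonzero off the diagonal, so the anticommutation \eqref{xc} pins down the sign ambiguity up to Clifford conjugation. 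For any other $\mt \in \Std(\undla)$, writing $d(\mt,\mt^{\undla}) = s_{k_p}\cdots s_{k_1}$ as a reduced expression, I would define $v_{\mt}$ by applying the suitably normalized intertwiner $\phi_{k_p}\cdots\phi_{k_1}$ to $v_{\mt^{\undla}}$, and then sweep out the full weight space by the Clifford action.

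Next, I would verify the defining relations \eqref{braid}--\eqref{xc} of $\mhgcn$ on $\mathbb{D}(\undla)$. The braid and commutation relations for the $s_i$ follow from \eqref{braidinter} together with Lemma \ref{admissible transposes}, which certifies that every reduced decomposition of $d(\mt,\mt^{\undla})$ threads through standard tableaux. Crucially, Lemma \ref{important conditionequi2}(3) excludes the idempotency condition \eqref{invertible dege} at any admissible step, so each $\phi_{k_i}$ involved is invertible on the weight space it acts on by Corollary \ref{phi}(b); this is exactly what makes the assignment $\mt \mapsto v_{\mt}$ independent of the chosen reduced expression. The intertwining relations \eqref{xinter}, \eqref{cinter}, \eqref{phic2} then lift the scalar action on $v_{\mt^{\undla}}$ to a consistent action on all of $\mathbb{D}(\undla)$. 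Finally, the cyclotomic relation $g(x_1)\cdot v_{\mt^{\undla}} = 0$ is automatic: the box $(1,1,l)\in\undla$ forces $\res_{\mt^{\undla}}(1) = Q_l$ for some $l$ in the plain case, while in the strict case $(1,1,0)\in\mathcal{D}_{\undla}$ forces $x_1 v_{\mt^{\undla}} = 0$, killing the extra $x_1$ factor in $g^{\mathsf{(s)}}_{\undQ}$.

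Simplicity of $\mathbb{D}(\undla)$ then follows because $\mathcal{P}_n$ acts semisimply with distinct joint eigenvalues on distinct weight spaces, so any nonzero submodule must contain some $v_{\mt}$, after which the invertible intertwiners and the Clifford action produce the entire module. Pairwise non-isomorphism across different $\undla \in \mathscr{P}^{\bullet,m}_{n}$ is immediate from Lemma \ref{important conditionequi2}(2), since the joint spectrum of $x_1^2,\ldots,x_n^2$ alone distinguishes them. A dimension count, matching the PBW dimension $2^n\cdot n!\cdot r^n$ of $\mhgcn$ against $\sum_{\undla}(\dim\mathbb{D}(\undla))^2/\dim\End_{\mhgcn}(\mathbb{D}(\undla))$ with the appropriate type $\texttt{M}$/$\texttt{Q}$ bookkeeping, then closes the argument via Wedderburn. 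The main obstacle is the Clifford bookkeeping at nodes lying on $\mathcal{D}_{\undla}$, where the naive intertwiner must be twisted by a Clifford idempotent to avoid singular behaviour; Lemma \ref{important conditionequi2}(1), which controls $\mathtt{u}_{\pm}(\res_{\mt}(k))$ both on and off the diagonal, is precisely the input that makes this twist unambiguous and guarantees the type $\texttt{M}$/$\texttt{Q}$ dichotomy tracks the parity of $|\mathcal{D}_{\undla}|$ correctly.
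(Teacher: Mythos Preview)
Your outline is correct and matches the approach of the cited reference \cite{SW}, which the paper itself does not reprove: construct each $D(\undla)$ as in formula \eqref{actionformula} recalled from \cite{SW}, verify irreducibility and pairwise non-isomorphism via Lemma \ref{important conditionequi2}, and finish with a Wedderburn dimension count. The only cosmetic difference is that \cite{SW} (and the paper) build $D(\undla)$ directly as the direct sum $\oplus_{\mt} L(\res(\mt^{\undla}))^{d(\mt,\mt^{\undla})}$ with an explicit $s_i$-action rather than generating from a seed vector via intertwiners, but the two descriptions coincide.
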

The explicit construction of simple modules will be given in Section \ref{dege}.
	\section{Simple modules of $\mathcal{A}_n$ and $\mathcal{P}_n$}\label{simplemodules}
	The purpose of this section is to write down an explicit basis for the simple modules of $\mathcal{A}_n$ and $\mathcal{P}_n$ and to describe the actions of $\mathcal{A}_n$ and $\mathcal{P}_n$ on the basis.
		\subsection{Representations of $\mathcal{A}_n$}
	

	{\textbf {In this subsection, we shall fix $\underline{\iota}=(\iota_1,\iota_2,\cdots,\iota_n )\in (\mathbb{K}^*)^n$ and define
\label{pag:nondeg.Diota}
\begin{align}\label{nondeg.Diota}
\mathcal{D}_{\underline{\iota}}=\{i_1<i_2<\cdots<i_t\}:=\{1\leq i\leq n \mid \mathtt{b}_\pm(\iota_{i})\in\{\pm 1\}\}.
\end{align}}}
	For each $\iota_i$, let $\mathbb{L}(\iota_i)$ be the $2$-dimensional
	$\mathcal{A}_1$-supermodule $\mathbb{L}(\iota_i)=\mathbb{K}v_i\oplus \mathbb{K}v'_i$  with
	$\mathbb{L}(\iota_i)_{\bar{0}}=\mathbb{K}v_i,$ $\mathbb{L}(\iota_i)_{\bar{1}}=\mathbb{K}v'_i$
	and
	$$
	X^{\pm 1}_i v_i=\mathtt{b}_{\pm}(\iota_i)v_i,\quad X^{\pm 1}_i v'_i=\mathtt{b}_{\mp}(\iota_i)v'_i, \quad
	C_iv_i=v'_i,\quad C_iv'_i=v_i.
	$$
	Then the $\mathcal{A}_n$-supermodule $\mathbb{L}(\iota_1)\otimes\mathbb{L}(\iota_2)\otimes\cdots\otimes\mathbb{L}(\iota_n)$ is a cyclic $\mathcal{A}_n$-supermodule with $\mathbb{K}$-basis $\{C^\beta v_1\otimes v_2\otimes\cdots \otimes v_n \mid \beta\in \Z_2^n\}$. It's easy to see that each $\mathcal{A}_1$-module $\mathbb{L}(\iota_i)$ is irreducible of type $\texttt{Q}$ if $\mathtt{b}_\pm(\iota_{i})\in\{\pm 1\}$,
	and irreducible of type $\texttt{M}$ if $\mathtt{b}_\pm(\iota_{i})\notin\{\pm 1\}$. Clearly, $\mathcal{A}_n\cong \mathcal{A}_1\otimes\cdots \otimes\mathcal{A}_1.$ Applying Lemma \ref{tensorsmod}, we have
	\begin{equation}\label{decom2}\mathbb{L}(\iota_1)\otimes\mathbb{L}(\iota_2)\otimes\cdots\otimes\mathbb{L}(\iota_n)\cong \biggl(\mathbb{L}(\iota_1)\circledast
		\mathbb{L}(\iota_2)\circledast\cdots\circledast
		\mathbb{L}(\iota_n)\biggr)^{\oplus 2^{\lfloor t/2 \rfloor}}\end{equation}
	as $\mathcal{A}_n$-modules. We want to write down an explicit basis for $\mathbb{L}(\underline{\iota}):=\mathbb{L}(\iota_1)\circledast
	\mathbb{L}(\iota_2)\circledast\cdots\circledast
	\mathbb{L}(\iota_n)$ and describe the actions of $\mathcal{A}_n$ on the basis. To this end, we need the following $\mathcal{A}_n$-module homomorphism.
	
	\begin{lem}\label{lem:hom1}
		Suppose $\mathtt{b}_\pm(\iota_{i})\in\{\pm 1\}$. Then the $\mathbb{K}$-linear map $$\begin{matrix}
			\rho_i:  &\mathbb{L}(\iota_1)\otimes\mathbb{L}(\iota_2)\otimes\cdots\otimes\mathbb{L}(\iota_n)&\longrightarrow  &\mathbb{L}(\iota_1)\otimes\mathbb{L}(\iota_2)\otimes\cdots\otimes\mathbb{L}(\iota_n)&\\
			&C^\beta v_1\otimes v_2\otimes\cdots \otimes v_n&\mapsto &C^\beta C_i v_1\otimes v_2\otimes\cdots \otimes v_n,&\forall\beta\in \Z_2^n,
		\end{matrix}$$ is an $\mathcal{A}_n$-module homomorphism.
	\end{lem}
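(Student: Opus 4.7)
First I would identify the supermodule $M:=\mathbb{L}(\iota_1)\otimes\cdots\otimes\mathbb{L}(\iota_n)$ with the cyclic quotient $\mathcal{A}_n/I$, where $v_0:=v_1\otimes v_2\otimes\cdots\otimes v_n$ and $I:=\operatorname{Ann}_{\mathcal{A}_n}(v_0)$. This is legitimate because the stated basis $\{C^\beta v_0\}_{\beta\in\mathbb{Z}_2^n}$ exhibits $M$ as a free $\mathcal{C}_n$-module on $v_0$; a straightforward dimension count (using relations \eqref{Poly} and \eqref{XC} to reduce any monomial in $X_j^{\pm 1}$ to a scalar) shows that $I$ is generated as a left ideal by $\{X_j-\mathtt{b}_+(\iota_j):1\leq j\leq n\}$. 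Under this identification, the defining formula of $\rho_i$ is simply right multiplication by $C_i$, i.e.\ $\rho_i(xv_0)=xC_iv_0$ for $x\in\mathcal{A}_n$. Once well-definedness is established, the $\mathcal{A}_n$-linearity is immediate from associativity: $\rho_i(y\cdot xv_0)=(yx)C_iv_0=y\,\rho_i(xv_0)$ for every $y\in\mathcal{A}_n$.

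Thus the content of the proof reduces to verifying that $g\,C_iv_0=0$ for each generator $g=X_j-\mathtt{b}_+(\iota_j)$ of $I$. When $j\neq i$, the commutation $X_jC_i=C_iX_j$ from \eqref{XC} yields $(X_j-\mathtt{b}_+(\iota_j))\,C_iv_0=C_i(X_j-\mathtt{b}_+(\iota_j))v_0=0$, so this case is automatic. When $j=i$, the twisted relation $X_iC_i=C_iX_i^{-1}$ gives
\[
(X_i-\mathtt{b}_+(\iota_i))\,C_iv_0=C_iX_i^{-1}v_0-\mathtt{b}_+(\iota_i)C_iv_0=\bigl(\mathtt{b}_-(\iota_i)-\mathtt{b}_+(\iota_i)\bigr)C_iv_0,
\]
using $X_i^{-1}v_0=\mathtt{b}_-(\iota_i)v_0$. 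Since $\mathtt{b}_+(\iota_i)\mathtt{b}_-(\iota_i)=1$, the hypothesis $\mathtt{b}_{\pm}(\iota_i)=\pm 1$ is precisely the condition $\mathtt{b}_+(\iota_i)=\mathtt{b}_-(\iota_i)\in\{\pm 1\}$, forcing this difference to vanish.

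The main obstacle, and the entire substance of the hypothesis, is the $j=i$ case: the inversion built into \eqref{XC} shifts the eigenvalue of $X_i$ on $C_iv_0$ from $\mathtt{b}_+(\iota_i)$ to $\mathtt{b}_-(\iota_i)$, so the annihilator of $v_0$ is preserved under right multiplication by $C_i$ exactly when these two roots coincide. Without this special feature of $\iota_i$, the prospective map $\rho_i$ fails to descend to $M$. Once this point is clear, no further calculation is required to conclude that $\rho_i$ is a well-defined $\mathcal{A}_n$-module homomorphism.
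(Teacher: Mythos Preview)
Your argument is correct, and it reaches the same decisive calculation as the paper---namely that $(X_i-\mathtt{b}_+(\iota_i))C_iv_0=(\mathtt{b}_-(\iota_i)-\mathtt{b}_+(\iota_i))C_iv_0$ vanishes precisely under the hypothesis---but the packaging is genuinely different. The paper proceeds by direct verification on the basis: for each generator $C_l$ and $X_j$ of $\mathcal{A}_n$ and each basis vector $C^\beta v_0$, it computes $C_l\rho_i(C^\beta v_0)$ versus $\rho_i(C_lC^\beta v_0)$, and likewise for $X_j$, pushing through $C^\beta$ and invoking the relation $X_iC_i=C_iX_i^{-1}$ when $j=i$. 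Your route instead recognises $M\cong\mathcal{A}_n/I$ with $I$ generated by the $X_j-\mathtt{b}_+(\iota_j)$, observes that $\rho_i$ is right multiplication by $C_i$ (so $\mathcal{A}_n$-linearity is free from associativity), and reduces everything to the single well-definedness check $IC_i\subseteq I$ on generators. This buys you a cleaner separation of concerns and avoids the case analysis over basis elements $C^\beta v_0$; the paper's approach, by contrast, is more self-contained and does not require the preliminary identification of the annihilator (your dimension-count argument that $J=I$ is correct, but it is an extra step the paper sidesteps). Both are short and valid; yours isolates the conceptual content more sharply.
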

	
	\begin{proof}  First, we prove that $\rho_i$ commutes with the action of $C_l$, where $1\leq l\leq n$. We fix $1\leq l\leq n$, $\beta\in \Z_2^n$. Suppose $C_lC^\beta=aC^\omega$, where $a\in\{\pm 1\}$ and $\omega\in \Z_2^n$. Then we have
		\begin{align*}C_l\rho_i(C^\beta v_1\otimes v_2\otimes\cdots \otimes v_n)&=C_l(C^\beta C_i v_1\otimes v_2\otimes\cdots \otimes v_n)\\
			&=(C_lC^\beta )C_i v_1\otimes v_2\otimes\cdots \otimes v_n\\
			&=aC^\omega C_iv_1\otimes v_2\otimes\cdots \otimes v_n,\\
			\rho_i(C_lC^\beta v_1\otimes v_2\otimes\cdots \otimes v_n)&=\rho_i(aC^\omega v_1\otimes v_2\otimes\cdots \otimes v_n)\\
			&=aC^\omega C_iv_1\otimes v_2\otimes\cdots \otimes v_n.
		\end{align*}  Hence $\rho_i$ commutes with the action of $C_l$, where $1\leq l\leq n$.
		
		Next we check that $\rho_i$ commutes with the action of $X_j$, where $1\leq j\leq n$. We fix $1\leq j\leq n$, $\beta\in \Z_2^n$. Suppose $X_jC^\beta=C^\beta X_j^{b}$ for some $b\in\{\pm 1\}$. Then we have
		\begin{align}X_j\rho_i(C^\beta v_1\otimes v_2\otimes\cdots \otimes v_n)&=X_j(C^\beta C_i v_1\otimes v_2\otimes\cdots \otimes v_n)\nonumber\\
			&=(X_jC^\beta) C_i v_1\otimes v_2\otimes\cdots \otimes v_n\nonumber\\
			&=C^\beta X_j^{b}C_iv_1\otimes v_2\otimes\cdots \otimes v_n, \label{equ1}\\
			\rho_i(X_jC^\beta v_1\otimes v_2\otimes\cdots \otimes v_n)&=\rho_i(C^\beta X_j^{b} v_1\otimes v_2\otimes\cdots \otimes v_n)\nonumber\\
			&=\mathtt{b}_b(\iota_j)\rho_i(C^\beta  v_1\otimes v_2\otimes\cdots \otimes v_n)\nonumber\\
			&=\mathtt{b}_b(\iota_j)C^\beta C_iv_1\otimes v_2\otimes\cdots \otimes v_n. \label{equ2}
		\end{align}
		If $i\neq j$, then $X_j^{b}C_i=C_iX_j^b$, hence we have \begin{align*}X_j\rho_i(C^\beta v_1\otimes v_2\otimes\cdots \otimes v_n)&=C^\beta C_iX_j^{b}v_1\otimes v_2\otimes\cdots \otimes v_n\nonumber\\
			&=\mathtt{b}_b(\iota_j)C^\beta C_i v_1\otimes v_2\otimes\cdots \otimes v_n\nonumber\\
			&=	\rho_i(X_jC^\beta v_1\otimes v_2\otimes\cdots \otimes v_n)
		\end{align*} by \eqref{equ1} and \eqref{equ2}. If $i=j$,  then $X_i^{b}C_i=C_iX_i^{-b}$. Note that $\mathtt{b}_\pm(\iota_{i})\in\{\pm 1\}$, hence $\mathtt{b}_+(\iota_{i})=\mathtt{b}_-(\iota_{i})$. We have \begin{align*}X_i\rho_i(C^\beta v_1\otimes v_2\otimes\cdots \otimes v_n)&=C^\beta C_iX_i^{-b}v_1\otimes v_2\otimes\cdots \otimes v_n\nonumber\\
			&=\mathtt{b}_{-b}(\iota_i)C^\beta C_i v_1\otimes v_2\otimes\cdots \otimes v_n\nonumber\\
			&=\mathtt{b}_{b}(\iota_i)C^\beta C_i v_1\otimes v_2\otimes\cdots \otimes v_n\nonumber\\
			&=	\rho_i(X_jC^\beta v_1\otimes v_2\otimes\cdots \otimes v_n)\nonumber
		\end{align*} by \eqref{equ1} and \eqref{equ2} again. This implies that $\rho_i$ commutes with action of $X_j$, where $1\leq j\leq n$ and completes the proof of Lemma.
	\end{proof}
	Recall that
	\begin{align}
		\mathcal{D}_{\underline{\iota}}=\{i_1<i_2<\cdots<i_t\}:=\{1\leq i\leq n\mid \mathtt{b}_\pm(\iota_{i})\in\{\pm 1\}\}.
	\end{align}
	
	\begin{lem}\label{lem:hom2}
		There is an algebra homomorphism $$\begin{matrix}
			\rho	:  & \mathcal{C}_t&\longrightarrow  &\End_{\mathcal{A}_n}\biggl(\mathbb{L}(\iota_1)\otimes\mathbb{L}(\iota_2)\otimes\cdots\otimes\mathbb{L}(\iota_n)\biggr)&\\
			&C_j&\mapsto &\rho_{i_j},&1\leq j\leq t.
		\end{matrix}$$
	\end{lem}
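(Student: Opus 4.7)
The plan is to verify that the assignments $C_j \mapsto \rho_{i_j}$ extend to a well-defined algebra homomorphism from $\mathcal{C}_t$ by checking the defining relations of the Clifford algebra hold for the images. By Lemma \ref{lem:hom1}, each $\rho_{i_j}$ already lies in $\End_{\mathcal{A}_n}\bigl(\mathbb{L}(\iota_1)\otimes\cdots\otimes \mathbb{L}(\iota_n)\bigr)$, so the only remaining task is to show that
\[
\rho_{i_j}^2 = \mathrm{id}, \qquad \rho_{i_j}\rho_{i_k} + \rho_{i_k}\rho_{i_j} = 0 \text{ for } j \neq k.
\]

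The central computation will be an evaluation of $\rho_i\rho_j$ on a basis vector $C^{\beta}v_1\otimes\cdots\otimes v_n$. First I would apply $\rho_j$ to get $C^{\beta}C_j\,v_1\otimes\cdots\otimes v_n$. Since $C^{\beta}C_j$ is an element of the Clifford subalgebra $\mathcal{C}_n$, by straightforward sign-tracking (moving $C_j$ through the tail $C_{j+1}^{\beta_{j+1}}\cdots C_n^{\beta_n}$) we can write $C^{\beta}C_j = \varepsilon\,C^{\beta'}$ for some $\varepsilon \in \{\pm 1\}$ and $\beta'\in\Z_2^n$. Applying the defining formula for $\rho_i$ then gives
\[
\rho_i\bigl(\varepsilon C^{\beta'}v_1\otimes\cdots\otimes v_n\bigr) = \varepsilon C^{\beta'}C_i\,v_1\otimes\cdots\otimes v_n = C^{\beta}C_jC_i\,v_1\otimes\cdots\otimes v_n.
\]
Thus $\rho_i\rho_j$ acts as ``right multiplication of the Clifford word by $C_jC_i$'' on the cyclic generator. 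From this one reads off $\rho_{i_j}^2(C^{\beta}v) = C^{\beta}C_{i_j}^2 v = C^{\beta}v$, and for $j\neq k$, using the anti-commutation $C_{i_k}C_{i_j} = -C_{i_j}C_{i_k}$ inside $\mathcal{C}_n$, we obtain $\rho_{i_j}\rho_{i_k}(C^{\beta}v) = -\rho_{i_k}\rho_{i_j}(C^{\beta}v)$, as required.

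Once these two relations are verified on the spanning set $\{C^{\beta}v_1\otimes\cdots\otimes v_n \mid \beta\in\Z_2^n\}$, they hold as identities in $\End_{\mathcal{A}_n}\bigl(\mathbb{L}(\iota_1)\otimes\cdots\otimes\mathbb{L}(\iota_n)\bigr)$, and the universal property of $\mathcal{C}_t$ (presented by generators $C_1,\ldots,C_t$ subject to $C_j^2=1,\ C_jC_k=-C_kC_j$ for $j\neq k$) then yields the desired algebra homomorphism $\rho$.

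I do not anticipate a genuine obstacle: the argument is essentially ``$\rho_i$ is right multiplication by $C_i$ on the cyclic generator,'' and the Clifford relations transfer directly. The only care needed is in the sign bookkeeping when rewriting $C^{\beta}C_j$ in the monomial basis $\{C^{\beta'}\}$ of $\mathcal{C}_n$, but this sign cancels in both $\rho_i\rho_j$ and $\rho_j\rho_i$ and plays no role in the final comparison.
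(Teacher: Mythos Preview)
Your proposal is correct and follows essentially the same approach as the paper: verify the Clifford relations $\rho_{i_j}^2=\mathrm{id}$ and $\rho_{i_j}\rho_{i_k}=-\rho_{i_k}\rho_{i_j}$ by computing the composite on basis vectors $C^\beta v_1\otimes\cdots\otimes v_n$ and reducing to the identity $\rho_i\rho_j(C^\beta v)=C^\beta C_jC_i\,v$. The only cosmetic difference is that the paper, instead of rewriting $C^\beta C_j=\varepsilon C^{\beta'}$ in the monomial basis, directly invokes the $\mathcal{A}_n$-linearity of $\rho_{i_{j_1}}$ from Lemma~\ref{lem:hom1} to pull the Clifford element $C^\beta C_{i_{j_2}}$ past it; the effect and the conclusion are identical.
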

	
	\begin{proof}
		Let $1\leq j_1<j_2\leq t$ and $\alpha\in\Z_2^n$. We have \begin{align*}	
			\rho(C_{j_1})	\rho(C_{j_2})(C^\beta v_1\otimes v_2\otimes\cdots \otimes v_n)&=\rho(C_{j_1})(C^\beta C_{i_{j_2}}v_1\otimes v_2\otimes\cdots \otimes v_n)\\
			&=C^\beta C_{i_{j_2}} \rho(C_{j_1})(v_1\otimes v_2\otimes\cdots \otimes v_n)\\
			&=	C^\beta C_{i_{j_2}} C_{i_{j_1}}(v_1\otimes v_2\otimes\cdots \otimes v_n),
		\end{align*} where in the second equality we have used Lemma \ref{lem:hom1}, i.e. $\rho(C_{j_1})$ is an $\mathcal{A}_n$-module homomorphism.
		Similarly, we have $$	\rho(C_{j_2})	\rho(C_{j_1})(C^\beta v_1\otimes v_2\otimes\cdots \otimes v_n)=C^\beta C_{i_{j_1}} C_{i_{j_2}}(v_1\otimes v_2\otimes\cdots \otimes v_n).$$ Hence
		$$\rho(C_{j_2})	\rho(C_{j_1})(C^\beta v_1\otimes v_2\otimes\cdots \otimes v_n)=-\rho(C_{j_1})	\rho(C_{j_2})(C^\beta v_1\otimes v_2\otimes\cdots \otimes v_n)
		$$ and we deduce $\rho(C_{j_2})	\rho(C_{j_1})=-\rho(C_{j_1})	\rho(C_{j_2})$. Moreover, \begin{align*}	
			\rho(C_{j_1})	\rho(C_{j_1})(C^\beta v_1\otimes v_2\otimes\cdots \otimes v_n)&=\rho(C_{j_1})(C^\beta C_{i_{j_1}}v_1\otimes v_2\otimes\cdots \otimes v_n)\\
			&=C^\beta C_{i_{j_1}} \rho(C_{j_1})(v_1\otimes v_2\otimes\cdots \otimes v_n)\\
			&=C^\beta C_{i_{j_1}}C_{i_{j_1}} (v_1\otimes v_2\otimes\cdots \otimes v_n)\\
			&=	C^\beta (v_1\otimes v_2\otimes\cdots \otimes v_n),
		\end{align*} where in the second equality we have used Lemma \ref{lem:hom1} again. This implies that $\rho(C_{j_1})	\rho(C_{j_1})=1$. Therefore, we have checked that $\rho(C_1),\rho(C_2),\cdots,\rho(C_t)$ satisfy the relation of Clifford algebra $\mathcal{C}_t$ and proved this Lemma.
	\end{proof}
	
	Recall that $$I_t:=\Biggl\{2^{-\lfloor t/2 \rfloor}\cdot\overrightarrow{\prod_{k=1,\cdots,{\lfloor t/2 \rfloor}}}(1+(-1)^{a_k} \sqrt{-1}C_{2k-1}C_{2k})\in \mathcal{C}_t \Biggm|a_k\in\Z_2,\,1\leq k\leq {\lfloor t/2 \rfloor} \Biggr\},$$  forms a complete set of orthogonal primitive idempotents for $\mathcal{C}_t$. Combining this with Lemma \ref{lem:hom2}, we have the following $\mathcal{A}_n$-module decomposition :\begin{align}
		\mathbb{L}(\iota_1)\otimes\mathbb{L}(\iota_2)\otimes\cdots\otimes\mathbb{L}(\iota_n)&=\bigoplus_{\gamma\in I_t}	\rho(\gamma)\biggl(\mathbb{L}(\iota_1)\otimes\mathbb{L}(\iota_2)\otimes\cdots\otimes\mathbb{L}(\iota_n)\biggr)\\
		&=\bigoplus_{\gamma\in I_t}	\rho(\gamma)\biggl(\mathcal{C}_n v_1\otimes v_2\otimes\cdots \otimes v_n\biggr).\label{decom1}
	\end{align}
	
	We define
	\begin{align}\label{nondeg.ODiota}
		\mathcal{OD}_{\underline{\iota}}&:=\{i_1,i_3,\ldots,i_{2{\lceil t/2 \rceil}-1}\}\subset \mathcal{D}_{\underline{\iota}}, \\
		\mathbb{Z}_2(\mathcal{OD}_{\underline{\iota}})&:=\{\alpha \in \mathbb{Z}_2^{n} \mid \supp(\alpha)\in \mathcal{OD}_{\underline{\iota}} \}, \nonumber\\
		\mathbb{Z}_2([n]\setminus \mathcal{D}_{\underline{\iota}})&:=\{\alpha \in \mathbb{Z}_2^{n} \mid \supp(\alpha)\in [n]\setminus \mathcal{D}_{\underline{\iota}} \}.\nonumber
	\end{align}
	Then we have
	\begin{prop}\label{L basis}
		For any $\gamma_1,\gamma_2\in I_t$, we have
		\begin{enumerate}
			\item $$\rho(\gamma_1)\biggl(\mathbb{L}(\iota_1)\otimes\mathbb{L}(\iota_2)\otimes\cdots\otimes\mathbb{L}(\iota_n)\biggr)\cong\rho(\gamma_2)\biggl(\mathbb{L}(\iota_1)\otimes\mathbb{L}(\iota_2)\otimes\cdots\otimes\mathbb{L}(\iota_n)\biggr)$$ as $\mathcal{A}_n$-modules.
			\item
			$$\rho(\gamma_1)\biggl(\mathbb{L}(\iota_1)\otimes\mathbb{L}(\iota_2)\otimes\cdots\otimes\mathbb{L}(\iota_n)\biggr)\cong\mathbb{L}(\underline{\iota})$$ is an irreducible $\mathcal{A}_n$-module.
			\item $$\Biggl\{C^{\alpha_1}C^{\alpha_2}\rho(\gamma_1)(v_1\otimes v_2\otimes\cdots \otimes v_n)\biggm|\begin{matrix}\alpha_1\in \mathbb{Z}_2([n]\setminus \mathcal{D}_{\underline{\iota}}) \\
				\alpha_2\in \mathbb{Z}_2(\mathcal{OD}_{\underline{\iota}})
			\end{matrix}\Biggr\}$$
			forms a $\mathbb{K}$-basis of $\rho(\gamma_1)\biggl(\mathbb{L}(\iota_1)\otimes\mathbb{L}(\iota_2)\otimes\cdots\otimes\mathbb{L}(\iota_n)\biggr)=\rho(\gamma_1)\biggl(\mathcal{C}_n v_1\otimes v_2\otimes\cdots \otimes v_n\biggr)$.
			
		\end{enumerate}
	\end{prop}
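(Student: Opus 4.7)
The plan is to handle the three parts in turn, leveraging the decomposition \eqref{decom1} together with the Clifford-theoretic analysis of Lemma~\ref{lem:clifford rep}. Throughout set $V:=\mathbb{L}(\iota_1)\otimes\cdots\otimes\mathbb{L}(\iota_n)$ and $v_0:=v_1\otimes\cdots\otimes v_n$.

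For (1), given $\gamma_1,\gamma_2\in I_t$, Lemma~\ref{lem:clifford rep}(2) supplies an invertible monomial $c\in\mathcal{C}_t$ with $c\gamma_1=\gamma_2 c$. By Lemma~\ref{lem:hom2}, $\rho(c)\in\End_{\mathcal{A}_n}(V)$, and the intertwining identity $\rho(c)\rho(\gamma_1)=\rho(\gamma_2)\rho(c)$ shows $\rho(c)$ restricts to an $\mathcal{A}_n$-isomorphism $\rho(\gamma_1)V\to\rho(\gamma_2)V$, with inverse $\rho(c^{-1})$. For (2), each $\rho(\gamma)V$ is nonzero (otherwise, by (1), $V=\sum_\gamma\rho(\gamma)V=0$) and all $|I_t|=2^{\lfloor t/2\rfloor}$ of them are mutually isomorphic. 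Since $V$ also decomposes as $2^{\lfloor t/2\rfloor}$ copies of the irreducible $\mathbb{L}(\underline{\iota})$ by \eqref{decom2}, comparison of summand counts forces each component in \eqref{decom1} to be a single copy of $\mathbb{L}(\underline{\iota})$.

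The substantive work is (3). First observe $C_{i_j}\cdot v_0=v_1\otimes\cdots\otimes v'_{i_j}\otimes\cdots\otimes v_n=\rho_{i_j}(v_0)$, so left $\mathcal{A}_n$-multiplication by $C_{i_j}$ and the endomorphism $\rho_{i_j}$ agree \emph{on} $v_0$. Expanding the definition of $\gamma_1$ and iterating Lemma~\ref{lem:hom1}, a sign-tracking computation yields
\[
\rho(\gamma_1)(v_0)=e\cdot v_0,\qquad e:=2^{-\lfloor t/2\rfloor}\prod_{k=1}^{\lfloor t/2\rfloor}\bigl(1-(-1)^{a_k}\sqrt{-1}\,C_{i_{2k-1}}C_{i_{2k}}\bigr),
\]
a primitive idempotent of the Clifford subalgebra $\mathcal{C}_t^{(i)}\subseteq\mathcal{C}_n$ generated by $C_{i_1},\ldots,C_{i_t}$. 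Since $\rho(\gamma_1)$ is $\mathcal{C}_n$-linear and $V=\mathcal{C}_n v_0$, we get $\rho(\gamma_1)V=\mathcal{C}_n\cdot(ev_0)$. Using the vector-space factorization $\mathcal{C}_n=\mathcal{C}^{(\bar i)}\cdot\mathcal{C}_t^{(i)}$, where $\mathcal{C}^{(\bar i)}$ is the Clifford subalgebra on $\{C_j:j\notin\mathcal{D}_{\underline{\iota}}\}$, together with the basis of $\mathcal{C}_t^{(i)}e$ furnished by Lemma~\ref{lem:clifford rep}(3) (namely $\{C^{\alpha_2}e:\alpha_2\in\Z_2(\mathcal{OD}_{\underline{\iota}})\}$), the set $\{C^{\alpha_1}C^{\alpha_2}ev_0\}$ indexed as in the claim spans $\rho(\gamma_1)V$. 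Its cardinality $2^{n-t}\cdot 2^{\lceil t/2\rceil}=2^{n-\lfloor t/2\rfloor}=\dim\rho(\gamma_1)V$ promotes it to a basis.

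The main obstacle is the identification $\rho(\gamma_1)(v_0)=ev_0$: the sign flip $(-1)^{a_k}\mapsto-(-1)^{a_k}$ arising from the composition $\rho_{i_{2k-1}}\rho_{i_{2k}}$ on $v_0$ (as opposed to the naive transport $C_{2k-1}C_{2k}\mapsto C_{i_{2k-1}}C_{i_{2k}}$), together with the anticommutation signs one picks up when pushing the blocks $1\pm\sqrt{-1}C_{i_{2k-1}}C_{i_{2k}}$ for different $k$ past one another, must be tracked carefully. Once this identification is in hand, all remaining steps of (1)--(3) reduce to the Clifford-algebraic counting above.
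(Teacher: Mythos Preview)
Your argument is correct and follows essentially the same route as the paper. Parts (1) and (2) are identical to the paper's proof. For (3), the paper is terser: it simply cites Lemma~\ref{lem:clifford rep}(3) to assert that the displayed set spans $\rho(\gamma_1)(\mathcal{C}_n v_0)$ and then compares cardinality with $\dim\mathbb{L}(\underline{\iota})=2^{n-\lfloor t/2\rfloor}$, whereas you make the intermediate step $\rho(\gamma_1)(v_0)=e\cdot v_0$ explicit before invoking the Clifford factorization. One small remark: your worry about ``anticommutation signs one picks up when pushing the blocks $1\pm\sqrt{-1}C_{i_{2k-1}}C_{i_{2k}}$ for different $k$ past one another'' is unwarranted, since those blocks are even and mutually commute; the only sign that matters is the one you already identified, coming from $\rho_{i_{2k-1}}\rho_{i_{2k}}(v_0)=C_{i_{2k}}C_{i_{2k-1}}v_0=-C_{i_{2k-1}}C_{i_{2k}}v_0$.
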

	
	\begin{proof}
		(1). Lemma \ref{lem:hom2} implies that both $ \rho(\gamma_1)\biggl(\mathbb{L}(\iota_1)\otimes\mathbb{L}(\iota_2)\otimes\cdots\otimes\mathbb{L}(\iota_n)\biggr)$ and $\rho(\gamma_2)\biggl(\mathbb{L}(\iota_1)\otimes\mathbb{L}(\iota_2)\otimes\cdots\otimes\mathbb{L}(\iota_n)\biggr)$ are $\mathcal{A}_n$-modules. By Lemma \ref{lem:clifford rep} (2), there exists an invertible element $C\in \mathcal{C}_t$ such that $C^{-1}\gamma_2C=\gamma_1$. Since $\forall u\in \mathbb{L}(\iota_1)\otimes\mathbb{L}(\iota_2)\otimes\cdots\otimes\mathbb{L}(\iota_n)$, we have $$
		\rho(C)\rho(\gamma_1)(u)=\rho(C\gamma_1)(u)=\rho(\gamma_2C)(u)=\rho(\gamma_2)\rho(C)(u),
		$$ hence the restriction map:$$
		\rho(C): \rho(\gamma_1)\biggl(\mathbb{L}(\iota_1)\otimes\mathbb{L}(\iota_2)\otimes\cdots\otimes\mathbb{L}(\iota_n)\biggr)\longrightarrow  \rho(\gamma_2)\biggl(\mathbb{L}(\iota_1)\otimes\mathbb{L}(\iota_2)\otimes\cdots\otimes\mathbb{L}(\iota_n)\biggr)
		$$  is well-defined. It is clearly an $\mathcal{A}_n$-module isomorphism whose inverse is the restriction map $\rho(C^{-1})$. This proves (1).
		
		(2) follows from (1), \eqref{decom1} and \eqref{decom2}. It remains to prove (3).
		
		(3). By Lemma \ref{lem:clifford rep} (3), the set in (3) is a $\mathbb{K}$-linear generating set of $\rho(\gamma_1)\biggl(\mathcal{C}_n v_1\otimes v_2\otimes\cdots \otimes v_n\biggr)$ with cardinality $2^{n-\lfloor t/2 \rfloor}$. On the other hand, \eqref{decom2} implies that $\dim_\mathbb{K} \mathbb{L}(\underline{\iota})=2^{n-\lfloor t/2 \rfloor}$. Hence (3) follows from (2).
	\end{proof}
	
	For each permutation ${\tau}\in \mathfrak{S}_n$ , the twist of the action of
	$\mathcal{A}_n$ on $\mathbb{L}(\underline{\iota})$ with
	${\tau}^{-1}$ leads to a new $\mathcal{A}_n$-module denoted by
	$\mathbb{L}(\underline{\iota})^{\tau}$ with
	$$
	\mathbb{L}(\underline{\iota})^{{\tau}}=\{z^{{\tau}}~|~z\in \mathbb{L}(\underline{\iota})\} ,\quad
	fz^{{\tau}}=({\tau}^{-1}(f)z)^{{\tau}}, \text{ for any }f\in
	\mathcal{A}_n, z\in \mathbb{L}(\underline{\iota}).
	$$
	In particular we have $(X^\pm _kz)^{{\tau}}=X^\pm _{{\tau}(k)}z^{{\tau}}$ and
	$(C_kz)^{{\tau}}=C_{{\tau}(k)}z^{{\tau}}$. It is easy to see that $\mathbb{L}(\underline{\iota})^{{\tau}}\cong \mathbb{L}({\tau}\cdot \underline{\iota})$, where
	${\tau}\cdot \underline{\iota}=(\iota_{{\tau}^{-1}(1)},\ldots,\iota_{{\tau}^{-1}(n)})$
	for  ${\tau}\in \mathfrak{S}_n$. We want to study the structure of $\mathbb{L}(\underline{\iota})^{{\tau}}$ using the basis of $\mathbb{L}(\underline{\iota})$ we obtained in Proposition \ref{L basis}.
	\label{pag:nubetak}
	\begin{defn}	For each $\beta=(\beta_1,\ldots,\beta_n) \in \mathbb{Z}_{2}^{n}$ and $1\leq k \leq n,$ we define
		$$\nu_{\beta}(k):=
		\begin{cases}
			-1, & \text{ if } \beta_k=\bar{1}, \\
			1, & \text{ if } \beta_k=\bar{0}.
		\end{cases}$$
			\end{defn}
\begin{defn}	For each $\beta=(\beta_1,\ldots,\beta_n) \in \mathbb{Z}_{2}^{n}$ and $0\leq i \leq n+1,$  we define
		$$\quad |\beta|_{<i}:=\sum_{1\leq k <i}\beta_k,\quad |\beta|:=|\beta|_{<n+1}.$$
		Similarly, we can also define $|\beta|_{\leq i},$ $|\beta|_{> i}$ and $|\beta|_{\geq i}.$
	\end{defn}
	
	Recall that we have fixed $\underline{\iota}\in (\mathbb{K}^*)^n$ and also recall the definition of
$\mathcal{D}_{\underline{\iota}}$ and $\mathcal{OD}_{\underline{\iota}}.$
\begin{defn}\label{lem. Dtauiota}
For any $\tau\in \mathfrak{S}_n$ and $\underline{\iota}=(\iota_1,\iota_2,\cdots,\iota_n )\in (\mathbb{K}^*)^n,$ we define
		\begin{align*}
			\mathcal{D}^\tau_{\underline{\iota}}&:=\tau(\mathcal{D}_{\underline{\iota}}),\\
			\mathcal{OD}^\tau_{\underline{\iota}}&:=\tau (\mathcal{OD}_{\underline{\iota}}),\\
			\mathbb{Z}_2(\mathcal{OD}^\tau_{\underline{\iota}})&:=\{\alpha \in \mathbb{Z}_2^{n} \mid \supp(\alpha)\in \mathcal{OD}^\tau_{\underline{\iota}} \}, \\
			\mathbb{Z}_2([n]\setminus \mathcal{D}^\tau_{\underline{\iota}})&:=\{\alpha \in \mathbb{Z}_2^{n} \mid \supp(\alpha)\in [n]\setminus \mathcal{D}^\tau_{\underline{\iota}} \}
		\end{align*}
		and we denote by $\gamma_{\underline{\iota}}^\tau$ the following idempotent in Clifford algebra $\mathcal{C}_n$
$$\gamma_{\underline{\iota}}^\tau:=2^{-\lfloor t/2 \rfloor}\cdot\overrightarrow{\prod_{k=1,\cdots,{\lfloor t/2 \rfloor}}}\biggl(1+ \sqrt{-1}C_{\tau(i_{2k-1})}C_{\tau(i_{2k})}\biggr)\in \mathcal{C}_n.
		$$
\end{defn}

	For $k=1,2,\ldots,n$, we define $$e_k:=(\bar{0},\ldots,\bar{1},\dots,\bar{0})\in \mathbb{Z}_2^n$$ to be the $k$-th standard vector in $\Z_2^n$. The following Proposition connects the basis of $\mathbb{L}(\underline{\iota})$ and $\mathbb{L}(\underline{\iota})^\tau$ and also gives explicit formulae  for the actions of $X_i$ and $C_j$ on $\mathbb{L}(\underline{\iota})^\tau$ for any $\tau\in \mathfrak{S}_n$.
	\begin{prop}\label{actions of X,C}
	For any $\underline{\iota}=(\iota_1,\iota_2,\cdots,\iota_n )\in (\mathbb{K}^*)^n,$ the $\mathcal{A}_n$-module $\bigoplus_{\tau\in \mathfrak{S}_n}\mathbb{L}(\underline{\iota})^\tau$ has a $\mathbb{K}$-basis of form
 $$\bigsqcup_{\tau\in \mathfrak{S}_n}\Biggl\{C^{\alpha_1}C^{\alpha_2}v_{\underline{\iota}}^\tau \biggm|\begin{matrix}\alpha_1\in \mathbb{Z}_2([n]\setminus \mathcal{D}^\tau_{\underline{\iota}}) \\
 	\alpha_2\in \mathbb{Z}_2(\mathcal{OD}^\tau_{\underline{\iota}})
 \end{matrix}\Biggr\},$$
such that for any $\tau\in \mathfrak{S}_n$, the set of elements
 $$\Biggl\{C^{\alpha_1}C^{\alpha_2}v_{\underline{\iota}}^\tau \biggm|\begin{matrix}\alpha_1\in \mathbb{Z}_2([n]\setminus \mathcal{D}^\tau_{\underline{\iota}}) \\
			\alpha_2\in \mathbb{Z}_2(\mathcal{OD}^\tau_{\underline{\iota}})
		\end{matrix}\Biggr\}$$
forms a $\mathbb{K}$-linear basis of $\mathcal{A}_n$-module $\mathbb{L}(\underline{\iota})^\tau$
and the actions of $\mathcal{A}_n$ on $\mathbb{L}(\underline{\iota})^\tau$ are given as follows.
		\begin{enumerate}
			\item For any $i\in [n],$ $\alpha_1\in \mathbb{Z}_2([n]\setminus \mathcal{D}^\tau_{\underline{\iota}})$ and $\alpha_2\in \mathbb{Z}_2(\mathcal{OD}^\tau_{\underline{\iota}}),$ we have
			$$X_i\cdot C^{\alpha_1}C^{\alpha_2}v_{\underline{\iota}}^\tau
			=\mathtt{b}_{-}\biggl(\iota_{\tau^{-1}(i)}\biggr)^{-\nu_{\alpha_1}(i)}C^{\alpha_1}C^{\alpha_2}v_{\underline{\iota}}^\tau
			$$
			\item We have $\gamma_{\tau\cdot\iota} v_{\underline{\iota}}^\tau=v_{\underline{\iota}}^\tau.$
			\item  For any $i\in [n],$ $\alpha_1\in \mathbb{Z}_2([n]\setminus \mathcal{D}^\tau_{\underline{\iota}})$ and $\alpha_2\in \mathbb{Z}_2(\mathcal{OD}^\tau_{\underline{\iota}}),$ we have
			\begin{align}
				&C_i\cdot C^{\alpha_1}C^{\alpha_2}v_{\underline{\iota}}^\tau \nonumber\\
				&=\begin{cases}
					(-1)^{|\alpha_1|_{<i}} C^{\alpha_1+e_i}C^{\alpha_2}v_{\underline{\iota}}^\tau, & \text{ if } i\in [n]\setminus \mathcal{D}^\tau_{\underline{\iota}}, \\
					(-1)^{|\alpha_1|+|\alpha_2|_{<i}} C^{\alpha_1}C^{\alpha_2+e_{i}}v_{\underline{\iota}}^\tau, & \text{ if  $i=\tau(i_p) \in \mathcal{D}^\tau_{\underline{\iota}}$, $p$ is odd},\\
					(-\sqrt{-1})(-1)^{|\alpha_1|+|\alpha_2|_{\leq \tau(i_{p-1})}}&\\
					\qquad\qquad\qquad\cdot C^{\alpha_1}C^{\alpha_2+e_{\tau(i_{p-1})}}v_{\underline{\iota}}^\tau, &\text{ if  $i=\tau(i_p) \in \mathcal{D}^\tau_{\underline{\iota}}$, $p$ is even}.\nonumber
				\end{cases}
			\end{align}
		\end{enumerate}
	\end{prop}
	
	\begin{proof}We  choose $$\gamma_1:=2^{-\lfloor t/2 \rfloor}\cdot\overrightarrow{\prod_{k=1,\cdots,{\lfloor t/2 \rfloor}}}\biggl(1+ \sqrt{-1}C_{2k-1}C_{2k}\biggr)\in \mathcal{C}_n.
		$$
		and  set $v_{\underline{\iota}}:=\rho(\gamma_1)(v_1\otimes v_2\otimes\cdots \otimes v_n)$. Applying  Proposition \ref{L basis} (3), we obtain the basis for $\mathbb{L}(\underline{\iota})$.
		Now we have \begin{align*}
			\mathbb{L}(\underline{\iota})^\tau&=\bigoplus_{\substack{\begin{matrix}\alpha_1\in \mathbb{Z}_2([n]\setminus \mathcal{D}_{\underline{\iota}}) \\
						\alpha_2\in \mathbb{Z}_2(\mathcal{OD}_{\underline{\iota}})
			\end{matrix}}}\mathbb{K}(C^{\alpha_1}C^{\alpha_2}v_{\underline{\iota}})^\tau\\
			&=\bigoplus_{\substack{\begin{matrix}\alpha_1\in \mathbb{Z}_2([n]\setminus \mathcal{D}_{\underline{\iota}}) \\
						\alpha_2\in \mathbb{Z}_2(\mathcal{OD}_{\underline{\iota}})
			\end{matrix}}}\mathbb{K}C^{\tau\cdot\alpha_1}C^{\tau\cdot\alpha_2}v_{\underline{\iota}}^\tau\\	
			&=\bigoplus_{\substack{\begin{matrix}\alpha_1\in \mathbb{Z}_2([n]\setminus \mathcal{D}^\tau_{\underline{\iota}}) \\
						\alpha_2\in \mathbb{Z}_2(\gamma_{\underline{\iota}}^\tau)
			\end{matrix}}}\mathbb{K}C^{\alpha_1}C^{\alpha_2}v_{\underline{\iota}}^\tau
		\end{align*} which gives a basis of $\mathbb{L}(\underline{\iota})^\tau$. Next we check the relations.
		\begin{enumerate}
			\item For each $i\in [n],\,\alpha_1\in \mathbb{Z}_2([n]\setminus \mathcal{D}^\tau_{\underline{\iota}})$ and $\alpha_2\in \mathbb{Z}_2(\mathcal{OD}^\tau_{\underline{\iota}}),$ we have
			\begin{align*}
				X_i\cdot C^{\alpha_1}C^{\alpha_2} v_{\underline{\iota}}^\tau&=X_i\cdot C^{\alpha_1}C^{\alpha_2}\biggl(\rho(\gamma_1)(v_1\otimes v_2\otimes\cdots \otimes v_n)\biggr)^\tau\\
				&= C^{\alpha_1}C^{\alpha_2}X_i^{\nu_{\alpha_1}(i)\nu_{\alpha_2}(i)}\biggl(\rho(\gamma_1)(v_1\otimes v_2\otimes\cdots \otimes v_n)\biggr)^\tau\\
				&= C^{\alpha_1}C^{\alpha_2}\biggl(X_{\tau^{-1}(i)}^{\nu_{\alpha_1}(i)\nu_{\alpha_2}(i)}\rho(\gamma_1)(v_1\otimes v_2\otimes\cdots \otimes v_n)\biggr)^\tau\\
				&= C^{\alpha_1}C^{\alpha_2}\biggl(\rho(\gamma_1)X_{\tau^{-1}(i)}^{\nu_{\alpha_1}(i)\nu_{\alpha_2}(i)}(v_1\otimes v_2\otimes\cdots \otimes v_n)\biggr)^\tau\\
				&=\mathtt{b}_{-}\biggl(\iota_{\tau^{-1}(i)}\biggr)^{-\nu_{\alpha_1}(i)\nu_{\alpha_2}(i)}C^{\alpha_1}C^{\alpha_2}\biggl(\rho(\gamma_1)(v_1\otimes v_2\otimes\cdots \otimes v_n)\biggr)^\tau\\
				&=\mathtt{b}_{-}\biggl(\iota_{\tau^{-1}(i)}\biggr)^{-\nu_{\alpha_1}(i)\nu_{\alpha_2}(i)}C^{\alpha_1}C^{\alpha_2} v_{\underline{\iota}}^\tau
			\end{align*} where in the last third equation we have used Lemma \ref{lem:hom2}. Note that if $i\in \mathcal{OD}^\tau_{\underline{\iota}},$ we have $\mathtt{b}_{-}(\iota_{\tau^{-1}(i)})=\pm 1.$ This proves (1).
			\item We have  \begin{align*} \gamma_{\underline{\iota}}^\tau v_{\underline{\iota}}^\tau&= \gamma_{\underline{\iota}}^\tau\biggl(\rho(\gamma_1)(v_1\otimes v_2\otimes\cdots \otimes v_n)\biggr)^\tau\\
				&=\biggl( \tau^{-1}(\gamma_{\underline{\iota}}^\tau )\rho(\gamma_1)(v_1\otimes v_2\otimes\cdots \otimes v_n)\biggr)^\tau\\
				&=\biggl(\gamma_{\underline{\iota}} \rho(\gamma_1)(v_1\otimes v_2\otimes\cdots \otimes v_n)\biggr)^\tau\\
				&=\biggl(\rho(\gamma_1)\rho(\gamma_1) (v_1\otimes v_2\otimes\cdots \otimes v_n)\biggr)^\tau\\
				&=\biggl(\rho(\gamma_1)(v_1\otimes v_2\otimes\cdots \otimes v_n)\biggr)^\tau=v_{\underline{\iota}}^\tau,
			\end{align*} where in the last third equation we have used Lemma \ref{lem:hom2} and in the last second equation we have used that $\gamma_1$ is an idempotent element.
			Hence we prove (2).		 	
			\item For each $i\in [n],\,\alpha_1\in \mathbb{Z}_2([n]\setminus \mathcal{D}^\tau_{\underline{\iota}})$ and $\alpha_2\in \mathbb{Z}_2(\mathcal{OD}^\tau_{\underline{\iota}}).$ If $ i\in [n]\setminus \mathcal{D}^\tau_{\underline{\iota}}$, we have
			$$
			C_i\cdot C^{\alpha_1}C^{\alpha_2}v_{\underline{\iota}}^\tau=(-1)^{|\alpha_1|_{<i}} C^{\alpha_1+e_i}C^{\alpha_2}v_{\underline{\iota}}^\tau.
			$$
			If $i=\tau(i_p )\in \mathcal{D}^\tau_{\underline{\iota}}$ and $p$ is odd, then $$
			C_i\cdot C^{\alpha_1}C^{\alpha_2}v_{\underline{\iota}}^\tau=(-1)^{|\alpha_1|+|\alpha_2|_{<i}} C^{\alpha_1}C^{\alpha_2+e_{i}}v_{\underline{\iota}}^\tau.
			$$
			If $i=\tau(i_p) \in \mathcal{D}^\tau_{\underline{\iota}}$ and $p$ is even, then it is straightforward to check\begin{equation}\label{commutation formula1}
				C_{i_p}\gamma_{\underline{\iota}}=-\sqrt{-1}C_{i_{p-1}}\gamma_{\underline{\iota}}.\end{equation}	
We have
\begin{align*}
				&	C_i\cdot C^{\alpha_1}C^{\alpha_2}v_{\underline{\iota}}^\tau\\
				&=C_{\tau(i_p)}\cdot C^{\alpha_1}C^{\alpha_2}\biggl(\rho(\gamma_1)(v_1\otimes v_2\otimes\cdots \otimes v_n)\biggr)^\tau\\
				&=(-1)^{|\alpha_1|+|\alpha_2|} C^{\alpha_1}C^{\alpha_2}C_{\tau(i_p)}\biggl(\rho(\gamma_1)(v_1\otimes v_2\otimes\cdots \otimes v_n)\biggr)^\tau\\
				&=(-1)^{|\alpha_1|+|\alpha_2|} C^{\alpha_1}C^{\alpha_2}\biggl(C_{i_p}\rho(\gamma_1)(v_1\otimes v_2\otimes\cdots \otimes v_n)\biggr)^\tau\\
				&=(-1)^{|\alpha_1|+|\alpha_2|} C^{\alpha_1}C^{\alpha_2}\biggl(C_{i_p}\gamma_{\underline{\iota}}(v_1\otimes v_2\otimes\cdots \otimes v_n)\biggr)^\tau\\
				&=-\sqrt{-1}(-1)^{|\alpha_1|+|\alpha_2|} C^{\alpha_1}C^{\alpha_2}\biggl(C_{i_{p-1}}\gamma_{\underline{\iota}}(v_1\otimes v_2\otimes\cdots \otimes v_n)\biggr)^\tau\\
				&=-\sqrt{-1}(-1)^{|\alpha_1|+|\alpha_2|} C^{\alpha_1}C^{\alpha_2}\biggl(C_{i_{p-1}}\rho(\gamma_1)(v_1\otimes v_2\otimes\cdots \otimes v_n)\biggr)^\tau\\
				&=-\sqrt{-1}(-1)^{|\alpha_1|+|\alpha_2|} C^{\alpha_1}C^{\alpha_2}C_{\tau(i_{p-1})}\biggl(\rho(\gamma_1)(v_1\otimes v_2\otimes\cdots \otimes v_n)\biggr)^\tau\\
				&=-\sqrt{-1}(-1)^{|\alpha_1|+|\alpha_2|_{\leq\tau(i_{p-1})}} C^{\alpha_1}C^{\alpha_2+e_{\tau(i_{p-1})}}\rho(\gamma_1)(v_1\otimes v_2\otimes\cdots \otimes v_n)^\tau\\
				&=-\sqrt{-1}(-1)^{|\alpha_1|+|\alpha_2|_{\leq\tau(i_{p-1})}} C^{\alpha_1}C^{\alpha_2+e_{\tau(i_{p-1})}}v_{\underline{\iota}}^\tau,
			\end{align*}
where in the fourth and last fourth equations we have used Lemma \ref{lem:hom2} and in the fifth equation we have used \eqref{commutation formula1}.
	\end{enumerate} This completes the proof.
	\end{proof}
	\subsection{Representations of $\mathcal{P}_n$}
	
%
	
	{\textbf {In this subsection, we shall fix $\underline{\iota}=(\iota_1,\iota_2,\cdots,\iota_n )\in \mathbb{K}^n$ and define $$\mathcal{D}_{\underline{\iota}}=\{i_1<i_2<\cdots<i_t\}:=\{1\leq i\leq n \mid \mathtt{u}_\pm(\iota_{i})=0\}.
			$$} }
	For each $\iota_i$, let $L(\iota_i)$ be the $2$-dimensional
	$\mathcal{A}_1$-supermodule $L(\iota_i)=\mathbb{K}v_i\oplus \mathbb{K}v'_i$  with
	$L(\iota_i)_{\bar{0}}=\mathbb{K}v_i$ and $L(\iota_i)_{\bar{1}}=\mathbb{K}v'_i$
	and
	$$
	x_i v_i=\mathtt{u}_{+}(\iota_i)v_i,\quad x_i v'_i=\mathtt{u}_{-}(\iota_i)v'_i, \quad
	c_iv_i=v'_i,\quad c_iv'_i=v_i.
	$$
	Then the $\mathcal{P}_n$-supermodule $L(\iota_1)\otimes L(\iota_2)\otimes\cdots\otimes L(\iota_n)$ is a cyclic $\mathcal{P}_n$-supermodule with $\mathbb{K}$-basis $\{c^\beta v_1\otimes v_2\otimes\cdots \otimes v_n \mid \beta\in \Z_2^n\}$. 	It's easy to see that each $\mathcal{P}_1$-module $L(\iota_i)$  is irreducible of type $\texttt{Q}$ if $\mathtt{u}_\pm(\iota_{i})=0$,
	and irreducible of type $\texttt{M}$ if $\mathtt{u}_\pm(\iota_{i})\neq 0$. Clearly, $\mathcal{A}_n\cong \mathcal{A}_1\otimes\cdots \otimes\mathcal{A}_1.$ Applying Lemma \ref{tensorsmod}, we have
	\begin{equation}\label{decom2 dege}
		L(\iota_1)\otimes L(\iota_2)\otimes\cdots\otimes L(\iota_n)\cong \biggl(L(\iota_1)\circledast
		L(\iota_2)\circledast\cdots\circledast
		L(\iota_n)\biggr)^{\oplus 2^{\lfloor t/2 \rfloor}}
	\end{equation}
	as $\mathcal{P}_n$-modules.
The following two Lemmas are analogues of Lemma \ref{lem:hom1},\,\ref{lem:hom2}.
	\begin{lem}\label{lem:hom1 dege}
		Suppose $\mathtt{u}_\pm(\iota_{i})=0$. Then the $\mathbb{K}$-linear map $$\begin{matrix}
			\rho_i:  &L(\iota_1)\otimes L(\iota_2)\otimes\cdots\otimes L(\iota_n)&\longrightarrow  &L(\iota_1)\otimes L(\iota_2)\otimes\cdots\otimes L(\iota_n)&\\
			&c^\beta v_1\otimes v_2\otimes\cdots \otimes v_n&\mapsto &c^\beta c_i v_1\otimes v_2\otimes\cdots \otimes v_n,&\forall\beta\in \Z_2^n
		\end{matrix}$$ is a $\mathcal{P}_n$-module homomorphism.
	\end{lem}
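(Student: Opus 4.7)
The plan is to mirror the argument given for the non-degenerate analogue Lemma~\ref{lem:hom1}, exploiting the parallel between the defining relations of $\mathcal{A}_n$ and $\mathcal{P}_n$ together with the degenerate vanishing hypothesis $\mathtt{u}_\pm(\iota_i)=0$. The verification splits into two parts: commutation of $\rho_i$ with every $c_l$ (purely a Clifford computation) and commutation of $\rho_i$ with every $x_j$ (where the assumption on $\iota_i$ plays its role).

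First I would check that $\rho_i$ commutes with the action of $c_l$ for each $l\in[n]$. Fix $\beta\in\Z_2^n$ and use the Clifford relations \eqref{clifford} to write $c_l c^\beta = a\, c^\omega$ for some $a\in\{\pm 1\}$ and $\omega\in\Z_2^n$. Then both $c_l\rho_i(c^\beta v_1\otimes\cdots\otimes v_n)$ and $\rho_i(c_l c^\beta v_1\otimes\cdots\otimes v_n)$ expand to $a\, c^\omega c_i v_1\otimes\cdots\otimes v_n$, since $c_i$ sits to the immediate right of $c^\beta$ in both expressions and therefore is unaffected by the rewriting.

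Next I would verify commutation with $x_j$ for every $j\in[n]$. Using \eqref{xc}, one has $x_j c^\beta = (-1)^{\beta_j} c^\beta x_j$, so set $b:=(-1)^{\beta_j}$. Computing both sides gives
\begin{align*}
x_j\rho_i(c^\beta v_1\otimes\cdots\otimes v_n) &= b\, c^\beta x_j c_i v_1\otimes\cdots\otimes v_n,\\
\rho_i(x_j c^\beta v_1\otimes\cdots\otimes v_n) &= b\,\mathtt{u}_+(\iota_j)\, c^\beta c_i v_1\otimes\cdots\otimes v_n.
\end{align*}
When $i\neq j$, the relation $x_j c_i = c_i x_j$ together with $x_j v_j = \mathtt{u}_+(\iota_j)v_j$ shows the first line equals the second. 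When $i=j$, one instead has $x_i c_i = -c_i x_i$, so the first line becomes $-b\,\mathtt{u}_+(\iota_i)\, c^\beta c_i v_1\otimes\cdots\otimes v_n$; the hypothesis $\mathtt{u}_\pm(\iota_i)=0$ forces both lines to vanish and hence to agree.

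I do not anticipate a genuine obstacle: the computation is essentially formal and closely parallels Lemma~\ref{lem:hom1}. The only subtle point is the diagonal case $i=j$, where in the non-degenerate version one needed $\mathtt{b}_+(\iota_i)=\mathtt{b}_-(\iota_i)$ to make the two sides match, while in the degenerate setting the corresponding matching is achieved by having both sides equal zero under $\mathtt{u}_+(\iota_i)=0$. This small bookkeeping difference is the only place where the argument diverges from its non-degenerate counterpart.
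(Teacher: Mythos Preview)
Your proof is correct and is exactly the degenerate adaptation of the paper's proof of Lemma~\ref{lem:hom1}; the paper itself omits the proof of Lemma~\ref{lem:hom1 dege}, merely remarking that it is the analogue of the non-degenerate case. Your handling of the diagonal case $i=j$---where both sides vanish because $\mathtt{u}_+(\iota_i)=0$, rather than agreeing via $\mathtt{b}_+(\iota_i)=\mathtt{b}_-(\iota_i)$ as in the non-degenerate argument---is precisely the expected modification.
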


	\begin{lem}\label{lem:hom2 dege}
		There is an algebra homomorphism $$\begin{matrix}
			\rho	:  & \mathcal{C}_t&\longrightarrow  &\End_{\mathcal{P}_n}\biggl(L(\iota_1)\otimes L(\iota_2)\otimes\cdots\otimes L (\iota_n)\biggr)&\\
			&c_j&\mapsto &\rho_{i_j},&1\leq j\leq t.
		\end{matrix}$$
	\end{lem}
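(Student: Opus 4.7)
The plan is to prove this lemma in direct parallel with Lemma \ref{lem:hom2}, with the Clifford generators $c_{i_j}\in\mathcal{P}_n$ playing the role of the $C_{i_j}\in\mathcal{A}_n$ in the non-degenerate setting. The key input is already in hand: Lemma \ref{lem:hom1 dege} guarantees that each $\rho_{i_j}$ lies in $\End_{\mathcal{P}_n}\bigl(L(\iota_1)\otimes\cdots\otimes L(\iota_n)\bigr)$, so $\rho$ is at least a well-defined $\mathbb{K}$-linear map into that endomorphism algebra. What remains is to check that the images $\rho_{i_1},\ldots,\rho_{i_t}$ satisfy the defining relations of $\mathcal{C}_t$, namely $\rho_{i_j}^2=\mathrm{id}$ and $\rho_{i_{j_1}}\rho_{i_{j_2}}=-\rho_{i_{j_2}}\rho_{i_{j_1}}$ for $j_1\neq j_2$.

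For the squared relation, I would test $\rho_{i_j}^2$ on a basis vector $c^{\beta}v_1\otimes\cdots\otimes v_n$. The first application produces $c^{\beta}c_{i_j}v_1\otimes\cdots\otimes v_n$, and the second application can be evaluated by using that $\rho_{i_j}$ is $\mathcal{P}_n$-linear (this is precisely the content of Lemma \ref{lem:hom1 dege}), sliding the element $c^{\beta}c_{i_j}\in\mathcal{P}_n$ past $\rho_{i_j}$ and reducing to $\rho_{i_j}(v_1\otimes\cdots\otimes v_n)=c_{i_j}v_1\otimes\cdots\otimes v_n$. The Clifford relation $c_{i_j}^2=1$ then gives back $c^{\beta}v_1\otimes\cdots\otimes v_n$.

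For the anticommutation relation, fix $1\leq j_1<j_2\leq t$ and compute $\rho_{i_{j_1}}\rho_{i_{j_2}}$ on $c^{\beta}v_1\otimes\cdots\otimes v_n$. The inner application inserts $c_{i_{j_2}}$ on the right of $c^{\beta}$; the outer application, thanks again to the $\mathcal{P}_n$-linearity of $\rho_{i_{j_1}}$, lets the inserted $c_{i_{j_2}}$ pass outside, leaving $c^{\beta}c_{i_{j_2}}c_{i_{j_1}}v_1\otimes\cdots\otimes v_n$. Swapping the roles of $j_1$ and $j_2$ produces $c^{\beta}c_{i_{j_1}}c_{i_{j_2}}v_1\otimes\cdots\otimes v_n$, and since $i_{j_1}\neq i_{j_2}$ the Clifford relation $c_{i_{j_1}}c_{i_{j_2}}=-c_{i_{j_2}}c_{i_{j_1}}$ in $\mathcal{C}_n\subset\mathcal{P}_n$ delivers the required sign.

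I do not anticipate any substantive obstacle: the whole argument is formally identical to the proof of Lemma \ref{lem:hom2}, with $x_i$ replacing $X_i^{\pm 1}$, and in fact the $x_i$'s play no explicit role in the verification since only the Clifford generators and the intertwining property of $\rho_{i_j}$ intervene. The one degenerate-specific input---that $\rho_{i_j}$ really is a $\mathcal{P}_n$-module homomorphism when $\mathtt{u}_{\pm}(\iota_{i_j})=0$---has already been isolated in Lemma \ref{lem:hom1 dege}, so the present lemma reduces to the two short relation checks sketched above.
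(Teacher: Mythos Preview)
Your proposal is correct and follows exactly the approach the paper intends: the paper omits the proof of Lemma~\ref{lem:hom2 dege}, stating only that it is an analogue of Lemma~\ref{lem:hom2}, and your argument reproduces that proof verbatim with $c$'s in place of $C$'s and $\mathcal{P}_n$ in place of $\mathcal{A}_n$.
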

	
%
	Similarly, we define
	\begin{align*}
		\mathcal{OD}_{\underline{\iota}}&:=\{i_1,i_3,\cdots,i_{2{\lceil t/2 \rceil}-1}\}\subset \mathcal{D}_{\underline{\iota}}, \\
		\mathbb{Z}_2(\mathcal{OD}_{\underline{\iota}})&:=\{\alpha \in \mathbb{Z}_2^{n} \mid \supp(\alpha)\in \mathcal{OD}_{\underline{\iota}} \}, \\
		\mathbb{Z}_2([n]\setminus \mathcal{D}_{\underline{\iota}})&:=\{\alpha \in \mathbb{Z}_2^{n} \mid \supp(\alpha)\in [n]\setminus \mathcal{D}_{\underline{\iota}} \}.
	\end{align*}
	Then we have
	\begin{prop}\label{L basis dege}
		For any $\gamma_1,\gamma_2\in I_t$, we have
		\begin{enumerate}
			\item $$\rho(\gamma_1)\biggl( L (\iota_1)\otimes L (\iota_2)\otimes\cdots\otimes L (\iota_n)\biggr)\cong\rho(\gamma_2)\biggl( L (\iota_1)\otimes L (\iota_2)\otimes\cdots\otimes L (\iota_n)\biggr)$$ as $\mathcal{P}_n$-modules.
			\item
			$$\rho(\gamma_1)\biggl( L (\iota_1)\otimes L (\iota_2)\otimes\cdots\otimes L (\iota_n)\biggr)\cong L (\underline{\iota})$$ is an irreducible $\mathcal{P}_n$-module.
			\item $$\Biggl\{c^{\alpha_1}c^{\alpha_2}\rho(\gamma_1)(v_1\otimes v_2\otimes\cdots \otimes v_n)\biggm|\begin{matrix}\alpha_1\in \mathbb{Z}_2([n]\setminus \mathcal{D}_{\underline{\iota}}) \\
				\alpha_2\in \mathbb{Z}_2(\mathcal{OD}_{\underline{\iota}})
			\end{matrix}\Biggr\}$$
			forms a $\mathbb{K}$-basis of $\rho(\gamma_1)\biggl( L (\iota_1)\otimes L (\iota_2)\otimes\cdots\otimes L (\iota_n)\biggr)=\rho(\gamma_1)\biggl(\mathcal{C}_n v_1\otimes v_2\otimes\cdots \otimes v_n\biggr)$.
			
		\end{enumerate}
	\end{prop}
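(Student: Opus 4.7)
The proof will proceed by mirroring the argument given for Proposition \ref{L basis} in the non-degenerate setting, with Lemmas \ref{lem:hom1 dege} and \ref{lem:hom2 dege} playing the roles of Lemmas \ref{lem:hom1} and \ref{lem:hom2}, and with the decomposition \eqref{decom2 dege} replacing \eqref{decom2}. The structural input from the Clifford algebra $\mathcal{C}_t$ recorded in Lemma \ref{lem:clifford rep} is the same, so the same combinatorics of primitive idempotents in $I_t$ apply.

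For part (1), I would first note that by Lemma \ref{lem:hom2 dege} the image $\rho(\gamma_1)(L(\iota_1)\otimes\cdots\otimes L(\iota_n))$ is a $\mathcal{P}_n$-submodule (since $\rho(\gamma_1)$ commutes with the $\mathcal{P}_n$-action), and likewise for $\gamma_2$. By Lemma \ref{lem:clifford rep}(2), there exists an invertible $c\in\mathcal{C}_t$ such that $c^{-1}\gamma_2 c=\gamma_1$. Then $\rho(c)$ intertwines the two submodules, since for any $u$ one has
\begin{equation*}
\rho(c)\rho(\gamma_1)(u)=\rho(c\gamma_1)(u)=\rho(\gamma_2 c)(u)=\rho(\gamma_2)\rho(c)(u),
\end{equation*}
and $\rho(c)$ is $\mathcal{P}_n$-linear by Lemma \ref{lem:hom2 dege}. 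Its inverse is $\rho(c^{-1})$, proving the desired isomorphism.

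For part (2), I would apply $I_t$ to decompose
\begin{equation*}
L(\iota_1)\otimes\cdots\otimes L(\iota_n)=\bigoplus_{\gamma\in I_t}\rho(\gamma)\bigl(L(\iota_1)\otimes\cdots\otimes L(\iota_n)\bigr),
\end{equation*}
which is a direct sum of $\lfloor t/2\rfloor$-many $\mathcal{P}_n$-modules, all mutually isomorphic by (1). Comparing with \eqref{decom2 dege}, each summand must be isomorphic to the irreducible $L(\underline{\iota})$.

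For part (3), the set in question is a $\mathbb{K}$-linear spanning set of $\rho(\gamma_1)(\mathcal{C}_n v_1\otimes\cdots\otimes v_n)$ by Lemma \ref{lem:clifford rep}(3), with cardinality $2^{n-\lfloor t/2\rfloor}$. Since \eqref{decom2 dege} gives $\dim_\mathbb{K} L(\underline{\iota})=2^{n-\lfloor t/2\rfloor}$, and part (2) identifies $\rho(\gamma_1)(L(\iota_1)\otimes\cdots\otimes L(\iota_n))$ with $L(\underline{\iota})$, the spanning set is forced to be a basis. No serious obstacle is anticipated: the entire argument is a transcription of the non-degenerate proof, the only care being that the relations in $\mathcal{P}_n$ (in particular \eqref{xc}) are used in Lemmas \ref{lem:hom1 dege} and \ref{lem:hom2 dege} exactly where their non-degenerate counterparts appear.
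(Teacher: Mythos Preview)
Your proposal is correct and follows exactly the paper's approach: the paper omits the proof of Proposition~\ref{L basis dege} entirely, implicitly relying on the transcription of the proof of Proposition~\ref{L basis} with Lemmas~\ref{lem:hom1 dege}, \ref{lem:hom2 dege} and \eqref{decom2 dege} substituted in, just as you do. One minor slip: the number of summands indexed by $I_t$ is $|I_t|=2^{\lfloor t/2\rfloor}$, not $\lfloor t/2\rfloor$ (and this is also the correct multiplicity in \eqref{decom2 dege}, which carries the same typo), so the counts still match and the argument is unaffected.
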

	As in \cite[Remark 5.4]{SW},
	each permutation $\tau\in {\mathfrak{S}_n}$ defines a superalgebra
	isomorphism $\tau:\mathcal{P}_n\rightarrow \mathcal{P}_n$ by mapping $x_k$ to
	$x_{\tau(k)}$ and $c_k$ to  $c_{\tau(k)}$, for $1\leq k\leq n$. For
	$\underline{\iota}\in \mathbb{K}^n$, the twist action of
	$\mathcal{P}_n$ on $L(\underline{\iota})$ leads to a new $\mathcal{P}_n$-module denoted by
	$L(\underline{\iota})^{\tau}$ with
	$$
	L(\underline{\iota})^{\tau}=\{z^{\tau}~|~z\in L(\underline{\iota})\} ,\quad
	fz^{\tau}=(\tau^{-1}(f)z)^{\tau}, \text{ for any }f\in
	\mathcal{P}_n, z\in L(\underline{\iota}).
	$$
	In particular, we have $$(x_kz)^{\tau}=x_{\tau(k)}z^{\tau},\,(c_kz)^{\tau}=c_{\tau(k)}z^{\tau}$$ for each $1\leq k\leq n$. It is also easy to see that $
	L(\underline{\iota})^{\tau}\cong L(\tau\cdot \underline{\iota})$.
	We want to study the structure of $ L (\underline{\iota})^{{\tau}}$ using the basis of $ L (\underline{\iota})$ we obtained in Proposition \ref{L basis dege}.

	\begin{defn}
For any $\tau\in \mathfrak{S}_n$ and $\underline{\iota}=(\iota_1,\iota_2,\cdots,\iota_n )\in \mathbb{K}^n$, we define
		\begin{align*}
			\mathcal{D}^\tau_{\underline{\iota}}&:=\tau(\mathcal{D}_{\underline{\iota}}),\\
			\mathcal{OD}^\tau_{\underline{\iota}}&:=\tau (\mathcal{OD}_{\underline{\iota}}),\\
			\mathbb{Z}_2(\mathcal{OD}^\tau_{\underline{\iota}})&:=\{\alpha \in \mathbb{Z}_2^{n} \mid \supp(\alpha)\in \mathcal{OD}^\tau_{\underline{\iota}} \}, \\
			\mathbb{Z}_2([n]\setminus \mathcal{D}^\tau_{\underline{\iota}})&:=\{\alpha \in \mathbb{Z}_2^{n} \mid \supp(\alpha)\in [n]\setminus \mathcal{D}^\tau_{\underline{\iota}} \}
		\end{align*}
		and denote by $\gamma_{\underline{\iota}}^\tau$ the following idempotent in Clifford algebra $\mathcal{C}_n$
 $$\gamma_{\underline{\iota}}^\tau:=2^{-\lfloor t/2 \rfloor}\cdot\overrightarrow{\prod_{k=1,\cdots,{\lfloor t/2 \rfloor}}}\biggl(1+ \sqrt{-1}c_{\tau(i_{2k-1})}c_{\tau(i_{2k})}\biggr)\in \mathcal{C}_n.
		$$
	\end{defn}

	For $k=1,2,\ldots,n$, Recall that $$e_k:=(\bar{0},\ldots,\bar{1},\dots,\bar{0})\in \mathbb{Z}_2^n$$ to be the $k$-th standard vector in $\Z_2^n$. The following Proposition connects the basis of $ L (\underline{\iota})$ and $ L (\underline{\iota})^\tau$ and also gives explicit formulae  for the actions of $x_i$ and $c_j$ on $ L (\underline{\iota})^\tau$ for any $\tau\in \mathfrak{S}_n$.
	\begin{prop}\label{actions of x,c}
	For any	$\underline{\iota}=(\iota_1,\iota_2,\cdots,\iota_n )\in \mathbb{K}^n,$ the $\mathcal{P}_n$-module $\bigoplus_{\tau\in \mathfrak{S}_n} L (\underline{\iota})^\tau$ has a $\mathbb{K}$-basis of form $$\bigsqcup_{\tau\in \mathfrak{S}_n}\Biggl\{c^{\alpha_1}c^{\alpha_2}v_{\underline{\iota}}^\tau\biggm|\begin{matrix}\alpha_1\in \mathbb{Z}_2([n]\setminus \mathcal{D}^\tau_{\underline{\iota}}) \\
		\alpha_2\in \mathbb{Z}_2(\mathcal{OD}^\tau_{\underline{\iota}})
	\end{matrix}\Biggr\}$$
such that for any $\tau\in \mathfrak{S}_n$, the set of elements
$$\Biggl\{c^{\alpha_1}c^{\alpha_2}v_{\underline{\iota}}^\tau\biggm|\begin{matrix}\alpha_1\in \mathbb{Z}_2([n]\setminus \mathcal{D}^\tau_{\underline{\iota}}) \\
			\alpha_2\in \mathbb{Z}_2(\mathcal{OD}^\tau_{\underline{\iota}})
		\end{matrix}\Biggr\}$$
forms a $\mathbb{K}$-linear basis of the $\mathcal{P}_n$-module $L(\underline{\iota})^\tau$ and the actions of $\mathcal{P}_n$ on
$L(\underline{\iota})^\tau$ are given as follows.
		\begin{enumerate}
			\item For any $i\in [n],$ $\alpha_1\in \mathbb{Z}_2([n]\setminus \mathcal{D}^\tau_{\underline{\iota}})$ and $\alpha_2\in \mathbb{Z}_2(\mathcal{OD}^\tau_{\underline{\iota}}),$ we have
			$$x_i\cdot c^{\alpha_1}c^{\alpha_2}v_{\underline{\iota}}^{\tau}
			=\nu_{\alpha_1}(i) \mathtt{u}_{+}\biggl(\iota_{\tau^{-1}(i)}\biggr) c^{\alpha_1}c^{\alpha_2}v_{\underline{\iota}}^{\tau}
			$$
			\item We have $\gamma_{\tau\cdot\iota} v_{\underline{\iota}}^\tau=v_{\underline{\iota}}^\tau.$
			\item  For any $i\in [n],$ $\alpha_1\in \mathbb{Z}_2([n]\setminus \mathcal{D}^\tau_{\underline{\iota}})$ and $\alpha_2\in \mathbb{Z}_2(\mathcal{OD}^\tau_{\underline{\iota}}),$ we have
			\begin{align}
				&c_i\cdot c^{\alpha_1}c^{\alpha_2}v_{\underline{\iota}}^\tau \nonumber\\
				&=\begin{cases}
					(-1)^{|\alpha_1|_{<i}} c^{\alpha_1+e_i}c^{\alpha_2}v_{\underline{\iota}}^\tau, & \text{ if } i\in [n]\setminus \mathcal{D}^\tau_{\underline{\iota}}, \\
					(-1)^{|\alpha_1|+|\alpha_2|_{<i}} c^{\alpha_1}c^{\alpha_2+e_{i}}v_{\underline{\iota}}^\tau, & \text{ if  $i=\tau(i_p) \in \mathcal{D}^\tau_{\underline{\iota}}$, $p$ is odd},\\
					(-\sqrt{-1})(-1)^{|\alpha_1|+|\alpha_2|_{\leq \tau(i_{p-1})}}&\\
					\qquad\qquad\qquad\cdot c^{\alpha_1}c^{\alpha_2+e_{\tau(i_{p-1})}}v_{\underline{\iota}}^\tau, &\text{ if  $i=\tau(i_p) \in \mathcal{D}^\tau_{\underline{\iota}}$, $p$ is even}.\nonumber
				\end{cases}
			\end{align}
		\end{enumerate}
	\end{prop}

	\section{Primitive idempotents and seminormal basis for cyclotomic Hecke-Clifford algebra $\mHfcn$}\label{Nondeg}
	{\bf In this section, we shall fix the parameter $\undQ=(Q_1,Q_2,\ldots,Q_m)\in(\mathbb{K}^*)^m$ and $f=f^{(\bullet)}_{\undQ}$ with $P^{(\bullet)}_{n}(q^2,\undQ)\neq 0$ for $\bullet\in\{\mathtt{0},\mathtt{s},\mathtt{ss}\}.$ Accordingly, we define the residue of boxes in the young diagram $\undla$ via \eqref{eq:residue} as well as $\res(\mathfrak{t})$ for each $\mathfrak{t}\in\Std(\undla)$ with $\undla\in\mathscr{P}^{\bullet,m}_{n}$ with $m\geq 0.$}
\subsection{Simple modules} \label{Non-dege-simplemodule}{\bf In this subsection, we fix $\undla\in\mathscr{P}^{\bullet,m}_{n}$.} We first recall the construction of simple $\mHfcn$-module $\mathbb{D}(\undla)$ in \cite{SW} for $\undla\in\mathscr{P}^{\bullet,m}_{n}.$ Then we shall give an explicit basis of $\mathbb{D}(\undla)$ and write down the action of generators of $\mHcn$ on this basis.

	Recall that for any $\ms,\mt \in \Std(\undla),$ there exists a unique $d(\ms,\mt)\in \mathfrak{S}_n$ such that
$\ms=d(\ms,\mt)\mt.$ For each $\mt\in\Std(\undla),$ we have ${d(\mt,\mt^{\undla})}\cdot \res(\mathfrak{t}^{\undla})=\res(\mt)$ since
$\mt^{-1}(k)=({d(\mt,\mt^{\undla})}\mathfrak{t}^{\undla})^{-1}(k)=(\mathfrak{t}^{\undla})^{-1}({d(\mt,\mt^{\undla})}^{-1} (k))$ for all $k\in[n].$

For $\undla\in\mathscr{P}^{\bullet,m}_{n},$ define the $\mathcal{A}_n$-module
\label{pag:nondege simple module}
$$
\mathbb{D}(\undla):=\oplus_{\mt\in \Std(\undla)}\mathbb{L}(\res(\mathfrak{t}^{\undla}))^{{d(\mt,\mt^{\undla})}}.
$$
To define a $\mHfcn$-module structure on $\mathbb{D}(\undla)$, we recall two operators on $\mathbb{D}(\undla)$  as in \cite[(4.7), (4.8)]{SW}, which can be viewed as some analogues of the operators in \cite{Wa}:
\begin{align}
\widetilde{\Xi}_i z&:=\left(-\epsilon\frac{1}{X_i X^{-1}_{i+1}-1}+\epsilon\frac{1}{X_i X_{i+1}-1}C_i C_{i+1}\right)z \label{Operater1Non-dege}\\
\widetilde{\Omega}_i z&:=\left(\sqrt{1-\epsilon^2 \biggl(\frac{X_iX^{-1}_{i+1}}{(X_iX^{-1}_{i+1}-1)^2}
+\frac{X^{-1}_iX^{-1}_{i+1}}{(X^{-1}_iX^{-1}_{i+1}-1)^2}\biggr)}\right)z\label{Operater2Non-dege},
\end{align} where $z\in \mathbb{L}(\res(\mathfrak{t}^{\undla}))^{{d(\mt,\mt^{\undla})}}\simeq \mathbb{L}(\res(\mt)).$
By Definition \ref{defn:separate} and Proposition \ref{separate formula}, the eigenvalues of $X_k+X^{-1}_k$ and $X_{k+1}+X^{-1}_{k+1}$ on $\mathbb{L}(\res(\mathfrak{t}^{\undla}))^{{d(\mt,\mt^{\undla})}}$ are different, hence the operators $\widetilde{\Xi}_i$ and $\widetilde{\Omega}_i$ are well-defined on $\mathbb{L}(\res(\mathfrak{t}^{\undla}))^{{d(\mt,\mt^{\undla})}}$ for each $\mt \in \Std(\undla)$.

\begin{thm}\label{Construction}(\cite[Theorem 4.5]{SW})
Let $\undQ=(Q_1,\ldots,Q_m)$. Suppose $f=f(X_1)=f^{(\bullet)}_{\undQ}(X_1)$ and  $P^{(\bullet)}_{n}(q^2,\undQ)\neq 0$ with $\bullet\in\{\mathsf{0},\mathsf{s},\mathsf{ss}\}$.
Then $\mathbb{D}(\undla)$ becomes an $\mHfcn$-module via
\begin{align}
T_iz^{d(\mt,\mt^{\undla})}= \left \{
 \begin{array}{ll}
 \widetilde{\Xi}_i z^{d(\mt,\mt^{\undla})}
 +\widetilde{\Omega}_i z^{s_id(\mt,\mt^{\undla})},
 &  \text{ if } s_i \mt\in \Std(\undla), \\
 \widetilde{\Xi}_i z^{d(\mt,\mt^{\undla})}
 , & \text{ otherwise},
 \end{array}
 \right.\label{actionformulaNon-dege}
\end{align}
 for any $1\leq i\leq n-1,\,z\in \mathbb{L}(\res(\mathfrak{t}^{\undla}))$ and $\mt\in \Std(\undla)$.
\end{thm}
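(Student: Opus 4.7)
The plan is to interpret \eqref{actionformulaNon-dege} as expressing $T_i$ as the sum of a diagonal part $\widetilde{\Xi}_i$ (which preserves the summand indexed by $\mt$) and an off-diagonal part that, up to the scalar $\widetilde{\Omega}_i$, realises the intertwining element $\Phi_i$ of \eqref{universal-Phi} sending $\mathbb{L}(\res(\mt))\to\mathbb{L}(\res(s_i\mt))$ whenever $s_i\mt\in\Std(\undla)$, and vanishing otherwise. I would first verify well-definedness: by Lemma \ref{important condition1}(2) the denominators in \eqref{Operater1Non-dege}--\eqref{Operater2Non-dege} never vanish on any $\mathbb{L}(\res(\mt))$, and the square root in \eqref{Operater2Non-dege} defines an element of $\mathbb{K}$ after fixing one of its two roots (permissible since $\mathbb{K}$ is algebraically closed of characteristic $\neq 2$). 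The crucial compatibility is that the radicand in \eqref{Operater2Non-dege} vanishes exactly when $s_i\mt\notin\Std(\undla)$, i.e.\ when $i,i+1$ lie in adjacent diagonals of $\mt$; this follows from Lemma \ref{important condition1}(3) combined with the substitution \eqref{substitute} and the idempotency condition \eqref{invertible}, and ensures the two cases of \eqref{actionformulaNon-dege} glue consistently.

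With well-definedness in hand, I would verify each defining relation of $\mHfcn$ on the weight-space decomposition of $\mathbb{D}(\undla)$. The commutations $T_iX_j=X_jT_i$ and $T_iC_j=C_jT_i$ for $j\neq i,i+1$ reduce immediately to Proposition \ref{actions of X,C} since $\widetilde{\Xi}_i,\widetilde{\Omega}_i$ involve only the indices $i,i+1$. The cross relations \eqref{PX1}, \eqref{PX2} and the nontrivial parts of \eqref{PC} would be verified on each $\mathbb{L}(\res(\mt))$ using the intertwining identities \eqref{Xinter}, \eqref{Cinter} on the off-diagonal summand (where twisting by $s_i$ swaps the eigenvalues of $X_i,X_{i+1}$ and of $C_i,C_{i+1}$), while the diagonal contribution $\widetilde{\Xi}_i$ is handled by a direct rational-function computation.

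The quadratic relation $T_i^2=\epsilon T_i+1$ would be checked case by case. When $s_i\mt\notin\Std(\undla)$ only $\widetilde{\Xi}_i^2$ contributes, and the required identity follows from Corollary \ref{Phi}(a) applied to the pair $(\mathtt{b}_-(\res_\mt(i)),\mathtt{b}_-(\res_\mt(i+1)))$. When $s_i\mt\in\Std(\undla)$, the mixed terms between $\widetilde{\Xi}_i$ and $\widetilde{\Omega}_i$ must cancel against the square of the off-diagonal part; this reduces via \eqref{square2} to an explicit identity between rational functions in the $X$-eigenvalues, with the choice of square root in \eqref{Operater2Non-dege} fixing the correct sign. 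I expect the main obstacle to be the braid relation $T_iT_{i+1}T_i=T_{i+1}T_iT_{i+1}$: the plan is to expand both sides on each $\mathbb{L}(\res(\mt))$ into diagonal and off-diagonal contributions, and pattern-match according to the admissibility criteria of Lemma \ref{admissible transposes} applied to $s_i,s_{i+1},s_is_{i+1},s_{i+1}s_i,s_is_{i+1}s_i$. The purely off-diagonal pieces will reduce to \eqref{braidrel2}, but the mixed terms require the compatibility of the chosen square roots of the various $\widetilde{\Omega}_j$'s across the six relevant weight spaces; resolving this consistency is the subtle step, and I anticipate it will require a careful coherent choice of roots depending on the path in $\Std(\undla)$. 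The far commutation $T_iT_j=T_jT_i$ for $|i-j|>1$ follows directly from \eqref{braidrel1}.

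Finally, the cyclotomic relation $f(X_1)=0$ reduces to an eigenvalue check: on any summand $\mathbb{L}(\res(\mt))\subset \mathbb{D}(\undla)$, the operator $X_1+X_1^{-1}$ acts by $\mathtt{q}(\res_\mt(1))=\mathtt{q}(Q_l)$, where $l$ is the component of $\undla$ containing $\mt^{-1}(1)$. For $\bullet=\mathsf{0}$ this value lies in $\{\mathtt{q}(Q_1),\ldots,\mathtt{q}(Q_m)\}$ and so $f^{(\mathsf{0})}_{\undQ}(X_1)$ annihilates every summand. For $\bullet=\mathsf{s}$ and $\mathsf{ss}$, the box $\mt^{-1}(1)$ may additionally lie in the $0$-component (with $Q_0=1$) or one of the $0_\pm$-components (with $Q_{0_+}=1, Q_{0_-}=-1$); these extra eigenvalues $1$ and $-1$ of $X_1$ are absorbed by the factors $(X_1-1)$ and $(X_1+1)(X_1-1)$ appearing in $f^{(\mathsf{s})}_{\undQ}$ and $f^{(\mathsf{ss})}_{\undQ}$ respectively.
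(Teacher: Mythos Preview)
The paper does not give its own proof of this theorem: it is quoted verbatim as \cite[Theorem~4.5]{SW} and used as a black box. So there is no in-paper argument to compare your proposal against.

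That said, your outline is the standard route for such a result (as in \cite{Wa,Na2,JN,SW}): define the $T_i$-action by the formula, then check the defining relations of $\mHcn$ weight-space by weight-space, reducing the braid relation to \eqref{braidrel2} and the quadratic relation to \eqref{square1}--\eqref{square2}. One small correction: you write that the radicand in \eqref{Operater2Non-dege} ``vanishes exactly when $s_i\mt\notin\Std(\undla)$''. Lemma~\ref{important condition1}(3) only gives one direction (nonvanishing when $i,i+1$ are not in adjacent diagonals). For the other direction you must also argue that when $s_i\mt\notin\Std(\undla)$ the boxes $\mt^{-1}(i),\mt^{-1}(i+1)$ are forced to sit in adjacent cells of the same row or column of the same component, so that $\res_\mt(i+1)=q^{\pm 2}\res_\mt(i)$ and hence \eqref{invertible} holds via \eqref{invertible2}; under the separate condition the cross-component coincidences are excluded. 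Also, in the case $s_i\mt\notin\Std(\undla)$ you do not actually need the radicand to vanish for \eqref{actionformulaNon-dege} to be consistent, since no off-diagonal term is present; what you really need there is that $\widetilde{\Xi}_i$ alone satisfies the quadratic relation, which is precisely Corollary~\ref{Phi}(a). Your caution about the coherent choice of square roots across the six tableaux in the braid check is well placed; in \cite{SW} this is handled by fixing once and for all a square root $\sqrt{a}$ for each $a\in\mathbb{K}$ and observing that the radicand depends only on the unordered pair of residues, so that the same scalar appears on both sides of the braid relation.
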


		Recall that we have fixed  $\undla\in\mathscr{P}^{\bullet,m}_{n}$. Let $t:=\sharp \mathcal{D}_{\undla}.$
		\begin{defn}
We denote \begin{align}
				\mathcal{D}_{\mt^{\undla}}&:=\{\mt^{\undla}(a,a,l)|(a,a,l)\in\mathcal{D}_{\undla}\}=\{i_1<i_2<\cdots<i_t\}\label{stanard D},\\
				\mathcal{OD}_{\mt^{\undla}}&:=\{i_1,i_3,\cdots,i_{2{\lceil t/2 \rceil}-1}\}\subset \mathcal{D}_{\mt^{\undla}}\label{standard OD}
				\end{align} and \label{pag:dundla}
		\begin{align}
			d_{\undla}:= \left \{
			\begin{array}{ll}
				1, & \text{ if $t$ is odd}, \\
				0, & \text{ if $t$ is even or $\mathcal{D}_{\mt^{\undla}}=\emptyset$}.
			\end{array}
			\right. \nonumber
		\end{align}
\end{defn}
\begin{rem}
	By Lemma \ref{important condition1} (1), it's easy to check that $\mathcal{D}_{\mt^{\undla}}=\mathcal{D}_{\res(\mt^{\undla})}$ if we set $\underline{\iota}=\res(\mt^{\undla})$ in \eqref{nondeg.Diota}. Hence we keep the same notation $i_1<i_2<\cdots<i_t$ here.
\end{rem}
Recall the notations in Definition \ref{lem. Dtauiota}.
\label{pag:Dt,ODt,Z2ODt}		
		\begin{defn}For each $\mt\in \Std(\undla),$ we define
		\begin{align*}
			\mathcal{D}_{\mt}&:=\mathcal{D}_{\res(\mt^{\undla})}^{d(\mt,\mt^{\undla}) }=d(\mt,\mt^{\undla}) (\mathcal{D}_{\mt^{\undla}}),\nonumber\\
            \mathcal{OD}_{\mt}&:=\mathcal{oD}_{\res(\mt^{\undla})}^{d(\mt,\mt^{\undla}) }=d(\mt,\mt^{\undla}) (\mathcal{OD}_{\mt^{\undla}}),\nonumber\\
		\Z_2(\mathcal{OD}_{\mt})&:=\Z_2(\mathcal{OD}_{\res(\mt^{\undla})}^{d(\mt,\mt^{\undla}) })=\{\alpha_{\mt} \in \mathbb{Z}_{2}^{n} \mid \supp(\alpha_{\mt}) \subseteq \mathcal{OD}_{\mt}\},\\
        \Z_2([n]\setminus \mathcal{D}_{\mt})&:=\Z_2([n]\setminus \mathcal{D}_{\res(\mt^{\undla})}^{d(\mt,\mt^{\undla}) })=\{\beta_{\mt} \in \mathbb{Z}_{2}^{n} \mid \supp(\beta_{\mt}) \subseteq [n]\setminus \mathcal{D}_{\mt}\},\nonumber
	\end{align*} 	and
$$\gamma_{\mt}:=\gamma_{\res(\mt^{\undla})}^{d(\mt,\mt^{\undla})}=2^{-\lfloor t/2 \rfloor}\cdot\overrightarrow{\prod_{k=1,\cdots,{\lfloor t/2 \rfloor}}}\biggl(1+ \sqrt{-1}C_{{d(\mt,\mt^{\undla})}(i_{2k-1})}C_{{d(\mt,\mt^{\undla})}(i_{2k})}\biggr)\in\mathcal{C}_n.
	$$
			\end{defn}

		We are almost ready to apply Proposition \ref{actions of X,C} to give a basis of  $\mathbb{D}(\undla)$ and describe the action of $\mHfcn$ on the basis.
		\begin{defn}
        \label{pag:deltabetak}
For  any $\beta\in \Z_2^n,\,k\in [n]$,	we define $$\delta_{\beta}(k):=\frac{1-\nu_{\beta}(k)}{2}=
			\begin{cases}
				1, & \text{ if } \beta_k=\bar{1}, \\
				0, & \text{ if } \beta_k=\bar{0},
			\end{cases}$$
		
			\end{defn}
			
		\begin{defn}	For any $i\in [n], \mt\in \Std(\undla),$ we define
             \label{pag:nondege eigenvalues}
			 $$\mathtt{b}_{\mt,i}:=\mathtt{b}_{-}(\res_{\mt}(i)).$$  If $i\in [n-1]$, we define
			 \begin{align}
			 	\delta(s_i\mt)
			 	:=\begin{cases}
			 		1, & \text{ if } s_i\mt \in \Std(\undla), \\
			 		0, & \text{ otherwise}.\nonumber
			 	\end{cases}
			 \end{align} and
            \label{pag:nondege coeffi cti}
			\begin{align}
				\mathtt{c}_{\mt}(i):=1-\epsilon^2 \biggl(\frac{\mathtt{b}_{\mt,i}^{-1}\mathtt{b}_{\mt,i+1}}{(\mathtt{b}_{\mt,i}^{-1}\mathtt{b}_{\mt,i+1}-1)^2}
				+\frac{\mathtt{b}_{\mt,i}\mathtt{b}_{\mt,i+1}}{(\mathtt{b}_{\mt,i}\mathtt{b}_{\mt,i+1}-1)^2}\biggr)\in \mathbb{K}.
			\end{align}
			\end{defn}
			Since $\mt \in \Std(\undla),$ $\mathtt{b}_{\mt,i}\neq \mathtt{b}_{\mt,i+1}^{\pm 1}$ by Definition \ref{defn:separate} and Proposition \ref{separate formula}, which immediately implies that $\mathtt{c}_{\mt}(i)$ is well-defined. If $s_i$ is admissible with respect to $\mt$, i.e., $\delta(s_i\mt)=1$, then $\mathtt{c}_{\mt}(i)\in \mathbb{K}^{*}$ by
		the third part of Lemma \ref{important condition1}. It is clear that $\mathtt{c}_{\mt}(i)=\mathtt{c}_{s_i\mt}(i).$	
	\begin{defn}
					Let $\mt\in\Std(\undla),\,\beta_{\mt}\in\Z_2([n]\setminus \mathcal{D}_{\mt})$ and $\alpha_{\mt}\in \Z_2(\mathcal{OD}_{\mt})$. For each $i\in [n-1],$ we define the element
$R(i,\beta_\mt, \alpha_\mt, \mt)\in\mathbb{L}(\res(\mathfrak{t}^{\undla}))^{{d(\mt,\mt^{\undla})}}\subseteq \mathbb{D}(\undla)$ as follows.
		
		\begin{enumerate}
			\item if $i,i+1\in [n]\setminus \mathcal{D}_{\mt},$
			$$R(i,\beta_\mt, \alpha_\mt, \mt):=(-1)^{\delta_{\beta_{\mt}}(i)}C^{\beta_{\mt}+e_i+e_{i+1}}C^{\alpha_{\mt}}v_{\mt};
			$$
			\item
			if $i={d(\mt,\mt^{\undla})}(i_p)\in \mathcal{D}_{\mt}, i+1\in [n]\setminus \mathcal{D}_{\mt},$
			$$R(i,\beta_\mt, \alpha_\mt, \mt):=\begin{cases}(-1)^{|\beta_{\mt}|_{>i}+|\alpha_{\mt}|_{<i}}C^{\beta_{\mt}+e_{i+1}}C^{\alpha_{\mt}+e_{i}}v_{\mt}, & \text{ if  $p$ is odd},\\
				(-\sqrt{-1})(-1)^{|\beta_{\mt}|_{>i}+|\alpha_{\mt}|_{\leq {d(\mt,\mt^{\undla})}(i_{p-1})}}&\\
				\qquad\qquad\qquad\cdot C^{\beta_{\mt}+e_{i+1}}C^{\alpha_{\mt}+e_{{d(\mt,\mt^{\undla})}(i_{p-1})}}v_{\mt}, &\text{ if  $p$ is even};
			\end{cases}\nonumber
			$$
			\item
			if $i+1={d(\mt,\mt^{\undla})}(i_p)\in \mathcal{D}_{\mt}, i\in [n]\setminus \mathcal{D}_{\mt},$ $$R(i,\beta_\mt, \alpha_\mt, \mt):=\begin{cases}(-1)^{|\beta_{\mt}|_{\geq i}+|\alpha_{\mt}|_{<i+1}} C^{\beta_{\mt}+e_i}C^{\alpha_{\mt}+e_{i+1}}v_{\mt}, & \text{ if  $p$ is odd},\\
				(-\sqrt{-1})(-1)^{|\beta_{\mt}|_{\geq i}+|\alpha_{\mt}|_{\leq {d(\mt,\mt^{\undla})}(i_{p-1})}}&\\
				\qquad\qquad\qquad \cdot C^{\beta_{\mt}+e_i}C^{\alpha_{\mt}+e_{{d(\mt,\mt^{\undla})}(i_{p-1})}}v_{\mt}, &\text{ if $p$ is even};\end{cases}  \nonumber
			$$
			\item
			if $i={d(\mt,\mt^{\undla})}(i_p), i+1={d(\mt,\mt^{\undla})}(i_{p'})\in \mathcal{D}_{\mt},$ \begin{equation*}
				R(i,\beta_\mt, \alpha_\mt, \mt):=\begin{cases}(-1)^{\delta_{\alpha_{\mt}}(i)} C^{\beta_{\mt}}C^{\alpha_{\mt}+e_{i}+e_{i+1}}v_{\mt}, & \text{ if  $p,p'$ are odd},\\
					(-\sqrt{-1})(-1)^{|\alpha_{\mt}|_{<i+1}+|\alpha_{\mt}+e_{i+1}|_{\leq{d(\mt,\mt^{\undla})}(i_{p-1})}} &\\
					\qquad\qquad\qquad \cdot C^{\beta_{\mt}}C^{\alpha_{\mt}+e_{{d(\mt,\mt^{\undla})}(i_{p-1})}+e_{i+1}}v_{\mt},& \text{ if  $p$ is even, $p'$ is odd},\\
					(-\sqrt{-1})(-1)^{|\alpha_{\mt}|_{\leq {d(\mt,\mt^{\undla})}(i_{p'-1})}+|\alpha_{\mt}+e_{{d(\mt,\mt^{\undla})}(i_{p'-1})}|_{<i}}&\\
					\qquad\qquad\qquad \cdot C^{\beta_{\mt}}C^{\alpha_{\mt}+e_{i}+e_{{d(\mt,\mt^{\undla})}(i_{p'-1})}}v_{\mt}, &\text{ if  $p$ is odd, $p'$ is even},\\
					(-1)^{1+|\alpha_{\mt}|_{\leq {d(\mt,\mt^{\undla})}(i_{p'-1})}+|\alpha_{\mt}+e_{{d(\mt,\mt^{\undla})}(i_{p'-1})}|_{\leq{d(\mt,\mt^{\undla})}(i_{p-1})}}&\\
					\qquad\qquad\qquad\cdot C^{\beta_{\mt}}C^{\alpha_{\mt}+e_{{d(\mt,\mt^{\undla})}(i_{p-1})}+e_{{d(\mt,\mt^{\undla})}(i_{p'-1})}}v_{\mt}, &\text{ if  $p,p'$ are even}.\end{cases}
			\end{equation*}
		\end{enumerate}
		\end{defn}
		The following is the first main result of this paper.		
		\begin{thm}\label{actions of generators on L basis} The simple $\mHfcn$-supermodule $\mathbb{D}(\undla)$ has a $\mathbb{K}$-basis of the form
$$\bigsqcup_{\mt\in \Std(\undla)}\Biggl\{C^{\beta_{\mt}}C^{\alpha_{\mt}}v_{\mt}\biggm|\begin{matrix}\beta_{\mt} \in \Z_2([n]\setminus \mathcal{D}_{\mt})  \\
					\alpha_{\mt}\in \Z_2(\mathcal{OD}_{\mt})
					\end{matrix}\Biggr\}$$
such that
				$$ \mathbb{L}_{\mt}:=\Biggl\{C^{\beta_{\mt}}C^{\alpha_{\mt}}v_{\mt}\biggm|\begin{matrix}\beta_{\mt} \in  \Z_2([n]\setminus \mathcal{D}_{\mt} ) \\
						\alpha_{\mt}\in \Z_2(\mathcal{OD}_{\mt})
					\end{matrix}\Biggr\}$$ forms a $\mathbb{K}$-basis of $\mathbb{L}(\res(\mathfrak{t}^{\undla}))^{{d(\mt,\mt^{\undla})}}$ and the actions of generators of $\mHfcn$ are given in the following.
					
			Let $\mt\in\Std(\undla),\,\beta_{\mt}\in\Z_2([n]\setminus \mathcal{D}_{\mt})$ and $\alpha_{\mt}\in \Z_2(\mathcal{OD}_{\mt})$.
			\begin{enumerate}
				\item For each $i\in [n],$ we have
				\begin{align}\label{X eigenvalues}
					X_i\cdot C^{\beta_{\mt}}C^{\alpha_{\mt}}v_{\mt}
					=\mathtt{b}_{\mt,i}^{-\nu_{\beta_{\mt}}(i)}C^{\beta_{\mt}}C^{\alpha_{\mt}}v_{\mt}.
				\end{align}
				\item We have $\gamma_{\mt} v_{\mt}=v_{\mt}.$
				\item For each $i\in [n],$ we have
				\begin{align}
					&C_i\cdot C^{\beta_{\mt}}C^{\alpha_{\mt}}v_{\mt} \label{Caction}\\
					&=\begin{cases}
						(-1)^{|\beta_{\mt}|_{<i}} C^{\beta_{\mt}+e_i}C^{\alpha_{\mt}}v_{\mt}, & \text{ if } i\in [n]\setminus \mathcal{D}_{\mt}, \\
						(-1)^{|\beta_{\mt}|+|\alpha_{\mt}|_{<i}} C^{\beta_{\mt}}C^{\alpha_{\mt}+e_{i}}v_{\mt}, & \text{ if  $i={d(\mt,\mt^{\undla})}(i_p) \in \mathcal{D}_{\mt}$ ,where $p$ is odd},\\
								(-\sqrt{-1})(-1)^{|\beta_{\mt}|+|\alpha_{\mt}|_{\leq {d(\mt,\mt^{\undla})}(i_{p-1})}}&\\
							\qquad\qquad	\cdot C^{\beta_{\mt}}C^{\alpha_{\mt}+e_{{d(\mt,\mt^{\undla})}(i_{p-1})}}v_{\mt}, &\text{ if  $i={d(\mt,\mt^{\undla})}(i_p) \in \mathcal{D}_{\mt}$ ,where $p$ is even}.\nonumber
					\end{cases}
				\end{align}
				\item For each $i\in [n-1],$ we have
				\begin{align}
					&T_i\cdot C^{\beta_{\mt}}C^{\alpha_{\mt}}v_{\mt} \label{Taction Non-dege}\\
					=&-\frac{\epsilon}{\mathtt{b}_{\mt,i}^{-\nu_{\beta_{\mt}}(i)}\mathtt{b}_{\mt,i+1}^{\nu_{\beta_{\mt}}(i+1)}-1} C^{\beta_{\mt}}C^{\alpha_{\mt}}v_{\mt} \nonumber\\
					&	\qquad\qquad  +\frac{\epsilon}{\mathtt{b}_{\mt,i}^{\nu_{\beta_{\mt}}(i)}\mathtt{b}_{\mt,i+1}^{\nu_{\beta_{\mt}}(i+1)}-1}R(i,\beta_\mt, \alpha_\mt, \mt)\nonumber\\
					&	\qquad\qquad\qquad +\delta(s_i\mt)(-1)^{\delta_{\beta_{\mt}}(i)\delta_{\beta_{\mt}}(i+1)+\delta_{\alpha_{\mt}}(i)\delta_{\alpha_{\mt}}(i+1)}\sqrt{\mathtt{c}_{\mt}(i)}C^{s_i \cdot \beta_{\mt}}C^{s_i \cdot \alpha_{\mt}}v_{s_i\mt}.\nonumber
				\end{align}

			\end{enumerate}
		\end{thm}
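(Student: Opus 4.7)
The approach is to apply Proposition~\ref{actions of X,C} to each direct summand of the decomposition
$$\mathbb{D}(\undla)=\bigoplus_{\mt\in\Std(\undla)}\mathbb{L}(\res(\mt^{\undla}))^{d(\mt,\mt^{\undla})},$$
and then combine this with the defining formula \eqref{actionformulaNon-dege} for the $T_i$-action. For each $\mt\in\Std(\undla)$, I set $\underline{\iota}=\res(\mt^{\undla})$ and $\tau=d(\mt,\mt^{\undla})$, so that $\tau\cdot\underline{\iota}=\res(\mt)$ and the sets $\mathcal{D}_{\tau\cdot\underline{\iota}}$, $\mathcal{OD}_{\tau\cdot\underline{\iota}}$ of Section~\ref{simplemodules} coincide with the $\mathcal{D}_{\mt}$, $\mathcal{OD}_{\mt}$ of Subsection~\ref{Non-dege-simplemodule}. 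Putting $v_{\mt}:=v_{\underline{\iota}}^{\tau}$, Proposition~\ref{L basis}(3) and Proposition~\ref{actions of X,C} immediately give the basis $\mathbb{L}_{\mt}$, the diagonal formula \eqref{X eigenvalues} for $X_{i}$, the idempotent identity $\gamma_{\mt}v_{\mt}=v_{\mt}$, and the case-by-case formula \eqref{Caction} for $C_{i}$. Taking the disjoint union over $\mt$ then proves items (1)--(3) and yields a $\mathbb{K}$-basis of $\mathbb{D}(\undla)$.

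The remaining work is item (4). By Theorem~\ref{Construction}, on a basis vector $C^{\beta_{\mt}}C^{\alpha_{\mt}}v_{\mt}$ I have
$$T_{i}\cdot C^{\beta_{\mt}}C^{\alpha_{\mt}}v_{\mt}
   =\widetilde{\Xi}_{i}\bigl(C^{\beta_{\mt}}C^{\alpha_{\mt}}v_{\mt}\bigr)
     +\delta(s_{i}\mt)\,\widetilde{\Omega}_{i}\bigl(C^{\beta_{\mt}}C^{\alpha_{\mt}}v_{\mt}\bigr)',$$
where the prime indicates the same underlying vector now regarded inside the summand $\mathbb{L}_{s_{i}\mt}$. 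I unfold $\widetilde{\Xi}_{i}$ via \eqref{Operater1Non-dege}: its scalar piece $-\epsilon/(X_{i}X_{i+1}^{-1}-1)$ is diagonal by \eqref{X eigenvalues} and yields the first term of \eqref{Taction Non-dege}. For the piece $\epsilon/(X_{i}X_{i+1}-1)\cdot C_{i}C_{i+1}$, I first apply $C_{i}C_{i+1}$ using \eqref{Caction}: running through the four cases (both $i,i+1\notin\mathcal{D}_{\mt}$; exactly one of them in $\mathcal{D}_{\mt}$; both in $\mathcal{D}_{\mt}$, with parities of their positions in \eqref{stanard D} determining subcases) one reads off precisely $R(i,\beta_{\mt},\alpha_{\mt},\mt)$. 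Then I move $X_{i}X_{i+1}$ past these Clifford generators using $X_{j}C_{j}=C_{j}X_{j}^{-1}$, which flips each exponent $-\nu_{\beta_{\mt}}(\cdot)$ to $+\nu_{\beta_{\mt}}(\cdot)$ and so evaluates the rational function as the scalar coefficient of $R$ in the second term of \eqref{Taction Non-dege}.

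For the $\widetilde{\Omega}_{i}$-contribution, the rational expression under the radical of \eqref{Operater2Non-dege} is invariant under $X_{i}\mapsto X_{i}^{\pm 1}$ and $X_{i+1}\mapsto X_{i+1}^{\pm 1}$, so $\widetilde{\Omega}_{i}$ acts as a single scalar $\sqrt{\mathtt{c}_{\mt}(i)}$ on every joint $X$-eigenvector of $\mathbb{L}_{\mt}$, and Lemma~\ref{important condition1}(3) guarantees $\mathtt{c}_{\mt}(i)\neq 0$ whenever $s_{i}\mt$ is standard. What remains is to identify the primed vector $\bigl(C^{\beta_{\mt}}C^{\alpha_{\mt}}v_{\mt}\bigr)'$ with the basis element $C^{s_{i}\cdot\beta_{\mt}}C^{s_{i}\cdot\alpha_{\mt}}v_{s_{i}\mt}$ of $\mathbb{L}_{s_{i}\mt}$ up to an explicit sign. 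Since $v_{\mt}$ and $v_{s_{i}\mt}$ both arise from the same underlying vector of $\mathbb{L}(\res(\mt^{\undla}))$, twisted by $d(\mt,\mt^{\undla})$ and $s_{i}d(\mt,\mt^{\undla})$ respectively, switching from the former to the latter amounts to applying $s_{i}$ to the indices appearing in $C^{\beta_{\mt}}C^{\alpha_{\mt}}$ and restoring standard order; only positions $i$ and $i+1$ are affected, and a sign $-1$ appears exactly when both positions carry a Clifford generator in $\beta_{\mt}$ (respectively $\alpha_{\mt}$), producing the factor $(-1)^{\delta_{\beta_{\mt}}(i)\delta_{\beta_{\mt}}(i+1)+\delta_{\alpha_{\mt}}(i)\delta_{\alpha_{\mt}}(i+1)}$ in \eqref{Taction Non-dege}.

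The principal obstacle throughout item (4) is the sign bookkeeping that must be carried out consistently in both steps above: the many subcases defining $R(i,\beta_{\mt},\alpha_{\mt},\mt)$ require repeated use of the anticommutation of the $C_{j}$'s and, when positions in $\mathcal{D}_{\mt}$ are involved, of the identity \eqref{commutation formula1} relating $C_{i_{p}}\gamma_{\mt}$ for even $p$ to $C_{i_{p-1}}\gamma_{\mt}$, while the $\widetilde{\Omega}_{i}$-term requires the careful comparison of $\gamma_{\mt}$ and $\gamma_{s_{i}\mt}$. Each local computation is routine, but they must be performed in every subcase to verify that the signs add up exactly as recorded in the statement.
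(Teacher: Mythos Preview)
Your outline is correct and follows the same overall architecture as the paper: parts (1)--(3) are pulled directly from Proposition~\ref{actions of X,C} after setting $v_{\mt}:=v_{\underline{\iota}}^{d(\mt,\mt^{\undla})}$, and part (4) is obtained by unfolding the $T_i$-action from Theorem~\ref{Construction} and computing $C_iC_{i+1}\cdot C^{\beta_{\mt}}C^{\alpha_{\mt}}v_{\mt}$ case by case to identify $R(i,\beta_{\mt},\alpha_{\mt},\mt)$.

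There is one organisational difference worth noting for the $s_i\mt$-term. You extract the sign $(-1)^{\delta_{\beta_{\mt}}(i)\delta_{\beta_{\mt}}(i+1)+\delta_{\alpha_{\mt}}(i)\delta_{\alpha_{\mt}}(i+1)}$ by a direct twist argument: since $v_{\mt}$ and $v_{s_i\mt}$ are the \emph{same} underlying element $v_{\underline{\iota}}$ twisted by $d(\mt,\mt^{\undla})$ and $s_id(\mt,\mt^{\undla})$ respectively, the primed vector is literally the product $\prod_{\ell}C_{s_i(j_\ell)}$ (same order) applied to $v_{s_i\mt}$, and the sign records the single adjacent swap needed in each of the $\beta$- and $\alpha$-blocks. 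The paper instead reorganises the computation around the element $\Phi_i(x,y)$ of \eqref{Phi function}: it first shows $\Phi_i(\mathtt{b}_{\mt,i},\mathtt{b}_{\mt,i+1})v_{\mt}=\delta(s_i\mt)\sqrt{\mathtt{c}_{\mt}(i)}\,v_{s_i\mt}$ from \eqref{actionformulaNon-dege}, and then obtains the sign by commuting $C^{\beta_{\mt}}C^{\alpha_{\mt}}$ past $\Phi_i$ via the relations \eqref{Phi and C}, which has the advantage that the $\widetilde{\Omega}_i$-scalar and the sign come out of a single algebraic identity rather than two separate steps. Both routes are valid; yours is slightly more elementary, the paper's is slightly cleaner bookkeeping. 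One small correction: your remark that the $\widetilde{\Omega}_i$-term ``requires the careful comparison of $\gamma_{\mt}$ and $\gamma_{s_i\mt}$'' is not needed --- precisely because $v_{\mt}$ and $v_{s_i\mt}$ share the same underlying $v_{\underline{\iota}}$, no such comparison arises.
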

		
		\begin{proof}

			By Proposition \ref{actions of X,C}, the space $\mathbb{L}(\res(\mathfrak{t}^{\undla}))$ has a $\mathbb{K}$-basis
			$$\Biggl\{C^{\beta_{\mt^{\undla}}}C^{\alpha_{\mt^{\undla}}}v_{\mt^{\undla}}\biggm|\begin{matrix}\beta_{\mt^{\undla}} \in \Z_2([n]\setminus \mathcal{D}_{\mt^{\undla}}) \\
				\alpha_{\mt^{\undla}}\in \Z_2(\mathcal{OD}_{\mt^{\undla}})
			\end{matrix}\Biggr\}$$ and the space $\mathbb{L}(\res(\mathfrak{t}^{\undla}))^{d(\mt,\mt^{\undla})}$ has a $\mathbb{K}$-basis 	$$\Biggl\{C^{\beta_{\mt}}C^{\alpha_{\mt}}v_{\mt^{\undla}}^{d(\mt,\mt^{\undla})}\biggm|\begin{matrix}\beta_{\mt} \in \Z_2([n]\setminus \mathcal{D}_{\mt}) \\
			\alpha_{\mt}\in \Z_2(\mathcal{OD}_{\mt})
			\end{matrix}\Biggr\}.$$ For $\mt\in\Std(\undla)$,  we set $$v_{\mt}:=v_{\mt^{\undla}}^{d(\mt,\mt^{\undla})}\in \mathbb{L}(\res(\mathfrak{t}^{\undla}))^{d(\mt,\mt^{\undla})}.
			$$  This gives rise to the required basis in our statement.  Now we check the action formulae.
				\begin{enumerate}
		\item	Note that for $\mt\in\Std(\undla)$, we have
			$$\mathtt{b}_-\biggl(\res_{\mt^{\undla}}({d(\mt,\mt^{\undla})}^{-1}(i))\biggr)=\mathtt{b}_-(\res_{\mt}(i))=\mathtt{b}_{\mt,i}.$$ Hence the action formulae (1) is a direct application of Proposition \ref{actions of X,C} (1).
			\item  This is Proposition \ref{actions of X,C} (2).
				\item  This is Proposition \ref{actions of X,C} (3).
			\item	From now on, we fix some $\mt\in\Std(\undla)$.
			
				By \eqref{actionformulaNon-dege}, we have
					\begin{align}\label{Ti acts on vt}
							T_iv_{\mt}= \left \{
							\begin{array}{ll}
									\widetilde{\Xi}_i v_{\mt}
									+\widetilde{\Omega}_i v_{s_i\mt},
									& \text{ if } s_i\mt \in \Std(\undla), \\
									\widetilde{\Xi}_i v_{\mt}
									, & \text{ otherwise},
								\end{array}
							\right.
						\end{align}
					Recall the definition of $\Phi_i(x,y)$ in \eqref{Phi function}. Using action formulae (1) and \eqref{Ti acts on vt}, we have
					\begin{align}\label{Phi-action}
							\Phi_i(\mathtt{b}_{\mt,i},\mathtt{b}_{\mt,i+1})v_{\mt}=	\delta(s_i\mt)\sqrt{\mathtt{c}_{\mt}(i)}v_{s_i\mt}.
						\end{align}
		For each $i\in [n-1],$ we have
			\begin{align}
				&\Phi_i(\mathtt{b}_{\mt,i}^{\nu_{\beta_{\mt}}(i)},\mathtt{b}_{\mt,i+1}^{\nu_{\beta_{\mt}}(i+1)})\cdot C^{\beta_{\mt}}C^{\alpha_{\mt}}v_{\mt} \nonumber\\
				&=(-1)^{\delta_{\beta_{\mt}}(i)\delta_{\beta_{\mt}}(i+1)+\delta_{\alpha_{\mt}}(i)\delta_{\alpha_{\mt}}(i+1)}C^{s_i \cdot \beta_{\mt}}C^{s_i \cdot \alpha_{\mt}}\Phi_i(\mathtt{b}_{\mt,i},\mathtt{b}_{\mt,i+1})v_{\mt} \nonumber\\
				&=\delta(s_i\mt)(-1)^{\delta_{\beta_{\mt}}(i)\delta_{\beta_{\mt}}(i+1)+\delta_{\alpha_{\mt}}(i)\delta_{\alpha_{\mt}}(i+1)}\sqrt{\mathtt{c}_{\mt}(i)}C^{s_i \cdot \beta_{\mt}}C^{s_i \cdot \alpha_{\mt}}v_{s_i\mt}, \nonumber
			\end{align}
			where in the first equation we have used \eqref{Phi and C} and in the last equation we have used \eqref{Phi-action}.
			So we have
		\begin{align}
				&T_i\cdot C^{\beta_{\mt}}C^{\alpha_{\mt}}v_{\mt} \label{ActionT 1}\\
				=&-\frac{\epsilon}{\mathtt{b}_{\mt,i}^{-\nu_{\beta_{\mt}}(i)}\mathtt{b}_{\mt,i+1}^{\nu_{\beta_{\mt}}(i+1)}-1} C^{\beta_{\mt}}C^{\alpha_{\mt}}v_{\mt} \nonumber\\
				&+\frac{\epsilon}{\mathtt{b}_{\mt,i}^{\nu_{\beta_{\mt}}(i)}\mathtt{b}_{\mt,i+1}^{\nu_{\beta_{\mt}}(i+1)}-1} C_iC_{i+1}C^{\beta_{\mt}}C^{\alpha_{\mt}} v_{\mt}\nonumber\\
				&+\delta(s_i\mt)(-1)^{\delta_{\beta_{\mt}}(i)\delta_{\beta_{\mt}}(i+1)+\delta_{\alpha_{\mt}}(i)\delta_{\alpha_{\mt}}(i+1)}\sqrt{\mathtt{c}_{\mt}(i)}C^{s_i \cdot \beta_{\mt}}C^{s_i \cdot \alpha_{\mt}}v_{s_i\mt}\nonumber.
				\end{align}

			  \begin{enumerate}
					\item If $i,i+1\in [n]\setminus \mathcal{D}_{\mt},$ then
			\begin{align}
				&C_iC_{i+1}\cdot C^{\beta_{\mt}}C^{\alpha_{\mt}}v_{\mt} \nonumber\\
				=&C_i \cdot (-1)^{|\beta_{\mt}|_{<i+1}}C^{\beta_{\mt}+e_{i+1}}C^{\alpha_{\mt}}v_{\mt} \nonumber\\
				=&(-1)^{|\beta_{\mt}|_{<i+1}+|\beta_{\mt}|_{<i}}C^{\beta_{\mt}+e_i+e_{i+1}}C^{\alpha_{\mt}}v_{\mt} \nonumber\\
				=&(-1)^{\delta_{\beta_{\mt}}(i)}C^{\beta_{\mt}+e_i+e_{i+1}}C^{\alpha_{\mt}}v_{\mt}.\nonumber
			\end{align}
			
			\item
			If $i={d(\mt,\mt^{\undla})}(i_p)\in \mathcal{D}_{\mt}, i+1\in [n]\setminus \mathcal{D}_{\mt},$ then
			\begin{align*}
				&C_iC_{i+1}\cdot C^{\beta_{\mt}}C^{\alpha_{\mt}}v_{\mt} \nonumber\\
				&=C_i \cdot (-1)^{|\beta_{\mt}|_{<i+1}}C^{\beta_{\mt}+e_{i+1}}C^{\alpha_{\mt}}v_{\mt} \nonumber\\
				&=\begin{cases}(-1)^{|\beta_{\mt}|_{<i+1}+|\beta_{\mt}|+1+|\alpha_{\mt}|_{<{d(\mt,\mt^{\undla})}(i_p)}}C^{\beta_{\mt}+e_{i+1}}C^{\alpha_{\mt}+e_{{d(\mt,\mt^{\undla})}(i_p)}}v_{\mt}, & \text{ if   $p$ is odd},\\
					(-\sqrt{-1})(-1)^{|\beta_{\mt}|_{<i+1}+|\beta_{\mt}|+1+|\alpha_{\mt}|_{\leq {d(\mt,\mt^{\undla})}(i_{p-1})}}&\\
					\qquad\qquad\qquad\cdot C^{\beta_{\mt}+e_{i+1}}C^{\alpha_{\mt}+e_{{d(\mt,\mt^{\undla})}(i_{p-1})}}v_{\mt}, &\text{ if   $p$ is even},
					\end{cases}\\
				&=\begin{cases}(-1)^{|\beta_{\mt}|_{>i}+|\alpha_{\mt}|_{<{d(\mt,\mt^{\undla})}(i_p)}}&\\
					\qquad\qquad\qquad\cdot C^{\beta_{\mt}+e_{i+1}}C^{\alpha_{\mt}+e_{{d(\mt,\mt^{\undla})}(i_p)}}v_{\mt}, & \text{ if  $p$ is odd},\\
					(-\sqrt{-1})(-1)^{|\beta_{\mt}|_{>i}+|\alpha_{\mt}|_{\leq {d(\mt,\mt^{\undla})}(i_{p-1})}}&\\
					\qquad\qquad\qquad\cdot C^{\beta_{\mt}+e_{i+1}}C^{\alpha_{\mt}+e_{{d(\mt,\mt^{\undla})}(i_{p-1})}}v_{\mt}, &\text{ if  $p$ is even},
				\end{cases}\\
				&=\begin{cases}(-1)^{|\beta_{\mt}|_{>i}+|\alpha_{\mt}|_{<i}}C^{\beta_{\mt}+e_{i+1}}C^{\alpha_{\mt}+e_{i}}v_{\mt}, & \text{ if  $p$ is odd},\\
					(-\sqrt{-1})(-1)^{|\beta_{\mt}|_{>i}+|\alpha_{\mt}|_{\leq {d(\mt,\mt^{\undla})}(i_{p-1})}}&\\
					\qquad\qquad\qquad\cdot C^{\beta_{\mt}+e_{i+1}}C^{\alpha_{\mt}+e_{{d(\mt,\mt^{\undla})}(i_{p-1})}}v_{\mt}, &\text{ if  $p$ is even}.
				\end{cases}\nonumber
			\end{align*}
			
		\item
			If $i+1={d(\mt,\mt^{\undla})}(i_p)\in \mathcal{D}_{\mt}, i\in [n]\setminus \mathcal{D}_{\mt},$ then
			\begin{align*}
				&C_iC_{i+1}\cdot C^{\beta_{\mt}}C^{\alpha_{\mt}}v_{\mt} \nonumber\\
				&=\begin{cases}(-1)^{|\beta_{\mt}|+|\alpha_{\mt}|_{<{d(\mt,\mt^{\undla})}(i_p)}} &\\
					\qquad\qquad\qquad\cdot C_iC^{\beta_{\mt}}C^{\alpha_{\mt}+e_{{d(\mt,\mt^{\undla})}(i_p)}}v_{\mt}, & \text{ if  $p$ is odd},\\
				(-\sqrt{-1})(-1)^{|\beta_{\mt}|+|\alpha_{\mt}|_{\leq {d(\mt,\mt^{\undla})}(i_{p-1})}}&\\
				\qquad\qquad\qquad\cdot C_iC^{\beta_{\mt}}C^{\alpha_{\mt}+e_{{d(\mt,\mt^{\undla})}(i_{p-1})}}v_{\mt}, &\text{ if  $p$ is even},\end{cases}\\
				&=\begin{cases}(-1)^{|\beta_{\mt}|+|\alpha_{\mt}|_{<{d(\mt,\mt^{\undla})}(i_p)}+|\beta_\mt|_{<i}} &\\
					\qquad\qquad\qquad\cdot C^{\beta_{\mt}+e_i}C^{\alpha_{\mt}+e_{{d(\mt,\mt^{\undla})}(i_p)}}v_{\mt}, & \text{ if $p$ is odd},\\
				(-\sqrt{-1})(-1)^{|\beta_{\mt}|+|\alpha_{\mt}|_{\leq {d(\mt,\mt^{\undla})}(i_{p-1})}+|\beta_\mt|_{<i}}&\\
				\qquad\qquad\qquad\cdot C^{\beta_{\mt}+e_i}C^{\alpha_{\mt}+e_{{d(\mt,\mt^{\undla})}(i_{p-1})}}v_{\mt}, &\text{ if  $p$ is even},\end{cases}\\
					&=\begin{cases}(-1)^{|\beta_{\mt}|_{\geq i}+|\alpha_{\mt}|_{<{d(\mt,\mt^{\undla})}(i_p)}} &\\
						\qquad\qquad\qquad\cdot C^{\beta_{\mt}+e_i}C^{\alpha_{\mt}+e_{{d(\mt,\mt^{\undla})}(i_p)}}v_{\mt}, & \text{ if  $p$ is odd},\\
					(-\sqrt{-1})(-1)^{|\beta_{\mt}|_{\geq i}+|\alpha_{\mt}|_{\leq {d(\mt,\mt^{\undla})}(i_{p-1})}}&\\
					\qquad\qquad\qquad\cdot C^{\beta_{\mt}+e_i}C^{\alpha_{\mt}+e_{{d(\mt,\mt^{\undla})}(i_{p-1})}}v_{\mt}, &\text{ if $p$ is even},\end{cases}\\
						&=\begin{cases}(-1)^{|\beta_{\mt}|_{\geq i}+|\alpha_{\mt}|_{<i+1}} &\\
							\qquad\qquad\qquad\cdot C^{\beta_{\mt}+e_i}C^{\alpha_{\mt}+e_{i+1}}v_{\mt}, & \text{ if  $p$ is odd},\\
						(-\sqrt{-1})(-1)^{|\beta_{\mt}|_{\geq i}+|\alpha_{\mt}|_{\leq {d(\mt,\mt^{\undla})}(i_{p-1})}}&\\
						\qquad\qquad\qquad\cdot C^{\beta_{\mt}+e_i}C^{\alpha_{\mt}+e_{{d(\mt,\mt^{\undla})}(i_{p-1})}}v_{\mt}, &\text{ if $p$ is even}.\end{cases}  \nonumber
			\end{align*}
\item
If $i={d(\mt,\mt^{\undla})}(i_p), i+1={d(\mt,\mt^{\undla})}(i_{p'})\in \mathcal{D}_{\mt},$ then
\begin{align*}
	&C_iC_{i+1}\cdot C^{\beta_{\mt}}C^{\alpha_{\mt}}v_{\mt} \nonumber\\
	&=\begin{cases}(-1)^{|\beta_{\mt}|+|\alpha_{\mt}|_{<{d(\mt,\mt^{\undla})}(i_{p'})}}  &\\
		\qquad\qquad\qquad\cdot C_iC^{\beta_{\mt}}C^{\alpha_{\mt}+e_{{d(\mt,\mt^{\undla})}(i_{p'})}}v_{\mt}, & \text{ if  $p'$ is odd},\\
		(-\sqrt{-1})(-1)^{|\beta_{\mt}|+|\alpha_{\mt}|_{\leq {d(\mt,\mt^{\undla})}(i_{p'-1})}}&\\
		\qquad\qquad\qquad\cdot C_iC^{\beta_{\mt}}C^{\alpha_{\mt}+e_{{d(\mt,\mt^{\undla})}(i_{p'-1})}}v_{\mt}, &\text{ if  $p'$ is even},\end{cases}\\
		&=\begin{cases}(-1)^{|\beta_{\mt}|+|\alpha_{\mt}|_{<{d(\mt,\mt^{\undla})}(i_{p'})}}&\\
			\qquad\qquad\cdot(-1)^{|\beta_{\mt}|+|\alpha_{\mt}+e_{{d(\mt,\mt^{\undla})}(i_{p'})}|_{<{d(\mt,\mt^{\undla})}(i_{p})}}&\\
			\qquad\qquad\qquad\cdot C^{\beta_{\mt}}C^{\alpha_{\mt}+e_{{d(\mt,\mt^{\undla})}(i_{p})}+e_{{d(\mt,\mt^{\undla})}(i_{p'})}}v_{\mt}, & \text{ if  $p,p'$ are odd},\\
				(-\sqrt{-1})(-1)^{|\beta_{\mt}|+|\alpha_{\mt}|_{<{d(\mt,\mt^{\undla})}(i_{p'})}}&\\
				\qquad\qquad\cdot (-1)^{|\beta_{\mt}|+|\alpha_{\mt}+e_{{d(\mt,\mt^{\undla})}(i_{p'})}|_{\leq{d(\mt,\mt^{\undla})}(i_{p-1})}}&\\
				\qquad\qquad\qquad\cdot C^{\beta_{\mt}}C^{\alpha_{\mt}+e_{{d(\mt,\mt^{\undla})}(i_{p-1})}+e_{{d(\mt,\mt^{\undla})}(i_{p'})}}v_{\mt},& \text{ if  $p$ is even, $p'$ is odd},\\
			(-\sqrt{-1})(-1)^{|\beta_{\mt}|+|\alpha_{\mt}|_{\leq {d(\mt,\mt^{\undla})}(i_{p'-1})}}&\\
			\qquad\qquad\cdot (-1)^{|\beta_{\mt}|+|\alpha_{\mt}+e_{{d(\mt,\mt^{\undla})}(i_{p'-1})}|_{<{d(\mt,\mt^{\undla})}(i_{p})}}&\\
			\qquad\qquad\qquad\cdot C^{\beta_{\mt}}C^{\alpha_{\mt}+e_{{d(\mt,\mt^{\undla})}(i_p)}+e_{{d(\mt,\mt^{\undla})}(i_{p'-1})}}v_{\mt}, &\text{ if  $p$ is odd, $p'$ is even},\\
			(-1)^{1+|\beta_{\mt}|+|\alpha_{\mt}|_{\leq {d(\mt,\mt^{\undla})}(i_{p'-1})}}&\\
			\qquad\qquad\cdot(-1)^{|\beta_{\mt}|+|\alpha_{\mt}+e_{{d(\mt,\mt^{\undla})}(i_{p'-1})}|_{\leq{d(\mt,\mt^{\undla})}(i_{p-1})}}&\\
			\qquad\qquad\qquad\cdot C^{\beta_{\mt}}C^{\alpha_{\mt}+e_{{d(\mt,\mt^{\undla})}(i_{p-1})}+e_{{d(\mt,\mt^{\undla})}(i_{p'-1})}}v_{\mt},&\text{ if  $p,p'$ are even},\end{cases}\\
				&=\begin{cases}(-1)^{|\alpha_{\mt}|_{<{d(\mt,\mt^{\undla})}(i_{p'})}+|\alpha_{\mt}+e_{{d(\mt,\mt^{\undla})}(i_{p'})}|_{<{d(\mt,\mt^{\undla})}(i_{p})}} &\\
					\qquad\qquad\qquad\cdot C^{\beta_{\mt}}C^{\alpha_{\mt}+e_{{d(\mt,\mt^{\undla})}(i_{p})}+e_{{d(\mt,\mt^{\undla})}(i_{p'})}}v_{\mt}, & \text{ if  $p,p'$ are odd},\\
				(-\sqrt{-1})(-1)^{|\alpha_{\mt}|_{<{d(\mt,\mt^{\undla})}(i_{p'})}+|\alpha_{\mt}+e_{{d(\mt,\mt^{\undla})}(i_{p'})}|_{\leq{d(\mt,\mt^{\undla})}(i_{p-1})}} &\\
				\qquad\qquad\qquad\cdot C^{\beta_{\mt}}C^{\alpha_{\mt}+e_{{d(\mt,\mt^{\undla})}(i_{p-1})}+e_{{d(\mt,\mt^{\undla})}(i_{p'})}}v_{\mt},& \text{ if  $p$ is even, $p'$ is odd},\\
				(-\sqrt{-1})(-1)^{|\alpha_{\mt}|_{\leq {d(\mt,\mt^{\undla})}(i_{p'-1})}+|\alpha_{\mt}+e_{{d(\mt,\mt^{\undla})}(i_{p'-1})}|_{<{d(\mt,\mt^{\undla})}(i_{p})}}&\\
				\qquad\qquad\qquad\cdot C^{\beta_{\mt}}C^{\alpha_{\mt}+e_{{d(\mt,\mt^{\undla})}(i_p)}+e_{{d(\mt,\mt^{\undla})}(i_{p'-1})}}v_{\mt}, &\text{ if  $p$ is odd, $p'$ is even},\\
				(-1)^{1+|\alpha_{\mt}|_{\leq {d(\mt,\mt^{\undla})}(i_{p'-1})}+|\alpha_{\mt}+e_{{d(\mt,\mt^{\undla})}(i_{p'-1})}|_{\leq{d(\mt,\mt^{\undla})}(i_{p-1})}}&\\
				\qquad\qquad\qquad\cdot C^{\beta_{\mt}}C^{\alpha_{\mt}+e_{{d(\mt,\mt^{\undla})}(i_{p-1})}+e_{{d(\mt,\mt^{\undla})}(i_{p'-1})}}v_{\mt}, &\text{ if  $p,p'$ are even},\end{cases}\\
					&=\begin{cases}(-1)^{\delta_{\alpha_{\mt}}(i)} C^{\beta_{\mt}}C^{\alpha_{\mt}+e_{i}+e_{i+1}}v_{\mt}, & \text{ if  $p,p'$ are odd},\\
					(-\sqrt{-1})(-1)^{|\alpha_{\mt}|_{<i+1}+|\alpha_{\mt}+e_{i+1}|_{\leq{d(\mt,\mt^{\undla})}(i_{p-1})}} &\\
					\qquad\qquad\qquad\cdot C^{\beta_{\mt}}C^{\alpha_{\mt}+e_{{d(\mt,\mt^{\undla})}(i_{p-1})}+e_{i+1}}v_{\mt},& \text{ if  $p$ is even, $p'$ is odd},\\
					(-\sqrt{-1})(-1)^{|\alpha_{\mt}|_{\leq {d(\mt,\mt^{\undla})}(i_{p'-1})}+|\alpha_{\mt}+e_{{d(\mt,\mt^{\undla})}(i_{p'-1})}|_{<i}}&\\
					\qquad\qquad\qquad\cdot C^{\beta_{\mt}}C^{\alpha_{\mt}+e_{i}+e_{{d(\mt,\mt^{\undla})}(i_{p'-1})}}v_{\mt}, &\text{ if  $p$ is odd, $p'$ is even},\\
					(-1)^{1+|\alpha_{\mt}|_{\leq {d(\mt,\mt^{\undla})}(i_{p'-1})}+|\alpha_{\mt}+e_{{d(\mt,\mt^{\undla})}(i_{p'-1})}|_{\leq{d(\mt,\mt^{\undla})}(i_{p-1})}}&\\
					\qquad\qquad\qquad\cdot C^{\beta_{\mt}}C^{\alpha_{\mt}+e_{{d(\mt,\mt^{\undla})}(i_{p-1})}+e_{{d(\mt,\mt^{\undla})}(i_{p'-1})}}v_{\mt}, &\text{ if  $p,p'$ are even}.\end{cases}
		\end{align*}
		\end{enumerate}		
		\end{enumerate} These, combined with \eqref{ActionT 1}, complete the proof of (4) and the whole Proposition.
			\end{proof}
			
In the end of this subsection, we introduce some notations which will be used later.
\label{pag:decomposition of OD}
\begin{defn}\label{decomposition of OD}
For any $\mt\in\Std(\undla),$ let ${d(\mt,\mt^{\undla})}\in P(\undla)$ such that $\mt={d(\mt,\mt^{\undla})}\mt^{\undla}$. We define
\begin{align}
\mathbb{Z}_2(\mathcal{OD}_{\mt})_{\bar{0}}:=\{\alpha_{\mt} \in\Z_2(\mathcal{OD}_{\mt}) \mid d(\mt,\mt^{\undla})(i_t)\notin \supp(\alpha_{\mt})\},\nonumber\\
\mathbb{Z}_2(\mathcal{OD}_{\mt})_{\bar{1}}:=\{\alpha_{\mt} \in\Z_2(\mathcal{OD}_{\mt}) \mid d(\mt,\mt^{\undla})(i_t)\in \supp(\alpha_{\mt})\}.\nonumber
\end{align}
\end{defn}

That is, if $d_{\undla}=0$ (i.e., t is even), then $\Z_2(\mathcal{OD}_{\mt})_{\bar{0}}=\Z_2(\mathcal{OD}_{\mt})$ and $\Z_2(\mathcal{OD}_{\mt})_{\bar{1}}=\emptyset;$  if $d_{\undla}=1$ (i.e., t is odd), then  $\Z_2(\mathcal{OD}_{\mt})_{\bar{0}}$ and $\Z_2(\mathcal{OD}_{\mt})_{\bar{1}}$ are both non-empty and there is a natural bijection between $\Z_2(\mathcal{OD}_{\mt})_{\bar{0}}$ and $\Z_2(\mathcal{OD}_{\mt})_{\bar{1}}$ which sends $\alpha_{\mt}\in \Z_2(\mathcal{OD}_{\mt})_{\bar{0}}$ to $\alpha_{\mt}+e_{{d(\mt,\mt^{\undla})}(i_t)}\in \Z_2(\mathcal{OD}_{\mt})_{\bar{1}}.$ In particular, we have
 $$ \Z_2(\mathcal{OD}_{\mt})=\Z_2(\mathcal{OD}_{\mt})_{\bar{0}}\sqcup \Z_2(\mathcal{OD}_{\mt})_{\bar{1}}.$$ For any $\alpha_{\mt} \in \Z_2(\mathcal{OD}_{\mt})_{\bar{0}},$ we use $\alpha_{\mt,\bar{0}}=\alpha_{\mt}$  to emphasize that $\alpha_{\mt}\in Z_2(\mathcal{OD}_{\mt})_{\bar{0}}$ and  if $d_{\undla}=1$, we define $\alpha_{\mt,\bar{1}}:=\alpha_{\mt}+e_{{d(\mt,\mt^{\undla})}(i_t)}\in \Z_2(\mathcal{OD}_{\mt})_{\bar{1}}.$
			
\begin{defn}\label{hat}
 Let $\mt\in \Std(\undla)$ and ${d(\mt,\mt^{\undla})}\in P(\undla)$ such that $\mt={d(\mt,\mt^{\undla})}\cdot \mt^{\undla}.$

\label{pag:widehat}
(1) For any $\alpha_{\mt}\in \mathbb{Z}_2 (\mathcal{OD}_{\mt})_{\bar{0}},$ we define
				$\widehat{\alpha_{\mt}}\in \mathbb{Z}_2 (\mathcal{OD}_{\mt})_{\bar{0}}$ to be the unique vector such that
				$$\supp(\alpha_{\mt})\sqcup \supp(\widehat{\alpha_{\mt}})=\{{d(\mt,\mt^{\undla})}(i_1),{d(\mt,\mt^{\undla})}(i_3),\cdots,{d(\mt,\mt^{\undla})}(i_{2\lfloor \frac{t}{2} \rfloor-1})\}.$$

(2) For any $\alpha_{\mt,\bar{1}}\in \mathbb{Z}_2 (\mathcal{OD}_{\mt})_{\bar{1}},$ we define
				$\widehat{\alpha_{\mt,\bar{1}}}\in \mathbb{Z}_2 (\mathcal{OD}_{\mt})_{\bar{1}}$ to be the unique vector such that
				$$\supp(\alpha_{\mt})\sqcup \supp(\widehat{\alpha_{\mt,\bar{1}}})=\{{d(\mt,\mt^{\undla})}(i_1),{d(\mt,\mt^{\undla})}(i_3),\cdots,{d(\mt,\mt^{\undla})}(i_{2\lceil \frac{t}{2} \rceil-1})\}.$$
\end{defn}

	\subsection{Primitive idempotents of $\mHfcn$}\label{primitiveidem}
	In this subsection, we shall use Proposition \ref{actions of generators on L basis} to give a complete set of (super) primitive idempotents of $\mHfcn$.
		
	The following definition is important in our construction of idempotents.
    \label{pag:Tri}
	\begin{defn}\label{Tri}
For $a\in \Z_2,\,\undla\in\mathscr{P}^{\bullet,m}_{n}$ with $\bullet\in\{\mathsf{0},\mathsf{s},\mathsf{ss}\},$  we define
		$${\rm Tri}_{a}(\undla):=\bigsqcup_{\mt\in \Std(\undla)}\{\mt \}\times  \Z_2(\mathcal{OD}_{\mt})_{a}\times  \Z_2([n]\setminus \mathcal{D}_{\mt}),$$
		and $${\rm Tri}(\undla):={\rm Tri}_{\bar{0}}(\undla)\sqcup {\rm Tri}_{\bar{1}}(\undla).$$
		\end{defn}

		Notice that ${\rm Tri}(\undla)={\rm Tri}_{0}(\undla)$ when $d_{\undla}=0.$ For any ${\rm T}=(\mt, \alpha_{\mt}, \beta_{\mt})\in {\rm Tri}_{\bar{0}}(\undla),$ we denote
		$${\rm T}_{a}=(\mt, \alpha_{\mt,a}, \beta_{\mt})\in {\rm Tri}_{a}(\undla),\quad a\in \mathbb{Z}_{2},$$
		when $d_{\undla}=1.$

Now we can define primitive idempotents.
\label{pag:primitive idempotents and blocks}
		\begin{defn}\label{primitive idempotents and blocks}
          For $k\in[n]$, let
         $$\mathtt{B}(k):=\{ \mathtt{b}_{\pm}(\res_{\ms}(k)) \mid \ms \in \Std(\mathscr{P}^{\bullet,m}_{n}) \}.$$
			For any ${\rm T}=(\mt, \alpha_{\mt}, \beta_{\mt})\in {\rm Tri}_{\bar{0}}(\undla),$ we define
			\begin{align}\label{definition of primitive idempotents. non-dege}
				F_{\rm T}:=\left(C^{\alpha_{\mt}}\gamma_{\mt}(C^{\alpha_{\mt}})^{-1} \right) \cdot \left(\prod_{k=1}^{n}\prod_{\mathtt{b}\in \mathtt{B}(k)\atop \mathtt{b}\neq \mathtt{b}_{+}(\res_{\mt}(k))}\frac{X_k^{\nu_{\beta_{\mt}}(k)}-\mathtt{b}}{\mathtt{b}_{+}(\res_{\mt}(k))-\mathtt{b}}\right)\in \mHfcn.
			\end{align}
We define
\begin{align}
				F_{\undla}&:=\sum_{{\rm T}\in {\rm Tri}_{\bar{0}}(\undla)} F_{\rm T},
			\end{align}
and two left ideals of $\mHfcn$ as follows
\label{pag:nondege simple block}
	\begin{align}
				\mathbb{D}_{\rm T}&:=\mHfcn F_{\rm T}\subseteq\mHfcn,\\
				B_{\undla}&:=\mHfcn  F_{\undla}\subseteq \mHfcn.
			\end{align}
		\end{defn}

\begin{defn}
For $a\in \Z_2,$ we denote
	$${\rm Tri}_{a}(\mathscr{P}^{\bullet,m}_{n}):=\bigsqcup_{\undla\in \mathscr{P}^{\bullet,m}_{n}}{\rm Tri}_{a}(\undla),$$
	and $${\rm Tri}(\mathscr{P}^{\bullet,m}_{n}):={\rm Tri}_{\bar{0}}(\mathscr{P}^{\bullet,m}_{n})\sqcup {\rm Tri}_{\bar{1}}(\mathscr{P}^{\bullet,m}_{n}).$$	
\end{defn}
	
		\begin{lem}\label{idempotent action. non-dege}	Let ${\rm T}=(\mt, \alpha_{\mt}, \beta_{\mt})\in {\rm Tri}_{\bar{0}}(\undla),$ and $
			{\rm S}=(\ms, \alpha_{\ms}', \beta_{\ms}')\in {\rm Tri}(\mathscr{P}^{\bullet,m}_{n})$. We have
			\begin{align}\label{F. eq3}
				F_{\rm T}\cdot C^{\beta_{\ms}^{'}} C^{\alpha_{\ms}^{'}} v_{\ms}=
				\begin{cases}
					C^{\beta_{\ms}^{'}} C^{\alpha_{\ms}^{'}}  v_{\ms}, & \text{ if } d_{\undla}=0 \text{ and } {\rm S}={\rm T}; \\
					C^{\beta_{\ms}^{'}} C^{\alpha_{\ms}^{'}}  v_{\ms}, & \text{ if $d_{\undla}=1$ and } {\rm S}={\rm T}_{a}\text{ for some $a\in \mathbb{Z}_2$  }; \\
					0, & \text{ otherwise.}
				\end{cases}
			\end{align}
			\end{lem}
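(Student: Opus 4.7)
The plan is to factor $F_{\rm T} = G_{\rm T}\cdot P_{\rm T}$, where $G_{\rm T}:=C^{\alpha_{\mt}}\gamma_{\mt}(C^{\alpha_{\mt}})^{-1}$ is the Clifford idempotent part and $P_{\rm T}$ is the Lagrange-type polynomial projector in $X_1,\ldots,X_n$ appearing as the second factor in \eqref{definition of primitive iempotents. non-dege}. I will first apply $P_{\rm T}$ to $C^{\beta_{\ms}'}C^{\alpha_{\ms}'}v_{\ms}$, and then apply $G_{\rm T}$ to the result.

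For the $P_{\rm T}$-stage, \eqref{X eigenvalues} shows that $X_k^{\nu_{\beta_{\mt}}(k)}$ acts on the basis vector with eigenvalue $\mathtt{b}_{\ms,k}^{-\nu_{\beta_{\ms}'}(k)\nu_{\beta_{\mt}}(k)}\in \mathtt{B}(k)$; by the Lagrange-interpolation construction, $P_{\rm T}$ then acts as $1$ when this eigenvalue equals $\mathtt{b}_{+}(\res_{\mt}(k))$ for every $k$, and as $0$ otherwise. Using $\mathtt{b}_{+}(\iota)\mathtt{b}_{-}(\iota)=1$, the equality translates into $\mathtt{q}(\res_{\ms}(k)) = \mathtt{q}(\res_{\mt}(k))$ for all $k$, which by Lemma \ref{important condition1}(2) forces $\ms=\mt$. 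Given $\ms=\mt$, the condition further reduces to $\nu_{\beta_{\ms}'}(k)=\nu_{\beta_{\mt}}(k)$ at every $k\notin\mathcal{D}_{\mt}$; combined with both $\beta$'s being supported in $[n]\setminus\mathcal{D}_{\mt}$, this yields $\beta_{\ms}'=\beta_{\mt}$.

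For the $G_{\rm T}$-stage, I note that $G_{\rm T}$ is an even element of the Clifford subalgebra generated by $\{C_j : j\in\mathcal{D}_{\mt}\}$, and each $C_l$ with $l\in[n]\setminus\mathcal{D}_{\mt}$ anticommutes with every such $C_j$, so the even degree of $G_{\rm T}$ gives $C^{\beta_{\mt}}G_{\rm T}=G_{\rm T}C^{\beta_{\mt}}$. Setting $\eta:=\alpha_{\mt}+\alpha_{\ms}'\in\Z_2^n$, the remaining computation becomes $G_{\rm T}\cdot C^{\alpha_{\ms}'}v_{\mt}= \pm\, C^{\alpha_{\mt}}\gamma_{\mt}C^{\eta}v_{\mt}$. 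By the uniqueness part of Lemma \ref{lem:clifford rep}(2), $C^{\eta}$ commutes with $\gamma_{\mt}$ exactly when $\eta=0$ (if $d_{\undla}=0$) or when $\eta\in\{0,\,e_{{d(\mt,\mt^{\undla})}(i_t)}\}$ (if $d_{\undla}=1$, using that $C_{{d(\mt,\mt^{\undla})}(i_t)}$ commutes with each factor of $\gamma_{\mt}$). In the non-commuting case, $\gamma_{\mt}'':=C^{\eta}\gamma_{\mt}(C^{\eta})^{-1}$ is a primitive idempotent in $I_{t}$ distinct from $\gamma_{\mt}$, so the orthogonality in Lemma \ref{lem:clifford rep}(1) gives $\gamma_{\mt}C^{\eta}v_{\mt}=C^{\eta}\gamma_{\mt}''\gamma_{\mt}v_{\mt}=0$. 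In the commuting case, Theorem \ref{actions of generators on L basis}(2) gives $\gamma_{\mt}C^{\eta}v_{\mt}=C^{\eta}v_{\mt}$, so the output is a nonzero scalar multiple of $C^{\alpha_{\ms}'}v_{\mt}$; unwinding the possibilities for $\eta$ recovers exactly ${\rm S}={\rm T}$ when $d_{\undla}=0$ and ${\rm S}={\rm T}_{a}$ for $a\in\Z_2$ when $d_{\undla}=1$.

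The main technical step will be the careful application of Lemma \ref{lem:clifford rep}(2) in the $t$-odd case to identify the commuting set as $\{0,\,e_{{d(\mt,\mt^{\undla})}(i_t)}\}$, which requires verifying the centrality of $C_{{d(\mt,\mt^{\undla})}(i_t)}$ in the subalgebra carrying $\gamma_{\mt}$. The one step that might look like an obstacle, namely the sign in the commuting case, actually requires no explicit bookkeeping: since $\gamma_{\mt}\in I_{t}$ is idempotent and conjugation preserves this, $G_{\rm T}^{2}=G_{\rm T}$, so any nonzero scalar $\lambda$ with $G_{\rm T}\cdot C^{\alpha_{\ms}'}v_{\mt}=\lambda\, C^{\alpha_{\ms}'}v_{\mt}$ satisfies $\lambda^{2}=\lambda$ and hence $\lambda=1$, giving the formula stated in \eqref{F. eq3}.
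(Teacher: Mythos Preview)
Your proof is correct and follows essentially the same two-stage strategy as the paper: first the Lagrange-type polynomial factor $P_{\rm T}$ picks out $\ms=\mt$ and $\beta_{\ms}'=\beta_{\mt}$ via Lemma~\ref{important condition1}, and then the Clifford factor $G_{\rm T}$ is handled by conjugating $\gamma_{\mt}$ into another element of $I_t$ and invoking orthogonality from Lemma~\ref{lem:clifford rep}. The only notable difference is cosmetic: where the paper avoids any sign ambiguity by rewriting $G_{\rm T}\,C^{\alpha_{\ms}'}v_{\mt}$ as $C^{\alpha_{\ms}'}\bigl[(C^{\alpha_{\ms}'})^{-1}G_{\rm T}\,C^{\alpha_{\ms}'}\bigr]v_{\mt}$ and then using $\gamma_{\mt}v_{\mt}=v_{\mt}$ directly, you instead allow a $\pm 1$ and kill it with the neat observation that $G_{\rm T}^2=G_{\rm T}$ forces the eigenvalue to be $1$.
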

	\begin{proof}
		
		
		If $\ms\neq \mt,$ then there exists $k_0\in [n]$ such that $\mathtt{b}_{\pm}(\res_{\ms}(k_0))\neq \mathtt{b}_{+}(\res_{\mt}(k_0))$ by the second part of Lemma \ref{important condition1}. Using \eqref{X eigenvalues}, we have
		\begin{align}
			&\prod_{\mathtt{b}\in \mathtt{B}(k_0)\atop \mathtt{b}\neq \mathtt{b}_{+}(\res_{\mt}(k_0))}\frac{X_{k_0}^{\nu_{\beta_{\mt}}(k_0)}-\mathtt{b}}{\mathtt{b}_{+}(\res_{\mt}(k_0))-\mathtt{b}}
			\cdot C^{\beta_{\ms}^{'}} C^{\alpha_{\ms}^{'}} v_{\ms} \nonumber\\
			&=\prod_{\mathtt{b}\in \mathtt{B}(k_0)\atop \mathtt{b}\neq \mathtt{b}_{+}(\res_{\mt}(k_0))}\frac{\mathtt{b}_{+}(\res_{\ms}(k_0))^{\nu_{\beta_{\mt}}(k_0)\nu_{\beta_{\ms}'}(k_0)}-\mathtt{b}}{\mathtt{b}_{+}(\res_{\mt}(k_0))-\mathtt{b}}
			\cdot C^{\beta_{\ms}^{'}} C^{\alpha_{\ms}^{'}} v_{\ms}=0.\nonumber
		\end{align}
		
		If $\ms=\mt$ and $\beta_{\mt}^{'}\neq \beta_{\mt},$ there exists $k_1\in [n]\setminus \mathcal{D}_{\mt}$ such that $\nu_{\beta_{\mt}}(k_1)\nu_{\beta_{\mt}'}(k_1)=-1.$ By the first part of Lemma \ref{important condition1}, we get
		$\mathtt{b}_{+}(\res_{\mt}(k_1))\neq \mathtt{b}_{-}(\res_{\mt}(k_1)).$
	Using \eqref{X eigenvalues}, we have
		\begin{align}
			&\prod_{\mathtt{b}\in \mathtt{B}(k_1)\atop \mathtt{b}\neq \mathtt{b}_{+}(\res_{\mt}(k_1))}\frac{X_{k_1}^{\nu_{\beta_{\mt}}(k_1)}-\mathtt{b}}{\mathtt{b}_{+}(\res_{\mt}(k_1))-\mathtt{b}}
			\cdot C^{\beta_{\mt}^{'}} C^{\alpha_{\mt}^{'}}v_{\mt} \nonumber\\
			&=\prod_{\mathtt{b}\in \mathtt{B}(k_1)\atop \mathtt{b}\neq \mathtt{b}_{+}(\res_{\mt}(k_1))}\frac{\mathtt{b}_{+}(\res_{\mt}(k_1))^{\nu_{\beta_{\mt}}(k_1)\nu_{\beta_{\mt}'}(k_1)}-\mathtt{b}}{\mathtt{b}_{+}(\res_{\mt}(k_1))-\mathtt{b}}
			\cdot C^{\beta_{\mt}^{'}} C^{\alpha_{\mt}^{'}}  v_{\mt} \nonumber\\
			&=\prod_{\mathtt{b}\in \mathtt{B}(k_1)\atop \mathtt{b}\neq \mathtt{b}_{+}(\res_{\mt}(k_1))}\frac{\mathtt{b}_{-}(\res_{\mt}(k_1))-\mathtt{b}}{\mathtt{b}_{+}(\res_{\mt}(k_1))-\mathtt{b}}
			\cdot C^{\beta_{\mt}^{'}} C^{\alpha_{\mt}^{'}}  v_{\mt}=0.\nonumber
		\end{align}
		If $\ms=\mt$ and $\beta_{\mt}^{'}= \beta_{\mt},$ using \eqref{X eigenvalues} again, we have
		\begin{align}
			&\quad \prod_{k=1}^{n}\prod_{\mathtt{b}\in \mathtt{B}(k)\atop \mathtt{b}\neq \mathtt{b}_{+}(\res_{\mt}(k))}\frac{X_k^{\nu_{\beta_{\mt}}(k)}-\mathtt{b}}{\mathtt{b}_{+}(\res_{\mt}(k))-\mathtt{b}}
			\cdot C^{\beta_{\mt}} C^{\alpha_{\mt}^{'}}  v_{\mt} \nonumber\\
			&=\quad \prod_{k=1}^{n}\prod_{\mathtt{b}\in \mathtt{B}(k)\atop \mathtt{b}\neq \mathtt{b}_{+}(\res_{\mt}(k))}\frac{\mathtt{b}_{+}(\res_{\mt}(k))-\mathtt{b}}{\mathtt{b}_{+}(\res_{\mt}(k))-\mathtt{b}}
			\cdot C^{\beta_{\mt}} C^{\alpha_{\mt}^{'}}  v_{\mt} \nonumber\\
			&=C^{\beta_{\mt}} C^{\alpha_{\mt}^{'}}  v_{\mt}.\nonumber
		\end{align}
		
		To sum up,
		\begin{align}\label{F. eq1}
			\prod_{k=1}^{n}\prod_{\mathtt{b}\in \mathtt{B}(k)\atop \mathtt{b}\neq \mathtt{b}_{+}(\res_{\mt}(k))}\frac{X_k^{\nu_{\beta_{\mt}}(k)}-\mathtt{b}}{\mathtt{b}_{+}(\res_{\mt}(k))-\mathtt{b}}\cdot C^{\beta_{\ms}^{'}} C^{\alpha_{\ms}^{'}}  v_{\ms}=
			\begin{cases}
				C^{\beta_{\ms}^{'}} C^{\alpha_{\ms}^{'}}  v_{\ms}, & \text{ if } \ms=\mt \text{ and } \beta_{\mt}^{'}=\beta_{\mt}; \\
				0, & \text{ otherwise.}
			\end{cases}
		\end{align}
		Furthermore, for any $\alpha_{\mt}^{'}\in  \Z_2([n]\setminus \mathcal{D}_{\mt}),$ we have
		\begin{align}\label{F. eq2}
			&C^{\alpha_{\mt}}\gamma_{\mt}(C^{\alpha_{\mt}})^{-1} \cdot C^{\beta_{\mt}} C^{\alpha_{\mt}^{'}}  v_{\mt} \\
			&=C^{\beta_{\mt}}C^{\alpha_{\mt}}\gamma_{\mt}(C^{\alpha_{\mt}})^{-1}  C^{\alpha_{\mt}^{'}}  v_{\mt}\nonumber\\
			&=C^{\beta_{\mt}}C^{\alpha_{\mt}^{'}}(C^{\alpha_{\mt}^{'}})^{-1}C^{\alpha_{\mt}}\gamma_{\mt}(C^{\alpha_{\mt}})^{-1}  C^{\alpha_{\mt}^{'}}  v_{\mt}\nonumber,
		\end{align} where in the first equation we used the fact that $C^{\alpha_{\mt}}\gamma_{\mt}(C^{\alpha_{\mt}})^{-1} $ commutes with $C^{\beta_{\mt}}$. Apply Lemma \ref{lem:clifford rep}(2), $$(C^{\alpha_{\mt}^{'}})^{-1}C^{\alpha_{\mt}}\gamma_{\mt}(C^{\alpha_{\mt}})^{-1}  C^{\alpha_{\mt}^{'}} $$ is an idempotent, and either $(C^{\alpha_{\mt}^{'}})^{-1}C^{\alpha_{\mt}}\gamma_{\mt}(C^{\alpha_{\mt}})^{-1}  C^{\alpha_{\mt}^{'}} $ and $\gamma_{\mt}$  are equal or they are orthogonal. By Lemma \ref{lem:clifford rep}(2) again, we have $$(C^{\alpha_{\mt}^{'}})^{-1}C^{\alpha_{\mt}}\gamma_{\mt}(C^{\alpha_{\mt}})^{-1}  C^{\alpha_{\mt}^{'}}\begin{cases}=\gamma_{\mt},&\text{if $d_{\undla}=0$ and $\alpha_{\mt}^{'}=\alpha_{\mt}$ or $d_{\undla}=1$ and $\alpha_{\mt}^{'}=\alpha_{\mt,a}$ for some $a\in \mathbb{Z}_2,$}\\
		\neq\gamma_{\mt}&\text{otherwise.}
		\end{cases}$$ On the other hand,  $v_{\mt}=\gamma_{\mt} v_{\mt}$. Combining these with \eqref{F. eq1} and \eqref{F. eq2}, we have that \begin{align*}
		F_{\rm T}\cdot C^{\beta_{\ms}^{'}} C^{\alpha_{\ms}^{'}} v_{\ms}=
		\begin{cases}
			C^{\beta_{\ms}^{'}} C^{\alpha_{\ms}^{'}}  v_{\ms}, & \text{ if } d_{\undla}=0 \text{ and } {\rm S}={\rm T}; \\
			C^{\beta_{\ms}^{'}} C^{\alpha_{\ms}^{'}}  v_{\ms}, & \text{ if $d_{\undla}=1$ and } {\rm S}={\rm T}_{a}\text{ for some $a\in \mathbb{Z}_2$  }; \\
			0, & \text{ otherwise,}
		\end{cases}
		\end{align*} which is as required.
		\end{proof}

\begin{thm}\label{semisimple:non-dege}\cite[Theorem 4.10]{SW}
Let $\undQ=(Q_1,Q_2,\ldots,Q_m)\in(\mathbb{K}^*)^m$.  Assume $f=f^{(\bullet)}_{\undQ}$ and   $P^{(\bullet)}_{n}(q^2,\undQ)\neq 0$, with $\bullet\in\{\mathtt{0},\mathtt{s},\mathtt{ss}\}$. Then $\mHfcn$ is a (split) semisimple algebra and
  $$
  \{\mathbb{D}(\undla)|~ \undla\in\mathscr{P}^{\bullet,m}_{n}\}$$ forms a complete set of pairwise non-isomorphic irreducible $\mHfcn$-modules. Moreover,  $\mathbb{D}(\undla)$ is of type  $\texttt{M}$ if and only if $\sharp \mathcal{D}_{\undla}$  is even and is of type  $\texttt{Q}$ if and only if $\sharp \mathcal{D}_{\undla}$  is odd.
	\end{thm}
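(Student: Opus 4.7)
The plan is to use the explicit basis of $\mathbb{D}(\undla)$ from Theorem \ref{actions of generators on L basis} together with the idempotents of Lemma \ref{idempotent action. non-dege} to establish irreducibility and the type of each $\mathbb{D}(\undla)$, then deduce semisimplicity by a dimension count.

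First I would prove irreducibility of $\mathbb{D}(\undla)$. Given any nonzero $v\in\mathbb{D}(\undla)$, expand it in the basis $\{C^{\beta_{\mt}}C^{\alpha_{\mt}} v_{\mt}\}$. By parts (1) and (2) of Lemma \ref{important condition1}, the joint spectrum of $(X_1,\ldots,X_n)$ separates the pairs $(\mt,\beta_{\mt})$ across $\Std(\mathscr{P}^{\bullet,m}_n)$, so a Lagrange interpolation polynomial in the $X_i$'s projects $v$ onto a single such weight space. Applying the appropriate idempotent $F_{\rm T}$ isolates either a single basis vector (when $d_{\undla}=0$) or a two-dimensional subspace (when $d_{\undla}=1$). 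The explicit formulas \eqref{Caction} and \eqref{Taction Non-dege}, combined with Lemma \ref{admissible transposes} and the nonvanishing of $\mathtt{c}_{\mt}(i)$ from Lemma \ref{important condition1}(3), then allow one to reach every other basis vector via successive applications of $T_i$'s and $C_j$'s, proving irreducibility.

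Next I would show pairwise non-isomorphism and determine the type. Any $\mHfcn$-isomorphism $\mathbb{D}(\undla)\cong\mathbb{D}(\underline{\mu})$ must preserve the joint spectrum of $X_i+X_i^{-1}$; by \eqref{X eigenvalues} this spectrum equals $\{\mathtt{q}(\res(\mt)):\mt\in\Std(\undla)\}$, which determines $\undla$ by Lemma \ref{important condition1}(2). For the type, Lemma \ref{lem:clifford rep} and Proposition \ref{L basis}(2) show that the $\mathcal{A}_n$-module $\mathbb{L}(\res(\mathfrak{t}^{\undla}))$ is of type $\texttt{M}$ or $\texttt{Q}$ according as $t=\sharp\mathcal{D}_{\undla}$ is even or odd. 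When $t$ is odd, the ``extra'' odd operator $\rho(C_t)$ from Lemma \ref{lem:hom2} extends to an odd $\mHfcn$-endomorphism of $\mathbb{D}(\undla)$: one only needs to verify it commutes with each $T_i$ using the explicit formula \eqref{Taction Non-dege}. This yields $\dim\End_{\mHfcn}(\mathbb{D}(\undla))=2$, hence type $\texttt{Q}$. When $t$ is even, any endomorphism must preserve each joint $X$-weight space, and a direct inspection of \eqref{Taction Non-dege} forces it to be a scalar, giving type $\texttt{M}$.

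Finally, semisimplicity and completeness follow by a dimension count: the irreducible pairwise non-isomorphic supermodules $\mathbb{D}(\undla)$ account for distinct Wedderburn blocks, and using $\dim\mathbb{D}(\undla)=|\Std(\undla)|\cdot 2^{n-\lfloor t/2\rfloor}$ together with $\dim\mHfcn=r^n\cdot 2^n\cdot n!$ (where $r=\deg f$), one verifies the identity
\begin{equation*}
\sum_{\undla\in\mathscr{P}^{\bullet,m}_n} 2^{-d_{\undla}}\bigl(\dim\mathbb{D}(\undla)\bigr)^2 = \dim\mHfcn,
\end{equation*}
in which the factor $2^{-d_{\undla}}$ reflects the standard type $\texttt{Q}$ correction in Wedderburn's theorem for semisimple superalgebras. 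The main obstacle is this combinatorial identity, a super-analog of $\sum_{\lambda\vdash n}(f^\lambda)^2=n!$, which must be verified case by case for $\bullet\in\{\mathsf{0},\mathsf{s},\mathsf{ss}\}$ using the decomposition of $\mathscr{P}^{\bullet,m}_n$ together with the hook-length enumerations for both ordinary and shifted standard tableaux.
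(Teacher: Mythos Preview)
The paper does not give its own proof of this statement: it is quoted verbatim from \cite[Theorem~4.10]{SW} and used as a black box (the subsequent block decomposition and Theorem~\ref{primitive iempotents} both invoke it). So there is nothing in the paper to compare your argument against; you are proposing an independent proof built from the paper's own constructions.

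Your strategy is sound and logically non-circular: Theorem~\ref{actions of generators on L basis} and Lemma~\ref{idempotent action. non-dege} are proved in the paper without appealing to semisimplicity, so you may use them. The irreducibility and non-isomorphism steps are clean. For the type~\texttt{Q} case, the odd endomorphism you want is not literally $\rho(C_t)$ on a single $\mathbb{L}$-piece but the operator that on each summand $\mathbb{L}(\res(\mathfrak{t}^{\undla}))^{d(\mt,\mt^{\undla})}$ acts as left multiplication by $C_{d(\mt,\mt^{\undla})(i_t)}$; its compatibility with $T_i$ is exactly the content of \eqref{eq.commute1} in the proof of Lemma~\ref{Phist. lem}(2), so you should point there rather than to \eqref{Taction Non-dege}. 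For type~\texttt{M}, your argument is fine once you note that the $F_{\rm T}$ lie in $\mHfcn$ and (when $d_{\undla}=0$) cut out one-dimensional spaces by Lemma~\ref{idempotent action. non-dege}.

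The dimension identity is the only real content left, and it does hold: in all three cases it reduces (after cancelling powers of $2$) to a product of the classical identity $\sum_{\undla\in\mathscr{P}^m_k}|\Std(\undla)|^2=m^k k!$ with the shifted-tableaux identity $\sum_{\lambda\in\mathscr{P}^{\mathsf{s}}_k}2^{k-\ell(\lambda)}(g^\lambda)^2=k!$, summed over the appropriate splittings of $n$. This is standard, but you should state it explicitly rather than leave it as ``the main obstacle,'' since without it your Wedderburn count does not close.
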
			
			By Theorem \ref{semisimple:non-dege}, we have the following $\mHfcn$-module isomorphism:
$$\mHfcn\cong\bigoplus_{\undla\in\mathscr{P}^{\bullet,m}_{n}}\mathbb{D}(\undla)^{\oplus 2^{n-\bigl\lceil\frac{|\mathcal{D}_{\mt^{\undla}}|}{2}\bigr\rceil}|\Std(\undla)| }\cong\bigoplus_{\undla\in\mathscr{P}^{\bullet,m}_{n}}\mathbb{D}(\undla)^{\oplus 2^{n-\bigl|\mathcal{OD}_{\mt^{\undla}}\bigr|}|\Std(\undla)| }.$$
So the block decomposition is
			$$\mHfcn=\bigoplus_{\undla \in \mathscr{P}^{\bullet,m}_{n}} B_{\undla},$$ and for each $\undla \in \mathscr{P}^{\bullet,m}_{n}$, we have $$B_{\undla}\cong \mathbb{D}(\undla)^{\oplus 2^{n-\bigl\lceil\frac{|\mathcal{D}_{\mt^{\undla}}|}{2}\bigr\rceil}|\Std(\undla)| }\cong \mathbb{D}(\undla)^{\oplus 2^{n-\bigl|\mathcal{OD}_{\mt^{\undla}}\bigr|}|\Std(\undla)| }$$ as $B_{\undla}$-modules.

			The following Theorem gives a complete set of (super) primitive orthogonal idempotents of $\mHfcn$, which is the second main result of this paper.
			
		\begin{thm}\label{primitive idempotents}
			Suppose $P^{(\bullet)}_{n}(q^2,\undQ)\neq 0$.  For $\bullet\in\{\mathtt{0},\mathtt{s},\mathtt{ss}\}$, we have the following.
			
			(a) $\{F_{\rm T} \mid \rm T\in {\rm Tri}_{\bar{0}}(\mathscr{P}^{\bullet,m}_{n})\}$ is a complete set of (super) primitive orthogonal idempotents of $\mHfcn.$
			
			(b) $\{F_{\undla} \mid \undla \in \mathscr{P}^{\bullet,m}_{n} \}$ is a complete set of (super) primitive central idempotents of $\mHfcn.$
			
			(c) For ${\rm T}=(\mt, \alpha_{\mt}, \beta_{\mt})\in {\rm Tri}_{\bar{0}}(\undla),\,
			{\rm S}=(\ms, \alpha_{\ms}', \beta_{\ms}')\in {\rm Tri}_{\bar{0}}(\underline{\mu}),$ then $\mathbb{D}_{\rm T}$ and $\mathbb{D}_{\rm S}$ belongs to the same block if and only if $\undla=\underline{\mu}$.
			
			(d) Let $\undla\in\mathscr{P}^{\bullet,m}_{n}$. If $d_{\undla}=1$, then for any ${\rm T}=(\mt, \alpha_{\mt}, \beta_{\mt}),
	{\rm S}=(\ms, \alpha_{\ms}', \beta_{\ms}')\in {\rm Tri}_{\bar{0}}(\undla),$ we have evenly isomorphic $\mHfcn$-supermodules $\mathbb{D}_{\rm T}\cong \mathbb{D}_{\rm S}$; if $d_{\undla}=0$, then for any ${\rm T}=(\mt, \alpha_{\mt}, \beta_{\mt}),
	{\rm S}=(\ms, \alpha_{\ms}', \beta_{\ms}')\in {\rm Tri}(\undla),$ we have evenly isomorphic $\mHfcn$-supermodules $\mathbb{D}_{\rm T}\cong \mathbb{D}_{\rm S}$ if and only if
	$$|\alpha_{\mt}|+|\beta_{\mt}| \equiv |\alpha_{\ms}'|+|\beta_{\ms}'| \pmod 2.$$
		\end{thm}
		\begin{proof}
			
			(a) We claim that $F_{\rm S}F_{\rm T}=\delta_{{\rm S},{\rm T}}F_{\rm S}$ for any ${\rm S,\,T} \in {\rm Tri}_{\bar{0}}(\mathscr{P}^{\bullet,m}_{n})$ and $$\sum_{\rm T\in {\rm Tri}_{\bar{0}}(\mathscr{P}^{\bullet,m}_{n})}F_{\rm T}=1.
			$$ By Theorem \ref{semisimple:non-dege}, we only need to check that for any $\undla\in\mathscr{P}^{\bullet,m}_{n}$, these equations hold on $\mathbb{D}(\undla)$,  which is a direct application of \eqref{F. eq3} and Proposition \ref{actions of generators on L basis}.  By Theorem \ref{semisimple:non-dege}, the number of primitive idempotents corresponding to  $B_{\undla}$ is $2^{n-\bigl|\mathcal{OD}_{\mt^{\undla}}\bigr|}|\Std(\undla)|$ which equals to $|{\rm Tri}_{\bar{0}}(\undla)|$. Thus we complete the proof of (a).
			
			(b) The proof of (a) implies that $\{F_{\rm T} \mid \rm T\in {\rm Tri}_{\bar{0}}(\undla)\}$ is a complete set of (super) primitive orthogonal idempotents of $B_{\undla}$. Hence $F_{\undla}$ is the identity of the block $B_{\undla}$. This proves (b).

			(c) Using \eqref{F. eq3}, we deduce that for any ${\rm T}=(\mt, \alpha_{\mt}, \beta_{\mt})\in {\rm Tri}_{\bar{0}}(\mathscr{P}^{\bullet,m}_{n})$ and $\undla\in \mathscr{P}^{\bullet,m}_{n}$, simple module $\mathbb{D}_{\rm T}$ belongs to block $B_{\undla}$ if and only if $\mt\in\Std(\undla)$. This proves (c).
			
			(d) If $d_{\undla}=1$, then $B_{\undla}$ has type $\texttt{Q}$. Hence any two simple modules are evenly-isomorphic by Example \ref{simple algebra} (2). If $d_{\undla}=0$, then $B_{\undla}$ has type $\texttt{M}$. It follows from the equation \eqref{F. eq3} and Example \ref{simple algebra} (1) that
			$\mathbb{D}_{\rm T}\cong \Pi^{a}\mathbb{D}(\undla)$ as supermodules if and only if $|\alpha_{\mt}|+|\beta_{\mt}|\equiv a \pmod 2,$ for $a\in \mathbb{Z}_2.$
		\end{proof}
		
		\begin{cor} Suppose $P^{(\bullet)}_{n}(q^2,\undQ)\neq 0$. Then
			the set of elements $$\{F_{\undla} \mid \lambda\in \mathscr{P}^{\bullet,m}_{n} \}$$ form a $\mathbb{K}$-basis of the super center ${\rm Z}(\mHfcn)_{\bar{0}}.$
		\end{cor}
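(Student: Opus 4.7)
The plan is to combine the block decomposition with the classification of simple superalgebras. By Theorem~\ref{primitive iempotents}(b), each $F_{\undla}$ is a (super) primitive central idempotent; in particular $F_{\undla}\in {\rm Z}(\mHfcn)_{\bar 0}$, the idempotents $\{F_{\undla}\}$ are pairwise orthogonal, and $\sum_{\undla}F_{\undla}=1$. This immediately shows that $\{F_{\undla}\mid \undla\in \mathscr{P}^{\bullet,m}_{n}\}$ is a linearly independent subset of ${\rm Z}(\mHfcn)_{\bar 0}$, so the only real content is to prove that this set spans.

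To get the spanning property, I would invoke the block decomposition
\[
\mHfcn=\bigoplus_{\undla\in \mathscr{P}^{\bullet,m}_{n}} B_{\undla},
\]
coming from Theorem~\ref{semisimple:non-dege}, together with Theorem~\ref{primitive iempotents}(b) which identifies $F_{\undla}$ as the unit of $B_{\undla}$. Since the decomposition is a direct sum of two-sided superideals, it induces a decomposition of the super center
\[
{\rm Z}(\mHfcn)_{\bar 0}=\bigoplus_{\undla\in \mathscr{P}^{\bullet,m}_{n}} {\rm Z}(B_{\undla})_{\bar 0}.
\]
Hence it suffices to show that $\dim_{\mathbb{K}} {\rm Z}(B_{\undla})_{\bar 0}=1$ for every $\undla$, in which case ${\rm Z}(B_{\undla})_{\bar 0}=\mathbb{K}F_{\undla}$.

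For the final step I would use the superalgebra version of Wedderburn's theorem: because $\mHfcn$ is split semisimple and $B_{\undla}$ has a unique (up to isomorphism) simple module $\mathbb{D}(\undla)$, the block $B_{\undla}$ is a simple superalgebra isomorphic either to a matrix superalgebra $\mathcal{M}_{p,q}$ (when $\mathbb{D}(\undla)$ is of type $\texttt{M}$, i.e.\ $d_{\undla}=0$) or to a queer matrix superalgebra $\mathcal{Q}_{r}$ (when $\mathbb{D}(\undla)$ is of type $\texttt{Q}$, i.e.\ $d_{\undla}=1$), exactly as recorded in Example~\ref{simple algebra}. A direct computation (or the super Schur's lemma) shows that in either case the even part of the super center consists solely of scalar multiples of the identity: for $\mathcal{M}_{p,q}$ the whole super center is $\mathbb{K}$, while for $\mathcal{Q}_{r}$ the super center is two-dimensional with one-dimensional even part (the extra dimension being odd, spanned by a matrix proportional to $\bigl(\begin{smallmatrix}0 & I_r\\ -I_r & 0\end{smallmatrix}\bigr)$). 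Thus ${\rm Z}(B_{\undla})_{\bar 0}=\mathbb{K} F_{\undla}$, which completes the argument.

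There is no serious obstacle here; the statement is essentially a corollary of Theorem~\ref{primitive iempotents}(b) combined with the classification of simple superalgebras. The only point to be careful about is distinguishing between the full super center (which can be larger when type~$\texttt{Q}$ blocks are present, due to odd central elements in each $\mathcal{Q}_{r}$) and its even part, which is what the statement concerns.
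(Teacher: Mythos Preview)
Your argument is correct and is exactly the standard deduction the paper intends: the corollary is stated without proof immediately after Theorem~\ref{primitive iempotents}, and your combination of part~(b) of that theorem with the block decomposition and the computation of ${\rm Z}(\mathcal{M}_{p,q})_{\bar 0}$ and ${\rm Z}(\mathcal{Q}_r)_{\bar 0}$ is precisely what is being left implicit. Your remark distinguishing the even super center from the full super center in the type~$\texttt{Q}$ case is the only subtlety, and you handled it correctly.
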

		The following Lemma will be used in Proposition \ref{Non-dege BK anti-idempotent}.
		\begin{lem}	For any ${\rm T}=(\mt, \alpha_{\mt}, \beta_{\mt})\in {\rm Tri}_{\bar{0}}(\undla),$ We have
			\begin{equation}\label{comm. form of FT}
				F_{\rm T}=\prod_{k=1}^{n}\prod_{\mathtt{b}\in \mathtt{B}(k)\atop \mathtt{b}\neq \mathtt{b}_{+}(\res_{\mt}(k))}\frac{X_k^{\nu_{\beta_{\mt}}(k)}-\mathtt{b}}{\mathtt{b}_{+}(\res_{\mt}(k))-\mathtt{b}} \cdot
				\left(C^{\alpha_{\mt}}\gamma_{\mt}(C^{\alpha_{\mt}})^{-1} \right)\in \mHfcn.
			\end{equation}
		\end{lem}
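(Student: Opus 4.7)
The plan is to exhibit that $E := C^{\alpha_{\mt}}\gamma_{\mt}(C^{\alpha_{\mt}})^{-1}$ commutes with
$$P := \prod_{k=1}^{n}\prod_{\mathtt{b}\in \mathtt{B}(k),\,\mathtt{b}\neq \mathtt{b}_{+}(\res_{\mt}(k))}\frac{X_k^{\nu_{\beta_{\mt}}(k)}-\mathtt{b}}{\mathtt{b}_{+}(\res_{\mt}(k))-\mathtt{b}}$$
as elements of $\mHfcn$, which is precisely the claim that the two forms of $F_{\rm T}$ agree. Because $\mHfcn$ is split semisimple under the standing hypothesis $P^{(\bullet)}_{n}(q^2,\undQ)\neq 0$ (Theorem \ref{semisimple:non-dege}), it suffices to verify $EPv=PEv$ on every basis vector $v=C^{\beta_{\ms}'}C^{\alpha_{\ms}'}v_{\ms}$ of every simple module $\mathbb{D}(\underline{\nu})$, where the basis is the one constructed in Theorem \ref{actions of generators on L basis}.

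I would first dispose of the ``off-diagonal'' case $\ms\neq \mt$ (which automatically covers $\underline{\nu}\neq \undla$). By Lemma \ref{important condition1}(2) there exists some $k_0$ with $\mathtt{q}(\res_{\ms}(k_0))\neq \mathtt{q}(\res_{\mt}(k_0))$, which forces both $\mathtt{b}_{+}(\res_{\ms}(k_0))$ and $\mathtt{b}_{-}(\res_{\ms}(k_0))$ to lie in $\mathtt{B}(k_0)\setminus\{\mathtt{b}_{+}(\res_{\mt}(k_0))\}$. Hence for every basis element $C^{\beta''}C^{\alpha''}v_{\ms}$ of the Clifford submodule $\mathcal{C}_{n}v_{\ms}$, the scalar factor of $P$ indexed by $k_0$ is evaluated at a root of its numerator and vanishes, so $P$ annihilates all of $\mathcal{C}_{n}v_{\ms}$. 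Since $E\in\mathcal{C}_{n}$ preserves this submodule, both $EPv$ and $PEv$ are zero on such $v$, in agreement with \eqref{F. eq3}.

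For the remaining case $\ms=\mt$ in $\Std(\undla)$, each Clifford monomial appearing in $E$ has even degree with indices inside $\mathcal{D}_{\mt}$ while $\supp(\beta_{\mt}')\subseteq [n]\setminus \mathcal{D}_{\mt}$, so their supports are disjoint and $EC^{\beta_{\mt}'}=C^{\beta_{\mt}'}E$. Pulling $C^{\alpha_{\mt}'}$ to the front as in the proof of Lemma \ref{idempotent action. non-dege}, the action of $E$ on $C^{\alpha_{\mt}'}v_{\mt}$ is controlled by the conjugate idempotent $(C^{\alpha_{\mt}'})^{-1}C^{\alpha_{\mt}}\gamma_{\mt}(C^{\alpha_{\mt}})^{-1}C^{\alpha_{\mt}'}$, which by Lemma \ref{lem:clifford rep}(2) is either equal to $\gamma_{\mt}$ or orthogonal to it; combined with $\gamma_{\mt}v_{\mt}=v_{\mt}$ this yields $E\cdot C^{\beta_{\mt}'}C^{\alpha_{\mt}'}v_{\mt}$ equal to $C^{\beta_{\mt}'}C^{\alpha_{\mt}'}v_{\mt}$ or $0$. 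Applying $P$ afterwards inserts an extra factor $\delta_{\beta_{\mt}',\beta_{\mt}}$ via \eqref{F. eq1}. Reversing the order, $Pv=\delta_{\beta_{\mt}',\beta_{\mt}}C^{\beta_{\mt}}C^{\alpha_{\mt}'}v_{\mt}$ and then $E$ contributes the same Clifford-compatibility on $\alpha_{\mt}'$, so the two routes produce the same vector and $EP=PE$ follows.

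The main obstacle is conceptual rather than computational: the generators satisfy $X_{k}C_{k}=C_{k}X_{k}^{-1}$, so $E$ and $P$ do not commute term by term inside $\mHfcn$, and a naive rewriting fails. The key shift of viewpoint is to recognise that semisimplicity of $\mHfcn$ upgrades pointwise-on-modules agreement into an identity of algebra elements, after which every remaining step is a direct citation of the Clifford computations already performed in the proof of Lemma \ref{idempotent action. non-dege}.
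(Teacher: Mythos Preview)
Your argument is correct. Both $EP$ and $PE$ act identically on every basis vector of every simple module, and semisimplicity then forces $EP=PE$ in $\mHfcn$. The case split and the reuse of \eqref{F. eq1} and the Clifford analysis from Lemma~\ref{idempotent action. non-dege} are all sound.

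The paper takes a somewhat different route. Rather than verifying the full identity $EP=PE$ on modules, it first isolates the simpler algebraic fact that $P$ commutes with each individual generator $C_i$ for $i\in\mathcal{D}_{\mt}$. The point is that for such $i$ one has $\mathtt{b}_{\pm}(\res_{\mt}(i))\in\{\pm 1\}$, so $\mathtt{b}_{+}=\mathtt{b}_{-}$ and flipping the exponent of $X_i$ in $P$ leaves the element unchanged (this auxiliary identity $P=P_{(i)}$ is what semisimplicity is invoked for). Then the relation $X_iC_i=C_iX_i^{-1}$ gives $PC_i=C_iP_{(i)}=C_iP$, and since $E$ is built from such $C_i$'s the commutation with $E$ follows purely algebraically. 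So your final paragraph slightly mischaracterises the situation: a direct rewriting does succeed, once one sees that the diagonal Clifford generators do not actually change the $X$-eigenvalue factor. Your approach is equally valid and reuses the earlier Clifford computations more systematically; the paper's approach has the advantage of extracting the cleaner intermediate statement that $P$ commutes with $C_i$ for $i\in\mathcal{D}_{\mt}$, which explains the mechanism behind the identity.
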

		
		\begin{proof}
			Note that for any $i \in \mathcal{D}_{\mt},$ we have $\mathtt{b}_{\pm}(\res_{\mt}(i)) \in \{ 1,-1 \}.$ Following the same argument as for \eqref{F. eq1}, we deduce that for any $\underline{\mu}\in\mathscr{P}^{\bullet,m}_{n}$,
			$$
			\prod_{k=1}^{n}\prod_{\mathtt{b}\in \mathtt{B}(k)\atop \mathtt{b}\neq \mathtt{b}_{+}(\res_{\mt}(k))}\frac{X_k^{\nu_{\beta_{\mt}}(k)}-\mathtt{b}}{\mathtt{b}_{+}(\res_{\mt}(k))-\mathtt{b}} $$ and $$\prod_{k=1}^{n}\prod_{\mathtt{b}\in \mathtt{B}(k)\atop \mathtt{b}\neq \mathtt{b}_{+}(\res_{\mt}(k))}\frac{X_k^{\nu_{\beta_{\mt}}(k)\cdot (-1)^{\delta_{k,i}}}-\mathtt{b}}{\mathtt{b}_{+}(\res_{\mt}(k))-\mathtt{b}}
			$$ act as the same linear operator on $\mathbb{D}(\underline{\mu})$. Hence $$
			\prod_{k=1}^{n}\prod_{\mathtt{b}\in \mathtt{B}(k)\atop \mathtt{b}\neq \mathtt{b}_{+}(\res_{\mt}(k))}\frac{X_k^{\nu_{\beta_{\mt}}(k)}-\mathtt{b}}{\mathtt{b}_{+}(\res_{\mt}(k))-\mathtt{b}}=\prod_{k=1}^{n}\prod_{\mathtt{b}\in \mathtt{B}(k)\atop \mathtt{b}\neq \mathtt{b}_{+}(\res_{\mt}(k))}\frac{X_k^{\nu_{\beta_{\mt}}(k)\cdot (-1)^{\delta_{k,i}}}-\mathtt{b}}{\mathtt{b}_{+}(\res_{\mt}(k))-\mathtt{b}}$$ by Theorem \ref{semisimple:non-dege}.
			Now we can compute
			\begin{align*}
				&\prod_{k=1}^{n}\prod_{\mathtt{b}\in \mathtt{B}(k)\atop \mathtt{b}\neq \mathtt{b}_{+}(\res_{\mt}(k))}\frac{X_k^{\nu_{\beta_{\mt}}(k)}-\mathtt{b}}{\mathtt{b}_{+}(\res_{\mt}(k))-\mathtt{b}} \cdot C_i \\
				&=C_i \cdot \prod_{k=1}^{n}\prod_{\mathtt{b}\in \mathtt{B}(k)\atop \mathtt{b}\neq \mathtt{b}_{+}(\res_{\mt}(k))}\frac{X_k^{\nu_{\beta_{\mt}}(k)\cdot (-1)^{\delta_{k,i}}}-\mathtt{b}}{\mathtt{b}_{+}(\res_{\mt}(k))-\mathtt{b}} \\
				&=C_i \cdot \prod_{k=1}^{n}\prod_{\mathtt{b}\in \mathtt{B}(k)\atop \mathtt{b}\neq \mathtt{b}_{+}(\res_{\mt}(k))}\frac{X_k^{\nu_{\beta_{\mt}}(k)}-\mathtt{b}}{\mathtt{b}_{+}(\res_{\mt}(k))-\mathtt{b}}.
			\end{align*}
			It follows that
			$${\text{RHS of \eqref{comm. form of FT}}}=\left(C^{\alpha_{\mt}}\gamma_{\mt}(C^{\alpha_{\mt}})^{-1} \right)\cdot \prod_{k=1}^{n}\prod_{\mathtt{b}\in \mathtt{B}(k)\atop \mathtt{b}\neq \mathtt{b}_{+}(\res_{\mt}(k))}\frac{X_k^{\nu_{\beta_{\mt}}(k)}-\mathtt{b}}{\mathtt{b}_{+}(\res_{\mt}(k))-\mathtt{b}}=F_{\rm T}.
			$$
		\end{proof}
		
		\begin{defn}For any  ${\rm T}=(\mt, \alpha_{\mt}, \beta_{\mt})\in {\rm Tri}(\mathscr{P}^{\bullet,m}_{n}),$  we define $${\rm \widehat{T}}:=(\mt, \widehat{\alpha_{\mt}}, \beta_{\mt})\in {\rm Tri}(\mathscr{P}^{\bullet,m}_{n}).$$
			\end{defn}
			
		Recall the anti-involution $*$ on $\mHfcn$.
			\begin{cor}\label{Non-dege BK anti-idempotent}
			For any  ${\rm T}=(\mt, \alpha_{\mt}, \beta_{\mt})\in {\rm Tri}_{\bar{0}}(\mathscr{P}^{\bullet,m}_{n}),$ we have
			$F_{\rm T}^*=F_{\rm \widehat{T}}.$
		\end{cor}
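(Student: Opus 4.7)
The plan is to exploit the two equivalent expressions of $F_{\rm T}$ (Definition \ref{definition of primitive iempotents. non-dege} and \eqref{comm. form of FT}): one puts the Clifford factor on the left of the $X$-factor, the other puts it on the right. Since $*$ reverses multiplication and fixes each $X_k$, we have
\begin{equation*}
F_{\rm T}^*=\left(\prod_{k=1}^{n}\prod_{\mathtt{b}\in \mathtt{B}(k),\,\mathtt{b}\neq \mathtt{b}_{+}(\res_{\mt}(k))}\frac{X_k^{\nu_{\beta_{\mt}}(k)}-\mathtt{b}}{\mathtt{b}_{+}(\res_{\mt}(k))-\mathtt{b}}\right)\cdot\Bigl(C^{\alpha_{\mt}}\gamma_{\mt}(C^{\alpha_{\mt}})^{-1}\Bigr)^{*}.
\end{equation*}
Since the $X$-factor appearing in $F_{\rm T}$ and in $F_{\widehat{\rm T}}$ depends only on $(\mt,\beta_{\mt})$ and therefore coincides, the proof reduces to showing
\begin{equation*}
\Bigl(C^{\alpha_{\mt}}\gamma_{\mt}(C^{\alpha_{\mt}})^{-1}\Bigr)^{*}=C^{\widehat{\alpha_{\mt}}}\gamma_{\mt}(C^{\widehat{\alpha_{\mt}}})^{-1}.
\end{equation*}

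First I would record the elementary identity $(C^{\alpha})^{*}=(-1)^{|\alpha|(|\alpha|-1)/2}C^{\alpha}$ for any $\alpha\in\mathbb{Z}_2^n$, so that the signs coming from $C^{\alpha_{\mt}}$ and $(C^{\alpha_{\mt}})^{-1}$ cancel in the anti-involution; this gives $\left(C^{\alpha_{\mt}}\gamma_{\mt}(C^{\alpha_{\mt}})^{-1}\right)^{*}=(C^{\alpha_{\mt}})^{-1}\gamma_{\mt}^{*}C^{\alpha_{\mt}}$. Next, set $\gamma_k^{\pm}:=1\pm\sqrt{-1}\,C_{p_k}C_{q_k}$ with $p_k=d(\mt,\mt^{\undla})(i_{2k-1})$, $q_k=d(\mt,\mt^{\undla})(i_{2k})$, so that $\gamma_{\mt}=2^{-\lfloor t/2\rfloor}\prod_{k=1}^{\lfloor t/2\rfloor}\gamma_k^{+}$. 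Using $C_j^{*}=C_j$ and $(C_iC_j)^{*}=-C_iC_j$ for $i\neq j$, together with the commutativity of distinct factors $\gamma_k^{\pm}$, I would conclude $\gamma_{\mt}^{*}=2^{-\lfloor t/2\rfloor}\prod_k\gamma_k^{-}$.

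The core observation is now the effect of conjugation: a direct computation with the Clifford relations yields $C_{p_{k'}}\gamma_{k}^{\pm}C_{p_{k'}}^{-1}=\gamma_k^{\pm}$ if $k\neq k'$ and $\gamma_k^{\mp}$ if $k=k'$. Consequently both $(C^{\alpha_{\mt}})^{-1}\gamma_{\mt}^{*}C^{\alpha_{\mt}}$ and $C^{\widehat{\alpha_{\mt}}}\gamma_{\mt}(C^{\widehat{\alpha_{\mt}}})^{-1}$ equal the same product $2^{-\lfloor t/2\rfloor}\prod_k\gamma_k^{\varepsilon_k}$, where $\varepsilon_k=+$ if and only if $p_k\in\supp(\alpha_{\mt})$: on the left, the all-minus $\gamma_{\mt}^{*}$ has its signs flipped back to $+$ exactly in positions $k$ with $p_k\in\supp(\alpha_{\mt})$; on the right, starting from the all-plus $\gamma_{\mt}$, signs flip to $-$ exactly in positions $k$ with $p_k\in\supp(\widehat{\alpha_{\mt}})$, which by the definition of $\widehat{\alpha_{\mt}}$ is the complement of $\supp(\alpha_{\mt})$ in $\{p_1,\ldots,p_{\lfloor t/2\rfloor}\}$. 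Assembling this with the $X$-factor and applying \eqref{comm. form of FT} to recognize the result as $F_{\widehat{\rm T}}$ finishes the proof; note finally that $\widehat{\rm T}\in{\rm Tri}_{\bar{0}}(\undla)$ by construction of $\widehat{\alpha_{\mt}}$.

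The routine part is the sign bookkeeping for $(C^{\alpha})^{*}$ and for $\gamma_{\mt}^{*}$; the only subtle step is matching the two forms of $\gamma$-type idempotents in the last paragraph, which hinges on the fact that $\widehat{\alpha_{\mt}}$ is defined to land in $\mathbb{Z}_2(\mathcal{OD}_{\mt})_{\bar{0}}$ and to be complementary to $\alpha_{\mt}$ exactly in the indexing set $\{p_1,\ldots,p_{\lfloor t/2\rfloor\}}$ used to build $\gamma_{\mt}$.
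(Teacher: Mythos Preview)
Your proof is correct and follows essentially the same approach as the paper: both apply $*$ to $F_{\rm T}$, reduce to the Clifford identity $\bigl(C^{\alpha_{\mt}}\gamma_{\mt}(C^{\alpha_{\mt}})^{-1}\bigr)^{*}=C^{\widehat{\alpha_{\mt}}}\gamma_{\mt}(C^{\widehat{\alpha_{\mt}}})^{-1}$, and then invoke \eqref{comm. form of FT} to recognize $F_{\widehat{\rm T}}$. The only cosmetic difference is that the paper establishes the Clifford identity by writing $\gamma_{\mt}^{*}$ as the conjugate of $\gamma_{\mt}$ by $\prod_{k}C_{p_k}$ and absorbing this product into $C^{\alpha_{\mt}}$ to produce $C^{\widehat{\alpha_{\mt}}}$, while you compute $\gamma_{\mt}^{*}$ explicitly as the all-minus product and match factor by factor.
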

		\begin{proof}
			Suppose 	
$$\supp(\alpha_{\mt})\sqcup \supp(\widehat{\alpha_{\mt}})=\{a_1,a_2,\cdots,a_l\}.$$
			Then we have
			\begin{align}
				F_{\rm T}^*&=\left(\prod_{k=1}^{n}\prod_{\mathtt{b}\in \mathtt{B}(k)\atop \mathtt{b}\neq \mathtt{b}_{+}(\res_{\mt}(k))}\frac{X_k^{\nu_{\beta_{\mt}}(k)}-\mathtt{b}}{\mathtt{b}_{+}(\res_{\mt}(k))-\mathtt{b}}\right)
				\cdot\left(C^{\alpha_{\mt}}\gamma_{\mt}^*(C^{\alpha_{\mt}})^{-1} \right)\nonumber\\&=\left(\prod_{k=1}^{n}\prod_{\mathtt{b}\in \mathtt{B}(k)\atop \mathtt{b}\neq \mathtt{b}_{+}(\res_{\mt}(k))}\frac{X_k^{\nu_{\beta_{\mt}}(k)}-\mathtt{b}}{\mathtt{b}_{+}(\res_{\mt}(k))-\mathtt{b}}\right)
				\cdot\left(C^{\alpha_{\mt}}\bigl(\overrightarrow{\prod_{i=a_1,\cdots,a_l}}C_i\bigr)\gamma_{\mt}\bigl(\overleftarrow{\prod_{i=a_1,\cdots,a_l}}C_i\bigr)(C^{\alpha_{\mt}})^{-1} \right)\nonumber\\
				&=\left(\prod_{k=1}^{n}\prod_{\mathtt{b}\in \mathtt{B}(k)\atop \mathtt{b}\neq \mathtt{b}_{+}(\res_{\mt}(k))}\frac{X_k^{\nu_{\beta_{\mt}}(k)}-\mathtt{b}}{\mathtt{b}_{+}(\res_{\mt}(k))-\mathtt{b}}\right)
				\cdot\left(C^{\widehat{\alpha_{\mt}}}\gamma_{\mt}(C^{\widehat{\alpha_{\mt}}})^{-1} \right)\nonumber\\
				&=\left(C^{\widehat{\alpha_{\mt}}}\gamma_{\mt}(C^{\widehat{\alpha_{\mt}}})^{-1} \right)
				\cdot \left( \prod_{k=1}^{n}\prod_{\mathtt{b}\in \mathtt{B}(k)\atop \mathtt{b}\neq \mathtt{b}_{+}(\res_{\mt}(k))}\frac{X_k^{\nu_{\beta_{\mt}}(k)}-\mathtt{b}}{\mathtt{b}_{+}(\res_{\mt}(k))-\mathtt{b}}\right)
=F_{\rm \widehat{T}},\nonumber
			\end{align}
			where in the second equation, we have used relations \eqref{Clifford} and Lemma \ref{lem:clifford rep} (2), in the fourth equation, we have used the fact $\supp(\widehat{\alpha_{\mt}}) \subset \mathcal{D}_\mt$ and \eqref{comm. form of FT}.
		\end{proof}

		\subsection{Seminormal basis of $\mHfcn$}\label{Seminormalbase}
		{\bf In this subsection, we fix $\undla\in\mathscr{P}^{\bullet,m}_{n}$ with $\bullet\in\{\mathsf{0},\mathsf{s},\mathsf{ss}\}.$} We will construct a series of seminormal bases for block $B_{\undla}$. The following definition is crucial in our construction of seminormal bases.
	\begin{defn}
    \label{pag:Phist and cst}	
For any $\ms,\mt \in \Std(\undla),$ we fix a reduced expression $d(\ms,\mt)=s_{k_p}\cdots s_{k_1}$. We define
		\begin{align}\label{Phist}
			\Phi_{\ms,\mt}:=\overleftarrow{\prod_{i=1,\ldots,p}}\Phi_{k_{i}}(\mathtt{b}_{s_{k_{i-1}}\cdots s_{k_1}\mathfrak{t},k_{i}}, \mathtt{b}_{s_{k_{i-1}}\cdots s_{k_1}\mathfrak{t},k_{i}+1})  \in \mHfcn
		\end{align}
		and the coefficient
		\begin{align}\label{c-coefficients. non-dege.}
			\mathtt{c}_{\ms,\mt}:=\prod_{i=1,\ldots,p}\sqrt{\mathtt{c}_{s_{k_{i-1}}\cdots s_{k_1}\mathfrak{t}}(k_{i})}  \in \mathbb{K}.
		\end{align}
		\end{defn}
		By Lemma \ref{admissible transposes} and the third part of Lemma \ref{important condition1}, $\mathtt{c}_{\ms,\mt}\in  \mathbb{K}^*$.  The following Lemma shows that $\Phi_{\ms,\mt}$ is independent of the reduced choice of $d(\ms,\mt)$.
		
		\begin{lem}\label{Phist. well-defi}
Let $\ms,\mt \in \Std(\undla).$
			\begin{enumerate}
				\item Suppose $d(\ms,\mt)=s_is_{i+1}s_{i}=s_{i+1}s_is_{i+1}$, we have 	\begin{align}
					\Phi_{i}(\mathtt{b}_{s_{i+1}s_i\mt,i},&\mathtt{b}_{s_{i+1}s_i\mt,i+1})\Phi_{i+1}(\mathtt{b}_{s_i\mt,i+1},\mathtt{b}_{s_i\mt,i+2})\Phi_{i}(\mathtt{b}_{\mt,i},\mathtt{b}_{\mt,i+1})\nonumber\\
					&=\Phi_{i+1}(\mathtt{b}_{s_is_{i+1}\mt,i+1},\mathtt{b}_{s_is_{i+1}\mt,i+2})\Phi_{i}(\mathtt{b}_{s_{i+1}\mt,i},\mathtt{b}_{s_{i+1}\mt,i+1})\Phi_{i+1}(\mathtt{b}_{\mt,i+1},\mathtt{b}_{\mt,i+2}).\nonumber
				\end{align}
				\item Suppose $d(\ms,\mt)=s_is_j=s_js_i$,where $|i-j|>1$, we have $$\Phi_{i}(\mathtt{b}_{s_j\mt,i},\mathtt{b}_{s_j\mt,i+1})\Phi_{j}(\mathtt{b}_{\mt,j},\mathtt{b}_{\mt,j+1})=\Phi_{j}(\mathtt{b}_{s_i\mt,j},\mathtt{b}_{s_i\mt,j+1})\Phi_{i}(\mathtt{b}_{\mt,i},\mathtt{b}_{\mt,i+1}).$$
				\item $\Phi_{\ms,\mt}$ is independent of the reduced expression of $d(\ms,\mt)$.
				\end{enumerate}
			\end{lem}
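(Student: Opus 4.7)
The plan is to reduce each of (1), (2), and (3) to the identities \eqref{braidrel1} and \eqref{braidrel2} satisfied by the abstract functions $\Phi_{i}(x,y)$, with a careful bookkeeping of how the eigenvalue sequence $(\mathtt{b}_{\mt,1},\ldots,\mathtt{b}_{\mt,n})$ transforms when $\mt$ is replaced by $s_{k}\mt$. The key observation, which I will record first, is the equivariance $\mathtt{b}_{s_{k}\mt,j}=\mathtt{b}_{\mt,s_{k}(j)}$, obtained directly from $\res_{s_{k}\mt}(j)=\res_{\mt}(s_{k}(j))$ and the definition of $\mathtt{b}_{\mt,j}=\mathtt{b}_{-}(\res_{\mt}(j))$.

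For (1), set $a:=\mathtt{b}_{\mt,i}$, $b:=\mathtt{b}_{\mt,i+1}$, $c:=\mathtt{b}_{\mt,i+2}$. Using the equivariance above, I will compute the six $\Phi$-arguments appearing in the braid identity: the left-hand side of (1) becomes $\Phi_{i}(b,c)\Phi_{i+1}(a,c)\Phi_{i}(a,b)$, while the right-hand side becomes $\Phi_{i+1}(a,b)\Phi_{i}(a,c)\Phi_{i+1}(b,c)$. These two expressions are then precisely \eqref{braidrel2} after the substitution $(x,y,z)=(b,c,a)$. I need to verify first that the three pairs of arguments satisfy the assumption $y\neq x^{\pm 1}$ needed to make sense of the $\Phi$-functions and the identity, but this follows from $\ms,s_{i}\mt,s_{i+1}\mt,s_{i}s_{i+1}\mt,s_{i+1}s_{i}\mt\in \Std(\undla)$ together with Definition~\ref{defn:separate} and Proposition~\ref{separate formula}.

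For (2), the argument is analogous and easier: with $|i-j|>1$, the eigenvalue at index $i$ is unaffected by $s_{j}$ and vice versa, so the left-hand side equals $\Phi_{i}(\mathtt{b}_{\mt,i},\mathtt{b}_{\mt,i+1})\Phi_{j}(\mathtt{b}_{\mt,j},\mathtt{b}_{\mt,j+1})$ (after using equivariance), and likewise for the right-hand side with the factors swapped; this is just \eqref{braidrel1}.

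For (3), I will invoke Matsumoto's theorem: any two reduced expressions for the same $w\in\mathfrak{S}_{n}$ are linked by a sequence of braid and commutation moves. By Lemma \ref{admissible transposes}, every reduced expression of $d(\ms,\mt)$ proceeds through standard tableaux, so each intermediate $\Phi$-product is defined. Parts (1) and (2) show that each elementary move preserves the value of the corresponding $\Phi$-product, so $\Phi_{\ms,\mt}$ depends only on $d(\ms,\mt)$. The only real bookkeeping obstacle is tracking that, at each intermediate stage of a braid move, the tableau through which we factor is indeed standard and the separate condition is satisfied; both of these follow from Lemma \ref{admissible transposes} applied to the longer reduced expression that contains the braid word as a sub-word, so nothing new needs to be checked.
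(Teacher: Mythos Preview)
Your proposal is correct and follows essentially the same route as the paper: both proofs reduce (1) and (2) to the identities \eqref{braidrel2} and \eqref{braidrel1} via the equivariance $\mathtt{b}_{s_k\mt,j}=\mathtt{b}_{\mt,s_k(j)}$, and in fact your substitution $(x,y,z)=(b,c,a)$ in \eqref{braidrel2} coincides exactly with the paper's choice $x=\mathtt{b}_{\mt,i+1}$, $y=\mathtt{b}_{\mt,i+2}$, $z=\mathtt{b}_{\mt,i}$. For (3) the paper simply says it follows from (1) and (2); your explicit invocation of Matsumoto's theorem and of Lemma~\ref{admissible transposes} to guarantee that the intermediate tableaux are standard (so that each $\Phi$-factor is defined) is a welcome clarification but not a different argument.
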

		
\begin{proof}\begin{enumerate}
		\item Let $$x:=\mathtt{b}_{-}(\res_{\mt}(i+1)),\,y:=\mathtt{b}_{-}(\res_{\mt}(i+2)),\,z:=\mathtt{b}_{-}(\res_{\mt}(i)).
		$$
One can easily check that
	\begin{align}
		x&=\mathtt{b}_{-}(\res_{s_{i+1}s_{i}\mt}(i))=\mathtt{b}_{-}(\res_{s_{i}s_{i+1}\mt}(i+2)),\nonumber\\
		y&=\mathtt{b}_{-}(\res_{s_{i+1}s_{i}\mt}(i+1))=\mathtt{b}_{-}(\res_{s_{i}\mt}(i+2))=\mathtt{b}_{-}(\res_{s_{i+1}\mt}(i+1)),\nonumber\\
		z&=\mathtt{b}_{-}(\res_{s_{i}\mt}(i+1))=\mathtt{b}_{-}(\res_{s_{i}s_{i+1}\mt}(i+1))=\mathtt{b}_{-}(\res_{s_{i+1}\mt}(i)).\nonumber
	\end{align}
By \eqref{braidrel2}, we have
	\begin{align}
		\Phi_{i}(x,y)\Phi_{i+1}(z,y)\Phi_{i}(z,x)=\Phi_{i+1}(z,x)\Phi_{i}(z,y)\Phi_{i+1}(x,y),\nonumber
	\end{align}
which is equivalent to
	\begin{align}
		\Phi_{i}(\mathtt{b}_{s_{i+1}s_i\mt,i},&\mathtt{b}_{s_{i+1}s_i\mt,i+1})\Phi_{i+1}(\mathtt{b}_{s_i\mt,i+1},\mathtt{b}_{s_i\mt,i+2})\Phi_{i}(\mathtt{b}_{\mt,i},\mathtt{b}_{\mt,i+1})\nonumber\\
		&=\Phi_{i+1}(\mathtt{b}_{s_is_{i+1}\mt,i+1},\mathtt{b}_{s_is_{i+1}\mt,i+2})\Phi_{i}(\mathtt{b}_{s_{i+1}\mt,i},\mathtt{b}_{s_{i+1}\mt,i+1})\Phi_{i+1}(\mathtt{b}_{\mt,i+1},\mathtt{b}_{\mt,i+2}).\nonumber
	\end{align}
	\item Let $$x:=\mathtt{b}_{-}(\res_{\mt}(i)),\,y:=\mathtt{b}_{-}(\res_{\mt}(i+1))\, z:=\mathtt{b}_{-}(\res_{\mt}(j)),\,w:=\mathtt{b}_{-}(\res_{\mt}(j+1)).$$
	 One can easily check that $$x=\mathtt{b}_{-}(\res_{s_{j}\mt}(i)),\,y=\mathtt{b}_{-}(\res_{s_{j}\mt}(i+1)),\,z=\mathtt{b}_{-}(\res_{s_{i}\mt}(j)),\,w=\mathtt{b}_{-}(\res_{s_{i}\mt}(j+1)).
	 $$By \eqref{braidrel1}, we have	$$\Phi_i(x,y)\Phi_j(z,w)=\Phi_j(z,w)\Phi_i(x,y),$$ which is equivalent to $$\Phi_{i}(\mathtt{b}_{s_j\mt,i},\mathtt{b}_{s_j\mt,i+1})\Phi_{j}(\mathtt{b}_{\mt,j},\mathtt{b}_{\mt,j+1})=\Phi_{j}(\mathtt{b}_{s_i\mt,j},\mathtt{b}_{s_i\mt,j+1})\Phi_{i}(\mathtt{b}_{\mt,i},\mathtt{b}_{\mt,i+1}).$$
	 \item This follows from (1) and (2).
	\end{enumerate}
	\end{proof}

	Recall the basis of simple module $\mathbb{D}(\undla)$ in Proposition \ref{actions of generators on L basis}. We summarise some properties of $\Phi_{\ms,\mt}$ which will be used later.
		
		\begin{lem}\label{Phist. lem}
			Let $\ms, \mt \in \Std(\undla).$
			\begin{enumerate}
			\item $\Phi_{\ms,\mt} \cdot v_{\mt}=\mathtt{c}_{\ms,\mt} v_{\ms}.$ Hence the coefficient $\mathtt{c}_{\ms,\mt}$ is also independent of the reduced expression of
			$d(\ms,\mt).$
			
		\item $\Phi_{\ms,\mt} C_{d(\mt,\mt^{\undla})(a)}= C_{d(\ms,\mt^{\undla})(a)}\Phi_{\ms,\mt}$ for any $a\in \mathcal{D}_{\mt^{\undla}}.$

		\item $\mathtt{c}_{\mt,\ms}=\mathtt{c}_{\ms,\mt}.$
			
		\item $\Phi_{\ms,\mt}\Phi_{\mt,\ms}=(\mathtt{c}_{\ms,\mt})^2.$
			
			\end{enumerate}
		\end{lem}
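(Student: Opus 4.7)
The plan is to treat the four parts in order, using two main tools: the formula $\Phi_i(\mathtt{b}_{\mt,i},\mathtt{b}_{\mt,i+1})v_{\mt}=\delta(s_i\mt)\sqrt{\mathtt{c}_{\mt}(i)}v_{s_i\mt}$ of \eqref{Phi-action} (which was obtained in the proof of Theorem \ref{actions of generators on L basis}) and the intertwining relations \eqref{square1} and \eqref{Phi and C}. Throughout I fix a reduced expression $d(\ms,\mt)=s_{k_p}\cdots s_{k_1}$ and set $w_0=e$, $w_i:=s_{k_i}\cdots s_{k_1}$, so that every $w_i\mt$ is standard by Lemma \ref{admissible transposes}.

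For (1), I would induct on $p$. The case $p=0$ is trivial. For $p\geq 1$, write $\Phi_{\ms,\mt}=\Phi_{k_p}(\mathtt{b}_{w_{p-1}\mt,k_p},\mathtt{b}_{w_{p-1}\mt,k_p+1})\cdot\Phi_{w_{p-1}\mt,\mt}$. By the induction hypothesis, $\Phi_{w_{p-1}\mt,\mt}\,v_\mt=\mathtt{c}_{w_{p-1}\mt,\mt}\,v_{w_{p-1}\mt}$, and then \eqref{Phi-action} applied to the tableau $w_{p-1}\mt$ contributes the extra scalar $\sqrt{\mathtt{c}_{w_{p-1}\mt}(k_p)}$ and sends $v_{w_{p-1}\mt}$ to $v_{\ms}$. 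Multiplying the scalars gives $\mathtt{c}_{\ms,\mt}v_\ms$; independence of $\mathtt{c}_{\ms,\mt}$ from the choice of reduced expression then follows from Lemma \ref{Phist. well-defi} combined with the nonvanishing of $v_\ms$.

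For (2), I pass $C_{d(\mt,\mt^{\undla})(a)}$ through $\Phi_{\ms,\mt}$ from right to left, one factor at a time. By \eqref{Phi and C}, moving $C_j$ past $\Phi_i(x,y)$ permutes the index via $s_i$ when $j\in\{i,i+1\}$ and inverts one of $x,y$. The crucial observation is that $a\in\mathcal{D}_{\mt^{\undla}}$ forces $\mathtt{b}_{\mt^{\undla},a}=\pm 1$; consequently for every $i$ the relevant residue value $\mathtt{b}_{w_i\mt,\,d(w_i\mt,\mt^{\undla})(a)}=\mathtt{b}_{\mt^{\undla},a}$ coincides with its own inverse. Thus no $\Phi$-factor is actually altered, only the subscript of $C$ is permuted, and after $p$ passages it has become $d(\ms,\mt^{\undla})(a)$, as required.

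For (3), note that $d(\mt,\ms)=s_{k_1}\cdots s_{k_p}$ is a reduced expression of the inverse. Setting $m_i:=k_{p+1-i}$, a direct telescoping check shows $s_{m_{i-1}}\cdots s_{m_1}\ms=w_{p-i+1}\mt=s_{k_{p-i+1}}w_{p-i}\mt$; combined with $\mathtt{c}_{s_i\mt'}(i)=\mathtt{c}_{\mt'}(i)$ this rewrites the product defining $\mathtt{c}_{\mt,\ms}$ into the product defining $\mathtt{c}_{\ms,\mt}$. For (4), the same reindexing identifies the $i$-th factor of $\Phi_{\mt,\ms}$ with $\Phi_{k_{p-i+1}}(y,x)$ while the $(p-i+1)$-th factor of $\Phi_{\ms,\mt}$ is $\Phi_{k_{p-i+1}}(x,y)$, where $(x,y)=(\mathtt{b}_{w_{p-i}\mt,k_{p-i+1}},\mathtt{b}_{w_{p-i}\mt,k_{p-i+1}+1})$. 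Hence $\Phi_{\ms,\mt}\Phi_{\mt,\ms}$ has palindromic shape, and the innermost pair collapses by \eqref{square1} to the scalar $\mathtt{c}_\mt(k_1)$; iterating outward produces $\prod_{i}\mathtt{c}_{w_{i-1}\mt}(k_i)=(\mathtt{c}_{\ms,\mt})^2$. The most delicate step is the book-keeping in (2): one must simultaneously track how the $C$-index evolves under the $s_{k_i}$ and confirm that each residue whose inverse appears really coincides with itself, which is exactly where the hypothesis $a\in\mathcal{D}_{\mt^{\undla}}$ intervenes.
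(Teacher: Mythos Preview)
Your proposal is correct and follows essentially the same route as the paper. For (1) the paper simply cites \eqref{Phi-action} and iterates, for (2) it proves exactly your single-factor claim $\Phi_i(\mathtt{b}_{\mfku,i},\mathtt{b}_{\mfku,i+1})C_j=C_{s_i(j)}\Phi_i(\mathtt{b}_{\mfku,i},\mathtt{b}_{\mfku,i+1})$ using $\mathtt{b}_{\mfku,j}=\pm1$ and \eqref{Phi and C}, and for (3)--(4) it performs the same reindexing of the reversed reduced word together with $\mathtt{c}_{s_i\mt'}(i)=\mathtt{c}_{\mt'}(i)$ and the palindromic collapse via \eqref{square1}.
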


		\begin{proof} We fix a reduced expression $d(\ms,\mt)=s_{k_p}\cdots s_{k_1}$. Then
			\begin{align*}
				\Phi_{\ms,\mt}:=\overleftarrow{\prod_{i=1,\ldots,p}}\Phi_{k_{i}}(\mathtt{b}_{s_{k_{i-1}}\cdots s_{k_1}\mathfrak{t},k_{i}}, \mathtt{b}_{s_{k_{i-1}}\cdots s_{k_1}\mathfrak{t},k_{i}+1}).
			\end{align*}
			
			\begin{enumerate}
				\item This follows from \eqref{Phi-action}.
			
			\item Let $\mfku\in\Std(\undla).$  We claim that if $j\in D_{\mfku}$ and $s_i$ is admissible with respect to $\mfku$, then \begin{equation}\label{eq.commute1}\Phi_{i}(\mathtt{b}_{\mfku,i}, \mathtt{b}_{\mfku,i+1})C_j=C_{s_{i}(j)}\Phi_{i}(\mathtt{b}_{\mfku,i}, \mathtt{b}_{\mfku,i+1}).
			\end{equation} Actually, we have $\mathtt{b}_{\mfku,j}=\pm 1$. Combining this with \eqref{Phi and C}, we can compute $$
\Phi_{i}(\mathtt{b}_{\mfku,i}, \mathtt{b}_{\mfku,i+1})C_j=\begin{cases} C_{i+1}\Phi_{i}(\mathtt{b}_{\mfku,i}, \mathtt{b}_{\mfku,i+1})&\text{if $j=i\in D_{\mfku},$}\\
	C_{i}\Phi_{i}(\mathtt{b}_{\mfku,i}, \mathtt{b}_{\mfku,i+1})&\text{if $j=i+1\in D_{\mfku}$,}\\
		C_{j}\Phi_{i}(\mathtt{b}_{\mfku,i}, \mathtt{b}_{\mfku,i+1})&\text{if $i,i+1\neq j\in D_{\mfku}$}.
	\end{cases}
$$ This proves our claim.
Now we deduce that \begin{align*}
	\Phi_{\ms,\mt} C_{d(\mt,\mt^{\undla})(a)}&=\overleftarrow{\prod_{i=1,\ldots,p}}\Phi_{k_{i}}(\mathtt{b}_{s_{k_{i-1}}\cdots s_{k_1}\mathfrak{t},k_{i}}, \mathtt{b}_{s_{k_{i-1}}\cdots s_{k_1}\mathfrak{t},k_{i}+1}) C_{d(\mt,\mt^{\undla})(a)}\\
	&= C_{d(\ms,\mt)d(\mt,\mt^{\undla})(a)}\overleftarrow{\prod_{i=1,\ldots,p}}\Phi_{k_{i}}(\mathtt{b}_{s_{k_{i-1}}\cdots s_{k_1}\mathfrak{t},k_{i}}, \mathtt{b}_{s_{k_{i-1}}\cdots s_{k_1}\mathfrak{t},k_{i}+1}) \\
	&=C_{d(\ms,\mt^{\undla})(a)}\overleftarrow{\prod_{i=1,\ldots,p}}\Phi_{k_{i}}(\mathtt{b}_{s_{k_{i-1}}\cdots s_{k_1}\mathfrak{t},k_{i}}, \mathtt{b}_{s_{k_{i-1}}\cdots s_{k_1}\mathfrak{t},k_{i}+1})\\
	&=C_{d(\ms,\mt^{\undla})(a)}	\Phi_{\ms,\mt},
	\end{align*} where in the second equation we have used \eqref{eq.commute1}.

		\item We have	\begin{align}\label{cst=cts}
				\mathtt{c}_{\mt,\ms}=\prod_{i=1,\ldots,p}\sqrt{\mathtt{c}_{s_{k_{i+1}}\cdots s_{k_{p}}\ms}(k_{i})}=\prod_{i=1,\ldots,p}\sqrt{\mathtt{c}_{s_{k_{i-1}}\cdots s_{k_1}\mathfrak{t}}(k_{i})}=\mathtt{c}_{\ms,\mt}.
			\end{align}
			
		\item By definition, we have \begin{align*}
				\Phi_{\mt,\ms}&=\overrightarrow{\prod_{i=1,\ldots,p}}\Phi_{k_{i}}(\mathtt{b}_{s_{k_{i+1}}\cdots s_{k_{p}}\mathfrak{s},k_{i}}, \mathtt{b}_{s_{k_{i+1}}\cdots s_{k_{p}}\mathfrak{s},k_{i}+1})\\
				&=\overrightarrow{\prod_{i=1,\ldots,p}}\Phi_{k_{i}}(\mathtt{b}_{s_{k_{i-1}}\cdots s_{k_{1}}\mathfrak{t},k_{i}+1}, \mathtt{b}_{s_{k_{i-1}}\cdots s_{k_{1}}\mathfrak{t},k_{i}}).
			\end{align*}
			Hence \begin{align*}\Phi_{\ms,\mt}\Phi_{\mt,\ms}&=\biggl(\overleftarrow{\prod_{i=1,\ldots,p}}\Phi_{k_{i}}(\mathtt{b}_{s_{k_{i-1}}\cdots s_{k_1}\mathfrak{t},k_{i}}, \mathtt{b}_{s_{k_{i-1}}\cdots s_{k_1}\mathfrak{t},k_{i}+1})\biggr)\\
				&\qquad\qquad\qquad\cdot\biggl(\overrightarrow{\prod_{i=1,\ldots,p}}\Phi_{k_{i}}(\mathtt{b}_{s_{k_{i-1}}\cdots s_{k_{1}}\mathfrak{t},k_{i}+1}, \mathtt{b}_{s_{k_{i-1}}\cdots s_{k_{1}}\mathfrak{t},k_{i}})\biggr)\\
				&=\biggl(\overleftarrow{\prod_{i=2,\ldots,p}}\Phi_{k_{i}}(\mathtt{b}_{s_{k_{i-1}}\cdots s_{k_1}\mathfrak{t},k_{i}}, \mathtt{b}_{s_{k_{i-1}}\cdots s_{k_1}\mathfrak{t},k_{i}+1})\biggr)\\
				&\qquad\qquad\qquad\cdot\biggl(\Phi_{k_{1}}(\mathtt{b}_{\mathfrak{t},k_{1}}, \mathtt{b}_{\mathfrak{t},k_{1}+1})\Phi_{k_{1}}(\mathtt{b}_{\mathfrak{t},k_{1}+1}, \mathtt{b}_{\mathfrak{t},k_{1}})\biggr)\\
			&\qquad\qquad\qquad	\qquad\qquad\qquad\cdot\biggl(\overrightarrow{\prod_{i=2,\ldots,p}}\Phi_{k_{i}}(\mathtt{b}_{s_{k_{i-1}}\cdots s_{k_{1}}\mathfrak{t},k_{i}+1}, \mathtt{b}_{s_{k_{i-1}}\cdots s_{k_{1}}\mathfrak{t},k_{i}})\biggr)\\
				&=\mathtt{c}_{\mt}(k_1)\biggl(\overleftarrow{\prod_{i=2,\ldots,p}}\Phi_{k_{i}}(\mathtt{b}_{s_{k_{i-1}}\cdots s_{k_1}\mathfrak{t},k_{i}}, \mathtt{b}_{s_{k_{i-1}}\cdots s_{k_1}\mathfrak{t},k_{i}+1})\biggr)\\
					&\qquad\qquad\qquad\cdot\biggl(\overrightarrow{\prod_{i=2,\ldots,p}}\Phi_{k_{i}}(\mathtt{b}_{s_{k_{i-1}}\cdots s_{k_{1}}\mathfrak{t},k_{i}+1}, \mathtt{b}_{s_{k_{i-1}}\cdots s_{k_{1}}\mathfrak{t},k_{i}})\biggr),
		\end{align*} where in the third equation, we have used \eqref{square1}. Following the same computation, we deduce that $$
		\Phi_{\ms,\mt}\Phi_{\mt,\ms}=\prod_{i=1,\ldots,p}\mathtt{c}_{s_{k_{i-1}}\cdots s_{k_1}\mathfrak{t}}(k_{i})=(\mathtt{c}_{\ms,\mt})^2.
		$$
\end{enumerate}
		\end{proof}
	By definition, we have $\Phi_{\mt,\mt}=1$ and $\mathtt{c}_{\mt,\mt}=1.$  Recall in \eqref{stanard D} and \eqref{standard OD}, we have set \begin{align*}
	\mathcal{D}_{\mt^{\undla}}&=\{i_1<i_2<\cdots<i_t\},\\
	\mathcal{OD}_{\mt^{\undla}}&=\{i_1,i_3,\cdots,i_{2{\lceil t/2 \rceil}-1}\}\subset \mathcal{D}_{\mt^{\undla}}.
\end{align*}
Now we can define the seminormal basis, which is the key object of this paper.
        \label{pag:nondege seminormal basis}
		\begin{defn} Let $\mathfrak{w}\in\Std(\undla)$.
		
		(1) Supppose $d_{\undla}=0.$  For any ${\rm S}=(\ms, \alpha_{\ms}', \beta_{\ms}'), {\rm T}=(\mt, \alpha_{\mt}, \beta_{\mt})\in {\rm Tri}(\undla),$ we define
			\begin{align}\label{fst. typeM. nondege.}
				f_{{\rm S},{\rm T}}^{\mathfrak{w}}
				:=F_{\rm S}C^{\beta_{\ms}'}C^{\alpha_{\ms}'}\Phi_{\ms,\mathfrak{w}}\Phi_{\mathfrak{w},\mt}
				(C^{\alpha_{\mt}})^{-1} (C^{\beta_{\mt}})^{-1} F_{\rm T}\in F_{\rm S}\mHfcn F_{\rm T},
			\end{align} and	\begin{align}\label{re. fst. typeM. nondege.}
			f_{{\rm S},{\rm T}}
			:=F_{\rm S}C^{\beta_{\ms}'}C^{\alpha_{\ms}'}\Phi_{\ms,\mt}
			(C^{\alpha_{\mt}})^{-1} (C^{\beta_{\mt}})^{-1} F_{\rm T} \in F_{\rm S}\mHfcn F_{\rm T},
			\end{align}

			(2) Suppose $d_{\undla}=1.$ For any $a\in \mathbb{Z}_{2}$ and ${\rm S}=(\ms, \alpha_{\ms}', \beta_{\ms}')\in {\rm Tri}_{\bar{0}}(\undla), {\rm T}_{a}=(\mt, \alpha_{\mt,a}, \beta_{\mt})\in {\rm Tri}_{a}(\undla),$ we define
			\begin{align}\label{fst. typeQ. nondege.}
			&	f_{{\rm S},{\rm T}_{a}}^{\mathfrak{w}}
				:= (-1)^{|\alpha_\ms'|_{>d(\ms,\mt^{\undla})(i_t)}+a|\alpha_\mt|_{>d(\mt,\mt^{\undla})(i_t)}}\nonumber\\
			&\qquad\qquad\qquad\cdot	F_{\rm S}C^{\beta_{\ms}'}C^{\alpha_{\ms}'}\Phi_{\ms,\mathfrak{w}}\Phi_{\mathfrak{w},\mt}
				(C^{{\alpha}_{\mt,a}})^{-1} (C^{\beta_{\mt}})^{-1} F_{\rm T}\in F_{\rm S}\mHfcn F_{\rm T}
			\end{align} and 	\begin{align}\label{re. fst. typeQ. nondege.}
		&	f_{{\rm S},{\rm T}_{a}}
			:= (-1)^{|\alpha_\ms'|_{>d(\ms,\mt^{\undla})(i_t)}+a|\alpha_\mt|_{>d(\mt,\mt^{\undla})(i_t)}}\nonumber\\
			&\qquad\qquad\qquad\cdot F_{\rm S}C^{\beta_{\ms}'}C^{\alpha_{\ms}'}\Phi_{\ms,\mt}
			(C^{{\alpha}_{\mt,a}})^{-1} (C^{\beta_{\mt}})^{-1} F_{\rm T} \in F_{\rm S}\mHfcn F_{\rm T}.
			\end{align}

		\label{pag:nondege cT}	
	   (3)  For any ${\rm T}=(\mt, \alpha_{\mt}, \beta_{\mt})\in {\rm Tri}(\undla),$ we define $$\mathtt{c}_{\rm T}^{\mathfrak{w}}:=(\mathtt{c}_{\mt,\mathfrak{w}})^2\in \mathbb{K}^*.$$
		\end{defn}

		The following Lemma is crucial for our main Theorem.
		\begin{lem}We fix $\mathfrak{w}\in\Std(\undla)$.
		
\begin{enumerate}	
	\item Suppose $d_{\undla}=0$. For any ${\rm S}=(\ms, \alpha_{\ms}', \beta_{\ms}'),
	{\rm T}=(\mt, \alpha_{\mt}, \beta_{\mt}),{\rm U}=(\mfku,\alpha_{\mfku}^{''},\beta_{\mfku}^{''})\in {\rm Tri}(\undla),$ we have \begin{equation}\label{idempotent1}
		f_{{\rm T}, {\rm T}}=F_{\rm T},\,\,	f_{{\rm T}, {\rm T}}^{\mathfrak{w}}=\mathtt{c}_{\rm T}^{\mathfrak{w}}F_{\rm T},
	\end{equation}
	 \begin{equation}\label{SNB. eq1}
					f_{{\rm S},{\rm T}}^\mathfrak{w} \cdot C^{\beta_{\mfku}^{''}} C^{\alpha_{\mfku}^{''}} v_{\mfku} =\begin{cases}
						\mathtt{c}_{\ms,\mathfrak{w}}\mathtt{c}_{\mathfrak{w},\mt}C^{\beta_{\ms}^{'}} C^{\alpha_{\ms}^{'}} v_{\ms},& \text{if ${\rm T}={\rm U}$},\\
						0,&\text{otherwise,}
						\end{cases}				
				\end{equation} and  \begin{equation}\label{SNB. eq2}
				f_{{\rm S},{\rm T}} \cdot C^{\beta_{\mfku}^{''}} C^{\alpha_{\mfku}^{''}} v_{\mfku} =\begin{cases}
					\mathtt{c}_{\ms,\mt}C^{\beta_{\ms}^{'}} C^{\alpha_{\ms}^{'}} v_{\ms},& \text{if ${\rm T}={\rm U}$},\\
					0,&\text{otherwise.}
				\end{cases}				
				\end{equation}
			\item Suppose $d_{\undla}=1$.  For any $a\in \mathbb{Z}_{2}$ and ${\rm S}=(\ms, \alpha_{\ms}', \beta_{\ms}')\in {\rm Tri}_{\bar{0}}(\undla), {\rm T}_{a}=(\mt, \alpha_{\mt,a}, \beta_{\mt})\in {\rm Tri}_{a}(\undla), {\rm U}=(\mfku,\alpha_{\mfku}^{''},\beta_{\mfku}^{''})\in {\rm Tri}(\undla),$ we have \begin{equation}\label{idempotent2}
					f_{{\rm T}_{\bar{0}}, {\rm T}_{\bar{0}}}=	(-1)^{|\alpha_{\mt}|_{>d(\mt,\mt^{\undla})(i_t)}}F_{{\rm T}_{\bar{0}}},\,\,f_{{\rm T}_{\bar{0}}, {\rm T}_{\bar{0}}}^{\mathfrak{w}}=	(-1)^{|\alpha_{\mt}|_{>d(\mt,\mt^{\undla})(i_t)}}\mathtt{c}_{\rm T}^{\mathfrak{w}}F_{{\rm T}_{\bar{0}}},
			\end{equation} \begin{equation}\label{SNB. eq3}
				f_{{\rm S},{\rm T}_a}^\mathfrak{w}  \cdot C^{\beta_{\mfku}^{''}} C^{\alpha_{\mfku}^{''}} v_{\mfku} =\begin{cases}
				(-1)^{b|\alpha_{\mt}|_{>d(\mt,\mt^{\undla})(i_t)}+(a+b+1)|\alpha_{\ms}'|_{>d(\ms,\mt^{\undla})(i_t)}}&\\
				\qquad\qquad\qquad\qquad\qquad\cdot\mathtt{c}_{\mathfrak{s},\mathfrak{w} }\mathtt{c}_{\mathfrak{w},\mt }C^{\beta_{\ms}'} C^{\alpha_{\ms,a+b}'  }v_{\ms}, & \text{if  ${\rm U}={\rm T}_b$,
					for some $b\in \Z_2$,}\\
				0,&\text{otherwise,}
			\end{cases}				
			\end{equation} and  \begin{equation}\label{SNB. eq4}
				f_{{\rm S},{\rm T}_a} \cdot C^{\beta_{\mfku}^{''}} C^{\alpha_{\mfku}^{''}} v_{\mfku} =\begin{cases}
						(-1)^{b|\alpha_{\mt}|_{>d(\mt,\mt^{\undla})(i_t)}+(a+b+1)|\alpha_{\ms}'|_{>d(\ms,\mt^{\undla})(i_t)}}&\\
				\qquad\qquad\qquad\qquad\qquad\cdot\mathtt{c}_{\mathfrak{s},\mt} C^{\beta_{\ms}'} C^{\alpha_{\ms,a+b}'  }v_{\ms}, & \text{if  ${\rm U}={\rm T}_b$,
					for some $b\in \Z_2$,}\\
				0,&\text{otherwise.}
				\end{cases}				
			\end{equation}
				\end{enumerate}
			\end{lem}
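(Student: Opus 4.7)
The proof proceeds by evaluating both sides on an arbitrary basis element $C^{\beta_{\mfku}''}C^{\alpha_{\mfku}''}v_{\mfku}$ of $\mathbb{D}(\undla)$ from Proposition \ref{actions of generators on L basis}, and comparing with the claimed expressions. The three ingredients driving the computation are the idempotent action \eqref{F. eq3} of Lemma \ref{idempotent action. non-dege}, the $\Phi$-action $\Phi_{\ms,\mt}\cdot v_{\mt}=\mathtt{c}_{\ms,\mt}v_{\ms}$ from Lemma \ref{Phist. lem}(1), and the intertwining identity $\Phi_{\ms,\mt}C_{d(\mt,\mt^{\undla})(a)}=C_{d(\ms,\mt^{\undla})(a)}\Phi_{\ms,\mt}$ of Lemma \ref{Phist. lem}(2).

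First, I would apply the rightmost factor $F_{\rm T}$. By \eqref{F. eq3} the result vanishes unless ${\rm U}={\rm T}$ (in the $d_{\undla}=0$ case) or ${\rm U}={\rm T}_{b}$ for some $b\in\Z_2$ (in the $d_{\undla}=1$ case), in which case $F_{\rm T}\cdot C^{\beta_{\mfku}''}C^{\alpha_{\mfku}''}v_{\mfku}$ reduces to $C^{\beta_{\mt}}C^{\alpha_{\mt}}v_{\mt}$ or $C^{\beta_{\mt}}C^{\alpha_{\mt,b}}v_{\mt}$ respectively. Next, I would cancel the Clifford factors $(C^{\alpha_{\mt}})^{-1}(C^{\beta_{\mt}})^{-1}$ or $(C^{\alpha_{\mt,a}})^{-1}(C^{\beta_{\mt}})^{-1}$. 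In the $d_{\undla}=0$ case and in the $d_{\undla}=1$ case with $a=b$, the cancellation is direct and leaves $v_{\mt}$. When $d_{\undla}=1$ and $a\neq b$, writing $j=d(\mt,\mt^{\undla})(i_t)$, a short Clifford-algebra computation yields $(C^{\alpha_{\mt,a}})^{-1}C^{\alpha_{\mt,b}}=(-1)^{|\alpha_{\mt}|_{>j}}C_{j}$, leaving the residual $(-1)^{|\alpha_{\mt}|_{>j}}C_{j}v_{\mt}$ instead of $v_{\mt}$.

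Then I would apply $\Phi_{\mathfrak{w},\mt}$ followed by $\Phi_{\ms,\mathfrak{w}}$ for \eqref{SNB. eq1} and \eqref{SNB. eq3} (or the single intertwiner $\Phi_{\ms,\mt}$ for \eqref{SNB. eq2} and \eqref{SNB. eq4}). Lemma \ref{Phist. lem}(1) extracts a factor $\mathtt{c}_{\mathfrak{w},\mt}\mathtt{c}_{\ms,\mathfrak{w}}$ (respectively $\mathtt{c}_{\ms,\mt}$) while transporting $v_{\mt}$ to $v_{\ms}$; in the residual case above, Lemma \ref{Phist. lem}(2) transfers $C_{d(\mt,\mt^{\undla})(i_t)}$ into $C_{d(\ms,\mt^{\undla})(i_t)}$ along the way. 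To express the resulting $C_{d(\ms,\mt^{\undla})(i_t)}v_{\ms}$ in the standard basis form, I would anticommute $C_{d(\ms,\mt^{\undla})(i_t)}$ past $C^{\alpha_{\ms}'}$, which produces the further sign $(-1)^{|\alpha_{\ms}'|_{>d(\ms,\mt^{\undla})(i_t)}}$ and replaces $\alpha_{\ms}'$ by $\alpha_{\ms,\bar{1}}'=\alpha_{\ms}'+e_{d(\ms,\mt^{\undla})(i_t)}$. Multiplying by $C^{\alpha_{\ms}'}$ and $C^{\beta_{\ms}'}$, followed by $F_{\rm S}$ (which acts as the identity on the resulting basis vector by \eqref{F. eq3}), produces precisely the claimed right-hand side.

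The idempotent formulas \eqref{idempotent1} and \eqref{idempotent2} follow by specializing ${\rm S}={\rm T}$ (respectively ${\rm S}={\rm T}_{\bar{0}}$): for $f_{{\rm T},{\rm T}}$ and $f_{{\rm T}_{\bar{0}},{\rm T}_{\bar{0}}}$ one uses $\Phi_{\mt,\mt}=1$ and $\mathtt{c}_{\mt,\mt}=1$ so the expression collapses to $F_{\rm T}^{2}=F_{\rm T}$, with the prefactor sign $(-1)^{|\alpha_{\mt}|_{>d(\mt,\mt^{\undla})(i_t)}}$ surviving in the $d_{\undla}=1$ case $a=0$; for the $\mathfrak{w}$-versions one invokes Lemma \ref{Phist. lem}(4) to replace $\Phi_{\mt,\mathfrak{w}}\Phi_{\mathfrak{w},\mt}$ by the central scalar $(\mathtt{c}_{\mt,\mathfrak{w}})^{2}=\mathtt{c}_{\rm T}^{\mathfrak{w}}$. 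The main technical obstacle is the sign bookkeeping in the $d_{\undla}=1$ case: one must verify that the built-in prefactor $(-1)^{|\alpha_{\ms}'|_{>d(\ms,\mt^{\undla})(i_t)}+a|\alpha_{\mt}|_{>d(\mt,\mt^{\undla})(i_t)}}$ combines with the two residual signs produced above to yield exactly $(-1)^{b|\alpha_{\mt}|_{>d(\mt,\mt^{\undla})(i_t)}+(a+b+1)|\alpha_{\ms}'|_{>d(\ms,\mt^{\undla})(i_t)}}$. This reduces to a short case analysis according to whether $a=b$ or $a\neq b$ in $\Z_{2}$, using $2|\alpha_{\ms}'|_{>d(\ms,\mt^{\undla})(i_t)}\equiv 0\pmod{2}$ and $2|\alpha_{\mt}|_{>d(\mt,\mt^{\undla})(i_t)}\equiv 0\pmod{2}$.
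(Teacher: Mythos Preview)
Your proposal is correct and follows essentially the same route as the paper's proof: apply $F_{\rm T}$ via \eqref{F. eq3} to isolate the surviving basis vectors, cancel the Clifford factors (splitting into $a=b$ and $a\neq b$ in the $d_{\undla}=1$ case), push the intertwiners through using Lemma~\ref{Phist. lem}(1),(2), and finish with $F_{\rm S}$ and the sign reconciliation; the idempotent identities \eqref{idempotent1}, \eqref{idempotent2} likewise follow from the definitions together with Lemma~\ref{Phist. lem}(4).
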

			
			\begin{proof}
			\begin{enumerate}
				\item 	Assume $d_{\undla}=0.$ \eqref{idempotent1} follows from \eqref{fst. typeM. nondege.}, \eqref{re. fst. typeM. nondege.} and Lemma \ref{Phist. lem} (4). By \eqref{F. eq3}, we have
				\begin{align*}
					f_{{\rm S},{\rm T}}^{\mathfrak{w}}\cdot C^{\beta_{\mfku}^{''}} C^{\alpha_{\mfku}^{''}} v_{\mfku}
					=0, \quad \text{ if } {\rm U}=(\mfku,\alpha_{\mfku}^{''},\beta_{\mfku}^{''})\neq  {\rm T} \in {\rm Tri}(\undla).
				\end{align*}
				On the other hand, we have
				\begin{align*}
					&f_{{\rm S},{\rm T}}^{\mathfrak{w}}\cdot C^{\beta_{\mt}} C^{\alpha_{\mt}}v_{\mt} \\
					&=F_{\rm S} C^{\beta_{\ms}^{'}}C^{\alpha_{\ms}^{'}}\Phi_{\ms,\mathfrak{w}}\Phi_{\mathfrak{w},\mt}v_{\mt} \nonumber\\
					&=\mathtt{c}_{\ms,\mathfrak{w}}\mathtt{c}_{\mathfrak{w},\mt}F_{\rm S} C^{\beta_{\ms}^{'}}C^{\alpha_{\ms}^{'}}v_{\mt} \nonumber\\
			&=\mathtt{c}_{\ms,\mathfrak{w}}\mathtt{c}_{\mathfrak{w},\mt}C^{\beta_{\ms}^{'}}C^{\alpha_{\ms}^{'}}\Phi_{\ms,\mt}v_{\mt}, \nonumber
				\end{align*}
				where in the second equation, we have used Lemma \ref{Phist. lem} (1), and in the third equation, we have used \eqref{F. eq3}. This completes the proof of \eqref{SNB. eq1}. The proof of \eqref{SNB. eq2} is similar.
		
	\item	Assume $d_{\undla}=1$.  \eqref{idempotent2} follows from \eqref{fst. typeQ. nondege.}, \eqref{re. fst. typeQ. nondege.} and Lemma \ref{Phist. lem} (4).  By \eqref{F. eq3}, we have
		\begin{align*}
			f_{{\rm S},{\rm T}_{a}}^\mathfrak{w}\cdot C^{\beta_{\mfku}^{''}} C^{\alpha_{\mfku}^{''}}  v_{\mfku}
			=0, \quad \text{ if }  {\rm U}=(\mfku,\alpha_{\mfku}^{''},\beta_{\mfku}^{''})\neq {\rm T}_{b} \in {\rm Tri}(\undla) \text{ for any }b\in \mathbb{Z}_2.
		\end{align*}
		On the other hand,  for $b\in \Z_2$, we have
		\begin{align*}
			&f_{{\rm S},{\rm T}_{a}}^\mathfrak{w}\cdot C^{\beta_{\mt}} C^{\alpha_{\mt,b}} v_{\mfkv}\nonumber \\
			&= (-1)^{|\alpha_\ms'|_{>d(\ms,\mt^{\undla})(i_t)}+a|\alpha_\mt|_{>d(\mt,\mt^{\undla})(i_t)}}\\
			&\qquad\qquad\qquad\cdot F_{\rm S} C^{\beta_{\ms}'} C^{\alpha_{\ms}'} \Phi_{\ms,\mathfrak{w}} \Phi_{\mathfrak{w},\mt} (C^{{\alpha_{\mt,a}}})^{-1}
			(C^{\beta_{\mt}})^{-1}C^{\beta_{\mt}}C^{{{\alpha_{\mt,b}}}}v_{\mt} \nonumber\\
			&=\begin{cases} (-1)^{|\alpha_\ms'|_{>d(\ms,\mt^{\undla})(i_t)}+a|\alpha_\mt|_{>d(\mt,\mt^{\undla})(i_t)}}&\\
				\qquad\qquad\qquad\cdot F_{\rm S} C^{\beta_{\ms}'} C^{\alpha_{\ms}'} \Phi_{\ms,\mathfrak{w}} \Phi_{\mathfrak{w},\mt}  v_{\mt}, &\text{if $a=b$,}\\
				(-1)^{|\alpha_\ms'|_{>d(\ms,\mt^{\undla})(i_t)}+a|\alpha_\mt|_{>d(\mt,\mt^{\undla})(i_t)}}&\\
				\qquad\qquad\cdot
					(-1)^{|\alpha_{\mt}|_{>d(\mt,\mt^{\undla})(i_t)}}&\\
					\qquad\qquad\qquad\cdot F_{\rm S} C^{\beta_{\ms}'} C^{\alpha_{\ms}'} \Phi_{\ms,\mathfrak{w}} \Phi_{\mathfrak{w},\mt}  C_{d(\mt,\mt^{\undla})(i_t)} v_{\mt},  &\text{if $a\neq b,$}
				\end{cases}\nonumber\\
					&=\begin{cases}(-1)^{|\alpha_\ms'|_{>d(\ms,\mt^{\undla})(i_t)}+a|\alpha_\mt|_{>d(\mt,\mt^{\undla})(i_t)}}&\\
						\qquad\qquad\qquad\cdot \mathtt{c}_{\ms,\mathfrak{w}}\mathtt{c}_{\mathfrak{w},\mt}F_{\rm S} C^{\beta_{\ms}'} C^{\alpha_{\ms}'} v_{\ms}, &\text{if $a=b$,}\\
				(-1)^{|\alpha_\ms'|_{>d(\ms,\mt^{\undla})(i_t)}+a|\alpha_\mt|_{>d(\mt,\mt^{\undla})(i_t)}}&\\
				\qquad\qquad\cdot (-1)^{|\alpha_{\mt}|_{>d(\mt,\mt^{\undla})(i_t)}}\mathtt{c}_{\ms,\mathfrak{w}}\mathtt{c}_{\mathfrak{w},\mt}&\\
				\qquad\qquad\qquad\cdot F_{\rm S} C^{\beta_{\ms}'} C^{\alpha_{\ms}'}  C_{d(\ms,\mt^{\undla})(i_t)} v_{\ms},  &\text{if $a\neq b,$}
				\end{cases}\nonumber\\
			&=\begin{cases}(-1)^{|\alpha_\ms'|_{>d(\ms,\mt^{\undla})(i_t)}+a|\alpha_\mt|_{>d(\mt,\mt^{\undla})(i_t)}}&\\
				\qquad\qquad\qquad\cdot \mathtt{c}_{\ms,\mathfrak{w}}\mathtt{c}_{\mathfrak{w},\mt}F_{\rm S} C^{\beta_{\ms}'} C^{\alpha_{\ms}'} v_{\ms}, &\text{if $a=b$,}\\
				(-1)^{|\alpha_\ms'|_{>d(\ms,\mt^{\undla})(i_t)}+a|\alpha_\mt|_{>d(\mt,\mt^{\undla})(i_t)}}&\\
				\qquad\qquad\cdot 	(-1)^{|\alpha_{\ms}'|_{>d(\ms,\mt^{\undla})(i_t)}+|\alpha_{\mt}|_{>d(\mt,\mt^{\undla})(i_t)}}\mathtt{c}_{\ms,\mathfrak{w}}\mathtt{c}_{\mathfrak{w},\mt}&\\
				\qquad\qquad\qquad\cdot F_{\rm S} C^{\beta_{\ms}'} C^{{\alpha_{\ms,\bar{1}}'} }v_{\ms},  &\text{if $a\neq b,$}
			\end{cases}\nonumber\\
								&=\begin{cases}(-1)^{|\alpha_\ms'|_{>d(\ms,\mt^{\undla})(i_t)}+a|\alpha_\mt|_{>d(\mt,\mt^{\undla})(i_t)}}&\\
									\qquad\qquad\qquad\cdot \mathtt{c}_{\ms,\mathfrak{w}}\mathtt{c}_{\mathfrak{w},\mt} C^{\beta_{\ms}'} C^{\alpha_{\ms}'} v_{\ms}, &\text{if $a=b$,}\\
							(-1)^{(a+1)|\alpha_{\mt}|_{>d(\mt,\mt^{\undla})(i_t)}}&\\
							\qquad\qquad\qquad\cdot\mathtt{c}_{\ms,\mathfrak{w}}\mathtt{c}_{\mathfrak{w},\mt} C^{\beta_{\ms}'} C^{{\alpha_{\ms,\bar{1}}'} }v_{\ms}, &\text{if $a\neq b,$}
							\end{cases}\nonumber\\
				&=		(-1)^{b|\alpha_{\mt}|_{>d(\mt,\mt^{\undla})(i_t)}+(a+b+1)|\alpha_{\ms}'|_{>d(\ms,\mt^{\undla})(i_t)}}\\
				&\qquad\qquad\qquad\cdot\mathtt{c}_{\mathfrak{s},\mathfrak{w} }\mathtt{c}_{\mathfrak{w},\mt }C^{\beta_{\ms}'} C^{\alpha_{\ms,a+b}'  }v_{\ms}
		\end{align*}
		where in the third equation, we have used Lemma \ref{Phist. lem} (1), (2), and in the last second equation, we have used \eqref{F. eq3}. This completes the proof of \eqref{SNB. eq3}. The proof of \eqref{SNB. eq4} is similar.
		\end{enumerate}
				\end{proof}
		The following Theorem is the third main result of this paper.
		\begin{thm}\label{seminormal basis}
			Suppose $P^{(\bullet)}_{n}(q^2,\undQ)\neq 0$. We fix $\mathfrak{w}\in\Std(\undla)$. Then the following two sets
			\begin{align}\label{Non-deg seminormal1}
			\left\{ f_{{\rm S},{\rm T}}^\mathfrak{w} \Biggm|
			{\rm S}=(\ms, \alpha_{\ms}', \beta_{\ms}')\in {\rm Tri}_{\bar{0}}(\undla),
			{\rm T}=(\mt, \alpha_{\mt}, \beta_{\mt})\in {\rm Tri}(\undla)
			\right\}
		\end{align} and \begin{align}\label{Non-deg seminormal2}
			\left\{ f_{{\rm S},{\rm T}} \Biggm|
		{\rm S}=(\ms, \alpha_{\ms}', \beta_{\ms}')\in {\rm Tri}_{\bar{0}}(\undla),
		{\rm T}=(\mt, \alpha_{\mt}, \beta_{\mt})\in {\rm Tri}(\undla)
		\right\}
			\end{align}  form two $\mathbb{K}$-bases of the block $B_{\undla}$ of $\mHfcn$.
			
			Moreover, for ${\rm S}=(\ms, \alpha_{\ms}', \beta_{\ms}')\in {\rm Tri}_{\bar{0}}(\undla),
			{\rm T}=(\mt, \alpha_{\mt}, \beta_{\mt})\in {\rm Tri}(\undla),$ we have \begin{equation}\label{fst and re. fst}
			f_{{\rm S},{\rm T}}
			=\frac{\mathtt{c}_{\ms,\mt}}{\mathtt{c}_{\ms,\mathfrak{w} }\mathtt{c}_{\mathfrak{w},\mt }} f_{{\rm S},{\rm T}}^\mathfrak{w}\in F_{\rm S}\mHfcn F_{\rm T}.
			\end{equation} The multiplications of basis elements in \eqref{Non-deg seminormal1} are given as follows.
			
			(1) Suppose $d_{\undla}=0.$  Then for any
			${\rm S}=(\ms, \alpha_{\ms}', \beta_{\ms}'),
			{\rm T}=(\mt, \alpha_{\mt}, \beta_{\mt}),
			{\rm U}=(\mfku,\alpha_{\mfku}^{''},\beta_{\mfku}^{''}),
			{\rm V}=(\mfkv,\alpha_{\mfkv}^{'''},\beta_{\mfkv}^{'''})\in {\rm Tri}(\undla),$ we have
			\begin{align}\label{Non-deg multiplication1}
				f_{{\rm S},{\rm T}}^\mathfrak{w} f_{{\rm U},{\rm V}}^\mathfrak{w}
				=\delta_{{\rm T},{\rm U}} \mathtt{c}_{\rm T}^\mathfrak{w} f_{{\rm S},{\rm V}}^\mathfrak{w}.
			\end{align}

			(2) Suppose $d_{\undla}=1.$ Then for any $a,b\in \mathbb{Z}_2$ and
			\begin{align*}
				{\rm S}&=(\ms, \alpha_{\ms}', \beta_{\ms}')\in {\rm Tri}_{\bar{0}}(\undla), \quad
				{\rm T}_{a}=(\mt, \alpha_{\mt,a}, \beta_{\mt})\in {\rm Tri}_{a}(\undla),\nonumber\\
				{\rm U}&=(\mfku,\alpha_{\mfku}^{''},\beta_{\mfku}^{''})\in {\rm Tri}_{\bar{0}}(\undla), \quad
				{\rm V}_{b}=(\mfkv,{\alpha_{\mfkv,b}^{'''}},\beta_{\mfkv}^{'''})\in {\rm Tri}_{b}(\undla),\nonumber
			\end{align*} we have
			\begin{align}\label{Non-deg multiplication2}
				f_{{\rm S},{\rm T}_{a}}^\mathfrak{w} f_{{\rm U},{\rm V}_{b}}^\mathfrak{w}
				=\delta_{{\rm T}_{\bar{0}},{\rm U}}(-1)^{\left(|\alpha_{\mt}|_{>d(\mt,\mt^{\undla})(i_t)}\right)}\mathtt{c}_{\rm T}^\mathfrak{w} f_{{\rm S},{\rm V}_{a+b}}^\mathfrak{w}.
			\end{align}
		
		\end{thm}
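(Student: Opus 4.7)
The plan is to reduce every assertion of the theorem to a computation on the simple module $\mathbb{D}(\undla)$, using the action formulas \eqref{SNB. eq1}--\eqref{SNB. eq4} recorded in the preceding lemma. The key principle is that the block $B_{\undla}$ acts faithfully on $\mathbb{D}(\undla)$ (since $B_{\undla}$ is simple as a superalgebra), so equalities in $B_{\undla}$ may be verified by comparing actions on the basis of $\mathbb{D}(\undla)$ described in Theorem \ref{actions of generators on L basis}.

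For the basis property, I would first match the cardinalities: from $B_{\undla}\cong \mathbb{D}(\undla)^{\oplus 2^{n-|\mathcal{OD}_{\mt^{\undla}}|}|\Std(\undla)|}$ one computes $\dim_\mathbb{K} B_{\undla}=|\Std(\undla)|^{2}\cdot 2^{2n-t}$ where $t=|\mathcal{D}_{\mt^{\undla}}|$, and a short case check on the parity of $t$ shows $|{\rm Tri}_{\bar 0}(\undla)|\cdot|{\rm Tri}(\undla)|$ has the same value. It is then enough to establish linear independence of \eqref{Non-deg seminormal1}. Suppose $\sum a_{{\rm S},{\rm T}}f^{\mathfrak{w}}_{{\rm S},{\rm T}}=0$ and fix ${\rm U}\in{\rm Tri}(\undla)$; applying the sum to the basis vector $C^{\beta_{\mfku}''}C^{\alpha_{\mfku}''}v_{\mfku}$ and using \eqref{SNB. eq1} (if $d_{\undla}=0$) or \eqref{SNB. eq3} (if $d_{\undla}=1$), only those terms with ${\rm T}={\rm U}$, respectively ${\rm T}_{\bar 0}={\rm U}$, survive, producing distinct basis vectors indexed by ${\rm S}$ with nonzero coefficients $\mathtt{c}_{\ms,\mathfrak{w}}\mathtt{c}_{\mathfrak{w},\mt}$. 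This forces $a_{{\rm S},{\rm T}}=0$. The same argument handles \eqref{Non-deg seminormal2} via \eqref{SNB. eq2}, \eqref{SNB. eq4}.

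The identity \eqref{fst and re. fst} follows by comparing actions: evaluating $\mathtt{c}_{\ms,\mathfrak{w}}\mathtt{c}_{\mathfrak{w},\mt}f_{{\rm S},{\rm T}}$ and $\mathtt{c}_{\ms,\mt}f^{\mathfrak{w}}_{{\rm S},{\rm T}}$ on the same input basis vector via \eqref{SNB. eq1}--\eqref{SNB. eq4} yields the identical scalar multiple of the output vector, so faithful action on $\mathbb{D}(\undla)$ forces equality in $B_{\undla}$.

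For the multiplication formulas, I compute $f^{\mathfrak{w}}_{{\rm S},{\rm T}}f^{\mathfrak{w}}_{{\rm U},{\rm V}}\cdot C^{\beta_{\mfkv}'''}C^{\alpha_{\mfkv}'''}v_{\mfkv}$ iteratively: the inner factor $f^{\mathfrak{w}}_{{\rm U},{\rm V}}$ vanishes on this input unless the third-component match holds, in which case \eqref{SNB. eq1}/\eqref{SNB. eq3} outputs a scalar times $C^{\beta_{\mfku}''}C^{\alpha_{\mfku}''}v_{\mfku}$; then $f^{\mathfrak{w}}_{{\rm S},{\rm T}}$ requires the input to match ${\rm T}$, forcing ${\rm U}={\rm T}$ (respectively ${\rm U}={\rm T}_{\bar 0}$), and produces a further scalar times $C^{\beta_{\ms}'}C^{\alpha_{\ms}'}v_{\ms}$. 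Collecting scalars using $\mathtt{c}_{\mfku,\mathfrak{w}}\mathtt{c}_{\mathfrak{w},\mt}=(\mathtt{c}_{\mathfrak{w},\mt})^{2}=\mathtt{c}_{\rm T}^{\mathfrak{w}}$ from Lemma \ref{Phist. lem}(3)--(4), and comparing with the action of the proposed right-hand side on the same input, yields \eqref{Non-deg multiplication1} in the $d_{\undla}=0$ case. The main obstacle is the $d_{\undla}=1$ case: the composite of two applications of \eqref{SNB. eq3} produces a sign $(-1)^{b|\alpha_{\mfkv}|_{>\cdot}+(0+b+1)|\alpha_{\mfku}''|_{>\cdot}}\cdot(-1)^{a\cdot |\alpha_{\mt}|_{>\cdot}+(a+0+1)|\alpha_{\ms}'|_{>\cdot}}$ (with ${\rm U}={\rm T}_{\bar 0}$ forcing $\alpha_{\mfku}''=\alpha_{\mt}$), while the output subscript $\alpha_{\ms,a+b}'$ on the right-hand side of \eqref{Non-deg multiplication2} generates the sign predicted by \eqref{SNB. eq3} applied to the claimed product; careful bookkeeping shows the discrepancy is exactly $(-1)^{|\alpha_{\mt}|_{>d(\mt,\mt^{\undla})(i_{t})}}$, matching the formula. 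This sign reconciliation is the only nontrivial calculation in the proof.
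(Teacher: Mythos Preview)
Your proposal is correct and follows essentially the same approach as the paper: both arguments reduce everything to the action formulas \eqref{SNB. eq1}--\eqref{SNB. eq4} on $\mathbb{D}(\undla)$ and invoke faithfulness (Theorem \ref{semisimple:non-dege}) to conclude equalities in $B_{\undla}$. The only organizational difference is in the basis step: the paper argues locally, observing that $F_{\rm S}\mHfcn F_{\rm T}\cong\operatorname{Hom}_{\mHfcn}(\mathbb{D}_{\rm S},\mathbb{D}_{\rm T})$ is $1$-dimensional (type \texttt{M}) or $2$-dimensional (type \texttt{Q}) and that each $f^{\mathfrak{w}}_{{\rm S},{\rm T}}$ (respectively the pair $\{f^{\mathfrak{w}}_{{\rm S},{\rm T}_a}:a\in\mathbb{Z}_2\}$) is nonzero (respectively linearly independent) there, whereas you perform a global dimension count and prove linear independence directly on the module; both are valid and equivalent in substance.
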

		
		\begin{proof}
			(1) Supppose $d_{\undla}=0,$ then the block $B_{\lambda}$ is of type $\texttt{M}$ by Theorem \ref{semisimple:non-dege}. Using \eqref{SNB. eq1}, \eqref{SNB. eq2}, we deduce that  $f_{{\rm S},{\rm T}} ^{\mathfrak{w}},\,f_{{\rm S},{\rm T}} \neq 0$ in the $1$-dimensional space
			$$F_{\rm S} \mHfcn F_{\rm T}\cong {\rm Hom}_{\mHfcn}(\mathbb{D}_{\rm S},\mathbb{D}_{\rm T}).$$ Hence \eqref{Non-deg seminormal1} and \eqref{Non-deg seminormal2} form two bases of the block $B_{\undla}$. For equations \eqref{fst and re. fst} and \eqref{Non-deg multiplication1}, we only need to check they act on $\mathbb{D}(\undla)$ as the same operator by Theorem \ref{semisimple:non-dege}. These are direct consequences of \eqref{SNB. eq1}, \eqref{SNB. eq2}.

			(2) Suppose $d_{\undla}=1$, then the block $B_{\lambda}$ is of type $\texttt{Q}$ by Theorem \ref{semisimple:non-dege}.  Using \eqref{SNB. eq1}, \eqref{SNB. eq2}, we deduce that  $\{f_{{\rm S},{\rm T}_{a}}^\mathfrak{w}\mid a\in \mathbb{Z}_2\}$ and $\{f_{{\rm S},{\rm T}_{a}}\mid a\in \mathbb{Z}_2\}$ form two $\mathbb{K}$-bases of the $2$-dimensional space
			$$F_{\rm S} \mHfcn F_{\rm T}\cong {\rm Hom}_{\mHfcn}(\mathbb{D}_{\rm S},\mathbb{D}_{\rm T}).$$
		Hence \eqref{Non-deg seminormal1} and \eqref{Non-deg seminormal2} form two bases of the block $B_{\undla}$. Similarly, equations \eqref{fst and re. fst} and \eqref{Non-deg multiplication2} are direct consequences of \eqref{SNB. eq1}, \eqref{SNB. eq2} and Theorem \ref{semisimple:non-dege}.
		\end{proof}
		
		\begin{rem}
			Assume $d_{\undla}=0,$ then for any
			${\rm S}=(\ms, \alpha_{\ms}', \beta_{\ms}'),
			{\rm T}=(\mt, \alpha_{\mt}, \beta_{\mt}),
			{\rm U}=(\mfku,\alpha_{\mfku}^{''},\beta_{\mfku}^{''}),
			{\rm V}=(\mfkv,\alpha_{\mfkv}^{'''},\beta_{\mfkv}^{'''})\in {\rm Tri}(\undla),$ we have
			\begin{align*}
				f_{{\rm S},{\rm T}}f_{{\rm U},{\rm V}}
				=\delta_{{\rm T},{\rm U}} \frac{\mathtt{c}_{\ms,\mt}\mathtt{c}_{\mt,\mathfrak{v}} }{\mathtt{c}_{\ms,\mfkv}}f_{{\rm S},{\rm V}}.
			\end{align*}
			\end{rem}
	The following example shows that the coefficient $$\frac{\mathtt{c}_{\ms,\mt}\mathtt{c}_{\mt,\mathfrak{v}} }{\mathtt{c}_{\ms,\mfkv}}$$ may depend on $\ms$ and $\mfkv$ in general, where $\ms, \mt,\mfkv \in \Std(\undla).$
		\begin{example}
			Let
			$$\undla=\young(\,\,\, ,:\,)\in \mathscr{P}^{\mathsf{s},0}_{4},$$
			and $$\ms:=\mt^{\undla}=\young(123,:4),\quad \mt:=s_3\ms=\young(124,:3).$$
			Then one can check
			$$\mathtt{b}_{\mt,3}=\mathtt{b}_{\ms,4}=1,$$
			$$\mathtt{b}_{\mt,4}=\mathtt{b}_{\ms,3}=[3]_{q^2}-[2]_{q^2}-\epsilon\sqrt{[3]_{q^2} [2]_{q^2}}\neq 1.$$	
	Hence
			$$\frac{\mathtt{c}_{\mt,\mt}\mathtt{c}_{\mt,\mt}}{\mathtt{c}_{\mt,\mt}}=\frac{1\cdot1}{1}=1,$$
			$$\frac{\mathtt{c}_{\ms,\mt}\mathtt{c}_{\mt,\ms}}{\mathtt{c}_{\ms,\ms}}
			=\frac{\mathtt{c}_{\ms,\mt}\mathtt{c}_{\mt,\ms}}{1}=1-\epsilon^2 \frac{2\mathtt{b}_{\mt,4}}{\mathtt{b}_{\mt,4}-1}\neq 1.$$
		\end{example}
	\subsection{Further properties}	\label{action}{\bf In this subsection, we fix $\undla\in\mathscr{P}^{\bullet,m}_{n}$ with $\bullet\in\{\mathsf{0},\mathsf{s},\mathsf{ss}\}.$} We shall consider the actions of anti-involution $*$ and generators of $\mHfcn$ on our seminormal bases.
			\begin{defn}\label{sgn}
            \label{pag:sgn}
			Let $\mt\in \Std(\undla),\,\alpha_{\mt}\in \mathbb{Z}_2 (\mathcal{OD}_{\mt})$.
			\begin{enumerate}
				\item  Suppose $d_{\undla}=0,$  i.e. $t$ is even, we define
				$$\sgn(\mt,\alpha_{\mt}):=(-1)^{\sum\limits_{j=1}^{\frac{t}{2} }\bigl|\alpha_{\mt}\bigr|_{>d(\mt,\mt^{\undla})(i_{2j-1})}+\sum\limits_{1\leq j'<j\leq  \frac{t}{2} }\bigl|e_{d(\mt,\mt^{\undla})(i_{2j'-1})}\bigr|_{>d(\mt,\mt^{\undla})(i_{2j-1})}}.$$
				\item  Suppose $d_{\undla}=1,$ i.e. $t$ is odd, for $a\in \Z_2,$ we define
				$$\sgn(\mt,\alpha_{\mt})_{a}:=\begin{cases}(-1)^{\sum\limits_{j=1}^{\lceil \frac{t}{2} \rceil}\bigl|\alpha_{\mt}\bigr|_{>d(\mt,\mt^{\undla})(i_{2j-1})}+\sum\limits_{1\leq j'<j\leq \lfloor \frac{t}{2} \rfloor}\bigl|e_{d(\mt,\mt^{\undla})(i_{2j'-1})}\bigr|_{>d(\mt,\mt^{\undla})(i_{2j-1})}}, \text{if $a=\bar{0},$}\\
					(-1)^{\sum\limits_{j=1}^{\lceil \frac{t}{2} \rceil}\bigl|\alpha_{\mt}\bigr|_{>d(\mt,\mt^{\undla})(i_{2j-1})}+\sum\limits_{1\leq j'<j\leq \lceil \frac{t}{2} \rceil}\bigl|e_{d(\mt,\mt^{\undla})(i_{2j'-1})}\bigr|_{>d(\mt,\mt^{\undla})(i_{2j-1})}}, \text{if $a=\bar{1}.$}
					\end{cases}$$
				
				\end{enumerate}
		\end{defn}
		
		It's easy to check the following.
		
		\begin{lem}Let $\mt\in \Std(\undla),\,\alpha_{\mt}\in \mathbb{Z}_2 (\mathcal{OD}_{\mt})_{\bar{0}}$.
			\begin{enumerate}
				\item Suppose $d_{\undla}=0,$ then we have
				\begin{align}\label{sgn: type M}
					(C^{\widehat{\alpha_{\mt}}})^{-1}C^{\alpha_{\mt}}=\sgn(\mt,\widehat{\alpha_{\mt}})\overrightarrow{\prod_{j=1,2,\cdots, \frac{t}{2}}}C_{d(\mt,\mt^{\undla})(2i_j-1)}.
				\end{align}
				\item 	Suppose $d_{\undla}=1,$ then for any $a,b\in \Z_2$, we have	\begin{align}\label{sgn: type Q}
					(C^{\widehat{\alpha_{\mt,a}}})^{-1}C^{\alpha_{\mt,b}}=
						(-1)^{(a+b+\bar{1})\bigl(\lfloor \frac{t}{2}\rfloor+|\alpha_{\mt}|_{>d(\mt,\mt^{\undla})(i_t)}\bigr)}\sgn(\mt,\widehat{\alpha_{\mt,b+\overline{1}}})_{a+b}\overrightarrow{\prod\limits_{j=1,2,\cdots, \lfloor \frac{t}{2}\rfloor}}C_{d(\mt,\mt^{\undla})(2i_j-1)} \cdot C_{d(\mt,\mt^{\undla})(i_t)}^{a+b}
				\end{align}
			\end{enumerate}
		\end{lem}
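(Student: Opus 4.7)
The plan is to verify both identities by a direct computation in the Clifford subalgebra $\mathcal{C}_n \subset \mathcal{A}_n$, using only $C_k^2 = 1$ and the anticommutation $C_jC_k = -C_kC_j$ for $j\neq k$. Since each $C_k$ is an involution, inverting a monomial simply reverses the order: if $\supp(\widehat{\alpha_{\mt}}) = \{t_1<t_2<\cdots<t_r\}$, then
\begin{equation*}
(C^{\widehat{\alpha_{\mt}}})^{-1} = (C_{t_1}C_{t_2}\cdots C_{t_r})^{-1} = C_{t_r}C_{t_{r-1}}\cdots C_{t_1}.
\end{equation*}
So $(C^{\widehat{\alpha_{\mt}}})^{-1}C^{\alpha_{\mt}}$ is, up to sign, a product of all of the generators $C_k$ with $k$ ranging over $\supp(\widehat{\alpha_{\mt}}) \sqcup \supp(\alpha_{\mt})$, each appearing exactly once.

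For part (1), Definition \ref{hat} tells us that $\supp(\widehat{\alpha_{\mt}})\sqcup\supp(\alpha_{\mt}) = \{d(\mt,\mt^{\undla})(i_{2j-1}) : 1\leq j\leq \lfloor t/2\rfloor\}$, so the resulting monomial (once rearranged into the canonical increasing order $\overrightarrow{\prod_{j}}C_{d(\mt,\mt^{\undla})(i_{2j-1})}$) will pick up a sign determined purely by the partition $(\supp(\widehat{\alpha_{\mt}}),\supp(\alpha_{\mt}))$. I would compute this sign in two stages. First, bring each generator $C_{k}$ with $k \in \supp(\alpha_{\mt})$ from its current position (after all $C_{t_j}$'s) across the factors of $(C^{\widehat{\alpha_{\mt}}})^{-1}$ to land in its correct slot; this contributes the factor $\prod_j (-1)^{|\alpha_{\mt}|_{>d(\mt,\mt^{\undla})(i_{2j-1})}}$, matching the first summand in the exponent of $\sgn(\mt,\widehat{\alpha_{\mt}})$. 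Second, reorder the reversed string $C_{t_r}\cdots C_{t_1}$ of indices in $\supp(\widehat{\alpha_{\mt}})$ into increasing order; the sign contribution here is counted by the pairwise inversions among the $d(\mt,\mt^{\undla})(i_{2j'-1})$'s with $j'<j$, giving precisely the second summand in $\sgn(\mt,\widehat{\alpha_{\mt}})$.

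For part (2), the same strategy applies, but now $t$ is odd and $\mathcal{OD}_{\mt}$ contains the extra index $d(\mt,\mt^{\undla})(i_t)$. When $a=b$ (so $\alpha_{\mt,b}$ and $\widehat{\alpha_{\mt,a}}$ both omit $d(\mt,\mt^{\undla})(i_t)$), the argument is exactly as in part (1) applied to $\lfloor t/2\rfloor$ indices. When $a\neq b$, the generator $C_{d(\mt,\mt^{\undla})(i_t)}$ appears in exactly one of the two factors. I would isolate this generator, move it to the rightmost position (incurring the factor $(-1)^{|\alpha_{\mt}|_{>d(\mt,\mt^{\undla})(i_t)}+\lfloor t/2\rfloor}$ needed to commute it past the remaining $\lfloor t/2\rfloor$ generators plus the sign from where it initially sat inside the increasing monomial), and then reduce to the even-case computation with the partition indexed by $\widehat{\alpha_{\mt,b+\bar{1}}}$. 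Collecting all the signs yields formula \eqref{sgn: type Q}.

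The only real obstacle is bookkeeping: one must match the formal combinatorial exponents in Definition \ref{sgn} to the signs arising from the anticommutations. Rather than reorganising by reducing to lemmas, I would carry out the computation in full for a fixed partition and then observe that both sides of each identity are multilinear/alternating in the obvious way on the generating set, so that verifying the sign on a canonical choice of $\alpha_{\mt}$ (e.g.\ the "empty" choice) pins down all other cases by successive replacement of $\alpha_{\mt}$ with $\alpha_{\mt}+e_k$ for $k\in \mathcal{OD}_{\mt}$, which flips precisely one factor on each side of the identity. This reduces the verification to a handful of base cases that can be checked directly.
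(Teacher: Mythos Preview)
Your approach is correct and is precisely the direct Clifford-algebra computation the paper has in mind (the paper itself offers no proof beyond ``It's easy to check the following''). A couple of small imprecisions are worth flagging, though they do not derail the argument. First, the target product $\overrightarrow{\prod_{j}}C_{d(\mt,\mt^{\undla})(i_{2j-1})}$ is ordered by $j$, not by the Clifford subscript; since $d(\mt,\mt^{\undla})$ is a permutation, these indices need not be increasing, so calling this the ``canonical increasing order'' is misleading even if you are clearly aiming at the right monomial. Second, in part~(2) your description of the case $a=b$ is only literally accurate for $a=b=\bar{0}$: when $a=b=\bar{1}$, both $\widehat{\alpha_{\mt,a}}$ and $\alpha_{\mt,b}$ \emph{contain} $d(\mt,\mt^{\undla})(i_t)$ (by Definition~\ref{hat}(2)), and the two copies of $C_{d(\mt,\mt^{\undla})(i_t)}$ cancel rather than being absent from the outset. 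The net effect on the monomial is the same, so your reduction to the even case still goes through once you account for the extra sign from that cancellation.
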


		The following Lemma will be used in the discussion of anti-involution $*$.
		\begin{lem}\label{commute odd}
			(1) If $d_{\undla}=0.$ For any $\ms,\mt\in \Std(\undla), \alpha_{\ms}'\in \Z(\mathcal{OD}_{\ms}), \alpha_{\mt}\in\Z(\mathcal{OD}_{\mt}),$ we have
$$		(C^{\widehat{\alpha_{\ms}'}})^{-1}C^{\alpha_{\ms}'}\Phi_{\ms,\mt}=\sgn(\ms,\widehat{\alpha_{\ms}'})\sgn(\mt,\widehat{\alpha_{\mt}})\Phi_{\ms,\mt}(C^{\widehat{\alpha_{\mt}}})^{-1}C^{\alpha_{\mt}}.
$$
			(2) If $d_{\undla}=1.$ For any $\ms,\mt\in \Std(\undla), \alpha_{\ms}'\in \Z(\mathcal{OD}_{\ms})_{\bar{0}}, \alpha_{\mt}\in\Z(\mathcal{OD}_{\mt})_{\bar{0}}$ and $a,b\in \Z_2$, we have \begin{align*}
			(C^{\widehat{\alpha_{\ms,b}'}})^{-1}C^{\alpha_{\ms}'}\Phi_{\ms,\mt}=&(-1)^{(b+1)(|\alpha'_{\ms}|_{>d(\ms,\mt^{\undla})(i_t)}+|\alpha_{\mt}|_{>d(\mt,\mt^{\undla})(i_t)})}\\
			&\qquad\qquad\qquad\cdot\sgn(\ms,\widehat{\alpha'_{\ms,\overline{1}}})_{b}\sgn(\mt,\widehat{\alpha_{\mt,a+\overline{1}}})_{b}\Phi_{\ms,\mt}(C^{\widehat{\alpha_{\mt,a+b}}})^{-1}C^{\alpha_{\mt,a}}.
\end{align*}
			\end{lem}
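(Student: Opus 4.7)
The plan is to reduce both identities to the single commutation rule in Lemma \ref{Phist. lem}(2), which states that $\Phi_{\ms,\mt}$ intertwines the Clifford generators attached to the diagonal boxes of $\mt$ and $\ms$: namely $\Phi_{\ms,\mt}C_{d(\mt,\mt^{\undla})(a)}=C_{d(\ms,\mt^{\undla})(a)}\Phi_{\ms,\mt}$ for every $a\in\mathcal{D}_{\mt^{\undla}}$. Since by definition $\supp(\alpha'_\ms)\cup\supp(\widehat{\alpha'_\ms})\subseteq\mathcal{OD}_\ms=d(\ms,\mt^{\undla})(\mathcal{OD}_{\mt^{\undla}})$ and analogously for $\mt$, every Clifford factor that will appear on either side of the computation is indexed by an element of the form $d(\ast,\mt^{\undla})(a)$ with $a\in\mathcal{D}_{\mt^{\undla}}$, so Lemma \ref{Phist. lem}(2) applies to each of them in turn.

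For part (1), assume $d_{\undla}=0$. First I rewrite the left-hand side using \eqref{sgn: type M}:
\begin{equation*}
(C^{\widehat{\alpha_{\ms}'}})^{-1}C^{\alpha_{\ms}'}\,\Phi_{\ms,\mt}
=\sgn(\ms,\widehat{\alpha_{\ms}'})\,\Bigl(\overrightarrow{\prod_{j=1}^{t/2}}C_{d(\ms,\mt^{\undla})(i_{2j-1})}\Bigr)\Phi_{\ms,\mt}.
\end{equation*}
Then I push $\Phi_{\ms,\mt}$ to the left past each Clifford factor, one at a time, using Lemma \ref{Phist. lem}(2); each step replaces $C_{d(\ms,\mt^{\undla})(i_{2j-1})}$ by $C_{d(\mt,\mt^{\undla})(i_{2j-1})}$ on the other side and produces no extra sign. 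This yields
\begin{equation*}
\sgn(\ms,\widehat{\alpha_{\ms}'})\,\Phi_{\ms,\mt}\,\overrightarrow{\prod_{j=1}^{t/2}}C_{d(\mt,\mt^{\undla})(i_{2j-1})}.
\end{equation*}
Finally I apply \eqref{sgn: type M} to $\mt$ in reverse to rewrite the ordered product as $\sgn(\mt,\widehat{\alpha_{\mt}})(C^{\widehat{\alpha_{\mt}}})^{-1}C^{\alpha_{\mt}}$, giving the right-hand side.

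For part (2), assume $d_{\undla}=1$, so $t$ is odd and \eqref{sgn: type Q} introduces an additional factor $C_{d(\ast,\mt^{\undla})(i_t)}^{\,*}$ depending on the parities $a,b$. The same three-step strategy applies: expand $(C^{\widehat{\alpha_{\ms,b}'}})^{-1}C^{\alpha_{\ms}'}$ via \eqref{sgn: type Q} (taking the roles of ``$a$'' and ``$b$'' in \eqref{sgn: type Q} to be $b$ and $\bar 0$ respectively), commute $\Phi_{\ms,\mt}$ past the resulting ordered product of Clifford generators at diagonal positions via Lemma \ref{Phist. lem}(2), and then recontract using \eqref{sgn: type Q} on the $\mt$ side with parameters $a+b$ and $a$. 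The sign prefactor $(-1)^{(b+1)(|\alpha_\ms'|_{>d(\ms,\mt^{\undla})(i_t)}+|\alpha_\mt|_{>d(\mt,\mt^{\undla})(i_t)})}$ then arises by directly combining the two instances of the prefactor $(-1)^{(a+b+\bar 1)(\lfloor t/2\rfloor+|\alpha|_{>d(\cdot,\mt^{\undla})(i_t)})}$ coming out of \eqref{sgn: type Q}, and the $\sgn$ factors combine to $\sgn(\ms,\widehat{\alpha'_{\ms,\overline{1}}})_{b}\sgn(\mt,\widehat{\alpha_{\mt,a+\overline{1}}})_{b}$.

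The only slightly delicate point is bookkeeping of the extra $C_{d(\cdot,\mt^{\undla})(i_t)}^{a+b}$ factor in \eqref{sgn: type Q}: this single additional Clifford element is again indexed by a diagonal position, so Lemma \ref{Phist. lem}(2) still applies and introduces no further sign. This is the step I expect to require the most care, since one must verify that the combinatorial sign matches exactly after commuting an odd string of Clifford generators, but no new ideas are needed beyond Lemma \ref{Phist. lem}(2) and the definitions in \eqref{sgn: type M} and \eqref{sgn: type Q}.
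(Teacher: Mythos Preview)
Your proposal is correct and follows exactly the approach indicated in the paper, which simply says ``This follows from \eqref{sgn: type M}, \eqref{sgn: type Q} and Lemma \ref{Phist. lem} (2).'' You have spelled out the three-step mechanism (expand via the sign identities, commute via Lemma \ref{Phist. lem}(2), recontract) that the paper leaves implicit.
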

		
		\begin{proof}
		This follows from \eqref{sgn: type M}, \eqref{sgn: type Q} and Lemma \ref{Phist. lem} (2).
			\end{proof}
			Now we can describe the image of seminormal basis under anti-involution $*$.
		\begin{prop}\label{re. fst and BK involution}We fix $\mathfrak{w}\in\Std(\undla).$
			
			(1) If $d_{\undla}=0.$ For any ${\rm S}=(\ms, \alpha_{\ms}', \beta_{\ms}'), {\rm T}=(\mt, \alpha_{\mt}, \beta_{\mt})\in {\rm Tri}(\undla),$ we have
			\begin{align}\label{Non-dege dual 1}
				(f_{{\rm S},{\rm T}})^*
				=\sgn(\ms,\widehat{\alpha_{\ms}'})\sgn(\mt,\widehat{\alpha_{\mt}})f_{{\rm \widehat{T}},{\rm \widehat{S}}}
			\end{align} and
				\begin{align}\label{Non-dege dual 2}
			(f_{{\rm S},{\rm T}}^\mathfrak{w})^*
				=\sgn(\ms,\widehat{\alpha_{\ms}'})\sgn(\mt,\widehat{\alpha_{\mt}})f_{{\rm \widehat{T}},{\rm \widehat{S}}}^\mathfrak{w}.
			\end{align}
%
			(2) If $d_{\undla}=1.$ For any $a\in \mathbb{Z}_{2}$ and ${\rm S}=(\ms, \alpha_{\ms}', \beta_{\ms}')\in {\rm Tri}_{\bar{0}}(\undla), {\rm T}_{a}=(\mt, \alpha_{\mt,a}, \beta_{\mt})\in {\rm Tri}_{a}(\undla),$ we have
		$$
			(f_{{\rm S},{\rm T}_{a}})^*
		=\sgn(\ms,\widehat{\alpha'_{\ms,\overline{1}}})_{\overline{0}}\sgn(\mt,\widehat{\alpha_{\mt,a+\overline{1}}})_{a+\overline{1}}f_{{\rm \widehat{T}},{\rm \widehat{S_a} } }
		$$ and
		$$
				(f_{{\rm S},{\rm T}_{a}}^\mathfrak{w})^*
		=\sgn(\ms,\widehat{\alpha'_{\ms,\overline{1}}})_{\overline{0}}\sgn(\mt,\widehat{\alpha_{\mt,a+\overline{1}}})_{a+\overline{1}}f_{{\rm \widehat{T}},{\rm \widehat{S_a} } }^\mathfrak{w}.$$
			
		\end{prop}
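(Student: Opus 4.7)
The plan is to apply the anti-involution $*$ directly to the defining formula
\[
f_{{\rm S},{\rm T}} \;=\; F_{\rm S}\,C^{\beta_{\ms}'}C^{\alpha_{\ms}'}\,\Phi_{\ms,\mt}\,(C^{\alpha_{\mt}})^{-1}(C^{\beta_{\mt}})^{-1}\,F_{\rm T},
\]
and reassemble the outcome into $f_{\widehat{\rm T},\widehat{\rm S}}$ (up to a sign). Using that $*$ is an anti-involution with $X_j^*=X_j$, $C_j^*=C_j$, together with Corollary~\ref{Non-dege BK anti-idempotent} ($F_{\rm T}^*=F_{\widehat{\rm T}}$ and $F_{\rm S}^*=F_{\widehat{\rm S}}$), the outer pieces are immediate. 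The middle piece $\Phi_{\ms,\mt}^*$ is the only genuinely new object; a direct calculation on the generators $\Phi_i(x,y)=T_i+\tfrac{\epsilon}{x^{-1}y-1}-\tfrac{\epsilon}{xy-1}C_iC_{i+1}$ using $T_i^*=T_i+\epsilon C_iC_{i+1}$ and $(C_iC_{i+1})^*=-C_iC_{i+1}$ shows that $\Phi_i(x,y)^*$ equals a simple modification of $\Phi_i$, which, after absorbing it into the surrounding idempotents $F_{\widehat{\rm S}},F_{\widehat{\rm T}}$, agrees with $\Phi_{\mt,\ms}$ on the module (cf.\ the relation $F_{\widehat{\rm T}}=F_{\rm T}^*$ and the commutation formulae \eqref{Phi and C}). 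This reduces the calculation to a rearrangement of Clifford factors.

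Next I would rewrite $(f_{{\rm S},{\rm T}})^*$ in the form $F_{\widehat{\rm T}}\cdot(\text{Clifford stuff on the }\mt\text{-side})\cdot\Phi_{\mt,\ms}\cdot(\text{Clifford stuff on the }\ms\text{-side})\cdot F_{\widehat{\rm S}}$, and then push the Clifford factors past $\Phi_{\mt,\ms}$ using Lemma~\ref{commute odd}. The point of that lemma is precisely to convert $(C^{\widehat{\alpha}})^{-1}C^{\alpha}$-type expressions across $\Phi$'s while recording a sign governed by $\sgn(\ms,\cdot),\sgn(\mt,\cdot)$, i.e.\ the signs introduced in Definition~\ref{sgn} via formulae \eqref{sgn: type M} and \eqref{sgn: type Q}. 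After this commutation, the remaining Clifford factors collapse back into the definition of $f_{\widehat{\rm T},\widehat{\rm S}}$ (in the type~$\mathtt{M}$ case) or $f_{\widehat{\rm T},\widehat{\rm S_a}}$ (in the type~$\mathtt{Q}$ case), producing the claimed identities.

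For the $\mathfrak{w}$-versions $f_{{\rm S},{\rm T}}^{\mathfrak{w}}$ and $(f_{{\rm S},{\rm T}}^{\mathfrak{w}})^*$, the only difference is that $\Phi_{\ms,\mt}$ gets replaced by $\Phi_{\ms,\mathfrak{w}}\Phi_{\mathfrak{w},\mt}$; under $*$ this becomes $\Phi_{\mathfrak{w},\mt}^*\Phi_{\ms,\mathfrak{w}}^*$, and exactly the same Clifford-commutation argument applies term by term, so the same sign appears. Alternatively, one can simply combine \eqref{fst and re. fst} with the result for $f_{{\rm S},{\rm T}}$: since $\mathtt{c}_{\ms,\mt}/(\mathtt{c}_{\ms,\mathfrak{w}}\mathtt{c}_{\mathfrak{w},\mt})$ is a scalar and $*$ is $\mathbb{K}$-linear, the $\mathfrak{w}$-version follows automatically from the statement for the $f_{{\rm S},{\rm T}}$.

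The main obstacle is book-keeping the signs. There are three independent sources of signs: (i) the reversal of the Clifford monomials $(C^{\alpha})^*$, (ii) the signs coming from pushing $(C^{\widehat{\alpha}})^{-1}C^{\alpha}$ past $\Phi_{\ms,\mt}$ via Lemma~\ref{commute odd}, and (iii) in the $d_{\undla}=1$ case, the extra sign $(-1)^{b|\alpha'_{\ms}|_{>d(\ms,\mt^{\undla})(i_t)}+\cdots}$ that appears both in the normalisation of $f_{{\rm S},{\rm T}_a}$ itself and in the relation between $\alpha_{\mt,a}$ and $\widehat{\alpha_{\mt,a+\overline{1}}}$. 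Sanity checks in small examples, together with the fact that $(*)^2=\mathrm{id}$ forces $\sgn(\ms,\widehat{\alpha_{\ms}'})\sgn(\mt,\widehat{\alpha_{\mt}})\sgn(\mt,\widehat{\widehat{\alpha_{\mt}}})\sgn(\ms,\widehat{\widehat{\alpha_{\ms}'}})=1$ (and its type~$\mathtt{Q}$ analogue), provide the necessary consistency tests.
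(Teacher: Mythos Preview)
Your outline has the right shape and identifies the correct tools: Corollary~\ref{Non-dege BK anti-idempotent} for the idempotents, Lemma~\ref{commute odd} for moving Clifford factors across $\Phi_{\ms,\mt}$, and \eqref{fst and re. fst} to reduce the $\mathfrak{w}$-version to the unadorned one (this last reduction is exactly what the paper does). The genuine gap is in your treatment of $(\Phi_{\ms,\mt})^*$.

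You correctly observe $\Phi_i(x,y)^*=\Phi_i(y^{-1},x^{-1})$, but this is \emph{not} $\Phi_i(y,x)$, so $(\Phi_{\ms,\mt})^*\neq\Phi_{\mt,\ms}$ as elements of $\mHfcn$. Your phrase ``absorbing into the surrounding idempotents \ldots\ agrees with $\Phi_{\mt,\ms}$ on the module'' is on the right track but is not a proof, and your next paragraph --- where you propose to rewrite $(f_{{\rm S},{\rm T}})^*$ \emph{algebraically} with $\Phi_{\mt,\ms}$ sitting in the middle --- is simply false as an identity in the algebra. The paper closes this gap by an explicit module computation: it expands
\[
(\Phi_{\ms,\mt})^*=\Phi_{\mt,\ms}+\sum_{\beta\in\mathbb{Z}_2^n,\ u<d(\mt,\ms)}a_{\beta,u}C^{\beta}T_u
\]
(each factor $\Phi_{k_i}(y^{-1},x^{-1})$ differs from the corresponding factor of $\Phi_{\mt,\ms}$ only by a scalar plus a $C_{k_i}C_{k_i+1}$-term, so the product expands into $\Phi_{\mt,\ms}$ plus terms of strictly shorter $T$-length), and then uses the action formula~\eqref{Taction Non-dege} to show that each $T_u v_{\ms}$ with $u<d(\mt,\ms)$ is a combination of $C^{\beta}v_{\mfkv}$ with $\mfkv\neq\mt$, hence is annihilated by $F_{\widehat{\rm T}}$. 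This yields $F_{\widehat{\rm T}}C^{\beta_{\mt}}C^{\widehat{\alpha_{\mt,c}}}(\Phi_{\ms,\mt})^* v_{\ms}=\mathtt{c}_{\mt,\ms}C^{\beta_{\mt}}C^{\widehat{\alpha_{\mt,c}}}v_{\mt}$. From here the argument does follow your plan: apply Lemma~\ref{commute odd} \emph{inside} the $*$ (legitimate, since that lemma is a genuine algebraic identity involving $\Phi_{\ms,\mt}$, not $(\Phi_{\ms,\mt})^*$), compute $(f_{{\rm S},{\rm T}_a})^*$ on every basis vector of $\mathbb{D}(\undla)$, and invoke semisimplicity (Theorem~\ref{semisimple:non-dege}) to conclude equality in $\mHfcn$. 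So the conceptual fix is: do not attempt an algebraic rewriting of $(f_{{\rm S},{\rm T}})^*$; instead, compare both sides as operators on $\mathbb{D}(\undla)$ from the outset, and supply the ``lower-term'' expansion of $(\Phi_{\ms,\mt})^*$ together with the length argument for $T_u v_{\ms}$ as the missing ingredient.
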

		\begin{proof}
			Let's prove (2). We fix a reduced expression $d(\ms,\mt)=s_{k_p}\cdots s_{k_1}.$  Note that $$
			\Phi_{i}(x,y)^*=\Phi_i(y^{-1},x^{-1})$$ for $i=1,2,\ldots,n-1$ and $x,y \in \mathbb{K}^*$ satisfying $y\neq x^{\pm 1}.$ It follows that
			\begin{align}\label{Phist*}
				(\Phi_{\ms,\mt})^*
				&=\overrightarrow{\prod_{i=1,\ldots,p}}\Phi_{k_{i}}(\mathtt{b}^{-1}_{s_{k_{i-1}}\cdots s_{k_1}\mathfrak{t},k_{i}+1},\mathtt{b}^{-1}_{s_{k_{i-1}}\cdots s_{k_1}\mathfrak{t},k_{i}})\\
				&=\overrightarrow{\prod_{i=1,\ldots,p}}\left(\Phi_{k_{i}}(\mathtt{b}_{s_{k_{i+1}}\cdots s_{k_{p}}\mathfrak{s},k_{i}}, \mathtt{b}_{s_{k_{i+1}}\cdots s_{k_{p}}\mathfrak{s},k_{i}+1})
				+a_i
			+d_iC_{k_i}C_{k_{i+1}}\right)\nonumber\\
				&=\Phi_{\mt,\ms}+\sum_{\beta\in \mathbb{Z}_2^n,d(\mt,\ms)>u\in \mathfrak{S}_n}a_{\beta,u}C_{\beta}T_u, \nonumber
			\end{align} where $a_1,\cdots,a_p,d_1,\cdots,d_p$ and $a_{\beta,u}\in \mathbb{K}.$
			
			On the other hand, using \eqref{Taction Non-dege}, we can deduce that for any $d(\mt,\ms)>u\in \mathfrak{S}_n$,
\begin{align}\label{cancel}
T_u v_{\ms}=\sum_{\substack{\beta\subset \Z_2^n\\ \mfkv\neq \mt}}g_{\beta,\mfkv} C^{\beta} v_{\mfkv}
\end{align}
for some $g_{\beta,\mfkv}\in \mathbb{K}.$

		This implies that for any $c\in \Z_2$, \begin{align}\label{tiny2}
			F_{\rm \widehat{T}}C^{\beta_{\mt}}C^{\widehat{\alpha_{\mt,c}}}(\Phi_{\ms,\mt})^*v_{\ms}&=F_{\rm \widehat{T}}C^{\beta_{\mt}}C^{\widehat{\alpha_{\mt,c}}}
			\left( \Phi_{\mt,\ms}+\sum_{\beta\in \mathbb{Z}_2^n,d(\mt,\ms)>u\in \mathfrak{S}_n}a_{\beta,u}C_{\beta}T_u \right)v_{\ms}\nonumber\\
			&=F_{\rm \widehat{T}}C^{\beta_{\mt}}C^{\widehat{\alpha_{\mt,c}}}
			\Phi_{\mt,\ms}v_{\ms}\nonumber\\
			&=\mathtt{c}_{\mt,\ms}C^{\beta_{\mt}}C^{\widehat{\alpha_{\mt,c}}}v_{\mt}
			\end{align} where in the first equation, we have used \eqref{Phist*}; in the second equation, we have used  \eqref{F. eq3} and \eqref{cancel}; in the last equation, we have used Lemma \ref{Phist. lem} (3) and \eqref{F. eq3}.
		Now we can compute
			\begin{align*}
				&(f_{{\rm S},{\rm T}_a})^* \cdot C^{\beta_{\ms}'}C^{\widehat{\alpha_{\ms,b}'}}v_{\ms}\nonumber\\
				&=(-1)^{|\alpha_\ms'|_{>d(\ms,\mt^{\undla})(i_t)}+a|\alpha_\mt|_{>d(\mt,\mt^{\undla})(i_t)}}\\
				&\qquad\cdot F_{\rm \widehat{T}}C^{\beta_{\mt}}C^{\alpha_{\mt,a}}
				(\Phi_{\ms,\mt})^*(C^{\alpha_{\ms}'})^{-1}(C^{\beta_{\ms}'})^{-1}
				F_{\rm \widehat{S}} \\
				&\qquad\qquad \cdot C^{\beta_{\ms}'}C^{\widehat{\alpha_{\ms,b}'}}v_{\ms}\nonumber\\
				&=(-1)^{|\alpha_\ms'|_{>d(\ms,\mt^{\undla})(i_t)}+a|\alpha_\mt|_{>d(\mt,\mt^{\undla})(i_t)}}\\
				&\qquad\cdot F_{\rm \widehat{T}}C^{\beta_{\mt}}C^{\alpha_{\mt,a}}
				(\Phi_{\ms,\mt})^*(C^{\alpha_{\ms}'})^{-1}C^{\widehat{\alpha_{\ms,b}'}}v_{\ms}\nonumber\\
				&=(-1)^{|\alpha_\ms'|_{>d(\ms,\mt^{\undla})(i_t)}+a|\alpha_\mt|_{>d(\mt,\mt^{\undla})(i_t)}}\\
				&\qquad\cdot F_{\rm \widehat{T}}C^{\beta_{\mt}}C^{\alpha_{\mt,a}}
				\biggl((C^{\widehat{\alpha_{\ms,b}'}})^{-1}C^{\alpha_{\ms}'}\Phi_{\ms,\mt}\biggr)^*v_{\ms}\nonumber\\
				&=(-1)^{|\alpha_\ms'|_{>d(\ms,\mt^{\undla})(i_t)}+a|\alpha_\mt|_{>d(\mt,\mt^{\undla})(i_t)}+(b+1)(|\alpha'_{\ms}|_{>d(\ms,\mt^{\undla})(i_t)}+|\alpha_{\mt}|_{>d(\mt,\mt^{\undla})(i_t)})}\\
				&\qquad\cdot\sgn(\ms,\widehat{\alpha'_{\ms,\overline{1}}})_{b}\sgn(\mt,\widehat{\alpha_{\mt,a+\overline{1}}})_{b}\\
								&\qquad\qquad\cdot F_{\rm \widehat{T}}C^{\beta_{\mt}}C^{\alpha_{\mt,a}}
				\biggl(\Phi_{\ms,\mt}(C^{\widehat{\alpha_{\mt,a+b}}})^{-1}C^{\alpha_{\mt,a}}\biggr)^*v_{\ms}\nonumber	\\
			&=(-1)^{b|\alpha_\ms'|_{>d(\ms,\mt^{\undla})(i_t)}+(a+b+1)|\alpha_\mt|_{>d(\mt,\mt^{\undla})(i_t)}}\\
			&\qquad\cdot\sgn(\ms,\widehat{\alpha'_{\ms,\overline{1}}})_{b}\sgn(\mt,\widehat{\alpha_{\mt,a+\overline{1}}})_{b}F_{\rm \widehat{T}}C^{\beta_{\mt}}C^{\widehat{\alpha_{\mt,a+b}}}
			(\Phi_{\ms,\mt})^*v_{\ms}\nonumber	\\		
			&=(-1)^{b|\alpha_\ms'|_{>d(\ms,\mt^{\undla})(i_t)}+(a+b+1)|\alpha_\mt|_{>d(\mt,\mt^{\undla})(i_t)}}\\
			&\qquad\cdot\sgn(\ms,\widehat{\alpha'_{\ms,\overline{1}}})_{b}\sgn(\mt,\widehat{\alpha_{\mt,a+\overline{1}}})_{b} \mathtt{c}_{\mt,\ms}C^{\beta_{\mt}}C^{\widehat{\alpha_{\mt,a+b}}}v_{\mt}\\
			&=(-1)^{b|\alpha_\ms'|_{>d(\ms,\mt^{\undla})(i_t)}+(a+b+1)|\alpha_\mt|_{>d(\mt,\mt^{\undla})(i_t)}+b|\widehat{\alpha'_\ms}|_{>d(\ms,\mt^{\undla})(i_t)}+(a+b+1)|\widehat{\alpha_\mt}|_{>d(\mt,\mt^{\undla})(i_t)}}\\
			&\qquad\cdot\sgn(\ms,\widehat{\alpha'_{\ms,\overline{1}}})_{b}\sgn(\mt,\widehat{\alpha_{\mt,a+\overline{1}}})_{b}\cdot 	f_{{\rm \widehat{T}},{\rm \widehat{S_a}}} \cdot C^{\beta_{\ms}'}C^{\widehat{\alpha_{\ms,b}'}}v_{\ms}\\
			&=(-1)^{b\sum\limits_{1\leq j'\leq \lfloor \frac{t}{2} \rfloor}\bigl|e_{d(\ms,\mt^{\undla})(i_{2j'-1})}\bigr|_{>d(\ms,\mt^{\undla})(i_{t})}
				+(a+b+1)\sum\limits_{1\leq j'\leq \lfloor \frac{t}{2} \rfloor}\bigl|e_{d(\mt,\mt^{\undla})(i_{2j'-1})}\bigr|_{>d(\mt,\mt^{\undla})(i_{t})}}\\
			&\qquad\cdot\sgn(\ms,\widehat{\alpha'_{\ms,\overline{1}}})_{b}\sgn(\mt,\widehat{\alpha_{\mt,a+\overline{1}}})_{b}\cdot 	f_{{\rm \widehat{T}},{\rm \widehat{S_a}}} \cdot C^{\beta_{\ms}'}C^{\widehat{\alpha_{\ms,b}'}}v_{\ms}\\
				&=\sgn(\ms,\widehat{\alpha'_{\ms,\overline{1}}})_{\overline{0}}\sgn(\mt,\widehat{\alpha_{\mt,a+\overline{1}}})_{a+\overline{1}}\cdot 	f_{{\rm \widehat{T}},{\rm \widehat{S_a}}} \cdot C^{\beta_{\ms}'}C^{\widehat{\alpha_{\ms,b}'}}v_{\ms}
				\end{align*} where in the first equation, we have used Corollary \ref{Non-dege BK anti-idempotent}; in the fourth equation, we have used Lemma \ref{commute odd} and in the last fourth equation, we have used \eqref{tiny2}.
				 Using  \eqref{F. eq3}, for any ${\rm U}=(\mfkv,\alpha_{\mfkv}^{'''},\beta_{\mfkv}^{'''})\neq {\rm \widehat{T}_b} \in {\rm Tri}(\undla)$ for any $b\in \Z_2$, we have $$(f_{{\rm S},{\rm T}})^* \cdot C^{\beta_{\mfkv}^{'''}}C^{\alpha_{\mfkv}^{'''}}v_{\mfkv}=0.$$
				 Putting these together,
	we deduce that $ (f_{{\rm S},{\rm T}_a})^* $ and $$\sgn(\ms,\widehat{\alpha'_{\ms,\overline{1}}})_{\overline{0}}\sgn(\mt,\widehat{\alpha_{\mt,a+\overline{1}}})_{a+\overline{1}}f_{{\rm \widehat{T}},{\rm \widehat{S_a}}}$$ act on $\mathbb{D}(\undla)$ as the same operator. Hence \eqref{Non-dege dual 1} follows from Theorem \ref{semisimple:non-dege}. Note that \eqref{Non-dege dual 2} is the Corollary of \eqref{fst and re. fst} and \eqref{Non-dege dual 1}. This proves (2). The proof of (1) is similar as (2).
		\end{proof}
\begin{defn}
Let $\undla\in\mathscr{P}^{\bullet,m}_{n}$ with $\bullet\in\{\mathsf{0},\mathsf{s},\mathsf{ss}\}.$
\begin{enumerate}
	\item
	Suppose $d_{\undla}=0.$ Let ${\rm S}=(\ms, \alpha_{\ms}', \beta_{\ms}'),
	{\rm T}=(\mt, \alpha_{\mt}, \beta_{\mt})\in {\rm Tri}(\undla).$ For each $i\in [n-1],$ we define the element
$F(i,{\rm T},{\rm S})\in\mHfcn$ as follows.

\begin{enumerate}
	\item If $i,i+1\in [n]\setminus \mathcal{D}_{\mt},$
	$$F(i,{\rm T},{\rm S}):=(-1)^{\delta_{\beta_{\mt}}(i)}f_{(\mt,\alpha_{\mt},\beta_{\mt}+e_{i}+e_{i+1}),{\rm S}};
	$$
	\item
	if $i={d(\mt,\mt^{\undla})}(i_p)\in \mathcal{D}_{\mt}, i+1\in [n]\setminus \mathcal{D}_{\mt},$
	$$F(i,{\rm T},{\rm S}):=\begin{cases}(-1)^{|\beta_{\mt}|_{>i}+|\alpha_{\mt}|_{<i}}f_{(\mt,\alpha_{\mt}+e_{i},\beta_{\mt}+e_{i+1}),{\rm S}}, & \text{ if  $p$ is odd},\\
		(-\sqrt{-1})(-1)^{|\beta_{\mt}|_{>i}+|\alpha_{\mt}|_{\leq {d(\mt,\mt^{\undla})}(i_{p-1})}}f_{(\mt,\alpha_{\mt}+e_{{d(\mt,\mt^{\undla})}(i_{p-1})},\beta_{\mt}+e_{i+1}),{\rm S}}, &\text{ if  $p$ is even};
	\end{cases}\nonumber
	$$
	\item
	if $i+1={d(\mt,\mt^{\undla})}(i_p)\in \mathcal{D}_{\mt}, i\in [n]\setminus \mathcal{D}_{\mt},$ $$F(i,{\rm T},{\rm S}):=\begin{cases}(-1)^{|\beta_{\mt}|_{\geq i}+|\alpha_{\mt}|_{<i+1}} f_{(\mt,\alpha_{\mt}+e_{i+1},\beta_{\mt}+e_{i}),{\rm S}}, & \text{ if  $p$ is odd},\\
		(-\sqrt{-1})(-1)^{|\beta_{\mt}|_{\geq i}+|\alpha_{\mt}|_{\leq {d(\mt,\mt^{\undla})}(i_{p-1})}}f_{(\mt,\alpha_{\mt}+e_{{d(\mt,\mt^{\undla})}(i_{p-1})},\beta_{\mt}+e_{i}),{\rm S}}, &\text{ if $p$ is even};\end{cases}  \nonumber
	$$
	\item
	if $i={d(\mt,\mt^{\undla})}(i_p), i+1={d(\mt,\mt^{\undla})}(i_{p'})\in \mathcal{D}_{\mt},$ \begin{equation*}
		F(i,{\rm T},{\rm S}):=\begin{cases}(-1)^{\delta_{\alpha_{\mt}}(i)} f_{(\mt,\alpha_{\mt}+e_i+e_{i+1},\beta_{\mt}),{\rm S}} & \text{ if  $p,p'$ are odd},\\
			(-\sqrt{-1})(-1)^{|\alpha_{\mt}|_{<i+1}+|\alpha_{\mt}+e_{i+1}|_{\leq{d(\mt,\mt^{\undla})}(i_{p-1})}} &\\
			\qquad\qquad\qquad \cdot f_{(\mt,\alpha_{\mt}+e_{{d(\mt,\mt^{\undla})}(i_{p-1})}+e_{i+1},\beta_{\mt}),{\rm S}} ,& \text{ if  $p$ is even, $p'$ is odd},\\
			(-\sqrt{-1})(-1)^{|\alpha_{\mt}|_{\leq {d(\mt,\mt^{\undla})}(i_{p'-1})}+|\alpha_{\mt}+e_{{d(\mt,\mt^{\undla})}(i_{p'-1})}|_{<i}}&\\
			\qquad\qquad\qquad \cdot f_{(\mt,\alpha_{\mt}+e_i+e_{{d(\mt,\mt^{\undla})}(i_{p'-1})},\beta_{\mt}),{\rm S}} &\text{ if  $p$ is odd, $p'$ is even},\\
			(-1)^{1+|\alpha_{\mt}|_{\leq {d(\mt,\mt^{\undla})}(i_{p'-1})}+|\alpha_{\mt}+e_{{d(\mt,\mt^{\undla})}(i_{p'-1})}|_{\leq{d(\mt,\mt^{\undla})}(i_{p-1})}}&\\
			\qquad\qquad\qquad\cdot f_{(\mt,\alpha_{\mt}+e_{{d(\mt,\mt^{\undla})}(i_{p-1})}+e_{{d(\mt,\mt^{\undla})}(i_{p'-1})},\beta_{\mt}),{\rm S}} &\text{ if  $p,p'$ are even}.
		\end{cases}
	\end{equation*}
	\end{enumerate}
		\item	Suppose $d_{\undla}=1.$
	Let $a\in \Z_2$, ${\rm T}=(\mt, \alpha_{\mt}, \beta_{\mt})\in {\rm Tri}_{\bar{0}}(\undla),$
	${\rm S}_{a}=(\ms, {\alpha_{\ms,a}'}, \beta_{\ms}')\in {\rm Tri}_{a}(\undla). $ For each $i\in [n-1],$ we define $F(i,{\rm T},{\rm S}_a)$ as follows.
	
	\begin{enumerate}
		\item If $i,i+1\in [n]\setminus \mathcal{D}_{\mt},$
		$$F(i,{\rm T},{\rm S}_a):=(-1)^{\delta_{\beta_{\mt}}(i)}f_{(\mt,\alpha_{\mt},\beta_{\mt}+e_{i}+e_{i+1}),{\rm S}_a};
		$$
		\item
		if $i={d(\mt,\mt^{\undla})}(i_p)\in \mathcal{D}_{\mt}, i+1\in [n]\setminus \mathcal{D}_{\mt},$
		$$F(i,{\rm T},{\rm S}_a):=\begin{cases}(-1)^{|\beta_{\mt}|_{>i}+|\alpha_{\mt,\bar{1}}|_{<i}}	f_{(\mt,\alpha_{\mt}+e_i,\beta_{\mt}+e_{i+1}),{\rm S}_a}, & \text{ if  $p\neq t$ is odd},\\
			(-1)^{|\beta_{\mt}|_{>i}+|\alpha_{\mt}|}	f_{(\mt,\alpha_{\mt},\beta_{\mt}+e_{i+1}),{\rm S}_{a+\bar{1}}}, & \text{ if  $p=t$},\\
			(-\sqrt{-1})(-1)^{|\beta_{\mt}|_{>i}+|\alpha_{\mt,\bar{1}}|_{\leq {d(\mt,\mt^{\undla})}(i_{p-1})}}&\\
			\qquad\qquad	\qquad\qquad \cdot f_{(\mt,\alpha_{\mt}+e_{{d(\mt,\mt^{\undla})}(i_{p-1})},\beta_{\mt}+e_{i+1}),{\rm S}_a}, &\text{ if  $p$ is even};
		\end{cases}
		$$
		\item
		if $i+1={d(\mt,\mt^{\undla})}(i_p)\in \mathcal{D}_{\mt}, i\in [n]\setminus \mathcal{D}_{\mt},$ $$F(i,{\rm T},{\rm S}_a):=\begin{cases}(-1)^{|\beta_{\mt}|_{\geq i}+|\alpha_{\mt,\bar{1}}|_{<i+1}}	f_{(\mt,\alpha_{\mt}+e_{i+1},\beta_{\mt}+e_{i}),{\rm S}_a},& \text{ if  $p\neq t$ is odd},\\
			(-1)^{|\beta_{\mt}|_{\geq i}+|\alpha_{\mt}|}	f_{(\mt,\alpha_{\mt},\beta_{\mt}+e_{i}),{\rm S}_{a+\bar{1}}},& \text{ if  $p=t$},\\
			(-\sqrt{-1})(-1)^{|\beta_{\mt}|_{\geq i}+|\alpha_{\mt,\bar{1}}|_{\leq {d(\mt,\mt^{\undla})}(i_{p-1})}}&\\
			\qquad\qquad\qquad\qquad\cdot	f_{(\mt,\alpha_{\mt}+e_{{d(\mt,\mt^{\undla})}(i_{p-1})},\beta_{\mt}+e_{i}),{\rm S}_a},&\text{ if $p$ is even};\end{cases}
		$$
		\item
		if $i={d(\mt,\mt^{\undla})}(i_p), i+1={d(\mt,\mt^{\undla})}(i_{p'})\in \mathcal{D}_{\mt},$ \begin{equation*}
			F(i,{\rm T},{\rm S}_a):=\begin{cases}(-1)^{\delta_{\alpha_{\mt}}(i)}f_{(\mt,\alpha_{\mt}+e_i+e_{i+1},\beta_{\mt}),{\rm S}_a}, & \text{ if  $p\neq t\neq p'$ are odd},\\
				(-1)^{|\alpha_\mt|_{\geq i}}f_{(\mt,\alpha_{\mt}+e_{i+1},\beta_{\mt}),{\rm S}_{a+\bar{1}}}, & \text{ if  $p=t$ and $ p'$ is odd},\\
				(-1)^{|\alpha_\mt|_{\geq i}}f_{(\mt,\alpha_{\mt}+e_{i},\beta_{\mt}),{\rm S}_{a+\bar{1}}}, & \text{ if  $p$ is odd, $ t=p'$},\\								
				(-\sqrt{-1})(-1)^{|\alpha_{\mt,\bar{1}}|_{<i+1}}&\\
				\qquad\cdot(-1)^{|\alpha_{\mt,\bar{1}}+e_{i+1}|_{\leq{d(\mt,\mt^{\undla})}(i_{p-1})}}&\\
				\qquad\qquad\cdot	 f_{(\mt,\alpha_{\mt}+e_{{d(\mt,\mt^{\undla})}(i_{p-1})}+e_{i+1}, \beta_{\mt}),{\rm S}_{a}},& \text{ if  $p$ is even, $p'\neq t$ is odd},\\
				(-\sqrt{-1})(-1)^{|\alpha_{\mt}|+|\alpha_{\mt,\bar{1}}|_{\leq{d(\mt,\mt^{\undla})}(i_{p-1})}}  &\\
				\qquad\qquad\cdot f_{(\mt,\alpha_{\mt}+e_{{d(\mt,\mt^{\undla})}(i_{p-1})}, \beta_{\mt}),{\rm S}_{a+\bar{1}}},& \text{ if  $p$ is even, $p'=t$},\\
				(-\sqrt{-1})(-1)^{|\alpha_{\mt,\bar{1}}|_{\leq {d(\mt,\mt^{\undla})}(i_{p'-1})}}&\\
				\qquad\cdot	(-1)^{|\alpha_{\mt,\bar{1}}+e_{{d(\mt,\mt^{\undla})}(i_{p'-1})}|_{<i}}&\\
				\qquad\qquad\cdot  f_{(\mt,\alpha_{\mt}+e_{{d(\mt,\mt^{\undla})}(i_{p'-1})}+e_{i}, \beta_{\mt}),{\rm S}_{a}} , &\text{ if  $p\neq t$ is odd, $p'$ is even},\\
				(-\sqrt{-1})(-1)^{|\alpha_{\mt,\bar{1}}|_{> {d(\mt,\mt^{\undla})}(i_{p'-1})}}&\\
				\qquad\qquad\cdot 	f_{(\mt,\alpha_{\mt}+e_{{d(\mt,\mt^{\undla})}(i_{p'-1})}, \beta_{\mt}),{\rm S}_{a+\bar{1}}} ,&\text{ if  $p=t$ and $p'$ is even},\\
				=(-1)^{1+|\alpha_{\mt,\bar{1}}|_{\leq {d(\mt,\mt^{\undla})}(i_{p'-1})}}&\\
				\qquad\cdot		(-1)^{|\alpha_{\mt,\bar{1}}+e_{{d(\mt,\mt^{\undla})}(i_{p'-1})}|_{\leq{d(\mt,\mt^{\undla})}(i_{p-1})}}&\\
				\qquad\qquad\cdot f_{(\mt,\alpha_{\mt}+e_{{d(\mt,\mt^{\undla})}(i_{p-1})}+e_{{d(\mt,\mt^{\undla})}(i_{p'-1})}, \beta_{\mt}),{\rm S}_{a}}&\text{ if  $p,p'$ are even}.
			\end{cases}
		\end{equation*}
	\end{enumerate}
\end{enumerate}
\end{defn}
			The action of the generators on the seminormal basis is given by the following result.
		\begin{prop}\label{generators action on seminormal basis}
			Let $\undla\in\mathscr{P}^{\bullet,m}_{n}$ with $\bullet\in\{\mathsf{0},\mathsf{s},\mathsf{ss}\}.$
			
			\begin{enumerate}
				\item		 Suppose $d_{\undla}=0.$ Let ${\rm S}=(\ms, \alpha_{\ms}', \beta_{\ms}'),
				{\rm T}=(\mt, \alpha_{\mt}, \beta_{\mt})\in {\rm Tri}(\undla)$ and ${d(\mt,\mt^{\undla})} \in P(\undla)$.
				\begin{enumerate}
					\item For each $i\in [n],$ we have
					\begin{align}
						X_i \cdot f_{{\rm T},{\rm S}}
						=\mathtt{b}_{\mt,i}^{-\nu_{\beta_{\mt}}(i)} f_{{\rm T},{\rm S}}.\nonumber
					\end{align}
					\item For each $i\in [n],$ we have
					\begin{align*}\label{C acts on f}
						C_i \cdot f_{{\rm T},{\rm S}}
						&=\begin{cases}
							(-1)^{|\beta_{\mt}|_{<i}}  f_{(\mt,\alpha_{\mt},\beta_{\mt}+e_i),{\rm S}}, & \text{ if } i\in [n]\setminus \mathcal{D}_{\mt}, \\
							(-1)^{|\beta_{\mt}|+|\alpha_{\mt}|_{<i}} f_{(\mt,\alpha_{\mt}+e_{i},\beta_{\mt}),{\rm S}}, & \text{ if  $i={d(\mt,\mt^{\undla})}(i_p) \in \mathcal{D}_{\mt}$ ,where $p$ is odd},\\
							(-\sqrt{-1})(-1)^{|\beta_{\mt}|+|\alpha_{\mt}|_{\leq {d(\mt,\mt^{\undla})}(i_{p-1})}}&\\
							\qquad\qquad \cdot f_{(\mt,\alpha_{\mt}+e_{{d(\mt,\mt^{\undla})}(i_{p-1})},\beta_{\mt}),{\rm S}}, & \text{ if  $i={d(\mt,\mt^{\undla})}(i_p) \in \mathcal{D}_{\mt}$ ,where $p$ is even},
						\end{cases}
					\end{align*}

					\item For each $i\in [n-1],$ we have
					\begin{align*}
						&T_i\cdot f_{{\rm T},{\rm S}}\\
						=&-\frac{\epsilon}{\mathtt{b}_{\mt,i}^{-\nu_{\beta_{\mt}}(i)}\mathtt{b}_{\mt,i+1}^{\nu_{\beta_{\mt}}(i+1)}-1} f_{{\rm T},{\rm S}} \nonumber\\
						&\qquad+\frac{\epsilon}{\mathtt{b}_{\mt,i}^{\nu_{\beta_{\mt}}(i)}\mathtt{b}_{\mt,i+1}^{\nu_{\beta_{\mt}}(i+1)}-1}F(i,{\rm T},{\rm S})\nonumber\\
						&\qquad\qquad+\delta(s_i\mt)(-1)^{\delta_{\beta_{\mt}}(i)\delta_{\beta_{\mt}}(i+1)+\delta_{\alpha_{\mt}}(i)\delta_{\alpha_{\mt}}(i+1)}\sqrt{\mathtt{c}_{\mt}(i)}\frac{\mathtt{c}_{\mt,\ms}}{\mathtt{c}_{s_i\cdot\mt,\ms}}f_{s_i\cdot{\rm T},{\rm S}}
					\end{align*}
					
				\end{enumerate}
				
				\item	Suppose $d_{\undla}=1.$
				Let $a\in \Z_2$, ${\rm T}=(\mt, \alpha_{\mt}, \beta_{\mt})\in {\rm Tri}_{\bar{0}}(\undla),$
				${\rm S}_{a}=(\ms, {\alpha_{\ms,a}'}, \beta_{\ms}')\in {\rm Tri}_{a}(\undla)$ and ${d(\mt,\mt^{\undla})} \in P(\undla)$.
				\begin{enumerate}
					\item For each $i\in [n],$ we have
					\begin{align}
						X_i \cdot f_{{\rm T},{\rm S}_{a}}
						=\mathtt{b}_{\mt,i}^{-\nu_{\beta_{\mt}}(i)} f_{{\rm T},{\rm S}_{a}}.\nonumber
					\end{align}
					\item For each $i\in [n],$ we have
					\begin{align*}
						C_i\cdot f_{{\rm T},{\rm S}_a}	=\begin{cases} 	
					(-1)^{|\beta_{\mt}|_{<i}}f_{(\mt,\alpha_{\mt},\beta_{\mt}+e_i ),{\rm S}_{a}},&\text{if $ i\in [n]\setminus \mathcal{D}_{\mt}$,}\\
					(-1)^{|\beta_{\mt}|+|\alpha_{\mt,\bar{1}}|_{<i}} 	&\\
					\qquad\qquad\qquad\cdot	f_{(\mt,\alpha_{\mt}+e_i ,\beta_{\mt}),{\rm S}_{a}}, &\text{if $i={d(\mt,\mt^{\undla})}(i_p) \in \mathcal{D}_{\mt},\,p\neq t$ is odd,}\\
							(-1)^{|\beta_{\mt}|+|\alpha_{\mt}|} 	f_{{\rm T},{\rm S}_{a+\bar{1}}}, &\text{if $ i={d(\mt,\mt^{\undla})}(i_p) \in \mathcal{D}_{\mt},\,p=t$,}\\
							(-\sqrt{-1})	(-1)^{|\beta_{\mt}|+|\alpha_{\mt,\bar{1}}|_{\leq {d(\mt,\mt^{\undla})}(i_{p-1})}}&\\
								\qquad\cdot	 f_{(\mt,\alpha_{\mt}+e_{{d(\mt,\mt^{\undla})}(i_{p-1})},\beta_{\mt}),{\rm S}_{a}}, &\text{if $ i={d(\mt,\mt^{\undla})}(i_p) \in \mathcal{D}_{\mt},\,p$ is even.}\\
						\end{cases}
					\end{align*}
					
					\item For each $i\in [n-1],$ we have
					\begin{align*}
						&T_i\cdot f_{{\rm T},{\rm S}_a}\\
						=&-\frac{\epsilon}{\mathtt{b}_{\mt,i}^{-\nu_{\beta_{\mt}}(i)}\mathtt{b}_{\mt,i+1}^{\nu_{\beta_{\mt}}(i+1)}-1} f_{{\rm T},{\rm S}_a} \nonumber\\
						&\qquad+\frac{\epsilon}{\mathtt{b}_{\mt,i}^{\nu_{\beta_{\mt}}(i)}\mathtt{b}_{\mt,i+1}^{\nu_{\beta_{\mt}}(i+1)}-1}F(i,{\rm T},{\rm S}_a)\nonumber\\
						&\qquad\qquad+\delta(s_i\mt)\cdot \begin{cases} (-1)^{\delta_{\beta_{\mt}}(i)\delta_{\beta_{\mt}}(i+1)+\delta_{\alpha_{\mt}}(i)\delta_{\alpha_{\mt}}(i+1)}\sqrt{\mathtt{c}_{\mt}(i)}\frac{\mathtt{c}_{\mt,\ms}}{\mathtt{c}_{s_i\cdot\mt,\ms}}&\\
							\qquad\qquad \cdot f_{s_i\cdot {\rm T}, {\rm S}_{a}},&\text{if $i\neq d(\mt,\mt^{\undla})(i_t)\neq i+1,$}\\
							(-1)^{\delta_{\alpha_{\mt}}(i+1)}\sqrt{\mathtt{c}_{\mt}(i)}\frac{\mathtt{c}_{\mt,\ms}}{\mathtt{c}_{s_i\cdot\mt,\ms}}&\\
							\qquad\qquad \cdot f_{s_i\cdot {\rm T}, {\rm S}_{a}}, &\text{if $i= d(\mt,\mt^{\undla})(i_t),$}\\
							(-1)^{\delta_{\alpha_{\mt}}(i)}\sqrt{\mathtt{c}_{\mt}(i)}\frac{\mathtt{c}_{\mt,\ms}}{\mathtt{c}_{s_i\cdot\mt,\ms}}&\\
							\qquad\qquad \cdot f_{s_i\cdot {\rm T}, {\rm S}_{a}},&\text{if $i+1= d(\mt,\mt^{\undla})(i_t).$}
							\end{cases}	
						\end{align*}
				\end{enumerate}
			\end{enumerate}
		\end{prop}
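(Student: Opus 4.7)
The overall strategy will be to exploit the semisimplicity of $\mHfcn$ proved in Theorem \ref{semisimple:non-dege}. Since $f_{{\rm T},{\rm S}}\in F_{\rm T}\mHfcn F_{\rm S}\subseteq B_{\undla}$, both sides of each identity vanish on every simple module $\mathbb{D}(\underline{\mu})$ with $\underline{\mu}\neq\undla$, so it is enough to verify that the two sides act identically on the basis
$$
\bigsqcup_{\mfku\in\Std(\undla)}\Bigl\{C^{\beta_{\mfku}^{\prime\prime}}C^{\alpha_{\mfku}^{\prime\prime}}v_{\mfku}\mid \beta_{\mfku}^{\prime\prime}\in\Z_2([n]\setminus\mathcal{D}_{\mfku}),\ \alpha_{\mfku}^{\prime\prime}\in\Z_2(\mathcal{OD}_{\mfku})\Bigr\}
$$
of $\mathbb{D}(\undla)$ constructed in Theorem \ref{actions of generators on L basis}. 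The key input on the left is the ``collapse'' formulas \eqref{SNB. eq2} and \eqref{SNB. eq4}: the operator $f_{{\rm T},{\rm S}}$ (respectively $f_{{\rm T},{\rm S}_a}$) annihilates every basis vector except those labelled by ${\rm S}$ (resp.\ by ${\rm S}_b$ for some $b$) and sends such a vector to an explicit scalar multiple of $C^{\beta_{\mt}}C^{\alpha_{\mt}}v_{\mt}$.

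With this in hand, the verification of the $X_i$-formula is immediate: after $f_{{\rm T},{\rm S}}$ collapses a basis vector to a scalar multiple of $C^{\beta_{\mt}}C^{\alpha_{\mt}}v_{\mt}$, the eigenvalue formula \eqref{X eigenvalues} of Theorem \ref{actions of generators on L basis}(1) produces the scalar $\mathtt{b}_{\mt,i}^{-\nu_{\beta_{\mt}}(i)}$, and the same scalar arises from evaluating the right-hand side on the collapsed vector. The $C_i$-formula is handled in the same spirit: one computes $C_i\cdot(C^{\beta_{\mt}}C^{\alpha_{\mt}}v_{\mt})$ by \eqref{Caction}, matches the resulting tuple of (sign)$\times C^{\beta'}C^{\alpha'}v_{\mt}$ against the putative right-hand side, and checks agreement case by case according to whether $i$ and $i+1$ lie in $\mathcal{D}_{\mt}$. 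In the type $\texttt{Q}$ case ($d_{\undla}=1$) an extra subtlety appears when $i=d(\mt,\mt^{\undla})(i_t)$, because the change $\alpha_{\mt}\mapsto\alpha_{\mt}+e_{i}$ toggles the $\Z_2$-parity of ${\rm T}$ and must be reinterpreted as a flip ${\rm S}_a\mapsto{\rm S}_{a+\bar1}$; tracking this via the convention ${\rm T}_b=(\mt,\alpha_{\mt,b},\beta_{\mt})$ gives the formula stated.

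The hard step is the $T_i$-formula. Here I would apply $T_i\cdot f_{{\rm T},{\rm S}}$ to a basis vector $C^{\beta_{\mfku}^{\prime\prime}}C^{\alpha_{\mfku}^{\prime\prime}}v_{\mfku}$, collapse to a scalar multiple of $C^{\beta_{\mt}}C^{\alpha_{\mt}}v_{\mt}$ by \eqref{SNB. eq2}, and then invoke the three-term action formula \eqref{Taction Non-dege} of Theorem \ref{actions of generators on L basis}(4). The first two summands of \eqref{Taction Non-dege} are diagonal in $\mt$; the first reproduces the diagonal coefficient $-\epsilon/(\mathtt{b}_{\mt,i}^{-\nu_{\beta_{\mt}}(i)}\mathtt{b}_{\mt,i+1}^{\nu_{\beta_{\mt}}(i+1)}-1)$ in front of $f_{{\rm T},{\rm S}}$, while the second, after a case-by-case identification of $R(i,\beta_\mt,\alpha_\mt,\mt)$ as a single basis vector, matches the combinatorial recipe defining $F(i,{\rm T},{\rm S})$. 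The third summand, which appears only when $\delta(s_i\mt)=1$, sends $C^{\beta_{\mt}}C^{\alpha_{\mt}}v_{\mt}$ into a multiple of $C^{s_i\cdot\beta_{\mt}}C^{s_i\cdot\alpha_{\mt}}v_{s_i\mt}$; comparing this with the action of $f_{s_i\cdot{\rm T},{\rm S}}$ on the same vector via \eqref{SNB. eq2} produces the ratio of scalars $\mathtt{c}_{\mt,\ms}/\mathtt{c}_{s_i\cdot\mt,\ms}$, as in the proposition. In the type $\texttt{Q}$ case one must again be careful when $i$ or $i+1$ equals $d(\mt,\mt^{\undla})(i_t)$: the Clifford generator attached to the ``last odd diagonal box'' interacts with the $\alpha_{\mt,a}\mapsto\alpha_{\mt,a+\bar1}$ relabelling and contributes the extra signs $(-1)^{\delta_{\alpha_{\mt}}(i+1)}$ and $(-1)^{\delta_{\alpha_{\mt}}(i)}$ recorded in the statement.

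The main obstacle will be exactly this bookkeeping of signs and diagonal-case bifurcations in the $T_i$-formula: each of the four cases of $R(i,\beta_\mt,\alpha_\mt,\mt)$ from Theorem \ref{actions of generators on L basis} has to be re-encoded as $F(i,{\rm T},{\rm S})$, and in the type $\texttt{Q}$ situation the definition of $F(i,{\rm T},{\rm S}_a)$ further splits depending on whether the indices $d(\mt,\mt^{\undla})(i_p)$ or $d(\mt,\mt^{\undla})(i_{p'})$ coincide with $d(\mt,\mt^{\undla})(i_t)$. Once these case analyses are carried out and the resulting signs reconciled with Definition \ref{decomposition of OD} and the conventions for $\alpha_{\mt,b}$, everything reduces to comparing two explicit scalars on a single basis vector, and the proof concludes by semisimplicity.
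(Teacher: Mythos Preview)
Your proposal is correct and follows essentially the same approach as the paper: verify the identities as operators on $\mathbb{D}(\undla)$ using the collapse formulas \eqref{SNB. eq2}, \eqref{SNB. eq4} together with the explicit action formulas of Theorem \ref{actions of generators on L basis}, and conclude by semisimplicity (Theorem \ref{semisimple:non-dege}). The paper likewise singles out the type $\texttt{Q}$ case as the one requiring detailed sign-tracking and carries out exactly the case-by-case analysis you describe.
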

		
		\begin{proof}
			Using Proposition \ref{actions of generators on L basis}, \eqref{SNB. eq1}, \eqref{SNB. eq2},\,\eqref{SNB. eq3} and \eqref{SNB. eq4}, we can compute that these equations hold as operators on $\mathbb{D}(\undla)$ and the Proposition follows from Theorem \ref{semisimple:non-dege}. We shall only give details of (2) which are tedious. The details of (1) is similar as (2) and easier, hence we shall omit them.
			
			From now on, we assume $d_{\undla}=1.$ Let $a,b\in \Z_2$, ${\rm T}=(\mt, \alpha_{\mt}, \beta_{\mt})\in {\rm Tri}_{\bar{0}}(\undla),$
			${\rm S}_{a}=(\ms, {\alpha_{\ms, a}'}, \beta_{\ms}')\in {\rm Tri}_{a}(\undla)$. By \eqref{SNB. eq4}, we have
				\begin{align}&f_{{\rm T},{\rm S}_a} \cdot C^{\beta_{\ms}^{'}} C^{\alpha_{\ms,b}^{'}} v_{\ms} \nonumber\\
				&\qquad\qquad	=(-1)^{b|\alpha_{\ms}'|_{>d(\ms,\mt^{\undla})(i_t)}+(a+b+1)|\alpha_{\mt}|_{>d(\mt,\mt^{\undla})(i_t)}}\mathtt{c}_{\mt,\ms}\nonumber\\
					&\qquad\qquad\qquad\qquad\qquad\cdot C^{\beta_{\mt}} C^{\alpha_{\mt, a+b}}v_{\mt}.
					\end{align}\label{basic action}
\begin{enumerate}
	\item Let $i\in [n].$  (2,a) follows from  Proposition \ref{actions of generators on L basis} (1), \eqref{basic action} and Theorem \ref{semisimple:non-dege}.
		\item Let $i\in [n].$
		\begin{enumerate}
			\item If $ i\in [n]\setminus \mathcal{D}_{\mt}$, using Proposition \ref{actions of generators on L basis} (3), we have
			\begin{align*}&(-1)^{b|\alpha_{\ms}'|_{>d(\ms,\mt^{\undla})(i_t)}+(a+b+1)|\alpha_{\mt}|_{>d(\mt,\mt^{\undla})(i_t)}}\mathtt{c}_{\mt,\ms}\\
				&\qquad\qquad	\qquad\qquad\qquad\cdot	C_i\cdot C^{\beta_{\mt}}C^{\alpha_{\mt,a+b}}v_{\mt}\\
				&=(-1)^{b|\alpha_{\ms}'|_{>d(\ms,\mt^{\undla})(i_t)}+(a+b+1)|\alpha_{\mt}|_{>d(\mt,\mt^{\undla})(i_t)}}\mathtt{c}_{\mt,\ms}\\
		&\qquad\qquad	\qquad\qquad\qquad\cdot	(-1)^{|\beta_{\mt}|_{<i}}  C^{\beta_{\mt}+e_i}C^{\alpha_{\mt,a+b}}v_{\mt}\\
			&=	(-1)^{|\beta_{\mt}|_{<i}}f_{(\mt,\alpha_{\mt},\beta_{\mt}+e_i ),{\rm S}_{a}}C^{\beta_{\ms}^{'}} C^{\alpha_{\ms,b}^{'}} v_{\ms}.
				\end{align*}
				\item If $i={d(\mt,\mt^{\undla})}(i_p) \in \mathcal{D}_{\mt},$ using Proposition \ref{actions of generators on L basis} (3), we have 	\begin{align*}&(-1)^{b|\alpha_{\ms}'|_{>d(\ms,\mt^{\undla})(i_t)}+(a+b+1)|\alpha_{\mt}|_{>d(\mt,\mt^{\undla})(i_t)}}\mathtt{c}_{\mt,\ms}\\
					&\qquad\qquad	\qquad\qquad\qquad\cdot	C_i\cdot C^{\beta_{\mt}}C^{\alpha_{\mt,a+b}}v_{\mt}\\
							&=\begin{cases} (-1)^{b|\alpha_{\ms}'|_{>d(\ms,\mt^{\undla})(i_t)}+(a+b+1)|\alpha_{\mt}|_{>d(\mt,\mt^{\undla})(i_t)}}\mathtt{c}_{\mt,\ms}&\\
								\qquad\qquad\qquad\cdot	(-1)^{|\beta_{\mt}|+|\alpha_{\mt, a+b}|_{<i}} &\\
								\qquad\qquad\	\qquad\qquad\qquad\cdot		C^{\beta_{\mt}}C^{\alpha_{\mt,a+b}+e_{i}}v_{\mt}, &\text{if $p\neq t$ is odd,}\\
								(-1)^{b|\alpha_{\ms}'|_{>d(\ms,\mt^{\undla})(i_t)}+(a+b+1)|\alpha_{\mt}|_{>i}}\mathtt{c}_{\mt,\ms}&\\
								\qquad\qquad\qquad\cdot	(-1)^{|\beta_{\mt}|+|\alpha_{\mt, a+b}|_{<i}} &\\
								\qquad\qquad\	\qquad\qquad\qquad\cdot		C^{\beta_{\mt}}C^{\alpha_{\mt,a+b+\bar{1}}}v_{\mt}, &\text{if $p=t$,}\\
							(-1)^{b|\alpha_{\ms}'|_{>d(\ms,\mt^{\undla})(i_t)}+(a+b+1)|\alpha_{\mt}|_{>d(\mt,\mt^{\undla})(i_t)}}\mathtt{c}_{\mt,\ms}&\\
								\qquad\qquad\qquad\cdot		(-\sqrt{-1})(-1)^{|\beta_{\mt}|+|\alpha_{\mt,a+b}|_{\leq {d(\mt,\mt^{\undla})}(i_{p-1})}}&\\
								\qquad\qquad\	\qquad\qquad\qquad\cdot	C^{\beta_{\mt}}C^{\alpha_{\mt,a+b}+e_{{d(\mt,\mt^{\undla})}(i_{p-1})}}v_{\mt}, &\text{if $p$ is even}\\
							\end{cases}\\
							&=\begin{cases} (-1)^{b|\alpha_{\ms}'|_{>d(\ms,\mt^{\undla})(i_t)}+(a+b+1)|\alpha_{\mt}+e_i|_{>d(\mt,\mt^{\undla})(i_t)}}\mathtt{c}_{\mt,\ms}&\\
								\qquad\qquad\qquad\cdot	(-1)^{(a+b+1)|e_i|_{>d(\mt,\mt^{\undla})(i_t)}+|\beta_{\mt}|+|\alpha_{\mt, a+b}|_{<i}} &\\
								\qquad\qquad\	\qquad\qquad\qquad\cdot		C^{\beta_{\mt}}C^{\alpha_{\mt,a+b}+e_{i}}v_{\mt}, &\text{if $p\neq t$ is odd,}\\
								(-1)^{b|\alpha_{\ms}'|_{>d(\ms,\mt^{\undla})(i_t)}+(a+1+b+1)|\alpha_{\mt}|_{>i}}\mathtt{c}_{\mt,\ms}&\\
								\qquad\qquad\qquad\cdot	(-1)^{|\alpha_{\mt}|_{>i}+|\beta_{\mt}|+|\alpha_{\mt, a+b}|_{<i}} &\\
								\qquad\qquad\	\qquad\qquad\qquad\cdot		C^{\beta_{\mt}}C^{\alpha_{\mt,a+b+\bar{1}}}v_{\mt}, &\text{if $p=t$,}\\
								(-1)^{b|\alpha_{\ms}'|_{>d(\ms,\mt^{\undla})(i_t)}+(a+b+1)|\alpha_{\mt}+e_{{d(\mt,\mt^{\undla})}}(i_{p-1})|_{>d(\mt,\mt^{\undla})(i_t)}}\mathtt{c}_{\mt,\ms}&\\
								\qquad\qquad\cdot		(-\sqrt{-1})(-1)^{(a+b+1)|e_{{d(\mt,\mt^{\undla})(i_{p-1})}}|_{>d(\mt,\mt^{\undla})(i_t)}}&\\
							\qquad\qquad\qquad\cdot		(-1)^{|\beta_{\mt}|+|\alpha_{\mt,a+b}|_{\leq {d(\mt,\mt^{\undla})}(i_{p-1})}}&\\
								\qquad\qquad\	\qquad\qquad\qquad\cdot	C^{\beta_{\mt}}C^{\alpha_{\mt,a+b}+e_{{d(\mt,\mt^{\undla})}(i_{p-1})}}v_{\mt}, &\text{if $p$ is even}\\
							\end{cases}\\
								&=\begin{cases} 	(-1)^{|\beta_{\mt}|+|\alpha_{\mt}+e_{d(\mt,\mt^{\undla})(i_t)}|_{<i}} 	f_{(\mt,\alpha_{\mt}+e_i ,\beta_{\mt}),{\rm S}_{a}}C^{\beta_{\ms}^{'}} C^{\alpha_{\ms,b}^{'}} v_{\ms}, &\text{if $p\neq t$ is odd,}\\
							(-1)^{|\beta_{\mt}|+|\alpha_{\mt}|} 	f_{{\rm T},{\rm S}_{a+\bar{1}}}C^{\beta_{\ms}^{'}} C^{\alpha_{\ms,b}^{'}} v_{\ms}, &\text{if $p=t$,}\\
								(-\sqrt{-1})	(-1)^{|\beta_{\mt}|+|\alpha_{\mt,\bar{1}}|_{\leq {d(\mt,\mt^{\undla})}(i_{p-1})}}&\\
								\qquad\qquad	\qquad\qquad\qquad\cdot	 f_{(\mt,\alpha_{\mt}+e_{{d(\mt,\mt^{\undla})}(i_{p-1})},\beta_{\mt}),{\rm S}_{a}}C^{\beta_{\ms}^{'}} C^{\alpha_{\ms,b}^{'}} v_{\ms}, &\text{if $p$ is even}\\
							\end{cases}
						\end{align*}
		\end{enumerate}
		These together with \eqref{basic action} and Theorem \ref{semisimple:non-dege} prove (2,b).
		\item
			Let $i\in [n-1].$ 			
	\begin{enumerate}	\item
				
			We first compute
			\begin{align*}
				(-1)^{b|\alpha_{\ms}'|_{>d(\ms,\mt^{\undla})(i_t)}+(a+b+1)|\alpha_{\mt}|_{>d(\mt, \mt^{\undla})(i_t)}}\mathtt{c}_{\mt,\ms}R(i,\beta_\mt,\alpha_{\mt,a+b},\mt).
		\end{align*}
		
			 \begin{enumerate}
				\item If $i,i+1\in [n]\setminus \mathcal{D}_{\mt},$  we have
				\begin{align*}	&(-1)^{b|\alpha_{\ms}'|_{>d(\ms,\mt^{\undla})(i_t)}+(a+b+1)|\alpha_{\mt}|_{>d(\mt,\mt^{\undla})(i_t)}}\mathtt{c}_{\mt,\ms}R(i,\beta_\mt,\alpha_{\mt,a+b},\mt)\\
				&	=(-1)^{b|\alpha_{\ms}'|_{>d(\ms,\mt^{\undla})(i_t)}+(a+b+1)|\alpha_{\mt}|_{>d(\mt,\mt^{\undla})(i_t)}}\mathtt{c}_{\mt,\ms}(-1)^{\delta_{\beta_{\mt}}(i)}C^{\beta_{\mt}+e_i+e_{i+1}}C^{\alpha_{\mt,a+b}}v_{\mt};			\\
				&=(-1)^{\delta_{\beta_{\mt}}(i)}f_{(\mt,\alpha_{\mt},\beta_{\mt}+e_i+e_{i+1}),S_a}C^{\beta_\ms'}C^{\alpha_{\ms,b}'}v_{\ms}.
				\end{align*}
				\item
				if $i={d(\mt,\mt^{\undla})}(i_p)\in \mathcal{D}_{\mt}, i+1\in [n]\setminus \mathcal{D}_{\mt},$ we have
				\begin{align*}&(-1)^{b|\alpha_{\ms}'|_{>d(\ms,\mt^{\undla})(i_t)}+(a+b+1)|\alpha_{\mt}|_{>d(\mt,\mt^{\undla})(i_t)}}\mathtt{c}_{\mt,\ms}R(i,\beta_\mt,\alpha_{\mt,a+b},\mt)\\
					&=\begin{cases}(-1)^{b|\alpha_{\ms}'|_{>d(\ms,\mt^{\undla})(i_t)}+(a+b+1)|\alpha_{\mt}|_{>d(\mt,\mt^{\undla})(i_t)}}\mathtt{c}_{\mt,\ms}&\\
					\qquad\qquad\qquad\qquad	(-1)^{|\beta_{\mt}|_{>i}+|\alpha_{\mt,a+b}|_{<i}}C^{\beta_{\mt}+e_{i+1}}C^{\alpha_{\mt,a+b}+e_{i}}v_{\mt}, & \text{ if  $p\neq t$ is odd},\\
					(-1)^{b|\alpha_{\ms}'|_{>d(\ms,\mt^{\undla})(i_t)}+(a+b+1)|\alpha_{\mt}|_{>i}}\mathtt{c}_{\mt,\ms}&\\
					\qquad\qquad\qquad\qquad	(-1)^{|\beta_{\mt}|_{>i}+|\alpha_{\mt}|_{<i}}C^{\beta_{\mt}+e_{i+1}}C^{\alpha_{\mt,a+b+1}}v_{\mt}, & \text{ if  $p= t$},\\
				(-1)^{b|\alpha_{\ms}'|_{>d(\ms,\mt^{\undla})(i_t)}+(a+b+1)|\alpha_{\mt}|_{>d(\mt,\mt^{\undla})(i_t)}}\mathtt{c}_{\mt,\ms}&\\	\qquad\qquad\cdot(-\sqrt{-1})(-1)^{|\beta_{\mt}|_{>i}+|\alpha_{\mt,a+b}|_{\leq {d(\mt,\mt^{\undla})}(i_{p-1})}}&
			\\	\qquad\qquad\qquad\qquad \cdot C^{\beta_{\mt}+e_{i+1}}C^{\alpha_{\mt,a+b}+e_{{d(\mt,\mt^{\undla})}(i_{p-1})}}v_{\mt}, &\text{ if  $p$ is even},
				\end{cases}\\
				&=\begin{cases}(-1)^{b|\alpha_{\ms}'|_{>d(\ms,\mt^{\undla})(i_t)}+(a+b+1)|\alpha_{\mt}+e_i|_{>d(\mt,\mt^{\undla})(i_t)}+|e_i|_{>d(\mt,\mt^{\undla})(i_t)}}\mathtt{c}_{\mt,\ms}&\\
					\qquad\qquad\qquad\qquad	(-1)^{|\beta_{\mt}|_{>i}+|\alpha_{\mt}|_{<i}}C^{\beta_{\mt}+e_{i+1}}C^{\alpha_{\mt,a+b}+e_{i}}v_{\mt}, & \text{ if  $p\neq t$ is odd},\\
						(-1)^{b|\alpha_{\ms}'|_{>d(\ms,\mt^{\undla})(i_t)}+(a+1+b+1)|\alpha_{\mt}|_{>i}+|\alpha_{\mt}|_{>i}}\mathtt{c}_{\mt,\ms}&\\
					\qquad\qquad\qquad\qquad	(-1)^{|\beta_{\mt}|_{>i}+|\alpha_{\mt}|_{<i}}C^{\beta_{\mt}+e_{i+1}}C^{\alpha_{\mt,a+b+1}}v_{\mt}, & \text{ if  $p= t$},\\
					(-1)^{b|\alpha_{\ms}'|_{>d(\ms,\mt^{\undla})(i_t)}+(a+b+1)|\alpha_{\mt}+e_{{d(\mt,\mt^{\undla})}(i_{p-1})}|_{>d(\mt,\mt^{\undla})(i_t)}}\mathtt{c}_{\mt,\ms}&\\
					\qquad\qquad\cdot(-\sqrt{-1})(-1)^{|e_{{d(\mt,\mt^{\undla})}(i_{p-1})}|_{>d(\mt,\mt^{\undla})(i_t)}+|\beta_{\mt}|_{>i}+|\alpha_{\mt}|_{\leq {d(\mt,\mt^{\undla})}(i_{p-1})}}&\\
						\qquad\qquad	\qquad\qquad \cdot C^{\beta_{\mt}+e_{i+1}}C^{\alpha_{\mt,a+b}+e_{{d(\mt,\mt^{\undla})}(i_{p-1})}}v_{\mt}, &\text{ if  $p$ is even},
				\end{cases}\\
					&=\begin{cases}(-1)^{|\beta_{\mt}|_{>i}+|\alpha_{\mt}+e_{d(\mt,\mt^{\undla})(i_t)}|_{<i}}	f_{(\mt,\alpha_{\mt}+e_i,\beta_{\mt}+e_{i+1}),{\rm S}_a}C^{\beta_{\ms}'}C^{\alpha_{\mt,b}}v_{\ms}, & \text{ if  $p\neq t$ is odd},\\
						(-1)^{|\beta_{\mt}|_{>i}+|\alpha_{\mt}|}	f_{(\mt,\alpha_{\mt},\beta_{\mt}+e_{i+1}),{\rm S}_{a+\bar{1}}}C^{\beta_{\ms}'}C^{\alpha_{\mt,b}}v_{\ms}, & \text{ if  $p=t$},\\
			(-\sqrt{-1})(-1)^{|\beta_{\mt}|_{>i}+|\alpha_{\mt}+e_{d(\mt,\mt^{\undla})(i_t)}|_{\leq {d(\mt,\mt^{\undla})}(i_{p-1})}}&\\
				\qquad\qquad	\qquad\qquad \cdot f_{(\mt,\alpha_{\mt}+e_{{d(\mt,\mt^{\undla})}(i_{p-1})},\beta_{\mt}+e_{i+1}),{\rm S}_a}C^{\beta_{\ms}'}C^{\alpha_{\mt,b}}v_{\ms}, &\text{ if  $p$ is even}.
				\end{cases}
			\end{align*}
				\item
				If $i+1={d(\mt,\mt^{\undla})}(i_p)\in \mathcal{D}_{\mt}, i\in [n]\setminus \mathcal{D}_{\mt},$ we have \begin{align*}&(-1)^{b|\alpha_{\ms}'|_{>d(\ms,\mt^{\undla})(i_t)}+(a+b+1)|\alpha_{\mt}|_{>d(\mt,\mt^{\undla})(i_t)}}\mathtt{c}_{\mt,\ms}R(i,\beta_\mt,\alpha_{\mt,a+b},\mt)\\
					&=\begin{cases}(-1)^{b|\alpha_{\ms}'|_{>d(\ms,\mt^{\undla})(i_t)}+(a+b+1)|\alpha_{\mt}|_{>d(\mt,\mt^{\undla})(i_t)}}\mathtt{c}_{\mt,\ms}&\\
						\qquad\qquad\qquad\qquad\cdot(-1)^{|\beta_{\mt}|_{\geq i}+|\alpha_{\mt,a+b}|_{<i+1}} C^{\beta_{\mt}+e_i}C^{\alpha_{\mt,a+b}+e_{i+1}}v_{\mt}, & \text{ if  $p\neq t$ is odd},\\
						(-1)^{b|\alpha_{\ms}'|_{>d(\ms,\mt^{\undla})(i_t)}+(a+b+1)|\alpha_{\mt}|_{>i+1}}\mathtt{c}_{\mt,\ms}&\\
						\qquad\qquad\qquad\qquad\cdot(-1)^{|\beta_{\mt}|_{\geq i}+|\alpha_{\mt}|_{<i+1}} C^{\beta_{\mt}+e_i}C^{\alpha_{\mt,a+b+\bar{1}}}v_{\mt}, & \text{ if  $p=t$ },\\
					(-1)^{b|\alpha_{\ms}'|_{>d(\ms,\mt^{\undla})(i_t)}+(a+b+1)|\alpha_{\mt}|_{>d(\mt,\mt^{\undla})(i_t)}}\mathtt{c}_{\mt,\ms}&\\
				\qquad\qquad\cdot	(-\sqrt{-1})(-1)^{|\beta_{\mt}|_{\geq i}+|\alpha_{\mt,a+b}|_{\leq {d(\mt,\mt^{\undla})}(i_{p-1})}}&\\
				\qquad\qquad\qquad\qquad\cdot C^{\beta_{\mt}+e_i}C^{\alpha_{\mt,a+b}+e_{{d(\mt,\mt^{\undla})}(i_{p-1})}}v_{\mt}, &\text{ if $p$ is even},\end{cases}  \nonumber\\
					&=\begin{cases}(-1)^{b|\alpha_{\ms}'|_{>d(\ms,\mt^{\undla})(i_t)}+(a+b+1)|\alpha_{\mt}+e_{i+1}|_{>d(\mt,\mt^{\undla})(i_t)}+|e_{i+1}|_{>d(\mt,\mt^{\undla})(i_t)}}\mathtt{c}_{\mt,\ms}&\\
					\qquad\qquad\qquad\qquad\cdot (-1)^{|\beta_{\mt}|_{\geq i}+|\alpha_{\mt}|_{<i+1}} C^{\beta_{\mt}+e_i}C^{\alpha_{\mt,a+b}+e_{i+1}}v_{\mt}, & \text{ if  $p\neq t$ is odd},\\
					(-1)^{b|\alpha_{\ms}'|_{>d(\ms,\mt^{\undla})(i_t)}+(a+1+b+1)|\alpha_{\mt}|_{>i+1}+|\alpha_{\mt}|_{>i+1}}\mathtt{c}_{\mt,\ms}&\\
					\qquad\qquad\qquad\qquad\cdot (-1)^{|\beta_{\mt}|_{\geq i}+|\alpha_{\mt}|_{<i+1}} C^{\beta_{\mt}+e_i}C^{\alpha_{\mt,a+b+\bar{1}}}v_{\mt}, & \text{ if  $p=t$ },\\
					(-1)^{b|\alpha_{\ms}'|_{>d(\ms,\mt^{\undla})(i_t)}+(a+b+1)|\alpha_{\mt}+e_{{d(\mt,\mt^{\undla})}(i_{p-1})}|_{>d(\mt,\mt^{\undla})(i_t)}}\mathtt{c}_{\mt,\ms}&\\
					\qquad\qquad	\cdot (-\sqrt{-1})(-1)^{|e_{{d(\mt,\mt^{\undla})}(i_{p-1})}|_{>d(\mt,\mt^{\undla})(i_t)}+|\beta_{\mt}|_{\geq i}+|\alpha_{\mt}|_{\leq {d(\mt,\mt^{\undla})}(i_{p-1})}}&\\
				\qquad\qquad\qquad\qquad\cdot	C^{\beta_{\mt}+e_i}C^{\alpha_{\mt,a+b}+e_{{d(\mt,\mt^{\undla})}(i_{p-1})}}v_{\mt}, &\text{ if $p$ is even},\end{cases}  \nonumber\\
				&=\begin{cases}(-1)^{|\beta_{\mt}|_{\geq i}+|\alpha_{\mt,\bar{1}}|_{<i+1}}	f_{(\mt,\alpha_{\mt}+e_{i+1},\beta_{\mt}+e_{i}),{\rm S}_a}C^{\beta_{\ms}'}C^{\alpha_{\mt,b}}v_{\ms}, & \text{ if  $p\neq t$ is odd},\\
			(-1)^{|\beta_{\mt}|_{\geq i}+|\alpha_{\mt}|}	f_{(\mt,\alpha_{\mt},\beta_{\mt}+e_{i}),{\rm S}_{a+\bar{1}}}C^{\beta_{\ms}'}C^{\alpha_{\mt,b}}v_{\ms} & \text{ if  $p=t$ },\\
			(-\sqrt{-1})(-1)^{|\beta_{\mt}|_{\geq i}+|\alpha_{\mt,\bar{1}}|_{\leq {d(\mt,\mt^{\undla})}(i_{p-1})}}&\\
		\qquad\qquad\qquad\qquad\cdot	f_{(\mt,\alpha_{\mt}+e_{{d(\mt,\mt^{\undla})}(i_{p-1})},\beta_{\mt}+e_{i}),{\rm S}_a}C^{\beta_{\ms}'}C^{\alpha_{\mt,b}}v_{\ms}, &\text{ if $p$ is even}.\end{cases}  \nonumber
			\end{align*}
				\item
				If $i={d(\mt,\mt^{\undla})}(i_p), i+1={d(\mt,\mt^{\undla})}(i_{p'})\in \mathcal{D}_{\mt}.$
				\begin{enumerate}
					\item If  $p$ and $p'$ are odd, we have \begin{align*}&(-1)^{b|\alpha_{\ms}'|_{>d(\ms,\mt^{\undla})(i_t)}+(a+b+1)|\alpha_{\mt}|_{>d(\mt,\mt^{\undla})(i_t)}}\mathtt{c}_{\mt,\ms}R(i,\beta_\mt,\alpha_{\mt,a+b},\mt)\\
						&=\begin{cases}(-1)^{b|\alpha_{\ms}'|_{>d(\ms,\mt^{\undla})(i_t)}+(a+b+1)|\alpha_{\mt}|_{>d(\mt,\mt^{\undla})(i_t)}}\mathtt{c}_{\mt,\ms}&\\
							\qquad\qquad\qquad\qquad	\cdot(-1)^{\delta_{\alpha_{\mt,a+b}}(i)} C^{\beta_{\mt}}C^{\alpha_{\mt,a+b}+e_{i}+e_{i+1}}v_{\mt}, & \text{ if  $p\neq t\neq p'$ are odd},\\
							(-1)^{b|\alpha_{\ms}'|_{>d(\ms,\mt^{\undla})(i_t)}+(a+b+1)|\alpha_{\mt}|_{>d(\mt,\mt^{\undla})(i_t)}}\mathtt{c}_{\mt,\ms}&\\
							\qquad\qquad\qquad\qquad	\cdot(-1)^{\delta_{\alpha_{\mt,a+b}}(i)} C^{\beta_{\mt}}C^{\alpha_{\mt,a+b+\bar{1}}+e_{i+1}}v_{\mt}, & \text{ if  $p=t$ and $ p'$ is odd},\\
							(-1)^{b|\alpha_{\ms}'|_{>d(\ms,\mt^{\undla})(i_t)}+(a+b+1)|\alpha_{\mt}|_{>d(\mt,\mt^{\undla})(i_t)}}\mathtt{c}_{\mt,\ms}&\\
							\qquad\qquad\qquad\qquad\cdot	(-1)^{\delta_{\alpha_{\mt,a+b}}(i)} C^{\beta_{\mt}}C^{\alpha_{\mt,a+b+\bar{1}}+e_{i}}v_{\mt}, & \text{ if  $p$ is odd and $ t=p'$, }
				\end{cases}\\
				&=\begin{cases}(-1)^{b|\alpha_{\ms}'|_{>d(\ms,\mt^{\undla})(i_t)}+(a+b+1)|\alpha_{\mt}+e_i+e_{i+1}|_{>d(\mt,\mt^{\undla})(i_t)}}\mathtt{c}_{\mt,\ms}&\\
				\qquad\qquad \cdot(-1)^{(a+b+1)|e_i+e_{i+1}|_{>d(\mt,\mt^{\undla})(i_t)}}&\\
				\qquad\qquad\qquad\qquad\cdot	(-1)^{\delta_{\alpha_{\mt}}(i)} C^{\beta_{\mt}}C^{\alpha_{\mt,a+b}+e_i+e_{i+1}}v_{\mt}, & \text{ if  $p\neq t\neq p'$ are odd},\\
				(-1)^{b|\alpha_{\ms}'|_{>d(\ms,\mt^{\undla})(i_t)}+(a+1+b+1)|\alpha_{\mt}+e_{i+1}|_{>i}}\mathtt{c}_{\mt,\ms}&\\
				\qquad\qquad\cdot(-1)^{(a+b)|e_{i+1}|_{>i}+|\alpha_\mt|_{>i}}&\\
				\qquad\qquad\qquad\qquad\cdot	(-1)^{\delta_{\alpha_{\mt,a+b}}(i)} C^{\beta_{\mt}}C^{\alpha_{\mt,a+b+\bar{1}}+e_{i+1}}v_{\mt}, & \text{ if  $p=t$ and $ p'$ is odd},\\
				(-1)^{b|\alpha_{\ms}'|_{>d(\ms,\mt^{\undla})(i_t)}+(a+1+b+1)|\alpha_{\mt}+e_i|_{>i+1}}\mathtt{c}_{\mt,\ms}&\\
				\qquad\qquad \cdot (-1)^{(a+b)|e_{i}|_{>i+1}+|\alpha_\mt|_{>i+1}}&\\
				\qquad\qquad\qquad\qquad\cdot	(-1)^{\delta_{\alpha_{\mt,a+b}}(i)} C^{\beta_{\mt}}C^{\alpha_{\mt,a+b+\bar{1}}+e_{i}}v_{\mt}, & \text{ if  $p$ is odd and $ t=p'$. }
	\end{cases}\\
&=\begin{cases}	(-1)^{\delta_{\alpha_{\mt}}(i)}f_{(\mt,\alpha_{\mt}+e_i+e_{i+1},\beta_{\mt}),{\rm S}_a}C^{\beta_{\ms}'}C^{\alpha_{\mt,b}}v_{\ms}, & \text{ if  $p\neq t\neq p'$ are odd},\\
(-1)^{|\alpha_\mt|_{\geq i}}f_{(\mt,\alpha_{\mt}+e_{i+1},\beta_{\mt}),{\rm S}_{a+\bar{1}}}C^{\beta_{\ms}'}C^{\alpha_{\mt,b}}v_{\ms}, & \text{ if  $p=t$ and $ p'$ is odd},\\
(-1)^{|\alpha_\mt|_{\geq i}}f_{(\mt,\alpha_{\mt}+e_{i},\beta_{\mt}),{\rm S}_{a+\bar{1}}}C^{\beta_{\ms}'}C^{\alpha_{\mt,b}}v_{\ms}, & \text{ if  $p$ is odd and $ t=p'$. }
\end{cases}
\end{align*}					
					\item If just one of $p$ and $p'$ is odd, we have \begin{align*}&(-1)^{b|\alpha_{\ms}'|_{>d(\ms,\mt^{\undla})(i_t)}+(a+b+1)|\alpha_{\mt}|_{>d(\mt,\mt^{\undla})(i_t)}}\mathtt{c}_{\mt,\ms}R(i,\beta_\mt,\alpha_{\mt,a+b},\mt)\\
						&=\begin{cases}
							(-1)^{b|\alpha_{\ms}'|_{>d(\ms,\mt^{\undla})(i_t)}+(a+b+1)|\alpha_{\mt}|_{>d(\mt,\mt^{\undla})(i_t)}}\mathtt{c}_{\mt,\ms}&\\
							\qquad\qquad	\cdot(-\sqrt{-1})(-1)^{|\alpha_{\mt,a+b}|_{<i+1}+|\alpha_{\mt,a+b}+e_{i+1}|_{\leq{d(\mt,\mt^{\undla})}(i_{p-1})}} &\\
							\qquad\qquad\qquad \cdot C^{\beta_{\mt}}C^{\alpha_{\mt,a+b}+e_{{d(\mt,\mt^{\undla})}(i_{p-1})}+e_{i+1}}v_{\mt},& \text{ if  $p$ is even, $p'\neq t$ is odd},\\
							(-1)^{b|\alpha_{\ms}'|_{>d(\ms,\mt^{\undla})(i_t)}+(a+b+1)|\alpha_{\mt}|_{>d(\mt,\mt^{\undla})(i_t)}}\mathtt{c}_{\mt,\ms}&\\
							\qquad\qquad	\cdot (-\sqrt{-1})(-1)^{|\alpha_{\mt,a+b}|_{<i+1}+|\alpha_{\mt,a+b+\bar{1}}|_{\leq{d(\mt,\mt^{\undla})}(i_{p-1})}} &\\
							\qquad\qquad\qquad \cdot C^{\beta_{\mt}}C^{\alpha_{\mt,a+b+\bar{1}}+e_{{d(\mt,\mt^{\undla})}(i_{p-1})}}v_{\mt},& \text{ if  $p$ is even and $p'=t$ },\\
							(-1)^{b|\alpha_{\ms}'|_{>d(\ms,\mt^{\undla})(i_t)}+(a+b+1)|\alpha_{\mt}|_{>d(\mt,\mt^{\undla})(i_t)}}\mathtt{c}_{\mt,\ms}&\\
							\qquad\qquad\cdot	(-\sqrt{-1})(-1)^{|\alpha_{\mt,a+b}|_{\leq {d(\mt,\mt^{\undla})}(i_{p'-1})}+|\alpha_{\mt,a+b}+e_{{d(\mt,\mt^{\undla})}(i_{p'-1})}|_{<i}}&\\
							\qquad\qquad\qquad \cdot C^{\beta_{\mt}}C^{\alpha_{\mt,a+b}+e_{i}+e_{{d(\mt,\mt^{\undla})}(i_{p'-1})}}v_{\mt}, &\text{ if  $p\neq t$ is odd and $p'$ is even},\\
							(-1)^{b|\alpha_{\ms}'|_{>d(\ms,\mt^{\undla})(i_t)}+(a+b+1)|\alpha_{\mt}|_{>d(\mt,\mt^{\undla})(i_t)}}\mathtt{c}_{\mt,\ms}&\\
							\qquad\qquad\cdot	(-\sqrt{-1})(-1)^{|\alpha_{\mt,a+b}|_{\leq {d(\mt,\mt^{\undla})}(i_{p'-1})}+|\alpha_{\mt,a+b}+e_{{d(\mt,\mt^{\undla})}(i_{p'-1})}|_{<i}}&\\
							\qquad\qquad\qquad \cdot C^{\beta_{\mt}}C^{\alpha_{\mt,a+b+\bar{1}}+e_{{d(\mt,\mt^{\undla})}(i_{p'-1})}}v_{\mt}, &\text{ if  $p=t$ and $p'$ is even},\\
						\end{cases}\\
						&=\begin{cases}	(-1)^{b|\alpha_{\ms}'|_{>d(\ms,\mt^{\undla})(i_t)}+(a+b+1)|\alpha_{\mt}+e_{{d(\mt,\mt^{\undla})}(i_{p-1})}+e_{i+1}|_{>d(\mt,\mt^{\undla})(i_t)}}\mathtt{c}_{\mt,\ms}&\\
							\qquad \cdot(-1)^{(a+b+1)|e_{{d(\mt,\mt^{\undla})}(i_{p-1})}+e_{i+1}|_{>d(\mt,\mt^{\undla})(i_t)}}&\\
							\qquad\qquad	\cdot(-\sqrt{-1})(-1)^{|\alpha_{\mt,a+b}|_{<i+1}+|\alpha_{\mt,a+b}+e_{i+1}|_{\leq{d(\mt,\mt^{\undla})}(i_{p-1})}} &\\
							\qquad\qquad\qquad \cdot C^{\beta_{\mt}}C^{\alpha_{\mt,a+b}+e_{{d(\mt,\mt^{\undla})}(i_{p-1})}+e_{i+1}}v_{\mt},& \text{ if  $p$ is even, $p'\neq t$ is odd},\\
							(-1)^{b|\alpha_{\ms}'|_{>d(\ms,\mt^{\undla})(i_t)}+(a+1+b+1)|\alpha_{\mt}+e_{{d(\mt,\mt^{\undla})}(i_{p-1})}|_{>i+1}}\mathtt{c}_{\mt,\ms}&\\
							\qquad\cdot(-1)^{|\alpha_{\mt}|_{>i+1}+(a+b)|e_{{d(\mt,\mt^{\undla})}(i_{p-1})}|_{>i+1}}&\\
							\qquad\qquad	\cdot (-\sqrt{-1})(-1)^{|\alpha_{\mt,a+b}|_{<i+1}+|\alpha_{\mt,a+b+\bar{1}}|_{\leq{d(\mt,\mt^{\undla})}(i_{p-1})}} &\\
							\qquad\qquad\qquad \cdot C^{\beta_{\mt}}C^{\alpha_{\mt,a+b+\bar{1}}+e_{{d(\mt,\mt^{\undla})}(i_{p-1})}}v_{\mt},& \text{ if  $p$ is even and $p'=t$ },\\
							(-1)^{b|\alpha_{\ms}'|_{>d(\ms,\mt^{\undla})(i_t)}+(a+b+1)|\alpha_{\mt}+e_{i}+e_{{d(\mt,\mt^{\undla})}(i_{p'-1})}|_{>d(\mt,\mt^{\undla})(i_t)}}\mathtt{c}_{\mt,\ms}&\\
							\qquad \cdot(-1)^{(a+b+1)|e_{i}+e_{{d(\mt,\mt^{\undla})}(i_{p'-1})}|_{>d(\mt,\mt^{\undla})(i_t)}}&\\
							\qquad\qquad\cdot	(-\sqrt{-1})(-1)^{|\alpha_{\mt,a+b}|_{\leq {d(\mt,\mt^{\undla})}(i_{p'-1})}+|\alpha_{\mt,a+b}+e_{{d(\mt,\mt^{\undla})}(i_{p'-1})}|_{<i}}&\\
							\qquad\qquad\qquad \cdot C^{\beta_{\mt}}C^{\alpha_{\mt,a+b}+e_{i}+e_{{d(\mt,\mt^{\undla})}(i_{p'-1})}}v_{\mt}, &\text{ if  $p\neq t$ is odd and $p'$ is even},\\
							(-1)^{b|\alpha_{\ms}'|_{>d(\ms,\mt^{\undla})(i_t)}+(a+1+b+1)|\alpha_{\mt}+e_{{d(\mt,\mt^{\undla})}(i_{p'-1})}|_{>i}}\mathtt{c}_{\mt,\ms}&\\
							\qquad\cdot(-1)^{|\alpha_{\mt}|_{>i}+(a+b)|e_{{d(\mt,\mt^{\undla})}(i_{p'-1})}|_{>i}}	&\\
							\qquad\qquad\cdot	(-\sqrt{-1})(-1)^{|\alpha_{\mt,a+b}|_{\leq {d(\mt,\mt^{\undla})}(i_{p'-1})}+|\alpha_{\mt,a+b}+e_{{d(\mt,\mt^{\undla})}(i_{p'-1})}|_{<i}}&\\
							\qquad\qquad\qquad \cdot C^{\beta_{\mt}}C^{\alpha_{\mt,a+b+\bar{1}}+e_{{d(\mt,\mt^{\undla})}(i_{p'-1})}}v_{\mt}, &\text{ if  $p=t$ and $p'$ is even},\\
						\end{cases}\\
						&=\begin{cases}	
							(-\sqrt{-1})(-1)^{|\alpha_{\mt,\bar{1}}|_{<i+1}+|\alpha_{\mt,\bar{1}}+e_{i+1}|_{\leq{d(\mt,\mt^{\undla})}(i_{p-1})}}&\\
							\qquad\qquad\cdot	 f_{(\mt,\alpha_{\mt}+e_{{d(\mt,\mt^{\undla})}(i_{p-1})}+e_{i+1}, \beta_{\mt}),{\rm S}_{a}} C^{\beta_{\ms}'}C^{\alpha_{\mt,b}}v_{\ms},& \text{ if  $p$ is even, $p'\neq t$ is odd},\\
							(-\sqrt{-1})(-1)^{|\alpha_{\mt}|+|\alpha_{\mt,\bar{1}}|_{\leq{d(\mt,\mt^{\undla})}(i_{p-1})}}  &\\
							\qquad\qquad\cdot f_{(\mt,\alpha_{\mt}+e_{{d(\mt,\mt^{\undla})}(i_{p-1})}, \beta_{\mt}),{\rm S}_{a+\bar{1}}} C^{\beta_{\ms}'}C^{\alpha_{\mt,b}}v_{\ms},& \text{ if  $p$ is even and $p'=t$ },\\
							(-\sqrt{-1})(-1)^{|\alpha_{\mt,\bar{1}}|_{\leq {d(\mt,\mt^{\undla})}(i_{p'-1})}+|\alpha_{\mt,\bar{1}}+e_{{d(\mt,\mt^{\undla})}(i_{p'-1})}|_{<i}}&\\
							\qquad\qquad\cdot  f_{(\mt,\alpha_{\mt}+e_{{d(\mt,\mt^{\undla})}(i_{p'-1})}+e_{i}, \beta_{\mt}),{\rm S}_{a}} C^{\beta_{\ms}'}C^{\alpha_{\mt,b}}v_{\ms}, &\text{ if  $p\neq t$ is odd and $p'$ is even},\\
							(-\sqrt{-1})(-1)^{|\alpha_{\mt,\bar{1}}|_{> {d(\mt,\mt^{\undla})}(i_{p'-1})}}f_{(\mt,\alpha_{\mt}+e_{{d(\mt,\mt^{\undla})}(i_{p'-1})}, \beta_{\mt}),{\rm S}_{a+\bar{1}}} C^{\beta_{\ms}'}C^{\alpha_{\mt,b}}v_{\ms}, &\text{ if  $p=t$ and $p'$ is even}.
						\end{cases}
					\end{align*}
					
					\item If  $p$ and $p'$ are even, we have
						 \begin{align*}&(-1)^{b|\alpha_{\ms}'|_{>d(\ms,\mt^{\undla})(i_t)}+(a+b+1)|\alpha_{\mt}|_{>d(\mt,\mt^{\undla})(i_t)}}\mathtt{c}_{\mt,\ms}R(i,\beta_\mt,\alpha_{\mt,a+b},\mt)\\
						 		&=	(-1)^{b|\alpha_{\ms}'|_{>d(\ms,\mt^{\undla})(i_t)}+(a+b+1)|\alpha_{\mt}|_{>d(\mt,\mt^{\undla})(i_t)}}\mathtt{c}_{\mt,\ms}\\
						 &	\qquad\qquad\cdot(-1)^{1+|\alpha_{\mt,a+b}|_{\leq {d(\mt,\mt^{\undla})}(i_{p'-1})}+|\alpha_{\mt,a+b}+e_{{d(\mt,\mt^{\undla})}(i_{p'-1})}|_{\leq{d(\mt,\mt^{\undla})}(i_{p-1})}}\\
						 &	\qquad\qquad\qquad\cdot C^{\beta_{\mt}}C^{\alpha_{\mt,a+b}+e_{{d(\mt,\mt^{\undla})}(i_{p-1})}+e_{{d(\mt,\mt^{\undla})}(i_{p'-1})}}v_{\mt}\\
						 &=(-1)^{b|\alpha_{\ms}'|_{>d(\ms,\mt^{\undla})(i_t)}+(a+b+1)|\alpha_{\mt}+e_{{d(\mt,\mt^{\undla})}(i_{p-1})}+e_{{d(\mt,\mt^{\undla})}(i_{p'-1})}|_{>d(\mt,\mt^{\undla})(i_t)}}\mathtt{c}_{\mt,\ms}\\
						 &	\qquad \cdot(-1)^{	(a+b+1)|e_{{d(\mt,\mt^{\undla})}(i_{p-1})}+e_{{d(\mt,\mt^{\undla})}(i_{p'-1})}|_{>d(\mt,\mt^{\undla})(i_t)}}\\
						 	&\qquad\qquad\cdot (-1)^{1+|\alpha_{\mt,a+b}|_{\leq {d(\mt,\mt^{\undla})}(i_{p'-1})}+|\alpha_{\mt,a+b}+e_{{d(\mt,\mt^{\undla})}(i_{p'-1})}|_{\leq{d(\mt,\mt^{\undla})}(i_{p-1})}}\\
						 &	\qquad\qquad\qquad\cdot C^{\beta_{\mt}}C^{\alpha_{\mt,a+b}+e_{{d(\mt,\mt^{\undla})}(i_{p-1})}+e_{{d(\mt,\mt^{\undla})}(i_{p'-1})}}v_{\mt}\\
						 &	=(-1)^{1+|\alpha_{\mt,\bar{1}}|_{\leq {d(\mt,\mt^{\undla})}(i_{p'-1})}+|\alpha_{\mt,\bar{1}}+e_{{d(\mt,\mt^{\undla})}(i_{p'-1})}|_{\leq{d(\mt,\mt^{\undla})}(i_{p-1})}}\\
						& \qquad\qquad\cdot f_{(\mt,\alpha_{\mt}+e_{{d(\mt,\mt^{\undla})}(i_{p-1})}+e_{{d(\mt,\mt^{\undla})}(i_{p'-1})}, \beta_{\mt}),{\rm S}_{a}} C^{\beta_{\ms}'}C^{\alpha_{\mt,b}}v_{\ms}.
						 	\end{align*}

									\end{enumerate}
			\end{enumerate}
			
		\item	Now suppose $s_i\mt\in\Std(\undla)$, we have
		\begin{align*}&(-1)^{b|\alpha_{\ms}'|_{>d(\ms,\mt^{\undla})(i_t)}+(a+b+1)|\alpha_{\mt}|_{>d(\mt,\mt^{\undla})(i_t)}}\mathtt{c}_{\mt,\ms}\\
				&\qquad\qquad \cdot(-1)^{\delta_{\beta_{\mt}}(i)\delta_{\beta_{\mt}}(i+1)+\delta_{\alpha_{\mt,a+b}}(i)\delta_{\alpha_{\mt,a+b}}(i+1)}\sqrt{\mathtt{c}_{\mt}(i)}\\
				&\qquad\qquad\qquad\qquad \cdot C^{s_i \cdot \beta_{\mt}}C^{s_i \cdot \alpha_{\mt,a+b}}v_{s_i\mt}\\
				&=\begin{cases} (-1)^{b|\alpha_{\ms}'|_{>d(\ms,\mt^{\undla})(i_t)}+(a+b+1)|s_i\cdot\alpha_{\mt}|_{>d(s_i\cdot\mt,\mt^{\undla})(i_t)}}\mathtt{c}_{\mt,\ms}&\\
					\qquad\qquad \cdot(-1)^{\delta_{\beta_{\mt}}(i)\delta_{\beta_{\mt}}(i+1)+\delta_{\alpha_{\mt}}(i)\delta_{\alpha_{\mt}}(i+1)}\sqrt{\mathtt{c}_{\mt}(i)}&\\
					\qquad\qquad\qquad\qquad \cdot C^{s_i \cdot \beta_{\mt}}C^{s_i \cdot \alpha_{\mt,a+b}}v_{s_i\mt}&\text{if $i\neq d(\mt,\mt^{\undla})(i_t)\neq i+1,$}\\
					(-1)^{b|\alpha_{\ms}'|_{>d(\ms,\mt^{\undla})(i_t)}+(a+b+1)|\alpha_{\mt}|_{>i}}\mathtt{c}_{\mt,\ms}&\\
					\qquad\qquad \cdot(-1)^{\delta_{\alpha_{\mt,a+b}}(i)\delta_{\alpha_{\mt,a+b}}(i+1)}\sqrt{\mathtt{c}_{\mt}(i)}&\\
					\qquad\qquad\qquad\qquad \cdot C^{s_i \cdot \beta_{\mt}}C^{s_i \cdot \alpha_{\mt,a+b}}v_{s_i\mt}, &\text{if $i= d(\mt,\mt^{\undla})(i_t),$}\\
					(-1)^{b|\alpha_{\ms}'|_{>d(\ms,\mt^{\undla})(i_t)}+(a+b+1)|\alpha_{\mt}|_{>i+1}}\mathtt{c}_{\mt,\ms}&\\
					\qquad\qquad \cdot(-1)^{\delta_{\alpha_{\mt,a+b}}(i)\delta_{\alpha_{\mt,a+b}}(i+1)}\sqrt{\mathtt{c}_{\mt}(i)}&\\
					\qquad\qquad\qquad\qquad \cdot C^{s_i \cdot \beta_{\mt}}C^{s_i \cdot \alpha_{\mt,a+b}}v_{s_i\mt}, &\text{if $i+1= d(\mt,\mt^{\undla})(i_t),$}
					\end{cases}\\
					&=\begin{cases} (-1)^{b|\alpha_{\ms}'|_{>d(\ms,\mt^{\undla})(i_t)}+(a+b+1)|s_i\cdot\alpha_{\mt}|_{>d(s_i\cdot\mt,\mt^{\undla})(i_t)}}\mathtt{c}_{\mt,\ms}&\\
						\qquad\qquad \cdot(-1)^{\delta_{\beta_{\mt}}(i)\delta_{\beta_{\mt}}(i+1)+\delta_{\alpha_{\mt}}(i)\delta_{\alpha_{\mt}}(i+1)}\sqrt{\mathtt{c}_{\mt}(i)}&\\
						\qquad\qquad\qquad\qquad \cdot C^{s_i \cdot \beta_{\mt}}C^{s_i \cdot \alpha_{\mt,a+b}}v_{s_i\mt}&\text{if $i\neq d(\mt,\mt^{\undla})(i_t)\neq i+1,$}\\
						(-1)^{b|\alpha_{\ms}'|_{>d(\ms,\mt^{\undla})(i_t)}+(a+b+1)|s_i\cdot \alpha_{\mt}|_{>i+1}+(a+b+1)\delta_{\alpha_\mt}(i+1)}\mathtt{c}_{\mt,\ms}&\\
						\qquad\qquad \cdot(-1)^{\delta_{\alpha_{\mt,a+b}}(i)\delta_{\alpha_{\mt,a+b}}(i+1)}\sqrt{\mathtt{c}_{\mt}(i)}&\\
						\qquad\qquad\qquad\qquad \cdot C^{s_i \cdot \beta_{\mt}}C^{s_i \cdot \alpha_{\mt,a+b}}v_{s_i\mt}, &\text{if $i= d(\mt,\mt^{\undla})(i_t),$}\\
						(-1)^{b|\alpha_{\ms}'|_{>d(\ms,\mt^{\undla})(i_t)}+(a+b+1)|s_i\cdot\alpha_{\mt}|_{>i}+(a+b+1)\delta_{\alpha_\mt}(i)}\mathtt{c}_{\mt,\ms}&\\
						\qquad\qquad \cdot(-1)^{\delta_{\alpha_{\mt,a+b}}(i)\delta_{\alpha_{\mt,a+b}}(i+1)}\sqrt{\mathtt{c}_{\mt}(i)}&\\
						\qquad\qquad\qquad\qquad \cdot C^{s_i \cdot \beta_{\mt}}C^{s_i \cdot \alpha_{\mt,a+b}}v_{s_i\mt}, &\text{if $i+1= d(\mt,\mt^{\undla})(i_t),$}
					\end{cases}\\
					&=\begin{cases} (-1)^{b|\alpha_{\ms}'|_{>d(\ms,\mt^{\undla})(i_t)}+(a+b+1)|s_i\cdot\alpha_{\mt}|_{>d(s_i\cdot\mt,\mt^{\undla})(i_t)}}\mathtt{c}_{\mt,\ms}&\\
						\qquad\qquad \cdot(-1)^{\delta_{\beta_{\mt}}(i)\delta_{\beta_{\mt}}(i+1)+\delta_{\alpha_{\mt}}(i)\delta_{\alpha_{\mt}}(i+1)}\sqrt{\mathtt{c}_{\mt}(i)}&\\
						\qquad\qquad\qquad\qquad \cdot C^{s_i \cdot \beta_{\mt}}C^{s_i \cdot \alpha_{\mt,a+b}}v_{s_i\mt}&\text{if $i\neq d(\mt,\mt^{\undla})(i_t)\neq i+1,$}\\
						(-1)^{b|\alpha_{\ms}'|_{>d(\ms,\mt^{\undla})(i_t)}+(a+b+1)|s_i\cdot \alpha_{\mt}|_{>i+1}+\delta_{\alpha_\mt}(i+1)}\mathtt{c}_{\mt,\ms}&\\
						\qquad\qquad \cdot \sqrt{\mathtt{c}_{\mt}(i)} C^{s_i \cdot \beta_{\mt}}C^{s_i \cdot \alpha_{\mt,a+b}}v_{s_i\mt}, &\text{if $i= d(\mt,\mt^{\undla})(i_t),$}\\
						(-1)^{b|\alpha_{\ms}'|_{>d(\ms,\mt^{\undla})(i_t)}+(a+b+1)|s_i\cdot\alpha_{\mt}|_{>i}+\delta_{\alpha_\mt}(i)}\mathtt{c}_{\mt,\ms}&\\
						\qquad\qquad \cdot \sqrt{\mathtt{c}_{\mt}(i)} C^{s_i \cdot \beta_{\mt}}C^{s_i \cdot \alpha_{\mt,a+b}}v_{s_i\mt}, &\text{if $i+1= d(\mt,\mt^{\undla})(i_t),$}
					\end{cases}\\
						&=\begin{cases} (-1)^{\delta_{\beta_{\mt}}(i)\delta_{\beta_{\mt}}(i+1)+\delta_{\alpha_{\mt}}(i)\delta_{\alpha_{\mt}}(i+1)}\sqrt{\mathtt{c}_{\mt}(i)}\frac{\mathtt{c}_{\mt,\ms}}{\mathtt{c}_{s_i\cdot\mt,\ms}}&\\
						\qquad\qquad \cdot f_{s_i\cdot {\rm T}, {\rm S}_{a}}C^{\beta_{\ms}'}C^{\alpha_{\ms,b}'}v_{\ms}&\text{if $i\neq d(\mt,\mt^{\undla})(i_t)\neq i+1,$}\\
						(-1)^{\delta_{\alpha_{\mt}}(i+1)}\sqrt{\mathtt{c}_{\mt}(i)}\frac{\mathtt{c}_{\mt,\ms}}{\mathtt{c}_{s_i\cdot\mt,\ms}}&\\
						\qquad\qquad \cdot f_{s_i\cdot {\rm T}, {\rm S}_{a}}C^{\beta_{\ms}'}C^{\alpha_{\ms,b}'}v_{\ms}, &\text{if $i= d(\mt,\mt^{\undla})(i_t),$}\\
						(-1)^{\delta_{\alpha_{\mt}}(i)}\sqrt{\mathtt{c}_{\mt}(i)}\frac{\mathtt{c}_{\mt,\ms}}{\mathtt{c}_{s_i\cdot\mt,\ms}}&\\
						\qquad\qquad \cdot f_{s_i\cdot {\rm T}, {\rm S}_{a}}C^{\beta_{\ms}'}C^{\alpha_{\ms,b}'}v_{\ms}, &\text{if $i+1= d(\mt,\mt^{\undla})(i_t).$}
					\end{cases}
				\end{align*}
				\end{enumerate}
					These together with \eqref{basic action} and Theorem \ref{semisimple:non-dege} prove (2,c).
			\end{enumerate}
			\end{proof}		
\begin{rem}
By Proposition \ref{re. fst and BK involution} and Proposition \ref{generators action on seminormal basis}, one can deduce the right action of generators on semimormal bases.
\end{rem}		
	
\subsection{Some subalgebras of $\mHfcn$}\label{some subalgebras}
In this subsection, we shall apply our seminormal bases theory to study some subalgebras inside $\mHfcn$.
\label{pag:subalgebras of mHfcn}
 \begin{defn}
(1) Let $\mathcal{G}_n^f$ be the subalgebra of $\mHfcn$ generated by $X_1+X_1^{-1},\ldots, X_n+X_n^{-1}.$

(2) Let $\mathbb{P}_n^f$ be the subalgebra of $\mHfcn$ generated by $X_1^{\pm 1},\ldots, X_n^{\pm 1}.$

(3) Let $\mathcal{A}_n^f$ be the subalgebra of $\mHfcn$ generated by $X_1^{\pm 1},\ldots, X_n^{\pm 1}$ and $C_1,\ldots, C_n.$
 \end{defn}

 Next, we shall explain those subalgebras $\mathcal{G}_n^f,$ $\mathcal{P}_n^f$ and $\mathcal{A}_n^f$ via above seminormal basis.

 \begin{defn}	Let $\undla\in\mathscr{P}^{\bullet,m}_{n}$ with $\bullet\in\{\mathsf{0},\mathsf{s},\mathsf{ss}\}$ and $\mt\in \Std(\undla).$

(1) We define
\label{pag:some idempotents of mHfcn}
 \begin{align}
 F_{\mt}:=\sum_{(\alpha_{\mt},\beta_{\mt})\in \mathbb{Z}_2(\mathcal{OD}_{\mt})_{\bar{0}}\times \mathbb{Z}_2([n]\setminus \mathcal{D}_{\mt})}F_{(\mt,\alpha_{\mt},\beta_{\mt})}.
\end{align}

(2) We define
 \begin{align}
 F_{(\mt,\beta_{\mt})}:=\sum_{\alpha_{\mt}\in \mathbb{Z}_2(\mathcal{OD}_{\mt})_{\bar{0}}}F_{(\mt,\alpha_{\mt},\beta_{\mt})}.
\end{align}
 \end{defn}

\label{pag:nondege q-values}
\begin{defn}	
For any $i\in [n], \mt\in \Std(\undla),$ we define
$$\mathtt{q}_{\mt,i}:=\mathtt{q}(\res_{\mt}(i)).$$
	\end{defn}

\begin{prop}\label{subalgebras}
Suppose $P^{(\bullet)}_{n}(q^2,\undQ)\neq 0$. Then

 (1) The following set of elements \begin{align}\label{G_n^f via seminormal basis}
 \biggl\{F_{\mt}~\biggm|\mt\in \Std(\mathscr{P}^{\bullet,m}_{n})\biggr\}
 \end{align} forms a $\mathbb{K}$-basis of $\mathcal{G}_n^f$.
 In particular, $$\dim_{\mathbb{K}}\mathcal{G}_n^f=|\Std(\mathscr{P}^{\bullet,m}_{n})|.$$

 (2) The following set of elements
   \begin{align}
\biggl\{F_{(\mt,\beta_{\mt})}~\biggm|\mt\in \Std(\mathscr{P}^{\bullet,m}_{n}), \beta_{\mt}\in \mathbb{Z}_2([n]\setminus \mathcal{D}_{\mt})\biggr\}
 \end{align} forms a $\mathbb{K}$-basis of $\mathcal{P}_n^f$.
 In particular, $$\dim_{\mathbb{K}}\mathcal{P}_n^f=\sum_{\undla \in \mathscr{P}^{\bullet,m}_{n}}2^{n-\sharp \mathcal{D}_{\undla}}|\Std(\undla)|.$$

 (3) The following set of elements
  \begin{align}\label{A_n^f via seminormal basis}
 \biggl\{{f_{{\rm T},{\rm S}}}~\biggm|\begin{matrix}{\rm T}=(\mt,\alpha_{\mt},\beta_{\mt})\in {\rm Tri}_{\bar{0}}(\mathscr{P}^{\bullet,m}_{n})\\
 {\rm S}=(\ms,\alpha'_{\ms},\beta'_{\ms})\in {\rm Tri}(\mathscr{P}^{\bullet,m}_{n}),\,	\mt=\ms\end{matrix}\biggr\}
 \end{align} forms a $\mathbb{K}$-basis of $\mathcal{A}_n^f.$ In particular, $$\dim_{\mathbb{K}}\mathcal{A}_n^f=\sum_{\undla \in \mathscr{P}^{\bullet,m}_{n}}2^{2n-\sharp \mathcal{D}_{\undla}}|\Std(\undla)|.$$
\end{prop}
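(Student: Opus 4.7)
The plan is to prove all three parts by a common three-step template: verify membership in the relevant subalgebra using an explicit formula, derive linear independence from the seminormal basis of $\mHfcn$, and obtain a matching upper bound on the dimension via the faithful action on $\bigoplus_{\undla}\mathbb{D}(\undla)$.

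For part (1), the formula \eqref{X eigenvalues} shows that $X_i+X_i^{-1}$ acts on the whole $\mathcal{A}_n$-summand $\mathbb{L}(\res(\mathfrak{t}^\undla))^{d(\mt,\mt^\undla)}$ of $\mathbb{D}(\undla)$ by the scalar $\mathtt{q}_{\mt,i}$, independently of $\alpha_\mt$ and $\beta_\mt$. By Lemma \ref{important condition1}(2) the $n$-tuples $(\mathtt{q}_{\mt,1},\ldots,\mathtt{q}_{\mt,n})$ are pairwise distinct for different $\mt\in\Std(\mathscr{P}^{\bullet,m}_n)$, so a standard Lagrange interpolation realises each $F_\mt$ as an explicit polynomial in $X_1+X_1^{-1},\ldots,X_n+X_n^{-1}$; this yields $F_\mt\in\mathcal{G}_n^f$ together with the upper bound $\dim\mathcal{G}_n^f\leq|\Std(\mathscr{P}^{\bullet,m}_n)|$. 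Linear independence of $\{F_\mt\}$ is inherited from Theorem \ref{primitive iempotents}(a), giving the basis in \eqref{G_n^f via seminormal basis}. Part (2) follows the same template: \eqref{X eigenvalues} says $(X_1,\ldots,X_n)$ acts diagonally on $C^{\beta_\mt}C^{\alpha_\mt}v_\mt$ with joint eigenvalue $\bigl(\mathtt{b}_{\mt,i}^{-\nu_{\beta_\mt}(i)}\bigr)_{i=1}^n$ depending only on $(\mt,\beta_\mt)$, and Lemma \ref{important condition1}(1),(2) guarantees these tuples separate different pairs $(\mt,\beta_\mt)$. Lagrange interpolation then puts each $F_{(\mt,\beta_\mt)}$ into $\mathcal{P}_n^f$, and the count of distinct joint eigenvalues gives the matching dimension bound.

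For part (3), when $\mt=\ms$ we have $d(\ms,\mt)=1$ and $\Phi_{\ms,\mt}=1$, so the defining formula \eqref{re. fst. typeM. nondege.}/\eqref{re. fst. typeQ. nondege.} reduces $f_{{\rm T},{\rm S}}$ to a product of $F_{\rm S}$, $F_{\rm T}$ and explicit Clifford monomials; since $F_{\rm T}\in\mathcal{A}_n^f$ by \eqref{definition of primitive iempotents. non-dege}, the elements listed in \eqref{A_n^f via seminormal basis} lie in $\mathcal{A}_n^f$, and linear independence is inherited from the seminormal basis \eqref{Non-deg seminormal2}. For the matching dimension upper bound, Proposition \ref{actions of generators on L basis} shows that the $\mathcal{A}_n$-action on $\mathbb{D}(\undla)$ preserves each summand $\mathbb{L}(\res(\mathfrak{t}^\undla))^{d(\mt,\mt^\undla)}$ and realises it as the simple $\mathcal{A}_n$-module $\mathbb{L}(\res(\mt))$ of Proposition \ref{L basis}; by Lemma \ref{important condition1}(2) these simples are pairwise non-isomorphic across different $\mt\in\Std(\mathscr{P}^{\bullet,m}_n)$. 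Hence the image of $\mathcal{A}_n^f$ in $\bigoplus_\mt\End_{\mathbb{K}}\bigl(\mathbb{L}(\res(\mt))\bigr)$ splits as a direct sum over $\mt$, and on each summand the super-Burnside density theorem identifies the image with either the full $\End_{\mathbb{K}}\bigl(\mathbb{L}(\res(\mt))\bigr)$ (when $d_\undla=0$, type $\texttt{M}$) or the $\mathcal{Q}$-subalgebra of it (when $d_\undla=1$, type $\texttt{Q}$). A direct computation using $\dim\mathbb{L}(\res(\mt))=2^{n-\lfloor t/2\rfloor}$ with $t=\sharp\mathcal{D}_\undla$ shows the resulting dimension is $2^{2n-t}$ in both cases, so summing over $\mt\in\Std(\undla)$ and then over $\undla$ matches the claim.

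The chief obstacle is the last step of part (3): identifying the image of $\mathcal{A}_n^f$ on each simple $\mathcal{A}_n$-constituent $\mathbb{L}(\res(\mt))$ requires the super-Burnside density theorem together with a careful split according to whether the simple module is of type $\texttt{M}$ or type $\texttt{Q}$, and crucially depends on the non-isomorphism of $\mathcal{A}_n$-constituents across different $\mt$. Once this is secured, the matched dimension forces the spanning conclusion in all three parts.
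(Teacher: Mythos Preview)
Your arguments for parts (1) and (2) coincide with the paper's: both express each $X_k^{\pm1}$ (resp.\ $X_k+X_k^{-1}$) as a linear combination of the $F_{(\mt,\beta_\mt)}$ (resp.\ $F_\mt$) via \eqref{X eigenvalues}, read off the span from this, and recover the idempotents by Lagrange interpolation using the separation of joint eigenvalues from Lemma~\ref{important condition1}.

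For part (3) you take a genuinely different route from the paper. The paper establishes $\mathcal{A}_n^f\subset(\mathcal{A}_n^f)'$ by a direct computation: writing $C_i=C_i\cdot\sum_{\rm T}F_{\rm T}$ and invoking the explicit formulas in Proposition~\ref{generators action on seminormal basis}(1.b),(2.b) to see that each $C_i f_{{\rm T},{\rm T}}$ lands in the span \eqref{A_n^f via seminormal basis}; the reverse inclusion is immediate from $\Phi_{\mt,\mt}=1$. Your argument instead bounds $\dim\mathcal{A}_n^f$ from above by analysing the faithful $\mathcal{A}_n^f$-action on $\bigoplus_\mt\mathbb{L}(\res(\mt))$: each summand is a simple $\mathcal{A}_n$-module of type~$\texttt{M}$ or~$\texttt{Q}$, so the image on it lies in the super-centraliser of $\End_{\mathcal{A}_n}(\mathbb{L}(\res(\mt)))$, i.e.\ in all of $\End_{\mathbb K}(\mathbb{L}(\res(\mt)))$ in the $\texttt{M}$ case and in a $\mathcal{Q}$-subalgebra in the $\texttt{Q}$ case, both of dimension $2^{2n-t}$. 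This is correct; note that for the mere upper bound you only need the containment in the super-centraliser (which follows from the existence of an odd endomorphism), not the full density/Burnside statement nor the pairwise non-isomorphism of the $\mathbb{L}(\res(\mt))$. The paper's approach stays entirely within the seminormal-basis machinery already developed and avoids appealing to general super-Wedderburn theory, while your approach is more structural and sidesteps the case analysis of Proposition~\ref{generators action on seminormal basis}.
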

\begin{proof}
 For $k=1,2,\ldots,n,$ we have
 \begin{align*}
 X_k^{\pm 1}=&X_k^{\pm 1}\cdot 1=X_k^{\pm 1}\left(\sum_{{\rm T}=(\mt,\alpha_{\mt},\beta_{\mt})\in {\rm Tri}_{\bar{0}}(\mathscr{P}^{\bullet,m}_{n})} F_{\rm T}
 \right)\\
 =&\sum_{{\rm T}=(\mt,\alpha_{\mt},\beta_{\mt})\in {\rm Tri}_{\bar{0}}(\mathscr{P}^{\bullet,m}_{n})} \mathtt{b}_{\mt,k}^{\mp\nu_{\beta_{\mt}}(k)}F_{\rm T}\\
 =&\sum_{\mt\in \Std(\mathscr{P}^{\bullet,m}_{n}), \beta_{\mt}\in \mathbb{Z}_2([n]\setminus \mathcal{D}_{\mt})} \mathtt{b}_{\mt,k}^{\mp\nu_{\beta_{\mt}}(k)}F_{(\mt,\beta_{\mt})}, \nonumber
 \end{align*} where the last second equation is due to Proposition \ref{generators action on seminormal basis}.

We deduce that, for any Laurent ploynomial $f(Y_1,\ldots,Y_n)\in \mathbb{K}[Y_1^{\pm1},\ldots,Y_n^{\pm1}],$ the following holds
  \begin{align}\label{laurent poly}
 f(X_1,\ldots,X_n)=\sum_{\mt\in \Std(\mathscr{P}^{\bullet,m}_{n}), \beta_{\mt}\in \mathbb{Z}_2([n]\setminus \mathcal{D}_{\mt})}
                   f\left(\mathtt{b}_{\mt,1}^{-\nu_{\beta_{\mt}}(1)},\ldots,\mathtt{b}_{\mt,1}^{-\nu_{\beta_{\mt}}(n)}\right)
                   F_{(\mt,\beta_{\mt})} .
 \end{align}

 (1). By \eqref{laurent poly},  for $k=1,2,\ldots,n,$ we have \begin{align}
 	X_k+X_k^{-1}
 	=\sum_{\mt\in \Std(\mathscr{P}^{\bullet,m}_{n}), \beta_{\mt}\in \mathbb{Z}_2([n]\setminus \mathcal{D}_{\mt})} (\mathtt{b}_{\mt,k}+\mathtt{b}_{\mt,k}^{-1})F_{(\mt,\beta_{\mt})} =\sum_{\mt\in \Std(\mathscr{P}^{\bullet,m}_{n})} \mathtt{q}_{\mt,k}F_{\mt}. \nonumber
 \end{align}
This implies that, for any ploynomial $g(Y_1,\ldots,Y_n)\in \mathbb{K}[Y_1,\ldots,Y_n],$
 \begin{align}\label{X+X^{-1} and Ft}
 	g(X_1+X_1^{-1},\ldots,X_n+X_n^{-1})=\sum_{\mt\in \Std(\mathscr{P}^{\bullet,m}_{n})} g(\mathtt{q}_{\mt,1},\ldots,\mathtt{q}_{\mt,n})
 	F_{\mt}.
 \end{align}
  Conversely, by the second part of Lemma \ref{important condition1}, if $\ms\neq\mt$, then there exists some $k\in [n]$ such that $\mathtt{q}_{\mt,k}\neq\mathtt{q}_{\ms,k}$. Combing this fact with
 \eqref{X+X^{-1} and Ft}, we have
 \begin{align}
 	F_{\mt}=\prod_{\ms\in \Std(\mathscr{P}^{\bullet,m}_{n}) \atop \ms\neq \mt}\prod_{k=1 \atop \mathtt{q}_{\ms,k}\neq \mathtt{q}_{\mt,k}}^{n} \frac{X_k+X_k^{-1}-\mathtt{q}_{\ms,k}}{\mathtt{q}_{\mt,k}-\mathtt{q}_{\ms,k}}\in \mathcal{G}_n^f \nonumber
 \end{align}
 for any $\mt\in \Std(\mathscr{P}^{\bullet,m}_{n}).$ This proves (1).

  (2). Since
\begin{align}
	\sum_{\alpha_{\mt}\in \mathbb{Z}_2(\mathcal{OD}_{\mt})_{\bar{0}}} C^{\alpha_{\mt}}\gamma_{\mt}(C^{\alpha_{\mt}})^{-1} =1,
\end{align}
we deduce that
\begin{align}
	F_{(\mt,\beta_{\mt})}=\prod_{k=1}^{n}\prod_{\mathtt{b}\in \mathtt{B}(k)\atop \mathtt{b}\neq \mathtt{b}_{+}(\res_{\mt}(k))}\frac{X_k^{\nu_{\beta_{\mt}}(k)}-\mathtt{b}}{\mathtt{b}_{+}(\res_{\mt}(k))-\mathtt{b}}\in \mathcal{P}_n^f.
\end{align}
 By \eqref{laurent poly}, we prove (2).

 (3). We denote   \begin{align*}
 (\mathcal{A}_n^f)':=	\mathbb{K}-{\text {span}}\biggl\{F_{{\rm T},{\rm S}}~\biggm|\begin{matrix}{\rm T}=(\mt,\alpha_{\mt},\beta_{\mt})\in {\rm Tri}_{\bar{0}}(\mathscr{P}^{\bullet,m}_{n})\\
 		{\rm S}=(\ms,\alpha'_{\ms},\beta'_{\ms})\in {\rm Tri}(\mathscr{P}^{\bullet,m}_{n}),\,	\mt=\ms\end{matrix}\biggr\}.
 \end{align*} Then
 \begin{align}
 C_i=C_i \cdot 1=\sum_{\substack{\undla \in \mathscr{P}^{\bullet,m}_{n},d_{\undla}=0 \\ {\rm T}\in {\rm Tri}(\undla)}}C_if_{{\rm T},{\rm T}}
                 +\sum_{\substack{\undla \in \mathscr{P}^{\bullet,m}_{n},d_{\undla}=1 \\ {\rm T}\in {\rm Tri}_{\bar{0}}(\undla)}}C_if_{{\rm T},{\rm T}_{\bar{0}}} \in (\mathcal{A}_n^f)'\nonumber
 \end{align}
 by (1.b) and (2.b) of Proposition \ref{generators action on seminormal basis}. Combining with \eqref{laurent poly}, we have $$
\mathcal{A}_n^f\subset (\mathcal{A}_n^f)' .$$ Conversely, recall the definition \eqref{re. fst. typeM. nondege.} and \eqref{re. fst. typeQ. nondege.} and note that $\Phi_{\mt,\mt}\equiv 1$ for any $\mt \in \mathscr{P}^{\bullet,m}_{n}$, we deduce that $$
(\mathcal{A}_n^f)'\subset \mathcal{A}_n^f .$$
\end{proof}

Suppose $B$ is a ring and $A\subseteq B$ is a subring, recall that the centralizer of $A$ in $B$ is defined by $C_{B}(A):=\{b\in B \mid ab=ba,\forall a\in A\}.$
\begin{prop}
Suppose $P^{(\bullet)}_{n}(q^2,\undQ)\neq 0$. Then we have
 \begin{align}
\mathcal{A}_n^f&=C_{\mHfcn}(\mathcal{G}_n^f),\\
\mathcal{G}_n^f&=(C_{\mHfcn}(\mathcal{A}_n^f))_{\bar{0}}, \label{even centralizer of A_n^f}\\
	C_{\mHfcn}(\mathbb{P}_n^f)=\mathbb{K}-{\text {span}}\biggl\{F_{{\rm T},{\rm S}}~&\biggm|\begin{matrix}{\rm T}=(\mt,\alpha_{\mt},\beta_{\mt})\in {\rm Tri}_{\bar{0}}(\mathscr{P}^{\bullet,m}_{n})\\
		{\rm S}=(\ms,\alpha'_{\ms},\beta'_{\ms})\in {\rm Tri}(\mathscr{P}^{\bullet,m}_{n}),\,	\mt=\ms, \beta_{\mt}=\beta'_{\ms}\end{matrix}\biggr\}
\end{align}
 and
 \begin{align}
 (C_{\mHfcn}C_{\mHfcn}(\mathbb{P}_n^f))_{\bar{0}}=\mathbb{P}_n^f.
 \end{align}
 In particular,
 $$\dim_{\mathbb{K}}C_{\mHfcn}(\mathbb{P}_n^f)=\sum_{\undla \in \mathscr{P}^{\bullet,m}_{n}}2^n|\Std(\undla)|.$$
\end{prop}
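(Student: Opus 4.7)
The plan is to systematically use the seminormal basis $\{f_{{\rm T},{\rm S}}\}$ from Theorem \ref{seminormal basis}, together with the orthogonal idempotent structure from Theorem \ref{primitive iempotents}, to convert commutation conditions into delta-type constraints on index labels. The key driver for every step is the pair of absorption identities $F_{\rm U}\,f_{{\rm S},{\rm T}} = \delta_{{\rm U},{\rm S}}\,f_{{\rm S},{\rm T}}$ and $f_{{\rm S},{\rm T}}\,F_{\rm V} = \delta_{{\rm V},{\rm T}_{\bar 0}}\,f_{{\rm S},{\rm T}}$ (both for ${\rm U},{\rm V}\in {\rm Tri}_{\bar 0}(\mathscr{P}^{\bullet,m}_n)$), which are immediate from the definitions \eqref{re. fst. typeM. nondege.}, \eqref{re. fst. typeQ. nondege.} and Theorem \ref{primitive iempotents}(a).

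For the equality $\mathcal{A}_n^f = C_{\mHfcn}(\mathcal{G}_n^f)$, I will first verify by direct computation that each $X_k + X_k^{-1}$ is central in $\mathcal{A}_n^f$, using $C_k(X_k+X_k^{-1}) = X_k^{-1}C_k + X_kC_k = (X_k+X_k^{-1})C_k$ and commutation of $C_k$ with all other $X_j^{\pm}$ and $C_j$; this gives $\mathcal{A}_n^f \subseteq C_{\mHfcn}(\mathcal{G}_n^f)$. For the reverse inclusion, I would write $a = \sum c_{{\rm S},{\rm T}}\,f_{{\rm S},{\rm T}}\in C_{\mHfcn}(\mathcal{G}_n^f)$ and exploit the decomposition $F_\mt = \sum_{{\rm U}\in {\rm Tri}_{\bar 0}:\,\mt_{\rm U}=\mt} F_{\rm U}$ from Proposition \ref{subalgebras}(1): the absorption identities show that $F_\mt\,a$ retains only those $f_{{\rm S},{\rm T}}$ with $\mt_{\rm S}=\mt$ while $a\,F_\mt$ retains only those with $\mt_{\rm T}=\mt$, and ranging over all $\mt$ forces $c_{{\rm S},{\rm T}}=0$ whenever the underlying tableaux of ${\rm S}$ and ${\rm T}$ disagree, placing $a$ in the span \eqref{A_n^f via seminormal basis}.

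For $\mathcal{G}_n^f = (C_{\mHfcn}(\mathcal{A}_n^f))_{\bar 0}$, the inclusion $\subseteq$ is immediate from the centrality above, since the generators $X_k+X_k^{-1}$ are even. For the reverse, the chain $\mathcal{G}_n^f \subseteq \mathcal{A}_n^f$ together with the previous step gives $C_{\mHfcn}(\mathcal{A}_n^f)\subseteq C_{\mHfcn}(\mathcal{G}_n^f)=\mathcal{A}_n^f$, reducing the problem to identifying $Z(\mathcal{A}_n^f)_{\bar 0}$. Using the multiplication rules \eqref{Non-deg multiplication1}, \eqref{Non-deg multiplication2} together with the proportionality \eqref{fst and re. fst}, I will show that $\mathcal{A}_n^f = \bigoplus_{\mt} A_\mt$ as a direct sum of subalgebras, where $A_\mt$ is spanned by those $f_{{\rm T},{\rm U}}$ with common tableau $\mt$; each $A_\mt$ is isomorphic to a super-matrix algebra of type $\texttt{M}$ (when $d_{\undla}=0$) or of type $\texttt{Q}$ (when $d_{\undla}=1$, the extra $\mathbb{Z}_2$-factor arising from the $a$-parameter in ${\rm Tri}_a(\undla)$), each having one-dimensional even super-center spanned by the block identity $F_\mt$. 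Assembling gives $Z(\mathcal{A}_n^f)_{\bar 0} = \bigoplus_\mt \mathbb{K}F_\mt = \mathcal{G}_n^f$ by Proposition \ref{subalgebras}(1). For the description of $C_{\mHfcn}(\mathbb{P}_n^f)$, the Step 1 argument applies verbatim with the refined idempotent $F_{(\mt,\beta_\mt)} = \sum_{\alpha\in \mathbb{Z}_2(\mathcal{OD}_\mt)_{\bar 0}} F_{(\mt,\alpha,\beta_\mt)}$ of Proposition \ref{subalgebras}(2): commutation with every $F_{(\mt,\beta_\mt)}$ now forces both the tableau and the $\beta$-component of ${\rm S}$ and ${\rm T}$ to match, yielding the stated span. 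The dimension follows from $|\mathbb{Z}_2([n]\setminus \mathcal{D}_\mt)|\cdot|\mathbb{Z}_2(\mathcal{OD}_\mt)_{\bar 0}|\cdot|\mathbb{Z}_2(\mathcal{OD}_\mt)| = 2^{n-t}\cdot 2^{\lfloor t/2\rfloor}\cdot 2^{\lceil t/2\rceil} = 2^n$ for each $\mt$, with $t=\sharp\mathcal{D}_{\undla}$.

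For the final double-centralizer equality, the inclusion $\mathbb{P}_n^f \subseteq (C_{\mHfcn}C_{\mHfcn}(\mathbb{P}_n^f))_{\bar 0}$ is immediate since $\mathbb{P}_n^f$ is commutative and purely even. Conversely, any $y$ in the even double centralizer already commutes with $\mathbb{P}_n^f$, so lies in the basis from the previous step; forcing further commutation with all of $C_{\mHfcn}(\mathbb{P}_n^f)$ places $y$ in the even super-center of the finer block decomposition $C_{\mHfcn}(\mathbb{P}_n^f) = \bigoplus_{(\mt,\beta_\mt)} M_{\mt,\beta_\mt}$ into ``local'' super-matrix or type-$\texttt{Q}$ blocks, each of even center spanned by $F_{(\mt,\beta_\mt)}$, so $y\in \bigoplus \mathbb{K}F_{(\mt,\beta_\mt)} = \mathbb{P}_n^f$. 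The main obstacle will be rigorously identifying the blocks $A_\mt$ and $M_{\mt,\beta_\mt}$ as type-$\texttt{M}$ or type-$\texttt{Q}$ super-matrix algebras and correctly extracting the even part of their super-centers; this requires careful tracking of the $(-1)^{|\alpha_\mt|_{> d(\mt,\mt^{\undla})(i_t)}}$ twist appearing in \eqref{Non-deg multiplication2} and of the asymmetry between ${\rm Tri}_{\bar 0}$-indexing along the first coordinate and ${\rm Tri}$-indexing along the second, which is precisely what distinguishes the two parity regimes $d_\undla \in \{0,1\}$.
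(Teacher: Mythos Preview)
Your proposal is correct and follows essentially the same strategy as the paper: both reduce the centralizer computations to constraints on the index labels of the seminormal basis $\{f_{{\rm S},{\rm T}}\}$, using the idempotent decompositions of Proposition \ref{subalgebras} together with the multiplication formulae of Theorem \ref{seminormal basis}.

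The organizational difference is minor but worth noting. For the key equality $\mathcal{G}_n^f=(C_{\mHfcn}(\mathcal{A}_n^f))_{\bar 0}$, the paper proceeds by a direct coefficient comparison: it writes an arbitrary $h\in (C_{\mHfcn}(\mathcal{A}_n^f))_{\bar 0}$ in the seminormal basis $\{f^{\mathfrak w}_{{\rm S},{\rm T}}\}$ and tests commutation against each $f^{\mathfrak w}_{{\rm T},{\rm T}'}\in\mathcal{A}_n^f$ (with ${\rm T},{\rm T}'$ sharing the tableau $\mt$), invoking \eqref{Non-deg multiplication1}, \eqref{Non-deg multiplication2} to force $a_{{\rm S},{\rm T}}=0$ unless ${\rm S}={\rm T}$ and $a_{{\rm T},{\rm T}}=a_{{\rm T}',{\rm T}'}$, then kills the odd contributions in the type-$\texttt{Q}$ case by parity. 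Your route instead first uses the chain $C_{\mHfcn}(\mathcal{A}_n^f)\subseteq C_{\mHfcn}(\mathcal{G}_n^f)=\mathcal{A}_n^f$ to reduce to computing $Z(\mathcal{A}_n^f)_{\bar 0}$, and then identifies each tableau-block $A_{\mt}$ as a super-matrix algebra of type $\texttt{M}$ or $\texttt{Q}$ with one-dimensional even center $\mathbb{K}F_{\mt}$. The two computations are equivalent---your block identification is exactly what the paper's coefficient constraints encode---but the paper's direct test avoids having to separately verify the super-matrix isomorphisms and track the $(-1)^{|\alpha_{\mt}|_{>d(\mt,\mt^{\undla})(i_t)}}$ twist you flag, since the multiplication formulae already absorb those signs. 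Your structural phrasing, on the other hand, makes the uniform shape of all four centralizer statements more transparent.
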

\begin{proof}
The idea of the proof is to apply the multiplication formulae in Theorem \ref{seminormal basis} and use the basis given in Proposition \ref{subalgebras}. We only check \eqref{even centralizer of A_n^f} here since the proof for other equalities is similar. It is clearly that $\mathcal{G}_n^f \subseteq (C_{\mHfcn}(\mathcal{A}_n^f))_{\bar{0}}$ by relation \eqref{XC} or Proposition \ref{subalgebras}. Conversely, for any $h\in (C_{\mHfcn}(\mathcal{A}_n^f))_{\bar{0}},$ we can write
\begin{align}
 h=\sum_{\substack{\undla \in \mathscr{P}^{\bullet,m}_{n},d_{\undla}=0 \\ {\rm S}, {\rm T}\in {\rm Tri}(\undla)}}
a_{{\rm S},{\rm T}} f^\mathfrak{w}_{{\rm S},{\rm T}}
 +
 \sum_{\substack{a\in \mathbb{Z}_2 \\ \undla \in \mathscr{P}^{\bullet,m}_{n},d_{\undla}=1 \\ {\rm U}, {\rm V}\in {\rm Tri}_{\bar{0}}(\undla)}}
 b_{{\rm U},{\rm V}_a} f^\mathfrak{w}_{{\rm U},{\rm V}_a}\nonumber
 \end{align}by Theorem \ref{seminormal basis}.

(i) For any ${\rm T}=(\mt,\alpha_{\mt},\beta_{\mt}), {\rm T}'=(\mt,\alpha_{\mt}',\beta_{\mt}') \in {\rm Tri}(\undla)$ with $d_{\undla}=0,$
we have $ f^{\mathfrak{w}}_{{\rm T},{\rm T}'} h=h f^{\mathfrak{w}}_{{\rm T},{\rm T}'} $ by \eqref{A_n^f via seminormal basis}. Combing with \eqref{Non-deg multiplication1}, it follows that
\begin{align}
 \sum_{\substack{\undla \in \mathscr{P}^{\bullet,m}_{n},d_{\undla}=0 \\ {\rm S} \in {\rm Tri}(\undla)}}
a_{{\rm T}',{\rm S}} \mathtt{c}_{{\rm T}'}^{\mathfrak{w}} f^\mathfrak{w}_{{\rm T},{\rm S}}
= \sum_{\substack{\undla \in \mathscr{P}^{\bullet,m}_{n},d_{\undla}=0 \\ {\rm S} \in {\rm Tri}(\undla)}}
a_{{\rm S},{\rm T}} \mathtt{c}_{\rm T}^{\mathfrak{w}} f^\mathfrak{w}_{{\rm S},{\rm T}'}. \nonumber
 \end{align}
Since by definition $\mathtt{c}_{\rm T}^{\mathfrak{w}}=\mathtt{c}_{{\rm T}'}^{\mathfrak{w}}=(\mathtt{c}_{\mt,\mathfrak{w}})^2\in \mathbb{K}^*$, we deduce that $a_{{\rm S},{\rm T}}=0$ unless ${\rm S}={\rm T}$ and $a_{{\rm T},{\rm T}}=a_{{\rm T}',{\rm T}'}.$

(ii) For any ${\rm T}=(\mt,\alpha_{\mt},\beta_{\mt}),{\rm T}'=(\mt,\alpha_{\mt}',\beta_{\mt}') \in {\rm Tri}_{\bar{0}}(\undla)$ with $d_{\undla}=1,$
we have $ f^{\mathfrak{w}}_{{\rm T},{\rm T}'_a} h=h f^{\mathfrak{w}}_{{\rm T},{\rm T}'_a}, a\in \mathbb{Z}_2$ by \eqref{A_n^f via seminormal basis}. Similarly, it follows from \eqref{Non-deg multiplication2} that $b_{{\rm S},{\rm T}_b}=0$ unless ${\rm S}={\rm T}$ and $b_{{\rm T},{\rm T}_b}=b_{{\rm T}',{\rm T}'_b},$ for any $b\in \mathbb{Z}_2.$ Since $h$ is even, we must have $b_{{\rm T},{\rm T}_{\bar{1}}}=0.$

Combining (i) and (ii) above, we deduce that $h\in \mathcal{G}_n^f$ by \eqref{G_n^f via seminormal basis}.
\end{proof}

The above Proposition shows that subalgebra $\mathbb{P}_n^f$ is not maximal commutative in general. To sum up, suppose $P^{(\bullet)}_{n}(q^2,\undQ)\neq 0$, we have described bases and dimensions for the following subalgebras
$${\rm Z}(\mHfcn)_{\bar{0}}\subset \mathcal{G}_n^f \subset \mathbb{P}_n^f \subset C_{\mHfcn}(\mathbb{P}_n^f) \subset \mathcal{A}_n^f \subset \mHfcn.$$

\begin{conjecture}
For each subalgebra $A\in \{ {\rm Z}(\mHfcn)_{\bar{0}}, \mathcal{G}_n^f, \mathbb{P}_n^f , C_{\mHfcn}(\mathbb{P}_n^f), \mathcal{A}_n^f \},$ the
$\dim_{\mathbb{K}}A$ is independent of ground field $\mathbb{K}$ and parameters $(q,\undQ).$
\end{conjecture}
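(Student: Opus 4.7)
The plan is a generic-to-specific (specialisation/flatness) argument, with the twist that the five subalgebras split into two classes with \emph{opposite} semicontinuity behaviour, each of which must be controlled separately. Introduce the base ring $R=\mathbb{Z}[\tfrac{1}{2},\sqrt{-1}][\mathtt{q}^{\pm 1},\mathtt{Q}_1^{\pm 1},\ldots,\mathtt{Q}_m^{\pm 1}]$ and the $R$-algebra $\mathcal{H}_R$ presented by the same generators and relations as $\mHfcn$; by the PBW theorem $\mathcal{H}_R$ is free over $R$ with basis $\{X^\alpha C^\beta T_w\}$, and every $\mHfcn$ appearing in the conjecture is recovered as $\mathbb{K}\otimes_R\mathcal{H}_R$ along some specialisation $\phi:R\to\mathbb{K}$. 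The generic fibre $\mathrm{Frac}(R)\otimes_R\mathcal{H}_R$ satisfies the separate condition and is semisimple, so Proposition \ref{subalgebras} applies there. The conjecture becomes the assertion that, for each of the five subalgebras, the natural $R$-form $A_R\subseteq\mathcal{H}_R$ is $R$-free of the asserted rank and is a \emph{pure} $R$-submodule (direct summand), so that its rank is preserved by every base change.

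The first three subalgebras $\mathcal{G}_n^f,\,\mathbb{P}_n^f,\,\mathcal{A}_n^f$ are generated subalgebras, hence their dimensions are lower semicontinuous in $(q,\undQ)$ in the sense that generic parameters yield the maximal value; specialisation can only create extra relations between the given generators. The generic dimension is therefore an upper bound, and what needs proving is the matching lower bound. The strategy is to exhibit, uniformly in the parameters, an explicit set of monomials in the generators whose image in the PBW basis of $\mHfcn$ is linearly independent and whose cardinality matches the conjectured dimension. For $\mathcal{A}_n^f$ the candidates should be monomials $\{X^\alpha C^\beta\}$ indexed by a combinatorial set adapted to the decomposition $\bigsqcup_\mt\{C^{\beta_\mt}C^{\alpha_\mt}v_\mt\}$ of Theorem \ref{actions of generators on L basis}; for $\mathbb{P}_n^f$ one drops the $C^\beta$ factor; for $\mathcal{G}_n^f$ one uses the corresponding symmetric combinations in $X_k+X_k^{-1}$. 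The key technical input is an ``integral generalised cyclotomic relation'' expressing a high-degree polynomial in each $X_k$ $(k\geq 2)$ as an $R$-combination of lower powers. Such an identity should arise from conjugating $f(X_1)=0$ by the intertwiners $\tilde{\Phi}_w$ of \eqref{intertwinNon-dege}, but a careful denominator-tracking argument will be required to show that the result lies in $R[X_k^{\pm 1}]$ rather than in its localisation at the factors $z_i$.

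The last two subalgebras ${\rm Z}(\mHfcn)_{\bar 0}$ and $C_{\mHfcn}(\mathbb{P}_n^f)$ are centralizers, whose dimensions are instead upper semicontinuous: specialisation can only enlarge the set of commuting elements, by collapsing eigenvalues. Here the generic value is a \emph{lower} bound, and the task is to rule out accidental commuting elements at bad parameters. The plan is to realise each centralizer as the kernel of an explicit $R$-linear commutator map $[-,\,\cdot\,]:\mathcal{H}_R\to\operatorname{Hom}_R(B_R,\mathcal{H}_R)$, with $B_R$ equal to $\mathbb{P}_n^f$ or the whole algebra as appropriate, and to prove that this kernel is an $R$-direct summand of the expected rank. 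Over $\mathrm{Frac}(R)$ a basis is given by the $F_\mt$ and $F_{(\mt,\beta_\mt)}$ of Proposition \ref{subalgebras}; at non-semisimple fibres these idempotents collapse to appropriate block idempotents, and the rank is preserved precisely when the $R$-form of the kernel is pure. The main obstacle is proving exactly this purity: it would follow from producing an explicit $R$-module complement of $A_R$ in $\mathcal{H}_R$ with the expected rank, but constructing such a complement without relying on the seminormal idempotents of Section \ref{Seminormalbase}, which do not survive degeneration, is the genuine new content of the conjecture and appears to require an integral analogue of the seminormal framework developed in this paper.
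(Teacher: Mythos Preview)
The statement you are attempting to prove is labelled in the paper as a \emph{Conjecture}, not a theorem; the paper offers no proof and leaves it open. So there is no ``paper's own proof'' to compare against, and any complete argument you supply would be new mathematics rather than a reconstruction.

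That said, your proposal is not a proof but a programme, and you are candid about this: for $\mathcal{G}_n^f$, $\mathbb{P}_n^f$, $\mathcal{A}_n^f$ you correctly identify that the issue is a uniform lower bound via explicit linearly independent monomials, and you flag that the needed integral relation for $X_k$ ($k\ge 2$) requires denominator control when conjugating $f(X_1)=0$ by $\tilde{\Phi}_w$. For the centralizers ${\rm Z}(\mHfcn)_{\bar{0}}$ and $C_{\mHfcn}(\mathbb{P}_n^f)$ you again correctly note the opposite semicontinuity and reduce to purity of an $R$-kernel, but then acknowledge that constructing an $R$-complement without the seminormal idempotents ``is the genuine new content of the conjecture.'' In other words, at both junctures where the argument would have to do real work, you state the obstacle rather than overcome it. This is an honest and well-organised outline of where the difficulties lie, and the lower/upper semicontinuity dichotomy is a sensible framing, but it does not constitute a proof; it is closer to an explanation of \emph{why} the authors stated this as a conjecture.
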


\section{Primitive idempotents and seminormal basis for cyclotomic Sergeev algebra $\mhgcn$}\label{dege}
{\bf	In this section, we fix the parameter $\undQ=(Q_1,Q_2,\ldots,Q_m)\in \mathbb{K}^m$ and $g=g^{(\bullet)}_{\undQ}$ with $P^{\bullet}_{n}(1,\undQ)\neq 0$ for $\bullet\in\{\mathtt{0},\mathtt{s}\}.$  Accordingly, we define the residue of boxes in the young diagram $\undla$ via \eqref{eq:deg-residue} as well as $\res(\mathfrak{t})$ for each $\mathfrak{t}\in\Std(\undla)$ with $\undla\in\mathscr{P}^{\bullet,m}_{n}$ with $m\geq 0.$}
\subsection{Simple modules}		

As in nondegenerate case, we first recall the construction of $\mhgcn$-simple module $D(\undla)$ in \cite{SW} for $\undla \in \mathscr{P}^{\bullet,m}_{n}.$ Then we shall give an explicit basis of $D(\undla)$ and write down the action of generators of $\mhcn$ on this basis.

\label{pag:dege simple module}
For $\undla\in\mathscr{P}^{\bullet,m}_{n}$ with $\bullet\in\{\mathsf{0},\mathsf{s}\}$, we define the $\mathcal{A}_n$-module
	$$
D(\undla):=\oplus_{\mt \in \Std(\undla)}L(\res(\mathfrak{t}^{\undla}))^{d(\mt,\mt^{\undla})}.
	$$

To define a $\mhgcn$-module structure  on $D(\undla)$, we recall two operators as in \cite[(5.22),(5.23)]{SW} which are similar with the operators in \cite{Wa}:
\begin{align}
	\Xi_i u&:=-\Big(\frac{1}{x_i-x_{i+1}}+c_ic_{i+1}\frac{1}{x_i+x_{i+1}}\Big)u,\label{Operater1-dege}\\
	\Omega_i u&:=\Bigg(\sqrt{1-\frac{1}{(x_i-x_{i+1})^2}-\frac{1}{(x_i+x_{i+1})^2}}\Bigg)u,\label{Operater2-dege}
\end{align} where $u\in L(\res(\mathfrak{t}^{\undla}))^{d(\mt,\mt^{\undla})}\simeq L(\res(\mt))$.
By Definition \ref{important condition2} and Proposition \ref{separate formula dege}, the eigenvalues of $x^2_i$ and $x^2_{i+1}$ on $L(\res(\mathfrak{t}^{\undla}))^{d(\mt,\mt^{\undla})}$ are different, hence the operators $\Xi_i$ and $\Omega_i$  are well-defined on $L(\res(\mathfrak{t}^{\undla}))^{d(\mt,\mt^{\undla})}$ for each $\mt \in \Std(\undla)$.

\begin{thm}(\cite[Theorem 5.20]{SW})
Let $\undQ=(Q_1,\ldots,Q_m)$. Suppose $g=g(x_1)=g^{(\bullet)}_{\undQ}(x_1)$ and  $P_n^{(\bullet)}(1,\undQ)\neq 0$ with $\bullet\in\{\mathsf{0},\mathsf{s}\}$.
  $D(\undla)$ affords a $\mhgcn$-module via
\begin{align}
s_i z^{\tau}= \left \{
 \begin{array}{ll}
 \Xi_i z^{d(\mt,\mt^{\undla})}
 +\Omega_i z^{s_id(\mt,\mt^{\undla})},
 & \text{ if } s_i \mt\in \Std(\undla), \\
 \Xi_iz^{d(\mt,\mt^{\undla})}
 , & \text{ otherwise},
 \end{array}
 \right.\label{actionformula}
\end{align}
 for  $1\leq i\leq n-1, z\in L(\res(\mathfrak{t}^{\undla}))$ and $\mt\in
\Std(\undla)$.
\end{thm}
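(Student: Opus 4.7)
The plan is to verify that the prescribed action turns $D(\undla)$ into a well-defined $\mhgcn$-module, which amounts to checking (i) the defining relations \eqref{braid}--\eqref{xc} of $\mhcn$, and (ii) the cyclotomic relation $g(x_1)=0$. Each summand $L(\res(\mt^{\undla}))^{d(\mt,\mt^{\undla})}\cong L(\res(\mt))$ is already a $\mathcal{P}_n$-module by construction, so \eqref{poly}, \eqref{clifford}, \eqref{xc} are automatic; the real work is in the relations involving $s_i$ together with the cyclotomic condition.

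The conceptual starting point is the tautological identity $s_i=\phi_i+\Xi_i$ obtained directly from the definition of $\phi_i$ in the paragraph before Remark \ref{diff. of phi and Phi}. By the separate condition (Proposition \ref{separate formula dege}), the operator $x_i^{2}-x_{i+1}^{2}$ is invertible on each $L(\res(\mt))$, so $\phi_i$ acts well-definedly on $D(\undla)$; it preserves the grading by tableaux up to the transposition $\mt\mapsto s_i\mt$, while $\Xi_i$ is a scalar operator on each weight space. When $s_i\mt\in\Std(\undla)$ the coefficient with which $\phi_i v_{\mt}$ lands in $L(\res(s_i\mt))$ is exactly the square root in \eqref{Operater2-dege}, forced by Corollary \ref{phi}(a) and the square formula \eqref{degsquare2}; when $s_i\mt\notin\Std(\undla)$ the separate condition together with Corollary \ref{phi}(a) shows $\phi_i v_{\mt}=0$, and the formula collapses to $s_i=\Xi_i$ as required.

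From here I would check each relation in turn by computing both sides on the weight basis. The cross relations \eqref{px1}, \eqref{px2}, \eqref{pc} reduce to the intertwining identities \eqref{xinter}, \eqref{cinter} for $\phi_i$, plus direct rational-function manipulations coming from the explicit form of $\Xi_i$. The commuting braid relation $s_is_j=s_js_i$ for $|i-j|>1$ follows from \eqref{degbraidrel1} and the obvious commutation of the disjoint $\Xi$'s. The quadratic relation $s_i^2=1$ is precisely what fixes the choice of $\Omega_i$: expanding $(\phi_i+\Xi_i)^2$ via \eqref{degsquare2} and collecting diagonal vs.\ off-diagonal parts reproduces \eqref{Operater2-dege}. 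The hexagon braid relation $s_is_{i+1}s_i=s_{i+1}s_is_{i+1}$ is the main obstacle: one has to expand both triple products on $v_{\mt}$, split into the several cases according to which of $s_i$, $s_{i+1}$, $s_is_{i+1}$, $s_{i+1}s_i$ are admissible with respect to $\mt$, and on each stratum compare the resulting monomials in the $\phi_\ast$'s and $\Xi_\ast$'s using the braid relation \eqref{degbraidrel2} together with \eqref{phic2} for the $c$-conjugations that arise when the intertwiners move past the Clifford generators. This bookkeeping parallels the verification already implicit in the non-degenerate case (Theorem \ref{Construction}) and is the place where one genuinely needs parts (1)--(3) of Lemma \ref{important conditionequi2}.

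Finally, for the cyclotomic relation I would observe that on any $L(\res(\mt))$ the generator $x_1$ has eigenvalue $\mathtt{u}_+(\res_\mt(1))$, and $\res_\mt(1)=Q_l$ for some $l$ since $1$ must occupy a corner box of $\mt^{\undla}$; hence $x_1^2$ has eigenvalue $\mathtt{q}(Q_l)$, and when $\bullet=\mathsf{s}$ and $1$ is placed in the zeroth (strict) component we have $\res_\mt(1)=0$ so $x_1$ itself has eigenvalue $0$. In either form of $g=g^{(\bullet)}_{\undQ}$, one of the factors therefore annihilates each weight space, and consequently $g(x_1)$ acts as zero on all of $D(\undla)$. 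The only genuinely delicate part of the whole argument is the braid hexagon, which I expect to be the main obstacle.
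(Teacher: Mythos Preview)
The paper does not prove this theorem at all; it is imported verbatim from \cite[Theorem 5.20]{SW}, just as the non-degenerate analogue (Theorem \ref{Construction}) is imported from \cite[Theorem 4.5]{SW}. There is therefore nothing in the present paper to compare your proposal against.

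Your outline is the standard strategy for results of this type (going back to \cite{Na2} and \cite{Wa}): the $\mathcal{P}_n$-relations hold on each summand by construction, and one then verifies the quadratic, commuting and hexagon relations for $s_i$ together with \eqref{px1}--\eqref{pc}, and finally the cyclotomic relation. Two small points deserve tightening. First, the assertion that ``Corollary \ref{phi}(a) shows $\phi_i v_{\mt}=0$'' when $s_i\mt\notin\Std(\undla)$ is not the right formulation: at that stage there is no pre-existing action of $\phi_i$ on $D(\undla)$, since you are in the process of \emph{defining} the $s_i$-action. What you actually need there is that $\Xi_i^2$ acts as the scalar $\frac{1}{(\mathtt{u}_{\mt,i}-\mathtt{u}_{\mt,i+1})^2}+\frac{1}{(\mathtt{u}_{\mt,i}+\mathtt{u}_{\mt,i+1})^2}$ on that weight space, and this equals $1$ precisely because the boxes $\mt^{-1}(i),\mt^{-1}(i+1)$ are adjacent (same row or same column), forcing the residues into one of the cases \eqref{invertible dege2}. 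Second, the identity $s_i=\phi_i+\Xi_i$ is a heuristic in the localized affine algebra rather than a statement about operators on $D(\undla)$; its real use is that the function identities \eqref{degsquare1}--\eqref{phic2} for $\phi_i(x,y)$ encode exactly the computations you need when expanding products of the defined $s_i$-operators. With these clarifications your plan is sound and matches what \cite{SW} carries out.
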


Recall that we have fixed  $\undla\in\mathscr{P}^{\bullet,m}_{n}$. Let $t:=\sharp \mathcal{D}_{\undla}.$
		\begin{defn}
We denote \begin{align}
				\mathcal{D}_{\mt^{\undla}}&:=\{i_1<i_2<\cdots<i_t\}=\{\mt^{\undla}(a,a,l)|(a,a,l)\in\mathcal{D}_{\undla}\}\label{dege.stanard D},\\
				\mathcal{OD}_{\mt^{\undla}}&:=\{i_1,i_3,\cdots,i_{2{\lceil t/2 \rceil}-1}\}\subset \mathcal{D}_{\mt^{\undla}}\label{dege.standard OD}.
	\end{align}
\end{defn}
For each $\mt\in \Std(\undla),$ recall the definition of $\mathcal{D}_{\mt},\,\mathcal{OD}_{\mt},\,\Z_2(\mathcal{OD}_{\mt}),\,\Z_2([n]\setminus \mathcal{D}_{\mt})$.
		

%
		\begin{defn}	
For any $i\in [n], \mt\in \Std(\undla),$ we denote
			 $$\mathtt{u}_{\mt,i}:=\mathtt{u}_{+}(\res_{\mt}(i)).$$
If $i\in [n-1]$, we define
\label{pag:dege coeffi cti}	
			\begin{align}
				\mathfrak{c}_{\mt}(i):=1-\frac{1}{(\mathtt{u}_{\mt,i}-\mathtt{u}_{\mt,i+1})^2}
				 -\frac{1}{(\mathtt{u}_{\mt,i}+\mathtt{u}_{\mt,i+1})^2}\in \mathbb{K}.
			\end{align}
			\end{defn}
\begin{rem}
It is notable that the choice of sign of $\mathtt{u}_{\mt,i}$ is different from that of $\mathtt{b}_{\mt,i}$ in non-degenerate case, which aims to be compatible with the form of intertwining element, see Remark \ref{diff. of phi and Phi}.
\end{rem}
			Since $\mt \in \Std(\undla),$ $\mathtt{u}_{\mt,i}\neq \pm\mathtt{u}_{\mt,i+1}$ by Definition \ref{important condition2} and Proposition \ref{separate formula dege}, which immediately implies that $\mathfrak{c}_{\mt}(i)$ is well-defined. If $s_i$ is admissible with respect to $\mt$, i.e., $\delta(s_i\mt)=1$, then $\mathfrak{c}_{\mt}(i)\in \mathbb{K}^{*}$ by
		the third part of Lemma \ref{important conditionequi2}. It is clear that $\mathfrak{c}_{\mt}(i)=\mathfrak{c}_{s_i\mt}(i).$	
		
		\begin{defn}
			Let $\mt\in\Std(\undla),\,\beta_{\mt}\in\Z_2([n]\setminus \mathcal{D}_{\mt})$ and $\alpha_{\mt}\in \Z_2(\mathcal{OD}_{\mt})$.	For each $i\in [n-1],$ we define $\mathcal{R}(i,\beta_\mt, \alpha_\mt, \mt)\in D(\undla)$ as follows.
		
		\begin{enumerate}
			\item If $i,i+1\in [n]\setminus \mathcal{D}_{\mt},$
			$$\mathcal{R}(i,\beta_\mt, \alpha_\mt, \mt):=(-1)^{\delta_{\beta_{\mt}}(i)}c^{\beta_{\mt}+e_i+e_{i+1}}c^{\alpha_{\mt}}v_{\mt};
			$$
			\item
			if $i={d(\mt,\mt^{\undla})}(i_p)\in \mathcal{D}_{\mt}, i+1\in [n]\setminus \mathcal{D}_{\mt},$
			$$\mathcal{R}(i,\beta_\mt, \alpha_\mt, \mt):=\begin{cases}(-1)^{|\beta_{\mt}|_{>i}+|\alpha_{\mt}|_{<i}}c^{\beta_{\mt}+e_{i+1}}c^{\alpha_{\mt}+e_{i}}v_{\mt}, & \text{ if  $p$ is odd},\\
				(-\sqrt{-1})(-1)^{|\beta_{\mt}|_{>i}+|\alpha_{\mt}|_{\leq {d(\mt,\mt^{\undla})}(i_{p-1})}}&\\
				\qquad\qquad\qquad\cdot c^{\beta_{\mt}+e_{i+1}}c^{\alpha_{\mt}+e_{{d(\mt,\mt^{\undla})}(i_{p-1})}}v_{\mt}, &\text{ if  $p$ is even};
			\end{cases}\nonumber
			$$
			\item
			if $i+1={d(\mt,\mt^{\undla})}(i_p)\in \mathcal{D}_{\mt}, i\in [n]\setminus \mathcal{D}_{\mt},$ $$\mathcal{R}(i,\beta_\mt, \alpha_\mt, \mt):=\begin{cases}(-1)^{|\beta_{\mt}|_{\geq i}+|\alpha_{\mt}|_{<i+1}} c^{\beta_{\mt}+e_i}c^{\alpha_{\mt}+e_{i+1}}v_{\mt}, & \text{ if  $p$ is odd},\\
				(-\sqrt{-1})(-1)^{|\beta_{\mt}|_{\geq i}+|\alpha_{\mt}|_{\leq {d(\mt,\mt^{\undla})}(i_{p-1})}}&\\
				\qquad\qquad\qquad \cdot c^{\beta_{\mt}+e_i}c^{\alpha_{\mt}+e_{{d(\mt,\mt^{\undla})}(i_{p-1})}}v_{\mt}, &\text{ if $p$ is even}.\end{cases}  \nonumber
			$$
		\end{enumerate}
\end{defn}

		\begin{prop}\label{dege-actions of generators on L basis} $D(\undla)$ has a $\mathbb{K}$-basis of the form
$$\bigsqcup_{\mt\in \Std(\undla)}\Biggl\{c^{\beta_{\mt}}c^{\alpha_{\mt}}v_{\mt}\biggm|\begin{matrix}\beta_{\mt} \in \Z_2([n]\setminus \mathcal{D}_{\mt})  \\
					\alpha_{\mt}\in \Z_2(\mathcal{OD}_{\mt})
					\end{matrix}\Biggr\}.$$ such that
				$$  L_{\mt}:=\Biggl\{c^{\beta_{\mt}}c^{\alpha_{\mt}}v_{\mt}\biggm|\begin{matrix}\beta_{\mt} \in  \Z_2([n]\setminus \mathcal{D}_{\mt} ) \\
						\alpha_{\mt}\in \Z_2(\mathcal{OD}_{\mt})
					\end{matrix}\Biggr\}$$ forms a $\mathbb{K}$-basis of $ L(\res(\mathfrak{t}^{\undla}))^{{d(\mt,\mt^{\undla})}}$ and the actions of generators of $\mhgcn$ are given in the following.
					
			Let $\mt\in\Std(\undla),\,\beta_{\mt}\in\Z_2([n]\setminus \mathcal{D}_{\mt})$ and $\alpha_{\mt}\in \Z_2(\mathcal{OD}_{\mt})$.
			\begin{enumerate}
				\item For each $i\in [n],$ we have
				\begin{align}\label{x eigenvalues}
					x_i\cdot c^{\beta_{\mt}}c^{\alpha_{\mt}}v_{\mt}
					=\nu_{\beta_{\mt}}(i)\mathtt{u}_{\mt,i}c^{\beta_{\mt}}c^{\alpha_{\mt}}v_{\mt}.
				\end{align}
				\item We have $\gamma_{\mt} v_{\mt}=v_{\mt}.$
				\item For each $i\in [n],$ we have
				\begin{align}
					&c_i\cdot c^{\beta_{\mt}}c^{\alpha_{\mt}}v_{\mt} \nonumber\\
					&=\begin{cases}
						(-1)^{|\beta_{\mt}|_{<i}} c^{\beta_{\mt}+e_i}c^{\alpha_{\mt}}v_{\mt}, & \text{ if } i\in [n]\setminus \mathcal{D}_{\mt}, \\
						(-1)^{|\beta_{\mt}|+|\alpha_{\mt}|_{<i}} c^{\beta_{\mt}}c^{\alpha_{\mt}+e_{i}}v_{\mt}, & \text{ if  $i={d(\mt,\mt^{\undla})}(i_p) \in \mathcal{D}_{\mt}$ ,where $p$ is odd},\\
								(-\sqrt{-1})(-1)^{|\beta_{\mt}|+|\alpha_{\mt}|_{\leq {d(\mt,\mt^{\undla})}(i_{p-1})}}&\\
							\qquad\qquad	\cdot c^{\beta_{\mt}}c^{\alpha_{\mt}+e_{{d(\mt,\mt^{\undla})}(i_{p-1})}}v_{\mt}, &\text{ if  $i={d(\mt,\mt^{\undla})}(i_p) \in \mathcal{D}_{\mt}$ ,where $p$ is even}.\nonumber
					\end{cases}
				\end{align}
				\item	For each $i\in [n-1],$ we have
				\begin{align}
					&s_i\cdot c^{\beta_{\mt}}c^{\alpha_{\mt}}v_{\mt} \label{Taction dege}\\
					=&-\frac{1}{\nu_{\beta_{\mt}}(i)\mathtt{u}_{\mt,i}-\nu_{\beta_{\mt}}(i+1)\mathtt{u}_{\mt,i+1}}  c^{\beta_{\mt}}c^{\alpha_{\mt}}v_{\mt} \nonumber\\
					&	\qquad\qquad  -\frac{1}{\nu_{\beta_{\mt}}(i)\mathtt{u}_{\mt,i}+\nu_{\beta_{\mt}}(i+1)\mathtt{u}_{\mt,i+1}} \mathcal{R}(i,\beta_\mt, \alpha_\mt, \mt)\nonumber\\
					&	\qquad\qquad\qquad +\delta(s_i\mt)(-1)^{\delta_{\beta_{\mt}}(i)\delta_{\beta_{\mt}}(i+1)}\sqrt{\mathfrak{c}_{\mt}(i)}c^{s_i \cdot \beta_{\mt}}c^{s_i \cdot \alpha_{\mt}}v_{s_i\mt}.\nonumber
				\end{align}

			\end{enumerate}
		\end{prop}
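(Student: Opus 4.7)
The plan is to mirror the argument used for Theorem \ref{actions of generators on L basis} in the non-degenerate case, replacing Proposition \ref{actions of X,C} and the intertwiner $\Phi_i(x,y)$ by their degenerate analogues Proposition \ref{actions of x,c} and $\phi_i(x,y)$. First I would construct the basis: applying Proposition \ref{L basis dege} to $L(\res(\mt^{\undla}))$ with the choice $\gamma_1:=2^{-\lfloor t/2\rfloor}\cdot\overrightarrow{\prod_{k=1,\ldots,\lfloor t/2\rfloor}}(1+\sqrt{-1}c_{2k-1}c_{2k})$ and setting $v_\mt:=v_{\mt^{\undla}}^{d(\mt,\mt^{\undla})}$, one obtains the required $\mathbb{K}$-basis of each twisted summand $L(\res(\mt^{\undla}))^{d(\mt,\mt^{\undla})}$, and hence of $D(\undla)$. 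Statements (1)--(3) then follow immediately from Proposition \ref{actions of x,c}(1)--(3), after observing that $\mathtt{u}_{+}(\res_{\mt^{\undla}}(d(\mt,\mt^{\undla})^{-1}(i)))=\mathtt{u}_{+}(\res_{\mt}(i))=\mathtt{u}_{\mt,i}$ and noting that the formulas for $c_i$ only involve the combinatorics of $\mathcal{D}_{\mt}, \mathcal{OD}_{\mt}$, exactly as in the non-degenerate case.

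For (4), the main step, I would first rewrite \eqref{actionformula} using \eqref{Operater1-dege} and \eqref{Operater2-dege} to obtain the key identity
\[
\phi_i(\mathtt{u}_{\mt,i},\mathtt{u}_{\mt,i+1})v_{\mt}=\delta(s_i\mt)\sqrt{\mathfrak{c}_{\mt}(i)}\,v_{s_i\mt},
\]
which is the degenerate counterpart of \eqref{Phi-action}. Then, for general $c^{\beta_\mt}c^{\alpha_\mt}v_\mt$, one moves $\phi_i(\nu_{\beta_\mt}(i)\mathtt{u}_{\mt,i},\nu_{\beta_\mt}(i+1)\mathtt{u}_{\mt,i+1})$ past $c^{\beta_\mt}c^{\alpha_\mt}$ using \eqref{phic2}; each commutation through a $c_i$ or $c_{i+1}$ flips a sign in the appropriate slot of $\phi_i$, producing the scalar $(-1)^{\delta_{\beta_\mt}(i)\delta_{\beta_\mt}(i+1)}$ in front of the ``jump'' term (observe that no $\delta_{\alpha_\mt}(i)\delta_{\alpha_\mt}(i+1)$ contribution appears here, because $\alpha_\mt$ is supported on $\mathcal{OD}_{\mt}$ and at most one of $i,i+1$ can lie in $\mathcal{OD}_{\mt}$ since consecutive integers cannot both sit in $\mathcal{OD}_{\mt}$ under the separate condition). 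The first two terms of $s_i$ come from the decomposition $s_i=\phi_i(\mathtt{u}_{\mt,i},\mathtt{u}_{\mt,i+1})-\tfrac{1}{x_i-x_{i+1}}-\tfrac{c_ic_{i+1}}{x_i+x_{i+1}}$; after pushing $\frac{1}{x_i\pm x_{i+1}}$ to the right past $c^{\beta_\mt}c^{\alpha_\mt}$, we get the diagonal scalar $-\frac{1}{\nu_{\beta_\mt}(i)\mathtt{u}_{\mt,i}-\nu_{\beta_\mt}(i+1)\mathtt{u}_{\mt,i+1}}$ and the coefficient of the $c_ic_{i+1}$ term $-\frac{1}{\nu_{\beta_\mt}(i)\mathtt{u}_{\mt,i}+\nu_{\beta_\mt}(i+1)\mathtt{u}_{\mt,i+1}}$, after using \eqref{xc} to collect the sign flips from $x$'s acting across the $c$'s.

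The hard part, just as in the non-degenerate case, is the bookkeeping in the third term: one has to expand $c_ic_{i+1}\cdot c^{\beta_\mt}c^{\alpha_\mt}v_\mt$ and sort the result into the basis $L_\mt$, which involves the four cases (a) $i,i+1\in [n]\setminus\mathcal{D}_\mt$; (b) $i\in\mathcal{D}_\mt,\,i+1\notin\mathcal{D}_\mt$; (c) $i\notin\mathcal{D}_\mt,\,i+1\in\mathcal{D}_\mt$; but \emph{not} the fourth case of the non-degenerate setup, because $i,i+1\in\mathcal{D}_\mt$ is forbidden by separateness. For each of the three surviving cases, splitting further on the parity of $p$ (where $d(\mt,\mt^{\undla})(i_p)\in\{i,i+1\}$) and applying part (3) twice in succession produces $\mathcal{R}(i,\beta_\mt,\alpha_\mt,\mt)$ with the correct signs; this is a verbatim transcription of the case analysis that gave $R(i,\beta_\mt,\alpha_\mt,\mt)$ in Theorem \ref{actions of generators on L basis}, with $C$'s replaced by $c$'s throughout. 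Assembling the three terms then yields \eqref{Taction dege} and completes the proof.
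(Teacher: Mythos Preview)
Your proposal is correct and matches the paper's intended approach: the paper omits the proof of this proposition entirely, stating only that the degenerate story is ``similar'' to the non-degenerate one (Theorem~\ref{actions of generators on L basis}), and your outline carries out exactly that transcription. Your observation that the case $i,i+1\in\mathcal{D}_\mt$ cannot occur in the degenerate setting (since for $\bullet\in\{\mathsf{0},\mathsf{s}\}$ every diagonal box has residue~$0$, so two adjacent diagonal positions would violate the separate condition) is precisely what explains both the absence of the fourth case in the definition of $\mathcal{R}$ and the disappearance of the $\delta_{\alpha_\mt}(i)\delta_{\alpha_\mt}(i+1)$ sign in the jump term.
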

\subsection{Primitive idempotents of $\mhgcn$}

Recall the definition of $\Z_2(\mathcal{OD}_{\mt})_{a}$ (\ref{decomposition of OD}) and ${\rm Tri}_{a}(\undla)$ (\ref{Tri}) for $a\in \Z_2,\,\undla\in\mathscr{P}^{\bullet,m}_{n}$ with $\bullet\in\{\mathsf{0},\mathsf{s}\}$. We shall define the primitive idempotents in degenerate case.
\begin{defn}
           For $k\in[n]$, let
           $$\mathtt{U}(k):=\{ \mathtt{u}_{\pm}(\res_{\ms}(k)) \mid \ms \in \Std(\mathscr{P}^{\bullet,m}_{n}) \}.$$
			For any ${\rm T}=(\mt, \alpha_{\mt}, \beta_{\mt})\in {\rm Tri}_{\bar{0}}(\undla),$ we define
            \label{pag:dege primitive idempotents}
			\begin{align}\label{definition of primitive idempotents. dege}
				 \mathcal{F}_{\rm T}:=\left(c^{\alpha_{\mt}}\gamma_{\mt}(c^{\alpha_{\mt}})^{-1} \right) \cdot \left(\prod_{k=1}^{n}\prod_{\mathtt{u}\in \mathtt{U}(k)\atop \mathtt{u}\neq \mathtt{u}_{+}(\res_{\mt}(k))}\frac{\nu_{\beta_{\mt}}(k)x_k-\mathtt{u}}{\mathtt{u}_{+}(\res_{\mt}(k))-\mathtt{u}}\right)\in \mhgcn.
\end{align}
We define
\begin{align}
				 \mathcal{F}_{\undla}&:=\sum_{{\rm T}\in {\rm Tri}_{\bar{0}}(\undla)}  \mathcal{F}_{\rm T},
			\end{align}
and two left ideals of $\mhgcn$ as follows
    \label{pag:dege simple blocks}
	\begin{align}
				\mathfrak{D}_{\rm T}&:=\mhgcn  \mathcal{F}_{\rm T}\subseteq\mhgcn,\\
				 \mathcal{B}_{\undla}&:=\mhgcn   \mathcal{F}_{\undla}\subseteq \mhgcn.
			\end{align}
		\end{defn}
The following is the analogue of Theorem \ref{primitive idempotents}.
\begin{thm}\label{primitive idempotents. dege}
			Suppose $P_n^{(\bullet)}(1,\undQ)\neq 0$. For $\bullet\in\{\mathsf{0},\mathsf{s}\}$, we have the followin.
			
			(a) $\{\mathcal{F}_{\rm T} \mid \rm T\in {\rm Tri}_{\bar{0}}(\mathscr{P}^{\bullet,m}_{n})\}$ is a complete set of (super) primitive orthogonal idempotents of $\mhgcn.$
			
			(b) $\{\mathcal{F}_{\undla} \mid \undla \in \mathscr{P}^{\bullet,m}_{n} \}$ is a complete set of (super) primitive central idempotents of $\mhgcn.$
			
			(c) For ${\rm T}=(\mt, \alpha_{\mt}, \beta_{\mt})\in {\rm Tri}_{\bar{0}}(\undla),\,
			{\rm S}=(\ms, \alpha_{\ms}', \beta_{\ms}')\in {\rm Tri}_{\bar{0}}(\underline{\mu}),$ then $\mathfrak{D}_{\rm T}$ and $\mathfrak{D}_{\rm S}$ belongs to the same block if and only if $\undla=\underline{\mu}$.
			
			(d) Let $\undla\in\mathscr{P}^{\bullet,m}_{n}$. If $d_{\undla}=1$, then for any ${\rm T}=(\mt, \alpha_{\mt}, \beta_{\mt}),
			{\rm S}=(\ms, \alpha_{\ms}', \beta_{\ms}')\in {\rm Tri}_{\bar{0}}(\undla),$ we have evenly isomorphic $\mhgcn$-supermodules $\mathfrak{D}_{\rm T}\cong \mathfrak{D}_{\rm S}$; if $d_{\undla}=0$, then for any ${\rm T}=(\mt, \alpha_{\mt}, \beta_{\mt}),
			{\rm S}=(\ms, \alpha_{\ms}', \beta_{\ms}')\in {\rm Tri}(\undla),$ we have evenly isomorphic $\mhgcn$-supermodules $\mathfrak{D}_{\rm T}\cong \mathfrak{D}_{\rm S}$ if and only if
			$$|\alpha_{\mt}|+|\beta_{\mt}| \equiv |\alpha_{\ms}'|+|\beta_{\ms}'| \pmod 2.$$
		\end{thm}

\begin{cor} Suppose $P_n^{(\bullet)}(1,\undQ)\neq 0$. Then
			the set of elements $$\{\mathcal{F}_{\undla} \mid \lambda\in \mathscr{P}^{\bullet,m}_{n} \}$$ form a $\mathbb{K}$-basis of the super center ${\rm Z}(\mhgcn)_{\bar{0}}.$
		\end{cor}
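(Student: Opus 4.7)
The plan is to mimic the non-degenerate argument and reduce the statement to a block-by-block analysis, using the semisimplicity of $\mhgcn$ together with Theorem~\ref{primitive iempotents. dege}. First, by Theorem~\ref{primitive iempotents. dege}(b), each $\mathcal{F}_{\undla}$ lies in the even part of the center $\mathrm{Z}(\mhgcn)_{\bar{0}}$, and the family $\{\mathcal{F}_{\undla}\mid\undla\in\mathscr{P}^{\bullet,m}_{n}\}$ consists of pairwise orthogonal nonzero idempotents. Thus linear independence is immediate, and only spanning remains to be checked.

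To verify spanning, I would invoke the block decomposition $\mhgcn=\bigoplus_{\undla\in\mathscr{P}^{\bullet,m}_{n}}\mathcal{B}_{\undla}$, which induces a decomposition
\begin{equation*}
\mathrm{Z}(\mhgcn)_{\bar{0}}=\bigoplus_{\undla\in\mathscr{P}^{\bullet,m}_{n}}\mathrm{Z}(\mathcal{B}_{\undla})_{\bar{0}},
\end{equation*}
so it suffices to show that $\mathrm{Z}(\mathcal{B}_{\undla})_{\bar{0}}=\mathbb{K}\,\mathcal{F}_{\undla}$ for every $\undla$. By the degenerate analogue of Theorem~\ref{semisimple:non-dege}, $\mathcal{B}_{\undla}$ is isomorphic as a superalgebra to either $\mathcal{M}_{p,p'}$ (when $d_{\undla}=0$, giving type $\mathtt{M}$) or $\mathcal{Q}_p$ (when $d_{\undla}=1$, giving type $\mathtt{Q}$) for the appropriate dimensions determined by $|\Std(\undla)|$ and $|\mathcal{OD}_{\mt^{\undla}}|$. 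In both cases, Example~\ref{simple algebra} (together with the standard computation of centers for $\mathcal{M}_{p,p'}$ and $\mathcal{Q}_p$) shows that the even part of the super center is one-dimensional, spanned by the identity of the block. Since $\mathcal{F}_{\undla}$ is the identity of $\mathcal{B}_{\undla}$ (as in the proof of Theorem~\ref{primitive iempotents. dege}(b)), the conclusion $\mathrm{Z}(\mathcal{B}_{\undla})_{\bar{0}}=\mathbb{K}\,\mathcal{F}_{\undla}$ follows.

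There is no substantive obstacle: the real work has been done in establishing semisimplicity of $\mhgcn$, identifying the isomorphism types of its blocks, and constructing the central idempotents $\mathcal{F}_{\undla}$. The corollary is essentially a packaging result, running in parallel to the non-degenerate corollary preceding Proposition~\ref{subalgebras}. The only minor point that needs attention is to confirm that one is computing the \emph{super} center (i.e., the even part of the graded center) rather than the ordinary center, but since $\mathcal{F}_{\undla}$ is even and the identity of each block is automatically super-central, this distinction causes no difficulty.
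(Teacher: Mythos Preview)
Your proposal is correct and matches the paper's (implicit) approach: the paper states this corollary without proof, treating it as an immediate consequence of Theorem~\ref{primitive iempotents. dege} and the block decomposition, exactly as you argue. The only thing to note is that the paper does not spell out the center computation for $\mathcal{M}_{p,p'}$ and $\mathcal{Q}_p$ either, so your explicit reduction to those cases is, if anything, more detailed than what the paper provides.
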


\begin{rem}\label{KMS idempotent}
	For any $\mt \in \Std(\undla), \beta_{\mt} \in \Z_2([n]\setminus \mathcal{D}_{\mt}),$ we denote that
	$$\mathcal{F}_{(\mt, \beta_{\mt})}:=\sum_{\alpha_{\mt}\in \Z(\mathcal{OD}_{\mt})_{\bar{0}}}\mathcal{F}_{(\mt, \alpha_{\mt},\beta_{\mt})}=\prod_{k=1}^{n}\prod_{\mathtt{u}\in \mathtt{U}(k)\atop \mathtt{u}\neq \mathtt{u}_{+}(\res_{\mt}(k))}\frac{\nu_{\beta_{\mt}}(k)x_k-\mathtt{u}}{\mathtt{u}_{+}(\res_{\mt}(k))-\mathtt{u}} \in \mhgcn.$$
	For any standard tableau $\ms$ and $1\leq k \leq n,$ we denote by ${\rm Add}(\ms)$ the set of all addable boxes of ${\rm shape}(\ms)$ and $\ms\downarrow_k$ the subtableau of $\ms$ labeled by $1,2,\ldots,k$ in $\ms.$ Then
	I. Kashuba, A. Molev and V. Serganova defined another elements in Sergeev algebra \cite{KMS}, i.e., $g=x_1$:
	$$\mathcal{E}_{(\mt, \beta_{\mt})}:= \prod_{k=1}^{n}\prod_{\mathtt{u}\in \{\mathtt{u}_{\pm}(\res(\gamma))\mid \gamma \in {\rm Add}(\mt\downarrow_{k-1})\}\backslash\{\mathtt{u}_{+}(\res_{\mt}(k))\}} \frac{\nu_{\beta_{\mt}}(k)x_k-\mathtt{u}}{\mathtt{u}_{+}(\res_{\mt}(k))-\mathtt{u}} \in \mhgcn.$$
	We claim that $\mathcal{F}_{(\mt, \beta_{\mt})}=\mathcal{E}_{(\mt, \beta_{\mt})}.$ In fact, we can check this equality similarly to the proof of Lemma \ref{idempotent action. non-dege}, i.e., $\mathcal{F}_{(\mt, \beta_{\mt})}$ and $\mathcal{E}_{(\mt, \beta_{\mt})}$ act as the same linear operator on each $\mathbb{D}(\underline{\mu})$. For example, for any standard tableau $\ms \neq \mt,$ there exists a minimal number $k_0$ such that $\ms\downarrow_{k_0} \neq \mt\downarrow_{k_0}$ but $\ms\downarrow_{k_0-1} = \mt\downarrow_{k_0-1},$ i.e., $\ms^{-1}(k_0),\mt^{-1}(k_0)\in {\rm Add}(\mt\downarrow_{k_0-1})$ and $\ms^{-1}(k_0) \neq \mt^{-1}(k_0).$ So we have $\mathtt{u}_{\pm}(\res_{\ms}(k_0))\neq \mathtt{u}_{+}(\res_{\mt}(k_0))$ by the second part of Lemma \ref{important conditionequi2}.
	Thus, it follows that $\mathcal{E}_{(\mt, \beta_{\mt})} \cdot C^{\beta_{\ms}^{'}} C^{\alpha_{\ms}^{'}} v_{\ms} =0$ by \eqref{x eigenvalues} for any $\alpha_{\ms}^{'}\in \Z_2(\mathcal{OD}_{\ms}), \beta_{\ms}^{'}\in \Z_2([n]\setminus \mathcal{D}_{\ms}).$
\end{rem}

\subsection{Seminormal basis of $\mhgcn$}
In this subsection, we also fix $\undla\in\mathscr{P}^{\bullet,m}_{n}$ with $\bullet\in\{\mathsf{0},\mathsf{s}\}.$ We will construct a series of seminormal bases for block $\mathcal{B}_{\undla}$ of $\mhgcn$.
	\begin{defn}
    \label{pag:phist and cst}	
For any $\ms,\mt \in \Std(\undla),$ we fix a reduced expression $d(\ms,\mt)=s_{k_p}\cdots s_{k_1}$. We define
		\begin{align}\label{phist}
			\phi_{\ms,\mt}:=\overleftarrow{\prod_{i=1,\ldots,p}}\phi_{k_{i}}(\mathtt{u}_{s_{k_{i-1}}\cdots s_{k_1}\mathfrak{t},k_{i}}, \mathtt{u}_{s_{k_{i-1}}\cdots s_{k_1}\mathfrak{t},k_{i}+1})  \in \mhgcn
		\end{align}
		and the coefficient
		\begin{align}\label{c-coefficients. dege}
			\mathfrak{c}_{\ms,\mt}:=\prod_{i=1,\ldots,p}\sqrt{\mathfrak{c}_{s_{k_{i-1}}\cdots s_{k_1}\mathfrak{t}}(k_{i})}  \in \mathbb{K}.
		\end{align}
		\end{defn}
By Lemma \ref{admissible transposes} and the third part of Lemma \ref{important conditionequi2}, we have $\mathfrak{c}_{\ms,\mt}\in \mathbb{K}^*$.  We can similarly prove that $\phi_{\ms,\mt}$ is independent of the reduced choice of $d(\ms,\mt)$ and has similar properties as $\Phi_{\ms,\mt}$ in Lemma \ref{Phist. well-defi} and Lemma \ref{Phist. lem}.

Now we define the seminormal basis in degenerate case.
\begin{defn}
Let $\mathfrak{w}\in\Std(\undla).$

\label{pag:dege seminormal basis}		
		(1) Supppose $d_{\undla}=0.$  For any ${\rm S}=(\ms, \alpha_{\ms}', \beta_{\ms}'), {\rm T}=(\mt, \alpha_{\mt}, \beta_{\mt})\in {\rm Tri}(\undla),$ we define
			\begin{align}\label{fst. typeM. dege.}
				\mathfrak{f}_{{\rm S},{\rm T}}^{\mathfrak{w}}
				:=\mathcal{F}_{\rm S}c^{\beta_{\ms}'}c^{\alpha_{\ms}'}\phi_{\ms,\mathfrak{w}}\phi_{\mathfrak{w},\mt}
				(c^{\alpha_{\mt}})^{-1} (c^{\beta_{\mt}})^{-1} \mathcal{F}_{\rm T}\in \mathcal{F}_{\rm S}\mhgcn \mathcal{F}_{\rm T},
			\end{align} and	\begin{align}\label{re. fst. typeM. dege.}
			\mathfrak{f}_{{\rm S},{\rm T}}
			:=\mathcal{F}_{\rm S}c^{\beta_{\ms}'}c^{\alpha_{\ms}'}\phi_{\ms,\mt}
			(c^{\alpha_{\mt}})^{-1} (c^{\beta_{\mt}})^{-1}\mathcal{F}_{\rm T} \in \mathcal{F}_{\rm S}\mhgcn \mathcal{F}_{\rm T},
			\end{align}

			(2) Suppose $d_{\undla}=1.$ For any $a\in \mathbb{Z}_{2}$ and ${\rm S}=(\ms, \alpha_{\ms}', \beta_{\ms}')\in {\rm Tri}_{\bar{0}}(\undla), {\rm T}_{a}=(\mt, \alpha_{\mt,a}, \beta_{\mt})\in {\rm Tri}_{a}(\undla),$ we define
			\begin{align}\label{fst. typeQ. dege.}
			\mathfrak{f}_{{\rm S},{\rm T}_{a}}^{\mathfrak{w}}
            :=\mathcal{F}_{\rm S}c^{\beta_{\ms}'}c^{\alpha_{\ms}'}\phi_{\ms,\mathfrak{w}}\phi_{\mathfrak{w},\mt}
				(c^{{\alpha}_{\mt,a}})^{-1} (c^{\beta_{\mt}})^{-1} \mathcal{F}_{\rm T}\in \mathcal{F}_{\rm S}\mhgcn \mathcal{F}_{\rm T},
			\end{align} and 	\begin{align}\label{re. fst. typeQ. dege.}
		    \mathfrak{f}_{{\rm S},{\rm T}_{a}}
			:= \mathcal{F}_{\rm S}c^{\beta_{\ms}'}c^{\alpha_{\ms}'}\phi_{\ms,\mt}
			(c^{{\alpha}_{\mt,a}})^{-1} (c^{\beta_{\mt}})^{-1} \mathcal{F}_{\rm T} \in \mathcal{F}_{\rm S}\mhgcn \mathcal{F}_{\rm T}.
			\end{align}
			
	   (3)  For any ${\rm T}=(\mt, \alpha_{\mt}, \beta_{\mt})\in {\rm Tri}(\undla),$ we define
\label{pag:dege cT}
$$\mathfrak{c}_{\rm T}^{\mathfrak{w}}:=(\mathfrak{c}_{\mt,\mathfrak{w}})^2\in \mathbb{K}^*.$$
		\end{defn}

We have the following result as a degenerate version of Theorem \ref{seminormal basis}.
\begin{thm}\label{seminormal basis. dege}
			Suppose $P_n^{(\bullet)}(1,\undQ)\neq 0$.  We fix $\mathfrak{w}\in\Std(\undla)$. Then the following two sets
			\begin{align}\label{deg seminormal1}
			\left\{ \mathfrak{f}_{{\rm S},{\rm T}}^\mathfrak{w} \Biggm|
			{\rm S}=(\ms, \alpha_{\ms}', \beta_{\ms}')\in {\rm Tri}_{\bar{0}}(\undla),
			{\rm T}=(\mt, \alpha_{\mt}, \beta_{\mt})\in {\rm Tri}(\undla)
			\right\}
		\end{align} and \begin{align}\label{deg seminormal2}
			\left\{ \mathfrak{f}_{{\rm S},{\rm T}} \Biggm|
		{\rm S}=(\ms, \alpha_{\ms}', \beta_{\ms}')\in {\rm Tri}_{\bar{0}}(\undla),
		{\rm T}=(\mt, \alpha_{\mt}, \beta_{\mt})\in {\rm Tri}(\undla)
		\right\}
			\end{align}  form two $\mathbb{K}$-bases of the block $B_{\undla}$ of $\mhgcn$.
			
			Moreover, for ${\rm S}=(\ms, \alpha_{\ms}', \beta_{\ms}')\in {\rm Tri}_{\bar{0}}(\undla),
			{\rm T}=(\mt, \alpha_{\mt}, \beta_{\mt})\in {\rm Tri}(\undla),$ we have \begin{equation}\label{dege-fst and re. fst}
			\mathfrak{f}_{{\rm S},{\rm T}}
			=\frac{\mathfrak{c}_{\ms,\mt}}{\mathfrak{c}_{\ms,\mathfrak{w} }\mathfrak{c}_{\mathfrak{w},\mt }} \mathfrak{f}_{{\rm S},{\rm T}}^\mathfrak{w}\in \mathcal{F}_{\rm S}\mhgcn \mathcal{F}_{\rm T}.
			\end{equation} The multiplications of basis elements in \eqref{deg seminormal1} are given as follows.
			
			(1) Supppose $d_{\undla}=0.$  Then for any
			${\rm S}=(\ms, \alpha_{\ms}', \beta_{\ms}'),
			{\rm T}=(\mt, \alpha_{\mt}, \beta_{\mt}),
			{\rm U}=(\mfku,\alpha_{\mfku}^{''},\beta_{\mfku}^{''}),
			{\rm V}=(\mfkv,\alpha_{\mfkv}^{'''},\beta_{\mfkv}^{'''})\in {\rm Tri}(\undla),$ we have
			\begin{align}\label{deg multiplication1}
				\mathfrak{f}_{{\rm S},{\rm T}}^\mathfrak{w} f_{{\rm U},{\rm V}}^\mathfrak{w}
				=\delta_{{\rm T},{\rm U}} \mathfrak{c}_{\rm T}^\mathfrak{w} \mathfrak{f}_{{\rm S},{\rm V}}^\mathfrak{w}.
			\end{align}

			(2) Suppose $d_{\undla}=1.$ Then for any $a,b\in \mathbb{Z}_2$ and
			\begin{align*}
				{\rm S}&=(\ms, \alpha_{\ms}', \beta_{\ms}')\in {\rm Tri}_{\bar{0}}(\undla), \quad
				{\rm T}_{a}=(\mt, \alpha_{\mt,a}, \beta_{\mt})\in {\rm Tri}_{a}(\undla),\nonumber\\
				{\rm U}&=(\mfku,\alpha_{\mfku}^{''},\beta_{\mfku}^{''})\in {\rm Tri}_{\bar{0}}(\undla), \quad
				{\rm V}_{b}=(\mfkv,{\alpha_{\mfkv,b}^{'''}},\beta_{\mfkv}^{'''})\in {\rm Tri}_{b}(\undla),\nonumber
			\end{align*} we have
			\begin{align}\label{deg multiplication2}
				\mathfrak{f}_{{\rm S},{\rm T}_{a}}^\mathfrak{w} f_{{\rm U},{\rm V}_{b}}^\mathfrak{w}
				=\delta_{{\rm T}_{\bar{0}},{\rm U}} \mathfrak{c}_{\rm T}^\mathfrak{w} \mathfrak{f}_{{\rm S},{\rm V}_{a+b}}^\mathfrak{w}.
			\end{align}
		\end{thm}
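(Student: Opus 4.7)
The plan is to adapt the proof of Theorem \ref{seminormal basis} to the degenerate setting, following the same architecture but with each non-degenerate ingredient replaced by its degenerate counterpart. First, I would establish the degenerate analogues of Lemma \ref{Phist. well-defi} and Lemma \ref{Phist. lem} for the intertwining element $\phi_{\ms,\mt}$ defined in \eqref{phist}. The braid and commutation relations \eqref{degbraidrel1}, \eqref{degbraidrel2}, \eqref{phic2} play the role of their non-degenerate siblings \eqref{braidrel1}, \eqref{braidrel2}, \eqref{Phi and C}, so $\phi_{\ms,\mt}$ is independent of the reduced expression chosen for $d(\ms,\mt)$; applying Proposition \ref{dege-actions of generators on L basis} to the simple module $D(\undla)$ yields $\phi_{\ms,\mt}\cdot v_{\mt}=\mathfrak{c}_{\ms,\mt}v_{\ms}$, the identity $\phi_{\ms,\mt}\phi_{\mt,\ms}=(\mathfrak{c}_{\ms,\mt})^2$ follows from \eqref{degsquare1}, and the appropriate commutation of $\phi_{\ms,\mt}$ with the Clifford generators $c_{d(\mt,\mt^{\undla})(a)}$ for $a\in\mathcal{D}_{\mt^{\undla}}$ is obtained from \eqref{phic2}.

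Next, one computes the action of each $\mathfrak{f}_{{\rm S},{\rm T}}^{\mathfrak{w}}$ and $\mathfrak{f}_{{\rm S},{\rm T}}$ on the basis $\{c^{\beta_{\mfku}''}c^{\alpha_{\mfku}''}v_{\mfku}\}$ of $D(\undla)$, producing degenerate analogues of \eqref{SNB. eq1}--\eqref{SNB. eq4}. The computation uses $\mathcal{F}_{\rm U}$ as a projection (the degenerate analogue of \eqref{F. eq3}, supplied by Theorem \ref{primitive iempotents. dege}) followed by sweeping $\phi_{\ms,\mathfrak{w}}\phi_{\mathfrak{w},\mt}$ across $v_{\mt}$. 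Given these formulas, the semisimplicity theorem cited at the end of Subsection \ref{basic-dege} gives a block decomposition in which $\mathcal{B}_{\undla}$ has type \texttt{M} when $d_{\undla}=0$ and type \texttt{Q} when $d_{\undla}=1$ (cf. Example \ref{simple algebra}), so $\mathcal{F}_{\rm S}\mhgcn\mathcal{F}_{\rm T}\cong\mathrm{Hom}_{\mhgcn}(\mathfrak{D}_{\rm S},\mathfrak{D}_{\rm T})$ is one-dimensional in the type \texttt{M} case and two-dimensional in the type \texttt{Q} case. The action formulas show that the candidate basis elements are nonzero and, in the type \texttt{Q} case, that $\{\mathfrak{f}_{{\rm S},{\rm T}_a}^{\mathfrak{w}}\mid a\in\mathbb{Z}_2\}$ is linearly independent; a cardinality count then confirms that \eqref{deg seminormal1} and \eqref{deg seminormal2} are $\mathbb{K}$-bases of $\mathcal{B}_{\undla}$, and \eqref{dege-fst and re. fst}, \eqref{deg multiplication1}, \eqref{deg multiplication2} are verified by comparing both sides as operators on $D(\undla)$ and invoking semisimplicity.

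The main obstacle will be the careful sign bookkeeping in the type \texttt{Q} case: the degenerate definitions \eqref{fst. typeQ. dege.} and \eqref{re. fst. typeQ. dege.} deliberately omit the sign prefactor $(-1)^{|\alpha_\ms'|_{>d(\ms,\mt^{\undla})(i_t)}+a|\alpha_\mt|_{>d(\mt,\mt^{\undla})(i_t)}}$ that decorates the non-degenerate \eqref{fst. typeQ. nondege.}, so I must verify that this omission is precisely what is needed to absorb the auxiliary sign $(-1)^{|\alpha_\mt|_{>d(\mt,\mt^{\undla})(i_t)}}$ appearing in \eqref{Non-deg multiplication2} and yield the sign-free multiplication \eqref{deg multiplication2}. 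Concretely, this amounts to retracing the non-degenerate sign computation while tracking how the extra Clifford generator $c_{d(\mt,\mt^{\undla})(i_t)}$ commutes through $\phi_{\ms,\mathfrak{w}}\phi_{\mathfrak{w},\mt}$ and interacts with the projection $\mathcal{F}_{\rm S}$ during composition; once this cancellation is confirmed, the remainder of the proof is a mechanical translation of the non-degenerate argument.
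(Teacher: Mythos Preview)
Your proposal is correct and follows essentially the same approach as the paper. The paper does not give a separate proof of Theorem \ref{seminormal basis. dege}; it simply presents it as the degenerate analogue of Theorem \ref{seminormal basis}, and your plan to transcribe that argument with the degenerate ingredients (Proposition \ref{dege-actions of generators on L basis}, the $\phi_{\ms,\mt}$ properties, Theorem \ref{primitive iempotents. dege}) and to verify the modified sign bookkeeping in the type \texttt{Q} case is exactly what is implicitly intended.
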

		
\subsection{Further properties}
Recall \cite[(2.32)]{BK}, the algebra $\mhgcn$ admits an anti-involution $*$ defined by $s_i^*=s_i,$ $c_j^*=c_j$ and $x_j^*=x_j,$ for
$i\in [n-1],$ $j\in[n].$
Recall the Definition \ref{sgn}. Then we have
\begin{prop}\label{dege-re. fst and BK involution} We fix $\mathfrak{w}\in\Std(\undla).$
			
			(1) If $d_{\undla}=0.$ For any ${\rm S}=(\ms, \alpha_{\ms}', \beta_{\ms}'), {\rm T}=(\mt, \alpha_{\mt}, \beta_{\mt})\in {\rm Tri}(\undla),$ we have
			\begin{align}\label{dege dual 1}
				(\mathfrak{f}_{{\rm S},{\rm T}})^*
				=\sgn(\ms,\widehat{\alpha_{\ms}'})\sgn(\mt,\widehat{\alpha_{\mt}})\mathfrak{f}_{{\rm \widehat{T}},{\rm \widehat{S}}}
			\end{align} and
				\begin{align}\label{dege dual 2}
			(\mathfrak{f}_{{\rm S},{\rm T}}^\mathfrak{w})^*
				=\sgn(\ms,\widehat{\alpha_{\ms}'})\sgn(\mt,\widehat{\alpha_{\mt}})\mathfrak{f}_{{\rm \widehat{T}},{\rm \widehat{S}}}^\mathfrak{w}.
			\end{align}
		
			(2) If $d_{\undla}=1.$ For any $a\in \mathbb{Z}_{2}$ and ${\rm S}=(\ms, \alpha_{\ms}', \beta_{\ms}')\in {\rm Tri}_{\bar{0}}(\undla), {\rm T}_{a}=(\mt, \alpha_{\mt,a}, \beta_{\mt})\in {\rm Tri}_{a}(\undla),$ we have
		$$
			(\mathfrak{f}_{{\rm S},{\rm T}_{a}})^*
		= \sgn(\ms,\widehat{\alpha'_{\ms,\overline{1}}})_{\overline{0}}\sgn(\mt,\widehat{\alpha_{\mt,a+\overline{1}}})_{a+\overline{1}}\mathfrak{f}_{{\rm \widehat{T}},{\rm \widehat{S_a} } }
		$$ and
		$$
				(\mathfrak{f}_{{\rm S},{\rm T}_{a}}^\mathfrak{w})^*
		= \sgn(\ms,\widehat{\alpha'_{\ms,\overline{1}}})_{\overline{0}}\sgn(\mt,\widehat{\alpha_{\mt,a+\overline{1}}})_{a+\overline{1}}\mathfrak{f}_{{\rm \widehat{T}},{\rm \widehat{S_a} } }^\mathfrak{w}.$$
		\end{prop}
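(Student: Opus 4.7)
The plan is to mimic, step by step, the proof of Proposition \ref{re. fst and BK involution}, replacing each ingredient by its degenerate analog. Since $\mhgcn$ is (split) semisimple under our assumption, it suffices to prove that both sides act as the same operator on every simple module $D(\undla)$, i.e.\ on the basis elements of Proposition \ref{dege-actions of generators on L basis}. As in the nondegenerate case, \eqref{dege-fst and re. fst} reduces \eqref{dege dual 2} to \eqref{dege dual 1}, so we may concentrate on the ``reduced'' seminormal basis $\mathfrak{f}_{{\rm S},{\rm T}}$.

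First I would set up the degenerate counterparts of the key building blocks used in the proof of Proposition \ref{re. fst and BK involution}. (i) A degenerate analog of Corollary \ref{Non-dege BK anti-idempotent}, namely $\mathcal{F}_{\rm T}^{*}=\mathcal{F}_{\rm \widehat T}$: this follows directly from the definition \eqref{definition of primitive iempotents. dege}, the fact that $x_j^{*}=x_j$ and $c_j^{*}=c_j$, together with Lemma \ref{lem:clifford rep}(2), just as in Corollary \ref{Non-dege BK anti-idempotent}. (ii) A computation of $\phi_i(x,y)^{*}$: using the degenerate anti-involution (with $s_i^{*}$ given by the $q\to 1$ specialization of $T_i^{*}=T_i+\epsilon C_iC_{i+1}$) one checks that
\[
\phi_i(x,y)^{*}=\phi_i(-y,-x)+\text{(terms of strictly lower length)},
\]
compare with $\Phi_i(x,y)^{*}=\Phi_i(y^{-1},x^{-1})$ used in \eqref{Phist*}. (iii) A degenerate analog of Lemma \ref{commute odd}, proved by iterating the commutation rules \eqref{phic2} exactly as Lemma \ref{commute odd} is proved from \eqref{Phi and C}, using Lemma \ref{Phist. lem}(2) (whose degenerate analog for $\phi_{\ms,\mt}$ is immediate from \eqref{phic2} and the same argument as in Lemma \ref{Phist. lem}).

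Next, iterating (ii) along a reduced expression for $d(\ms,\mt)$ one obtains an expansion
\[
(\phi_{\ms,\mt})^{*}=\phi_{\mt,\ms}+\sum_{\beta\in\Z_2^n,\;u<d(\mt,\ms)}a_{\beta,u}\,c^{\beta}u
\]
with $a_{\beta,u}\in\mathbb{K}$, paralleling \eqref{Phist*}. Using Proposition \ref{dege-actions of generators on L basis}(4) one then shows, as in \eqref{cancel}, that for any $u<d(\mt,\ms)$ the vector $u\cdot v_{\ms}$ is a linear combination of basis vectors supported on tableaux different from $\mt$, so these lower-order contributions are annihilated by $\mathcal{F}_{\rm \widehat T}C^{\beta_{\mt}}C^{\widehat{\alpha_{\mt,c}}}$ on the left. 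Combined with Lemma \ref{Phist. lem}(1,3) (degenerate version), this yields the degenerate analog of \eqref{tiny2}:
\[
\mathcal{F}_{\rm\widehat T}c^{\beta_{\mt}}c^{\widehat{\alpha_{\mt,c}}}(\phi_{\ms,\mt})^{*}v_{\ms}=\mathfrak{c}_{\mt,\ms}\,c^{\beta_{\mt}}c^{\widehat{\alpha_{\mt,c}}}v_{\mt}.
\]

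Finally, I would run the long computation of Proposition \ref{re. fst and BK involution} verbatim, with $F,\Phi,C,\mathtt{c}_{\star,\star}$ replaced by $\mathcal{F},\phi,c,\mathfrak{c}_{\star,\star}$, to evaluate $(\mathfrak{f}_{{\rm S},{\rm T}})^{*}\cdot c^{\beta_{\ms}'}c^{\widehat{\alpha_{\ms,b}'}}v_{\ms}$ in both cases $d_{\undla}=0$ and $d_{\undla}=1$, compare with the action of $\mathfrak{f}_{{\rm \widehat T},{\rm \widehat{S_a}}}$ computed from the degenerate analog of \eqref{SNB. eq2}/\eqref{SNB. eq4}, and appeal to semisimplicity. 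The main obstacle I anticipate is keeping track of the numerous signs arising from (ii), the degenerate analog of Lemma \ref{commute odd}, and the Clifford sign $\sgn(\mt,\alpha_{\mt})$ of Definition \ref{sgn}; the bookkeeping is already delicate in the type $\mathtt{Q}$ case of Proposition \ref{re. fst and BK involution}, and the same combinatorial identities must be verified here, but since the Clifford combinatorics and the definition of $\sgn$ are identical in the degenerate setting, the same manipulations go through.
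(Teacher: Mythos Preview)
Your proposal is correct and matches the paper's approach: the paper omits the proof entirely, stating only that the degenerate story is parallel to Proposition \ref{re. fst and BK involution}, and your plan of transferring that proof step by step with the replacements $(\Phi,\mathtt{b},\mathtt{c},F,C,T)\rightsquigarrow(\phi,\mathtt{u},\mathfrak{c},\mathcal{F},c,s)$ is exactly what is intended.

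One small sharpening in step~(ii): with the degenerate anti-involution $s_i^{*}=s_i$, $c_j^{*}=c_j$, $x_j^{*}=x_j$ one has the \emph{exact} identity
\[
\phi_i(x,y)^{*}=s_i+\tfrac{1}{x-y}-\tfrac{c_ic_{i+1}}{x+y}=\phi_i(-y,-x),
\]
with no error term, just as $\Phi_i(x,y)^{*}=\Phi_i(y^{-1},x^{-1})$ holds exactly in the nondegenerate case. The lower-order terms you mention only appear at the next step, when you observe that
\[
\phi_i(-a,-b)=\phi_i(a,b)-\frac{2}{a-b}-\frac{2c_ic_{i+1}}{a+b}
\]
and iterate along a reduced expression to obtain $(\phi_{\ms,\mt})^{*}=\phi_{\mt,\ms}+\sum_{u<d(\mt,\ms)}a_{\beta,u}c^{\beta}u$; this is precisely the degenerate analog of \eqref{Phist*}. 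With that correction your argument goes through verbatim.
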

\begin{defn}	Let $\undla\in\mathscr{P}^{\bullet,m}_{n}$ with $\bullet\in\{\mathsf{0},\mathsf{s}\}.$
	\begin{enumerate}
			\item		 Suppose $d_{\undla}=0.$ Let ${\rm S}=(\ms, \alpha_{\ms}', \beta_{\ms}'),
{\rm T}=(\mt, \alpha_{\mt}, \beta_{\mt})\in {\rm Tri}(\undla)$.	For each $i\in [n-1],$ we define $f(i,{\rm T},{\rm S})\in \mhgcn$ as follows.

\begin{enumerate}
	\item if $i,i+1\in [n]\setminus \mathcal{D}_{\mt},$
	$$f(i,{\rm T},{\rm S}):=(-1)^{\delta_{\beta_{\mt}}(i)}\mathfrak{f}_{(\mt,\alpha_{\mt},\beta_{\mt}+e_{i}+e_{i+1}),{\rm S}};
	$$
	\item
	if $i={d(\mt,\mt^{\undla})}(i_p)\in \mathcal{D}_{\mt}, i+1\in [n]\setminus \mathcal{D}_{\mt},$
	$$f(i,{\rm T},{\rm S}):=\begin{cases}(-1)^{|\beta_{\mt}|_{>i}+|\alpha_{\mt}|_{<i}}\mathfrak{f}_{(\mt,\alpha_{\mt}+e_{i},\beta_{\mt}+e_{i+1}),{\rm S}}, & \text{ if  $p$ is odd},\\
		(-\sqrt{-1})(-1)^{|\beta_{\mt}|_{>i}+|\alpha_{\mt}|_{\leq {d(\mt,\mt^{\undla})}(i_{p-1})}}\mathfrak{f}_{(\mt,\alpha_{\mt}+e_{{d(\mt,\mt^{\undla})}(i_{p-1})},\beta_{\mt}+e_{i+1}),{\rm S}}, &\text{ if  $p$ is even};
	\end{cases}\nonumber
	$$
	\item
	if $i+1={d(\mt,\mt^{\undla})}(i_p)\in \mathcal{D}_{\mt}, i\in [n]\setminus \mathcal{D}_{\mt},$ $$f(i,{\rm T},{\rm S}):=\begin{cases}(-1)^{|\beta_{\mt}|_{\geq i}+|\alpha_{\mt}|_{<i+1}} \mathfrak{f}_{(\mt,\alpha_{\mt}+e_{i+1},\beta_{\mt}+e_{i}),{\rm S}}, & \text{ if  $p$ is odd},\\
		(-\sqrt{-1})(-1)^{|\beta_{\mt}|_{\geq i}+|\alpha_{\mt}|_{\leq {d(\mt,\mt^{\undla})}(i_{p-1})}}\mathfrak{f}_{(\mt,\alpha_{\mt}+e_{{d(\mt,\mt^{\undla})}(i_{p-1})},\beta_{\mt}+e_{i}),{\rm S}}, &\text{ if $p$ is even};\end{cases}  \nonumber
	$$
\end{enumerate}
	\item	Suppose $d_{\undla}=1.$
	Let $a\in \Z_2$, ${\rm T}=(\mt, \alpha_{\mt}, \beta_{\mt})\in {\rm Tri}_{\bar{0}}(\undla),$
	${\rm S}_{a}=(\ms, {\alpha_{\ms,a}'}, \beta_{\ms}')\in {\rm Tri}_{a}(\undla)$.  For each $i\in [n-1],$ we define $f(i,{\rm T},{\rm S}_a)\in\mhgcn$ as follows.

\begin{enumerate}
	\item if $i,i+1\in [n]\setminus \mathcal{D}_{\mt},$
	$$f(i,{\rm T},{\rm S}_a):=(-1)^{\delta_{\beta_{\mt}}(i)}\mathfrak{f}_{(\mt,\alpha_{\mt},\beta_{\mt}+e_{i}+e_{i+1}),{\rm S}_a};
	$$
	\item
	if $i={d(\mt,\mt^{\undla})}(i_p)\in \mathcal{D}_{\mt}, i+1\in [n]\setminus \mathcal{D}_{\mt},$
	$$f(i,{\rm T},{\rm S}_a):=\begin{cases}(-1)^{|\beta_{\mt}|_{>i}+|\alpha_{\mt,\bar{1}}|_{<i}}	\mathfrak{f}_{(\mt,\alpha_{\mt}+e_i,\beta_{\mt}+e_{i+1}),{\rm S}_a}, & \text{ if  $p\neq t$ is odd},\\
		(-1)^{|\beta_{\mt}|_{>i}+|\alpha_{\mt}|}	\mathfrak{f}_{(\mt,\alpha_{\mt},\beta_{\mt}+e_{i+1}),{\rm S}_{a+\bar{1}}}, & \text{ if  $p=t$},\\
		(-\sqrt{-1})(-1)^{|\beta_{\mt}|_{>i}+|\alpha_{\mt,\bar{1}}|_{\leq {d(\mt,\mt^{\undla})}(i_{p-1})}}&\\
		\qquad\qquad	\qquad\qquad \cdot \mathfrak{f}_{(\mt,\alpha_{\mt}+e_{{d(\mt,\mt^{\undla})}(i_{p-1})},\beta_{\mt}+e_{i+1}),{\rm S}_a}, &\text{ if  $p$ is even};
	\end{cases}
	$$
	\item
	if $i+1={d(\mt,\mt^{\undla})}(i_p)\in \mathcal{D}_{\mt}, i\in [n]\setminus \mathcal{D}_{\mt},$ $$f(i,{\rm T},{\rm S}_a):=\begin{cases}(-1)^{|\beta_{\mt}|_{\geq i}+|\alpha_{\mt,\bar{1}}|_{<i+1}}	\mathfrak{f}_{(\mt,\alpha_{\mt}+e_{i+1},\beta_{\mt}+e_{i}),{\rm S}_a},& \text{ if  $p\neq t$ is odd},\\
		(-1)^{|\beta_{\mt}|_{\geq i}+|\alpha_{\mt}|}	\mathfrak{f}_{(\mt,\alpha_{\mt},\beta_{\mt}+e_{i}),{\rm S}_{a+\bar{1}}},& \text{ if  $p=t$ },\\
		(-\sqrt{-1})(-1)^{|\beta_{\mt}|_{\geq i}+|\alpha_{\mt,\bar{1}}|_{\leq {d(\mt,\mt^{\undla})}(i_{p-1})}}&\\
		\qquad\qquad\qquad\qquad\cdot	\mathfrak{f}_{(\mt,\alpha_{\mt}+e_{{d(\mt,\mt^{\undla})}(i_{p-1})},\beta_{\mt}+e_{i}),{\rm S}_a},&\text{ if $p$ is even};\end{cases}
	$$
\end{enumerate}
	\end{enumerate}
	\end{defn}
The action formulae of generators on seminormal basis are similar as Proposition \ref{generators action on seminormal basis} with some modifications.
\begin{prop}
			Let $\undla\in\mathscr{P}^{\bullet,m}_{n}$ with $\bullet\in\{\mathsf{0},\mathsf{s}\}.$
			
			\begin{enumerate}
				\item		 Suppose $d_{\undla}=0.$ Let ${\rm S}=(\ms, \alpha_{\ms}', \beta_{\ms}'),
				{\rm T}=(\mt, \alpha_{\mt}, \beta_{\mt})\in {\rm Tri}(\undla)$.
				\begin{enumerate}
					\item For each $i\in [n],$ we have
					\begin{align}
						x_i \cdot \mathfrak{f}_{{\rm T},{\rm S}}
						=\nu_{\beta_{\mt}}(i) \mathtt{u}_{\mt,i} \mathfrak{f}_{{\rm T},{\rm S}}.\nonumber
					\end{align}
					\item For each $i\in [n],$ we have
					\begin{align*}
						c_i \cdot \mathfrak{f}_{{\rm T},{\rm S}}
						&=\begin{cases}
							(-1)^{|\beta_{\mt}|_{<i}}  \mathfrak{f}_{(\mt,\alpha_{\mt},\beta_{\mt}+e_i),{\rm S}}, & \text{ if } i\in [n]\setminus \mathcal{D}_{\mt}, \\
							(-1)^{|\beta_{\mt}|+|\alpha_{\mt}|_{<i}} \mathfrak{f}_{(\mt,\alpha_{\mt}+e_{i},\beta_{\mt}),{\rm S}}, & \text{ if  $i={d(\mt,\mt^{\undla})}(i_p) \in \mathcal{D}_{\mt}$ ,where $p$ is odd},\\
							(-\sqrt{-1})(-1)^{|\beta_{\mt}|+|\alpha_{\mt}|_{\leq {d(\mt,\mt^{\undla})}(i_{p-1})}}&\\
							\qquad\qquad \cdot \mathfrak{f}_{(\mt,\alpha_{\mt}+e_{{d(\mt,\mt^{\undla})}(i_{p-1})},\beta_{\mt}),{\rm S}}, & \text{ if  $i={d(\mt,\mt^{\undla})}(i_p) \in \mathcal{D}_{\mt}$ ,where $p$ is even},
						\end{cases}
					\end{align*}

					\item
					For each $i\in [n-1],$
%
				we have
					\begin{align*}
						&s_i\cdot \mathfrak{f}_{{\rm T},{\rm S}}\\
						=&-\frac{1}{\nu_{\beta_{\mt}}(i)\mathtt{u}_{\mt,i}-\nu_{\beta_{\mt}}(i+1)\mathtt{u}_{\mt,i+1}} \mathfrak{f}_{{\rm T},{\rm S}} \nonumber\\
						&\qquad -\frac{1}{\nu_{\beta_{\mt}}(i)\mathtt{u}_{\mt,i}+\nu_{\beta_{\mt}}(i+1)\mathtt{u}_{\mt,i+1}}f(i,{\rm T},{\rm S})\nonumber\\
						&\qquad\qquad+\delta(s_i\mt)(-1)^{\delta_{\beta_{\mt}(i)}\delta_{\beta_{\mt}(i+1)}}\sqrt{\mathfrak{c}_{\mt}(i)}\frac{\mathfrak{c}_{\mt,\ms}}{\mathfrak{c}_{s_i\cdot\mt,\ms}}\mathfrak{f}_{s_i\cdot{\rm T},{\rm S}}
					\end{align*}
					
				\end{enumerate}
				
				\item	Suppose $d_{\undla}=1.$
				Let $a\in \Z_2$, ${\rm T}=(\mt, \alpha_{\mt}, \beta_{\mt})\in {\rm Tri}_{\bar{0}}(\undla),$
				${\rm S}_{a}=(\ms, {\alpha_{\ms,a}'}, \beta_{\ms}')\in {\rm Tri}_{a}(\undla)$.
				\begin{enumerate}
					\item For each $i\in [n],$ we have
					\begin{align}
						x_i \cdot \mathfrak{f}_{{\rm T},{\rm S}_{a}}
						=\nu_{\beta_{\mt}}(i)\mathtt{u}_{\mt,i} \mathfrak{f}_{{\rm T},{\rm S}_{a}}.\nonumber
					\end{align}
					\item For each $i\in [n],$ we have
					\begin{align*}
						c_i\cdot \mathfrak{f}_{{\rm T},{\rm S}_a}	=\begin{cases} 	
					(-1)^{|\beta_{\mt}|_{<i}}\mathfrak{f}_{(\mt,\alpha_{\mt},\beta_{\mt}+e_i ),{\rm S}_{a}},&\text{if $ i\in [n]\setminus \mathcal{D}_{\mt}$,}\\
					(-1)^{|\beta_{\mt}|+|\alpha_{\mt,\bar{1}}|_{<i}} 	&\\
					\qquad\qquad\qquad\cdot	\mathfrak{f}_{(\mt,\alpha_{\mt}+e_i ,\beta_{\mt}),{\rm S}_{a}}, &\text{if $i={d(\mt,\mt^{\undla})}(i_p) \in \mathcal{D}_{\mt},\,p\neq t$ is odd,}\\
							(-1)^{|\beta_{\mt}|+|\alpha_{\mt}|} 	\mathfrak{f}_{{\rm T},{\rm S}_{a+\bar{1}}}, &\text{if $ i={d(\mt,\mt^{\undla})}(i_p) \in \mathcal{D}_{\mt},\,p=t$,}\\
							(-\sqrt{-1})	(-1)^{|\beta_{\mt}|+|\alpha_{\mt,\bar{1}}|_{\leq {d(\mt,\mt^{\undla})}(i_{p-1})}}&\\
								\qquad\cdot	 \mathfrak{f}_{(\mt,\alpha_{\mt}+e_{{d(\mt,\mt^{\undla})}(i_{p-1})},\beta_{\mt}),{\rm S}_{a}}, &\text{if $ i={d(\mt,\mt^{\undla})}(i_p) \in \mathcal{D}_{\mt},\,p$ is even.}\\
						\end{cases}
					\end{align*}
					
					\item For each $i\in [n-1],$
					we have
					\begin{align*}
						s_i\cdot \mathfrak{f}_{{\rm T},{\rm S}_a}
						=&-\frac{1}{\nu_{\beta_{\mt}}(i)\mathtt{u}_{\mt,i}-\nu_{\beta_{\mt}}(i+1)\mathtt{u}_{\mt,i+1}}  \mathfrak{f}_{{\rm T},{\rm S}_a} \nonumber\\
						&\qquad-\frac{1}{\nu_{\beta_{\mt}}(i)\mathtt{u}_{\mt,i}+\nu_{\beta_{\mt}}(i+1)\mathtt{u}_{\mt,i+1}} f(i,{\rm T},{\rm S}_a)\nonumber\\
						&\qquad\qquad+ \delta(s_i\mt)(-1)^{\delta_{\beta_{\mt}}(i)\delta_{\beta_{\mt}}(i+1)}\sqrt{\mathfrak{c}_{\mt}(i)}\frac{\mathfrak{c}_{\mt,\ms}}{\mathfrak{c}_{s_i\cdot\mt,\ms}}
							 \mathfrak{f}_{s_i\cdot {\rm T}, {\rm S}_{a}}.
							\end{align*}
			
				\end{enumerate}
				
			\end{enumerate}
		\end{prop}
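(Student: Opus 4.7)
The plan is to follow the same strategy as in the proof of Proposition \ref{generators action on seminormal basis}, transferred to the degenerate setting. Since $\mhgcn$ is split semisimple under the hypothesis $P_n^{(\bullet)}(1,\undQ)\neq 0$, two elements coincide if and only if they act identically on every simple module $D(\underline{\mu})$ with $\underline{\mu}\in\mathscr{P}^{\bullet,m}_n$. Thus it suffices to check the asserted identities as linear operators on the explicit basis of $D(\undla)$ given in Proposition \ref{dege-actions of generators on L basis}, while verifying that both sides annihilate $D(\underline{\mu})$ for $\underline{\mu}\neq\undla$.

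First I would establish the degenerate analogues of \eqref{SNB. eq1}--\eqref{SNB. eq4} describing how the basis elements $\mathfrak{f}_{{\rm S},{\rm T}}^{\mathfrak{w}}$ and $\mathfrak{f}_{{\rm S},{\rm T}}$ act on $c^{\beta_{\mfku}''}c^{\alpha_{\mfku}''}v_{\mfku}$. This requires the degenerate versions of Lemma \ref{Phist. well-defi} and Lemma \ref{Phist. lem}, namely that $\phi_{\ms,\mt}$ is independent of the choice of reduced expression for $d(\ms,\mt)$, that $\phi_{\ms,\mt}\cdot v_{\mt}=\mathfrak{c}_{\ms,\mt}v_{\ms}$, that $\phi_{\ms,\mt}c_{d(\mt,\mt^{\undla})(a)}=c_{d(\ms,\mt^{\undla})(a)}\phi_{\ms,\mt}$ for $a\in\mathcal{D}_{\mt^{\undla}}$, and that $\mathfrak{c}_{\mt,\ms}=\mathfrak{c}_{\ms,\mt}$. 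Each of these mirrors the non-degenerate arguments, using \eqref{degsquare1}--\eqref{phic2} in place of \eqref{square1}--\eqref{Phi and C}, together with the first and third statements of Lemma \ref{important conditionequi2}. Combined with the degenerate analogue of Lemma \ref{idempotent action. non-dege} (which is essentially the content of Theorem \ref{primitive iempotents. dege}(a)), one obtains the desired action formula for $\mathfrak{f}_{{\rm S},{\rm T}_a}$, namely
\begin{align*}
\mathfrak{f}_{{\rm S},{\rm T}_a}\cdot c^{\beta_{\mfku}''}c^{\alpha_{\mfku}''}v_{\mfku}
=\delta_{{\rm U},{\rm T}_b}\,\eta(a,b,\alpha_\mt,\alpha_\ms')\,\mathfrak{c}_{\ms,\mt}\,c^{\beta_{\ms}'}c^{\alpha_{\ms,a+b}'}v_{\ms},
\end{align*}
for a suitable sign $\eta$ (trivial when $d_{\undla}=0$) determined by the Clifford commutation and the definition of $\mathfrak{f}_{{\rm S},{\rm T}_a}$.

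With this action formula in hand, each of the three identities (a), (b), (c) is verified by a direct computation: apply the generator $x_i$, $c_i$, or $s_i$ on the left, use Proposition \ref{dege-actions of generators on L basis} to evaluate the resulting action on $v_\mt$, and then recognise the outcome as the action of the right-hand side using the formula above. The $x_i$ case is immediate from \eqref{x eigenvalues}; the $c_i$ case requires tracking signs through the Clifford commutation and splits according to whether $i\in\mathcal{D}_{\mt}$ and, when $i=d(\mt,\mt^{\undla})(i_p)$, according to the parity of $p$ and whether $p=t$ (only relevant when $d_{\undla}=1$). The $s_i$ case decomposes \eqref{Taction dege} into three contributions: the diagonal term involving $\nu_{\beta_\mt}(i)\mathtt{u}_{\mt,i}\mp\nu_{\beta_\mt}(i+1)\mathtt{u}_{\mt,i+1}$, the term arising from $\mathcal{R}(i,\beta_\mt,\alpha_\mt,\mt)$, and the off-diagonal term involving $\sqrt{\mathfrak{c}_\mt(i)}$ multiplied by $\mathfrak{c}_{\mt,\ms}/\mathfrak{c}_{s_i\mt,\ms}$. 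Each contribution is matched to the corresponding summand on the right via the case analysis encoded in $f(i,{\rm T},{\rm S})$ or $f(i,{\rm T},{\rm S}_a)$.

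The main obstacle is that the verification of $s_i$ involves an extensive case analysis, with four primary cases according to the membership of $i$ and $i+1$ in $\mathcal{D}_\mt$, and several further subcases in the type $\texttt{Q}$ situation depending on whether $d(\mt,\mt^{\undla})(i_t)\in\{i,i+1\}$. The sign bookkeeping is delicate: although the degenerate intertwiners $\phi_i(x,y)$ satisfy cleaner relations (no $\epsilon$-factors and, by Remark \ref{diff. of phi and Phi}, the $c$'s precede the $x$'s in the last term), the absence of the $X_i$-inversion in the Clifford commutation \eqref{xc} changes which sign factors $(-1)^{\delta_{\alpha_\mt}(i)\delta_{\alpha_\mt}(i+1)}$ survive, and this propagates through the formulae. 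Once the verification has been carried out on each simple module $D(\undla)$, the appeal to semisimplicity completes the proof.
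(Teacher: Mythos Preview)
Your proposal is correct and follows essentially the same approach as the paper: the paper omits the proof in the degenerate case entirely, having stated that the degenerate story parallels the non-degenerate one, and your outline faithfully transfers the proof of Proposition~\ref{generators action on seminormal basis} by replacing $\Phi_i$, $\mathtt{b}_{\mt,i}$, $\mathtt{c}_{\mt}(i)$ with $\phi_i$, $\mathtt{u}_{\mt,i}$, $\mathfrak{c}_{\mt}(i)$ and invoking Proposition~\ref{dege-actions of generators on L basis} together with the degenerate analogues of \eqref{SNB. eq1}--\eqref{SNB. eq4} and semisimplicity. One small point worth noting: in the degenerate definitions \eqref{fst. typeQ. dege.}--\eqref{re. fst. typeQ. dege.} the extra sign prefactor present in \eqref{fst. typeQ. nondege.}--\eqref{re. fst. typeQ. nondege.} is absent, and correspondingly the multiplication formula \eqref{deg multiplication2} has no sign, so your $\eta$ should indeed come out trivial in both cases and the sign bookkeeping in part~(2) is slightly lighter than in the non-degenerate proof.
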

\label{pag:subalgebras of mhgcn}
\begin{defn}
(1) Let $G_n^f$ be the subalgebra of $\mhgcn$ generated by $x_1^2,\ldots, x_n^2.$

(2) Let $P_n^f$ be the subalgebra of $\mhgcn$ generated by $x_1,\ldots, x_n.$

(3) Let $A_n^f$ be the subalgebra of $\mhgcn$ generated by $x_1,\ldots, x_n$ and $c_1,\ldots, c_n.$
 \end{defn}

We can similarly obtain parallel results on these subalgebras of $\mhgcn$ as in subsection \ref{some subalgebras}. We skip the details.

\section*{Index of notation}
The Index of notation summarizes the main symbols used in this paper, including their brief descriptions and the page numbers of their first formal introduction.

\begin{center}
\textbf{Generalities}
\vspace{0.8em}
\begin{tabular}{|l|l|l|l|}
\hline  Symbol  & Description & Page \\
\hline  $\N$ & The set of positive integers \{1,2,\ldots\} & ~\pageref{pag:N}~ \\
        $\mathfrak{S}_n$ & The symmetric group of $n$ letters & ~\pageref{pag:N}~ \\
        $\mathbb{K}$ & An algebraically closed field of characteristic different from $2$ & ~\pageref{pag:K}~ \\
        $\mathbb{K}^*$ & The set $\mathbb{K}\setminus\{0\}$ & ~\pageref{pag:K}~ \\
        $|v|$ & The parity (or superdegree) of vecter $v$ in some super vertor space & ~\pageref{pag:||}~ \\
        $\Pi V$ & The parity shift of supermodule $V$ & ~\pageref{pag:parity shift}~ \\
        $V\circledast W$ & The irreducible component of $V\boxtimes W$ for irreducible supermodules $V,$ $W$ & ~\pageref{pag:irrtensor}~ \\
        $\mathcal{C}_n$ & The Clifford algebra & ~\pageref{pag:Clifford algebra}~ \\
        $\overrightarrow{\prod}$ & The ordered product & ~\pageref{pag:ordered product}~ \\
        $\lfloor x \rfloor$ & The greatest integer less than or equal to the real number $x$ & ~\pageref{pag:round down}~ \\
        $\lceil x \rceil$ & The smallest integer greater than or equal to the real number $x$ & ~\pageref{pag:round up}~ \\
        $\supp(\beta)$ &  The supporting set $\{1 \leq k \leq n:\beta_{k}=\bar{1}\}$ for $\beta=(\beta_1,\ldots,\beta_n)\in\mathbb{Z}_2^n$  & ~\pageref{pag:suppot and sum}~ \\
        $|\beta|$ & $\Sigma_{i=1}^{n}\beta_i$ for $\beta=(\beta_1,\ldots,\beta_n)\in\mathbb{Z}_2^n$  & ~\pageref{pag:suppot and sum}~ \\
        $\mathsf{0},\mathsf{s},\mathsf{ss}$ & The types of combinatorics & ~\pageref{pag:The types of combinatorics}~ \\
        $\mathscr{P}^{\bullet,m}_{n}$ & The set of mixed ($\bullet+m$)-multipartitions of $n$ for $\bullet\in\{\mathsf{0},\mathsf{s},\mathsf{ss}\}$ & ~\pageref{pag:The types of combinatorics}~ \\
        $\undla$ & An element in $\mathscr{P}^{\bullet,m}_{n}$ &  ~\pageref{pag:multipartition}~ \\
        $\alpha\in\undla$ & A box (or node) of $\undla$ & ~\pageref{pag:multipartition}~ \\
        $\Std(\undla)$ & The set of standard tableaux of shape $\undla$ & ~\pageref{pag:standard tableaux}~ \\
        $\mt$ & An element in $\Std(\undla)$ & ~\pageref{pag:standard tableaux}~ \\
        $\mt^{\undla},$ $\mt_{\undla}$ & Initial row tableau of shape $\undla,$ Initial column tableau of shape $\undla$ & ~\pageref{pag:standard tableaux}~ \\
        $\mathcal{D}_{\undla}$ & The set of boxes in the first diagnoals of strict partiton components of $\undla$ & ~\pageref{pag:diag of undlam}~ \\
        $\mathcal{D}_{\mt}$ & The set of numbers in the first diagnoals of strict partiton components of $\mt$ & ~\pageref{pag:diag of undlam}~ \\
        $P(\undla)$ & The set of admissible permutations indexed by $\undla$ & ~\pageref{pag:adimssible}~ \\
        $d(\ms,\mt)$ & The unique element in $\mathfrak{S}_n$ such that $\ms=d(\ms,\mt)\mt$ for $\ms,\mt\in\Std(\undla)$ & ~\pageref{pag:adimssible}~ \\
        $[n]$ & The set of positive integers $\{1,2,\ldots,n\}$ & ~\pageref{pag:[n]}~ \\
        $\nu_{\beta}(k)$ & It equals $-1$ if $\beta_{k}=\bar{1}$ and equals $1$ if $\beta_{k}=\bar{0},$ for $\beta\in\mathbb{Z}_2^n$ & ~\pageref{pag:nubetak}~ \\
        $|\beta|_{<i}$ & $\Sigma_{k=1}^{i-1}\beta_k$ for $\beta\in\mathbb{Z}_2^n$ & ~\pageref{pag:nubetak}~ \\
        $d_{\undla}$ & It equals $1$ if $\sharp \mathcal{D}_{\undla}$ is odd, otherwise $0$ & ~\pageref{pag:dundla}~ \\
        $\mathcal{OD}_{\mt}$ & Some key subset of $\mathcal{D}_{\mt}$ &  ~\pageref{pag:Dt,ODt,Z2ODt}~ \\
        $\mathbb{Z}_2(\mathcal{OD}_{\mt})$ & The subset of $\mathbb{Z}_2^n$ supported on $\mathcal{OD}_{\mt}$ &  ~\pageref{pag:Dt,ODt,Z2ODt}~ \\
        $\mathbb{Z}_2([n]\setminus\mathcal{D}_{\mt})$ & The subset of $\mathbb{Z}_2^n$ supported on $[n]\setminus\mathcal{D}_{\mt}$ &  ~\pageref{pag:Dt,ODt,Z2ODt}~ \\
        $\gamma_{\mt}$ & A certain idempotent of $\mathcal{C}_n$ related to $\mt$ & ~\pageref{pag:Dt,ODt,Z2ODt}~ \\
        $\delta_{\beta}(k)$ & It equals 1 if $\beta_k=\bar{1}$ and equals $0$ if $\beta_k=\bar{0},$ for $\beta\in\mathbb{Z}_2^n$  & ~\pageref{pag:deltabetak}~ \\
        $\delta(s_i\mt)$ & It equals 1 if $s_i\mt\in\Std(\undla)$ for $\mt\in\Std(\undla),$ otherwise $0$ & ~\pageref{pag:nondege coeffi cti}~ \\
        $\mathbb{Z}_2(\mathcal{OD}_{\mt})_{a}$  & There is a certain decomposition $\mathbb{Z}_2(\mathcal{OD}_{\mt})=\sqcup_{a\in \mathbb{Z}_2}\mathbb{Z}_2(\mathcal{OD}_{\mt})_{a}$ & ~\pageref{pag:decomposition of OD}~ \\
        $\widehat{\ }$ & A certain involution on $\mathbb{Z}_2(\mathcal{OD}_{\mt})_{\bar{0}}$ or $\mathbb{Z}_2(\mathcal{OD}_{\mt})_{\bar{1}}$ & ~\pageref{pag:widehat}~ \\
        ${\rm Tri}(\undla)$ & The set of triples associated with standard tableaux of shape $\undla$ & ~\pageref{pag:Tri}~ \\
        ${\rm Tri}_{a}(\undla)$ & There is a certain decomposition ${\rm Tri}(\undla)=\sqcup_{a\in \mathbb{Z}_2}{\rm Tri}_{a}(\undla)$ & ~\pageref{pag:Tri}~ \\
        $\sgn(\mt,\alpha_{\mt})$ & A sign associated to $\mt\in\Std(\undla)$ and $\alpha_{\mt}\in\mathbb{Z}_2(\mathcal{OD}_{\mt})$ when $d_{\undla}=0$ & ~\pageref{pag:sgn}~ \\
        $\sgn(\mt,\alpha_{\mt})_{a}$ & A sign associated to $\mt\in\Std(\undla),$ $\alpha_{\mt}\in\mathbb{Z}_2(\mathcal{OD}_{\mt})$ and $a\in\mathbb{Z}_2$ when $d_{\undla}=1$ & ~\pageref{pag:sgn}~ \\
\hline
\end{tabular}
\end{center}

\begin{center}
\textbf{Non-degenerate case}
\vspace{0.8em}
\begin{tabular}{|l|l|l|l|}
\hline  Symbol  & Description & Page \\
\hline  $\mHcn$ & The affine Hecke-Clifford algebra & ~\pageref{pag:AHCA}~ \\
        $q$ & The Hecke parameter in $\mathbb{K}\setminus\{0,\pm 1,\pm \sqrt{-1}\}$ & ~\pageref{pag:AHCA}~ \\
        $\epsilon$ & $q-q^{-1}$ & ~\pageref{pag:AHCA}~ \\
        $\mathcal{A}_n$ & A certain subalgebra of $\mHcn$ & ~\pageref{pag:subalg An}~ \\
        $\tilde{\Phi}_i$ & The intertwining element & ~\pageref{pag:BK intertwining element}~ \\
        $\Phi_i$ & Jone-Nazarov's intertwining element & ~\pageref{pag:JN intertwining element}~ \\
        $\Phi_i(x,y)$ & An element in $\mHcn$ related to $\Phi_i$ & ~\pageref{pag:Phi function}~ \\
        $\mathtt{q}(\iota)$ & $2(q\iota+(q\iota)^{-1})/(q+q^{-1})$ for $\iota\in\mathbb{K}^*$ & ~\pageref{pag:q-function and b-function}~ \\
        $\mathtt{b}_{\pm}(\iota)$ & The solutions of equation $x+x^{-1}=\mathtt{q}(\iota)$ & ~\pageref{pag:q-function and b-function}~ \\
        $\mHfcn$ & The cyclotomic Hecke-Clifford algebra & ~\pageref{pag:CHCA}~ \\
        $\underline{Q}$ & The cyclotomic parameters $(Q_1,Q_2,\ldots,Q_m)\in(\mathbb{K}^*)^m$ & ~\pageref{pag:Q-parameters}~\\
        $r$ & The level of $\mHfcn$ & ~\pageref{pag:nondege level}~\\
        $*$ & The anti-involution on $\mHfcn$ & ~\pageref{pag:nondege anti-involution}~ \\
        $\res(\alpha)$ & The residue $Q_lq^{2(j-i)}$ of box $\alpha=(i,j,l)$ & ~\pageref{pag:nondeg residue}~ \\
        $\res_{\mt}(k)$ & The residue of box $\mt^{-1}(k)$ for $\mt\in\Std(\undla)$ & ~\pageref{pag:nondeg residue}~ \\
        $\res(\mt)$ & The residue sequence $(\res_{\mt}(1),\ldots,\res_{\mt}(n))$ of $\mt\in\Std(\undla)$ & ~\pageref{pag:nondeg residue}~ \\
        $\mathtt{q}(\res(\mt))$ & The $\mathtt{q}$-sequence $(\mathtt{q}(\res_{\mt}(1)),\ldots,\mathtt{q}(\res_{\mt}(n)))$ of $\mt\in\Std(\undla)$ & ~\pageref{pag:nondeg residue}~ \\
        $P^{(\bullet)}_{n}(q^2,\undQ)$ & The Poincar\'e polynomial of type $\bullet\in\{\mathsf{0},\mathsf{s},\mathsf{ss}\}$ &  ~\pageref{pag:nondege Pioncare poly}~ \\
        $\mathbb{D}(\undla)$ & The simple module of $\mHfcn$ indexed by $\undla$ & ~\pageref{pag:nondege simple module}~ \\
        $\mathtt{b}_{\mt,i}$ & $\mathtt{b}_{-}(\res_{\mt}(i))$ &  ~\pageref{pag:nondege coeffi cti}~ \\
        $\mathtt{c}_{\mt}(i)$ & Some structure coefficient appeared in module $\mathbb{D}(\undla)$ &  ~\pageref{pag:nondege coeffi cti}~ \\
        $F_{\rm T}$ & The primitive idempotent indexed by ${\rm T}\in{\rm Tri}_{\bar{0}}(\undla)$ &  ~\pageref{pag:primitive idempotents and blocks}~ \\
        $F_{\undla}$ & The primitive central idempotent indexed by $\undla$ &  ~\pageref{pag:primitive idempotents and blocks}~ \\
        $B_{\undla}$ & The simple block of  $\mHfcn$ indexed by $\undla$ & ~\pageref{pag:primitive idempotents and blocks}~ \\
        $\Phi_{\ms,\mt}$ & A key element in $\mHfcn$ indexed by $\ms,\mt\in\Std(\undla)$ &  ~\pageref{pag:Phist and cst}~ \\
        $\mathtt{c}_{\ms,\mt}$ & A key coefficient indexed by $\ms,\mt\in\Std(\undla)$ &  ~\pageref{pag:Phist and cst}~ \\
        $f_{{\rm S},{\rm T}}^{\mathfrak{w}},$ $f_{{\rm S},{\rm T}_a}^{\mathfrak{w}}$ & The seminormal basis factoring though a fixed standard tableau $\mathfrak{w}$ &  ~\pageref{pag:nondege seminormal basis}~ \\
        $f_{{\rm S},{\rm T}},$ $f_{{\rm S},{\rm T}_a}$ & The (reduced) seminormal basis  & ~\pageref{pag:nondege seminormal basis}~ \\
        $\mathtt{c}_{\rm T}^{\mathfrak{w}}$ & $(\mathtt{c}_{\mt,\mathfrak{w}})^2$ for ${\rm T}=(\mt, \alpha_{\mt}, \beta_{\mt})\in {\rm Tri}(\undla)$ and a fixed standard tableau $\mathfrak{w}$  & ~\pageref{pag:nondege cT}~ \\
        $\mathcal{G}_n^f,$ $\mathbb{P}_n^f,$ $\mathcal{A}_n^f$ & Some certain subalgebras of $\mHfcn$ & ~\pageref{pag:subalgebras of mHfcn}~ \\
        $F_{\mt},$ $F_{(\mt,\beta_{\mt})}$ & Some key idempotents of $\mHfcn$ & ~\pageref{pag:some idempotents of mHfcn}~ \\
        $\mathtt{q}_{\mt,i}$ & $\mathtt{q}(\res_{\mt}(i))$ & ~\pageref{pag:nondege q-values}~ \\
\hline
\end{tabular}
\end{center}

\begin{center}
\textbf{Degenerate case}
\vspace{0.8em}
\begin{tabular}{|l|l|l|l|}
\hline  Symbol  & Description & Page \\
\hline  $\mhcn$ & The affine Sergeev algebra & ~\pageref{pag:ASA}~ \\
        $\mathcal{P}_n$ & A certain subalgebra of $\mhcn$ & ~\pageref{pag:subalg Pn}~ \\
		$\tilde{\phi}_i$ & The intertwining element & ~\pageref{pag:dege BK intertwining elements}~ \\
        $\phi_i$ & Nazarov's intertwining element & ~\pageref{pag:dege N intertwining elements}~ \\
        $\phi_i(x,y)$ & An element in $\mhcn$ related to $\phi_i$ & ~\pageref{pag:dege N phi function}~ \\
        $\mathtt{q}(\iota)$ & $\iota(\iota+1)$ for $\iota\in\mathbb{K}$ & ~\pageref{dege q-function and u-function}~ \\
        $\mathtt{u}_{\pm}(\iota)$ & $\pm \sqrt{\iota(\iota+1)}$ for $\iota\in\mathbb{K}$ & ~\pageref{dege q-function and u-function}~ \\
        $\mhgcn$ & The cyclotomic Sergeev algebra & ~\pageref{pag:CSA}~ \\
        $\underline{Q}$ & The cyclotomic parameters $(Q_1,Q_2,\ldots,Q_m)\in\mathbb{K}^m$ & ~\pageref{pag:dege Q-parameters}~\\
        $r$ & The level of $\mhgcn$ & ~\pageref{pag:dege level}~\\
        $*$ & The anti-involution on $\mhgcn$ & ~\pageref{pag:dege anti-involution}~ \\
        $\res(\alpha)$ & The residue $Q_l+j-i$ of box $\alpha=(i,j,l)$ & ~\pageref{pag:dege residue}~ \\
        $\res_{\mt}(k)$ & The residue of box $\mt^{-1}(k)$ for $\mt\in\Std(\undla)$ & ~\pageref{pag:dege residue}~ \\
        $\res(\mt)$ & The residue sequence $(\res_{\mt}(1),\ldots,\res_{\mt}(n))$ of $\mt\in\Std(\undla)$ & ~\pageref{pag:dege residue}~ \\
        $\mathtt{q}(\res(\mt))$ & The $\mathtt{q}$-sequence $(\mathtt{q}(\res_{\mt}(1)),\ldots,\mathtt{q}(\res_{\mt}(n)))$ of
        $\mt\in\Std(\undla)$ & ~\pageref{pag:dege residue}~ \\
        $P^{(\bullet)}_{n}(1,\undQ)$ & The Poincar\'e polynomial of type $\bullet\in\{\mathsf{0},\mathsf{s}\}$ &  ~\pageref{pag:dege Pioncare poly}~ \\
        $D(\undla)$ & The simple module of $\mhgcn$ indexed by $\undla$ & ~\pageref{pag:dege simple module}~ \\
        $\mathtt{u}_{\mt,i}$ & $\mathtt{u}_{+}(\res_{\mt}(i))$ &  ~\pageref{pag:dege coeffi cti}~ \\
        $\mathfrak{c}_{\mt}(i)$ & Some structure coefficient appeared in module $D(\undla)$ &  ~\pageref{pag:dege coeffi cti}~ \\
        $\mathcal{F}_{\rm T}$ & The primitive idempotent indexed by ${\rm T}\in{\rm Tri}_{\bar{0}}(\undla)$ &  ~\pageref{pag:dege primitive idempotents}~ \\
        $\mathcal{F}_{\undla}$ & The primitive central idempotent indexed by $\undla$ &  ~\pageref{pag:dege primitive idempotents}~ \\
        $\mathcal{B}_{\undla}$ & The simple block of  $\mhgcn$ of indexed by $\undla$ & ~\pageref{pag:dege simple blocks}~ \\
        $\phi_{\ms,\mt}$ & A key element in $\mhgcn$ indexed by $\ms,\mt\in\Std(\undla)$ &  ~\pageref{pag:phist and cst}~ \\
        $\mathfrak{c}_{\ms,\mt}$ & A key coefficient indexed by $\ms,\mt\in\Std(\undla)$ &  ~\pageref{pag:phist and cst}~ \\
        $\mathfrak{f}_{{\rm S},{\rm T}}^{\mathfrak{w}},$ $\mathfrak{f}_{{\rm S},{\rm T}_a}^{\mathfrak{w}}$ & The seminormal basis factoring though a fixed standard tableau $\mathfrak{w}$ &  ~\pageref{pag:dege seminormal basis}~ \\
        $\mathfrak{f}_{{\rm S},{\rm T}},$ $\mathfrak{f}_{{\rm S},{\rm T}_a}$ & The (reduced) seminormal basis  & ~\pageref{pag:dege seminormal basis}~ \\
        $\mathfrak{c}_{\rm T}^{\mathfrak{w}}$ & $(\mathfrak{c}_{\mt,\mathfrak{w}})^2$ for ${\rm T}=(\mt, \alpha_{\mt}, \beta_{\mt})\in {\rm Tri}(\undla)$ and a fixed standard tableau $\mathfrak{w}$  & ~\pageref{pag:dege cT}~ \\
        $G_n^f,$ $P_n^f,$ $A_n^f$ & Some certain subalgebras of $\mhgcn$ & ~\pageref{pag:subalgebras of mhgcn}~ \\
\hline
\end{tabular}
\end{center}

	\end{document}